\numberwithin{equation}{section}
\numberwithin{figure}{section}
\numberwithin{table}{section}
\theoremstyle{plain}
\newtheorem{thm}{\protect\theoremname}[section]
\theoremstyle{definition}
\newtheorem{defn}[thm]{\protect\definitionname}
\theoremstyle{plain}
\newtheorem{prop}[thm]{\protect\propositionname}
\theoremstyle{remark}
\newtheorem{rem}[thm]{\protect\remarkname}
\theoremstyle{plain}
\newtheorem{lem}[thm]{\protect\lemmaname}
\DeclareMathOperator{\Add}{\textup{Add}}
\DeclareMathOperator{\Aff}{\textup{Aff}}
\DeclareMathOperator{\Alg}{\textup{Alg}}
\DeclareMathOperator{\Ann}{\textup{Ann}}
\DeclareMathOperator{\Arr}{\textup{Arr}}
\DeclareMathOperator{\Art}{\textup{Art}}
\DeclareMathOperator{\Ass}{\textup{Ass}}
\DeclareMathOperator{\Aut}{\textup{Aut}}
\DeclareMathOperator{\Autsh}{\underline{\textup{Aut}}}
\DeclareMathOperator{\Bi}{\textup{B}}
\DeclareMathOperator{\CAdd}{\textup{CAdd}}
\DeclareMathOperator{\CAlg}{\textup{CAlg}}
\DeclareMathOperator{\CMon}{\textup{CMon}}
\DeclareMathOperator{\CPMon}{\textup{CPMon}}
\DeclareMathOperator{\CRings}{\textup{CRings}}
\DeclareMathOperator{\CSMon}{\textup{CSMon}}
\DeclareMathOperator{\CaCl}{\textup{CaCl}}
\DeclareMathOperator{\Cart}{\textup{Cart}}
\DeclareMathOperator{\Cl}{\textup{Cl}}
\DeclareMathOperator{\Coh}{\textup{Coh}}
\DeclareMathOperator{\Coker}{\textup{Coker}}
\DeclareMathOperator{\Cov}{\textup{Cov}}
\DeclareMathOperator{\Der}{\textup{Der}}
\DeclareMathOperator{\Div}{\textup{Div}}
\DeclareMathOperator{\End}{\textup{End}}
\DeclareMathOperator{\Endsh}{\underline{\textup{End}}}
\DeclareMathOperator{\Ext}{\textup{Ext}}
\DeclareMathOperator{\Extsh}{\underline{\textup{Ext}}}
\DeclareMathOperator{\FAdd}{\textup{FAdd}}
\DeclareMathOperator{\FCoh}{\textup{FCoh}}
\DeclareMathOperator{\FGrad}{\textup{FGrad}}
\DeclareMathOperator{\FLoc}{\textup{FLoc}}
\DeclareMathOperator{\FMod}{\textup{FMod}}
\DeclareMathOperator{\FPMon}{\textup{FPMon}}
\DeclareMathOperator{\FRep}{\textup{FRep}}
\DeclareMathOperator{\FSMon}{\textup{FSMon}}
\DeclareMathOperator{\FVect}{\textup{FVect}}
\DeclareMathOperator{\Fibr}{\textup{Fibr}}
\DeclareMathOperator{\Fix}{\textup{Fix}}
\DeclareMathOperator{\Fl}{\textup{Fl}}
\DeclareMathOperator{\Fr}{\textup{Fr}}
\DeclareMathOperator{\Funct}{\textup{Funct}}
\DeclareMathOperator{\GAlg}{\textup{GAlg}}
\DeclareMathOperator{\GExt}{\textup{GExt}}
\DeclareMathOperator{\GHom}{\textup{GHom}}
\DeclareMathOperator{\GL}{\textup{GL}}
\DeclareMathOperator{\GMod}{\textup{GMod}}
\DeclareMathOperator{\GRis}{\textup{GRis}}
\DeclareMathOperator{\GRiv}{\textup{GRiv}}
\DeclareMathOperator{\Gal}{\textup{Gal}}
\DeclareMathOperator{\Gl}{\textup{Gl}}
\DeclareMathOperator{\Grad}{\textup{Grad}}
\DeclareMathOperator{\Hilb}{\textup{Hilb}}
\DeclareMathOperator{\Hl}{\textup{H}}
\DeclareMathOperator{\Hom}{\textup{Hom}}
\DeclareMathOperator{\Homsh}{\underline{\textup{Hom}}}
\DeclareMathOperator{\ISym}{\textup{Sym}^*}
\DeclareMathOperator{\Imm}{\textup{Im}}
\DeclareMathOperator{\Irr}{\textup{Irr}}
\DeclareMathOperator{\Iso}{\textup{Iso}}
\DeclareMathOperator{\Isosh}{\underline{\textup{Iso}}}
\DeclareMathOperator{\Ker}{\textup{Ker}}
\DeclareMathOperator{\LAdd}{\textup{LAdd}}
\DeclareMathOperator{\LAlg}{\textup{LAlg}}
\DeclareMathOperator{\LMon}{\textup{LMon}}
\DeclareMathOperator{\LPMon}{\textup{LPMon}}
\DeclareMathOperator{\LRings}{\textup{LRings}}
\DeclareMathOperator{\LSMon}{\textup{LSMon}}
\DeclareMathOperator{\Left}{\textup{L}}
\DeclareMathOperator{\Lex}{\textup{Lex}}
\DeclareMathOperator{\Loc}{\textup{Vect}}
\DeclareMathOperator{\M}{\textup{M}}
\DeclareMathOperator{\ML}{\textup{ML}}
\DeclareMathOperator{\MLex}{\textup{MLex}}
\DeclareMathOperator{\Map}{\textup{Map}}
\DeclareMathOperator{\Mod}{\textup{Mod}}
\DeclareMathOperator{\Mon}{\textup{Mon}}
\DeclareMathOperator{\Ob}{\textup{Ob}}
\DeclareMathOperator{\Obj}{\textup{Obj}}
\DeclareMathOperator{\PDiv}{\textup{PDiv}}
\DeclareMathOperator{\PGL}{\textup{PGL}}
\DeclareMathOperator{\PML}{\textup{PML}}
\DeclareMathOperator{\PMLex}{\textup{PMLex}}
\DeclareMathOperator{\PMon}{\textup{PMon}}
\DeclareMathOperator{\Pic}{\textup{Pic}}
\DeclareMathOperator{\Picsh}{\underline{\textup{Pic}}}
\DeclareMathOperator{\Pro}{\textup{Pro}}
\DeclareMathOperator{\Proj}{\textup{Proj}}
\DeclareMathOperator{\QAdd}{\textup{QAdd}}
\DeclareMathOperator{\QAlg}{\textup{QAlg}}
\DeclareMathOperator{\QCoh}{\textup{QCoh}}
\DeclareMathOperator{\QMon}{\textup{QMon}}
\DeclareMathOperator{\QPMon}{\textup{QPMon}}
\DeclareMathOperator{\QRings}{\textup{QRings}}
\DeclareMathOperator{\QSMon}{\textup{QSMon}}
\DeclareMathOperator{\R}{\textup{R}}
\DeclareMathOperator{\Rep}{\textup{Rep}}
\DeclareMathOperator{\Rings}{\textup{Rings}}
\DeclareMathOperator{\Riv}{\textup{Riv}}
\DeclareMathOperator{\SFibr}{\textup{SFibr}}
\DeclareMathOperator{\SMLex}{\textup{SMLex}}
\DeclareMathOperator{\SMex}{\textup{SMex}}
\DeclareMathOperator{\SMon}{\textup{SMon}}
\DeclareMathOperator{\SchI}{\textup{SchI}}
\DeclareMathOperator{\Sh}{\textup{Sh}}
\DeclareMathOperator{\Soc}{\textup{Soc}}
\DeclareMathOperator{\Spec}{\textup{Spec}}
\DeclareMathOperator{\Specsh}{\underline{\textup{Spec}}}
\DeclareMathOperator{\Stab}{\textup{Stab}}
\DeclareMathOperator{\Supp}{\textup{Supp}}
\DeclareMathOperator{\Sym}{\textup{Sym}}
\DeclareMathOperator{\TMod}{\textup{TMod}}
\DeclareMathOperator{\Top}{\textup{Top}}
\DeclareMathOperator{\Tor}{\textup{Tor}}
\DeclareMathOperator{\Vect}{\textup{Vect}}
\DeclareMathOperator{\alt}{\textup{ht}}
\DeclareMathOperator{\car}{\textup{char}}
\DeclareMathOperator{\codim}{\textup{codim}}
\DeclareMathOperator{\degtr}{\textup{degtr}}
\DeclareMathOperator{\depth}{\textup{depth}}
\DeclareMathOperator{\divis}{\textup{div}}
\DeclareMathOperator{\et}{\textup{et}}
\DeclareMathOperator{\ffpSch}{\textup{ffpSch}}
\DeclareMathOperator{\h}{\textup{h}}
\DeclareMathOperator{\ilim}{\displaystyle{\lim_{\longrightarrow}}}
\DeclareMathOperator{\ind}{\textup{ind}}
\DeclareMathOperator{\indim}{\textup{inj dim}}
\DeclareMathOperator{\lf}{\textup{LF}}
\DeclareMathOperator{\op}{\textup{op}}
\DeclareMathOperator{\ord}{\textup{ord}}
\DeclareMathOperator{\pd}{\textup{pd}}
\DeclareMathOperator{\plim}{\displaystyle{\lim_{\longleftarrow}}}
\DeclareMathOperator{\pr}{\textup{pr}}
\DeclareMathOperator{\pt}{\textup{pt}}
\DeclareMathOperator{\rk}{\textup{rk}}
\DeclareMathOperator{\tr}{\textup{tr}}
\DeclareMathOperator{\type}{\textup{r}}
\DeclareMathOperator*{\colim}{\textup{colim}}
\theoremstyle{plain}
\newtheorem{thma}{Theorem}
\newtheorem{thmb}{Theorem}
\newtheorem{thmc}{Theorem}
\providecommand{\definitionname}{Definition}
\providecommand{\lemmaname}{Lemma}
\providecommand{\propositionname}{Proposition}
\providecommand{\remarkname}{Remark}
\providecommand{\theoremname}{Theorem}
\begin{document}
\title{Sheafification of linear functors}
\author{Fabio Tonini}
\address{Universitá degli Studi di Firenze, Dipartimento di Matematica e Informatica
\textquoteright Ulisse Dini\textquoteright , Viale Giovanni Battista
Morgagni, 67/A, 50134 Firenze, Italy}
\email{fabio.tonini@unifi.it}

\maketitle
\global\long\def\A{\mathbb{A}}%

\global\long\def\Ab{(\textup{Ab})}%

\global\long\def\C{\mathbb{C}}%

\global\long\def\Cat{(\textup{cat})}%

\global\long\def\Di#1{\textup{D}(#1)}%

\global\long\def\E{\mathcal{E}}%

\global\long\def\F{\mathbb{F}}%

\global\long\def\GCov{G\textup{-Cov}}%

\global\long\def\Gcat{(\textup{Galois cat})}%

\global\long\def\Gfsets#1{#1\textup{-fsets}}%

\global\long\def\Gm{\mathbb{G}_{m}}%

\global\long\def\GrCov#1{\textup{D}(#1)\textup{-Cov}}%

\global\long\def\Grp{(\textup{Grps})}%

\global\long\def\Gsets#1{(#1\textup{-sets})}%

\global\long\def\HCov{H\textup{-Cov}}%

\global\long\def\MCov{\textup{D}(M)\textup{-Cov}}%

\global\long\def\MHilb{M\textup{-Hilb}}%

\global\long\def\N{\mathbb{N}}%

\global\long\def\PGor{\textup{PGor}}%

\global\long\def\PGrp{(\textup{Profinite Grp})}%

\global\long\def\PP{\mathbb{P}}%

\global\long\def\Pj{\mathbb{P}}%

\global\long\def\Q{\mathbb{Q}}%

\global\long\def\RCov#1{#1\textup{-Cov}}%

\global\long\def\RR{\mathbb{R}}%

\global\long\def\Sch{\textup{Sch}}%

\global\long\def\WW{\textup{W}}%

\global\long\def\Z{\mathbb{Z}}%

\global\long\def\acts{\curvearrowright}%

\global\long\def\alA{\mathscr{A}}%

\global\long\def\alB{\mathscr{B}}%

\global\long\def\arr{\longrightarrow}%

\global\long\def\arrdi#1{\xlongrightarrow{#1}}%

\global\long\def\catC{\mathscr{C}}%

\global\long\def\catD{\mathscr{D}}%

\global\long\def\catF{\mathscr{F}}%

\global\long\def\catG{\mathscr{G}}%

\global\long\def\comma{,\ }%

\global\long\def\covU{\mathcal{U}}%

\global\long\def\covV{\mathcal{V}}%

\global\long\def\covW{\mathcal{W}}%

\global\long\def\duale#1{{#1}^{\vee}}%

\global\long\def\fasc#1{\widetilde{#1}}%

\global\long\def\fsets{(\textup{f-sets})}%

\global\long\def\iL{r\mathscr{L}}%

\global\long\def\id{\textup{id}}%

\global\long\def\la{\langle}%

\global\long\def\odi#1{\mathcal{O}_{#1}}%

\global\long\def\ra{\rangle}%

\global\long\def\set{(\textup{Sets})}%

\global\long\def\sets{(\textup{Sets})}%

\global\long\def\shA{\mathcal{A}}%

\global\long\def\shB{\mathcal{B}}%

\global\long\def\shC{\mathcal{C}}%

\global\long\def\shD{\mathcal{D}}%

\global\long\def\shE{\mathcal{E}}%

\global\long\def\shF{\mathcal{F}}%

\global\long\def\shG{\mathcal{G}}%

\global\long\def\shH{\mathcal{H}}%

\global\long\def\shI{\mathcal{I}}%

\global\long\def\shJ{\mathcal{J}}%

\global\long\def\shK{\mathcal{K}}%

\global\long\def\shL{\mathcal{L}}%

\global\long\def\shM{\mathcal{M}}%

\global\long\def\shN{\mathcal{N}}%

\global\long\def\shO{\mathcal{O}}%

\global\long\def\shP{\mathcal{P}}%

\global\long\def\shQ{\mathcal{Q}}%

\global\long\def\shR{\mathcal{R}}%

\global\long\def\shS{\mathcal{S}}%

\global\long\def\shT{\mathcal{T}}%

\global\long\def\shU{\mathcal{U}}%

\global\long\def\shV{\mathcal{V}}%

\global\long\def\shW{\mathcal{W}}%

\global\long\def\shX{\mathcal{X}}%

\global\long\def\shY{\mathcal{Y}}%

\global\long\def\shZ{\mathcal{Z}}%

\global\long\def\st{\ | \ }%

\global\long\def\stA{\mathcal{A}}%

\global\long\def\stB{\mathcal{B}}%

\global\long\def\stC{\mathcal{C}}%

\global\long\def\stD{\mathcal{D}}%

\global\long\def\stE{\mathcal{E}}%

\global\long\def\stF{\mathcal{F}}%

\global\long\def\stG{\mathcal{G}}%

\global\long\def\stH{\mathcal{H}}%

\global\long\def\stI{\mathcal{I}}%

\global\long\def\stJ{\mathcal{J}}%

\global\long\def\stK{\mathcal{K}}%

\global\long\def\stL{\mathcal{L}}%

\global\long\def\stM{\mathcal{M}}%

\global\long\def\stN{\mathcal{N}}%

\global\long\def\stO{\mathcal{O}}%

\global\long\def\stP{\mathcal{P}}%

\global\long\def\stQ{\mathcal{Q}}%

\global\long\def\stR{\mathcal{R}}%

\global\long\def\stS{\mathcal{S}}%

\global\long\def\stT{\mathcal{T}}%

\global\long\def\stU{\mathcal{U}}%

\global\long\def\stV{\mathcal{V}}%

\global\long\def\stW{\mathcal{W}}%

\global\long\def\stX{\mathcal{X}}%

\global\long\def\stY{\mathcal{Y}}%

\global\long\def\stZ{\mathcal{Z}}%

\global\long\def\then{\ \Longrightarrow\ }%

\global\long\def\L{\textup{L}}%

\global\long\def\l{\textup{l}}%

\begin{abstract}
We introduce ``sheafification'' functors from categories of (lax
monoidal) linear functors to categories of quasi-coherent sheaves
(of algebras) of stacks. They generalize the homogeneous sheafification
of graded modules for projective schemes.
\end{abstract}

\section*{Introduction}

If $\stX$ is a projective scheme over a ring $R$ with very ample
invertible sheaf $\odi{\stX}(1)$ then quasi-coherent sheaves on $\stX$
can be interpreted as graded modules over the homogeneous coordinate
ring $S_{\stX}$ of $\stX$. If $\GMod(-)$ denote the category of
graded modules then the Serre functor 
\[
\Gamma_{*}\colon\QCoh(\stX)\to\GMod(S_{\stX})\comma\Gamma_{*}(\shF)=\bigoplus_{n\in\Z}\Hl^{0}(\stX,\shF(n))
\]
is fully faithful. Moreover there is an inverse operation, called
homogeneous sheafification, which defines an exact functor $\widetilde{-}\colon\GMod(S_{\stX})\to\QCoh(\stX)$,
left adjoint to $\Gamma_{*},$ and such that $\widetilde{\Gamma_{*}(\shG)}\to\shG$
is an isomorphism for all $\shG\in\QCoh(\stX)$.

Let us consider now a very different situation. Let $G$ be an affine
group scheme over a field $k$. Classical Tannaka's reconstruction
problem consists in reconstructing the group $G$ and more generally
$G$-torsors from the category of representations $\Rep^{G}k$: a
$G$-torsor $\pi\colon P\to S$ is completely determined by the associated
exact \emph{strong} monoidal functor 
\[
(\pi_{*}\odi P\otimes-)^{G}\colon\Rep^{G}k\to\Loc(S)
\]
where $\Loc(S)$ denotes the category of locally free sheaves on $S$.

When $G$ is finite, in \cite{Tonini2013} I have introduced the notion
of $G$-cover $f:X\to S$ extending the notion of $G$-torsor and,
in order to handle the non abelian case, I showed in my Ph.D. thesis
\cite{Tonini2013a} that those objects can also be reconstructed from
the exact and \emph{lax} monoidal functors $(f_{*}\odi X\otimes-)^{G}\colon\Rep^{G}k\to\Loc(S)$.
The idea is that a $G$-cover is a weak version of a $G$-torsor,
therefore we have to look for a weak version of a strong monoidal
functor, that is, as the words suggest, a lax monoidal functor.

In this paper we develop a theory of sheafification functors which
generalizes the two above situations. Let us introduce some notations
and definitions. We fix a base commutative ring $R$ and a category
fibered in groupoids $\stX$ over $R$. We say that $\stX$ is \emph{pseudo-algebraic}
(resp. \emph{quasi-compact}) if there exist a scheme (resp. an affine
scheme) $X$ and a map $X\to\stX$ representable by fpqc covering
of algebraic spaces. We denote by $\QCoh\stX$ the category of quasi-coherent
sheaves on $\stX$ (see Section \ref{sec: Preliminaries on sheaves}).
Given a full subcategory $\shC$ of $\QCoh\stX$ we say that $\shC$
generates $\QCoh\stX$ if all quasi-coherent sheaves on $\stX$ are
quotient of a (possibly infinite) direct sum of sheaves in $\shC$.

In what follows $A$ will denote an $R$-algebra and $\shC$ a full
subcategory of $\QCoh\stX$. We define the \emph{Yoneda functors}
associated with $\shC$ as the $R$-linear contravariant functors
\[
\Omega^{\shF}\colon\shC\to\Mod A\comma\Omega_{\E}^{\shF}=\Hom_{\stX}(\pi^{*}\E,\shF)\text{ for }\shF\in\QCoh(\stX_{A})
\]
where $\pi\colon\stX_{A}=\stX\times_{R}A\to\stX$ is the projection.
We also denote by $\L_{R}(\shC,A)$ the category of contravariant
$R$-linear functors $\shC\to\Mod A$, so that we obtain a functor
\[
\Omega^{*}\colon\QCoh(\stX_{A})\to\L_{R}(\shC,A)
\]
When $R=k$, $\stX=\Bi G$, $\shC=\Loc\stX=\Rep^{G}k$ and $f:X\to\Spec A$
is a $G$-torsor or a $G$-cover one has that $f_{*}\odi{\stX}$ can
be thought of as a quasi-coherent sheaf of algebras over $\Bi G\times A$
and $\Omega^{f_{*}\odi X}=\Hom_{\Bi G}(-,f_{*}\odi X)=(f_{*}\odi X\otimes(-)^{\vee})^{G}$.
Up to a dual, this is the functor we associated with a $G$-cover
in the above discussion.

Now consider the case when $\stX$ is a projective scheme over a ring
$R$ with ample invertible sheaf $\odi{\stX}(1)$ and coordinate ring
$S_{\stX}$. Set $\shC_{\stX}=\{\odi{\stX}(n)\}_{n\in\Z}$. It is
easy to realize that $\L_{R}(\shC_{\stX},R)$ is equivalent to $\GMod(S_{\stX})$
and that $\Omega^{*}\colon\QCoh(\stX)\to\L_{R}(\shC,R)$ is just the
Serre functor $\Gamma_{*}\colon\QCoh(\stX)\to\GMod(\stX)$ we started
with (see \ref{sec:Quasi-projective-schemes and more} for details).

Once we have found this common language some questions arise naturally:
\begin{itemize}
\item Is the functor $\Omega^{*}\colon\QCoh(\stX_{A})\to\L_{R}(\shC,A)$
fully faithful like the Serre functor?
\item Does it also admits a left adjoint with the nice property of the sheafification
functor?
\item In the context of Tannaka duality sheaves of algebras yield (lax)
monoidal functors. Is it true in general?
\item Which functors are in the essential image of $\Omega^{*}$?
\end{itemize}
For the second problem, it turns out quite easily that also $\Omega^{*}\colon\QCoh(\stX_{A})\to\L_{R}(\shC,A)$
has a left adjoint 
\[
\shF_{*,\shC}\colon\L_{R}(\shC,A)\to\QCoh(\stX_{A})
\]
 when $\shC$ is essentially small: by analogy we call this functor
a \emph{sheafification }functor, which also explains the name of the
paper.

The third problem is also easy. If $\shC$ is a monoidal subcategory
of $\QCoh\stX$, $\ML_{R}(\shC,A)$ denotes the category of $R$-linear,
contravariant and (lax) monoidal functors $\shC\to\Mod A$ and $\QAlg(\stX)$
the category of quasi-coherent sheaves of algebras on $\stX$ then
$\Omega^{*}$ and $\shF_{*,\shC}$ extend to a pair of left adjoint
functors 
\[
\Omega^{*}\colon\QAlg(\stX_{A})\to\ML_{R}(\shC,A)\text{ and }\alA_{*,\shC}\colon\ML_{R}(\shC,A)\to\QAlg(\stX_{A})
\]

Coming back to the fully faithfulness of $\Omega^{*}$ we prove the
following.

\begin{thma} [\ref{prop:excatness properties of Omega* and shF*},\ref{thm:general theorem for equivalences}]\label{thma}
Let $\stX$ be a pseudo-algebraic category fibered in groupoids over
$R$ and $\shC\subseteq\QCoh\stX$ be a full subcategory generating
$\QCoh\stX$. Then the functor $\Omega^{*}\colon\QCoh(\stX_{A})\to\L_{R}(\shC,A)$
is fully faithful and, if $\shC$ is essentially small, the functor
$\shF_{*,\shC}\colon\L_{R}(\shC,A)\to\QCoh(\stX_{A})$ is exact and
the natural map $\shG\to\shF_{\Omega^{\shG},\shC}$ is an isomorphism.

\end{thma}

If $\stX$ is a projective scheme over $R$ then it is a classical
fact that $\shC_{\stX}=\{\odi{\stX}(n)\}_{n\in\Z}$ generates $\QCoh X$
and thus we recover the classical properties of $\Gamma_{*}\colon\QCoh(\stX_{A})\to\GMod(S_{\stX}\otimes_{R}A)$.
We moreover extend this to quasi-compact and quasi-projective schemes
over $R$ and to certain quotient stacks (see \ref{thm:for projective schemes}).

Theorem above when $\shC$ consists of a single object is a rephrasing
of classical Gabriel-Popescu's theorem for the category $\QCoh\stX$
(see \ref{thm: Gabriel Popescu's theorem}). When $\shC$ is monoidal
and generates $\QCoh\stX$ we also have that $\Omega^{*}\colon\QAlg(\stX_{A})\to\ML_{R}(\shC,A)$
is fully faithful (see \ref{thm:fundamental for sheaf of algebras and monoidal functors}).

The last problem we address is to describe the essential image of
$\Omega^{*}$. The main idea is just that $\Hom_{\stX}(-,\shF)$ for
$\shF\in\QCoh\stX$ is a left exact functor. Since the domain $\shC$
of the functors $\Omega^{\shF}$ is not abelian we need an ad hoc
definition of exactness. A \emph{test sequence }in $\shC$ is an exact
sequence (in the ambient category $\QCoh\stX$) 
\[
\shT_{*}\colon\bigoplus_{k\in K}\E_{k}\to\bigoplus_{i\in I}\E_{i}\to\E\to0\text{ where }\E,\E_{i},\E_{k}\in\shC\text{ and }I,J\text{ are sets}
\]
such that $\alpha(\E_{k})$ is contained in a finite sum for all $k\in K$.
A test sequence is called finite if $K$ and $I$ are finite sets.
Given $\Gamma\in\L_{R}(\shC,A)$ we will say that $\Gamma$ is exact
on a test sequence $\shT_{*}$ in $\shC$ if the complex of $A$-modules
(see \ref{def: left exact functors})
\[
0\to\Gamma_{\E}\to\prod_{i\in I}\Gamma_{\E_{i}}\to\prod_{k\in K}\Gamma_{\E_{k}}
\]
is exact. We denote by $\Lex_{R}(\shC,A)$ (resp. $\MLex_{R}(\shC,A)$
if $\shC$ is monoidal) the full subcategory of $\L_{R}(\shC,A)$
(resp. $\ML_{R}(\shC,A)$) of functors which are exact an all test
sequences. We have the following:

\begin{thmb}[\ref{thm:general theorem for equivalences}, \ref{ prop: Lex and left exact functors}]\label{thmb}
Let $\stX$ be a pseudo-algebraic category fibered in groupoids over
$R$ and $\shC\subseteq\QCoh\stX$ be a full subcategory generating
$\QCoh\stX$. Then $\Lex_{R}(\shC,A)$ is the essential image of the
(fully faithful) functor $\Omega^{*}\colon\QCoh(\stX_{A})\to\L_{R}(\shC,A)$.

If $\stX$ is quasi-compact and all sheaves in $\shC$ are finitely
presented then $\Lex_{R}(\shC,A)$ is the subcategory of $\L_{R}(\shC,A)$
of functors which are exact on finite test sequences.

\end{thmb}

In particular, when $\shC$ is essentially small, 
\[
\Omega^{*}\colon\QCoh(\stX_{A})\to\Lex_{R}(\shC,A)\text{ and }\shF_{*,\shC}\colon\Lex_{R}(\shC,A)\to\QCoh(\stX_{A})
\]
are quasi-inverses of each other.

When $R=A=\Z$, $\stX$ is a quasi-compact and quasi-separated scheme
and $\shC=\Loc(\stX)$, the category of locally free sheaves on $\stX$,
the results above have already been proved in \cite[Section 3.1]{Bhatt2014}:
the conclusion, which also extends in our setting, is that if $\stX$
has the resolution property (that is $\Loc(\stX)$ generates $\QCoh(\stX)$)
then $\QCoh\stX$ can be recovered by $\Loc(\stX)$ and the distinguished
class of morphisms which are surjective in the ambient category $\QCoh\stX$. 

When $\shC$ is monoidal we obtain analogous statements for monoidal
functors by replacing $\QCoh$, $\L_{R}$, $\Lex_{R}$ and $\shF_{*,\shC}$
with $\QAlg$, $\ML_{R}$, $\MLex_{R}$ and $\alA_{*,\shC}$ respectively
(see \ref{thm:fundamental for sheaf of algebras and monoidal functors}).

Theorem \ref{thma} and \ref{thmb} apply in the following situations
in which $\shC$ generates $\QCoh(\stX)$:
\begin{itemize}
\item if $\shC=\QCoh\stX$ then $\Lex_{R}(\QCoh\stX,A)$ is the category
of contraviant, $R$-linear and left exact functors $\QCoh\stX\to\Mod A$
which transform direct sums into products (see \ref{thm:equivalence for QCoh X});
\item if $\shC=\Coh\stX$ (resp. $\shC=\Loc(\stX)$) and $\stX$ is a Noetherian
algebraic stack (resp. $\stX$ is quasi-compact and has the resolution
property) then $\Lex_{R}(\shC,A)$ is the category of contraviant,
$R$-linear and left exact functors $\shC\to\Mod A$ (see \ref{thm:equivalence for Loc X}
and \ref{thm:equivalence for noetherian algebraic stacks});
\item if $\shC=\QCoh_{\text{fp}}\stX$, the category of quasi-coherent sheaves
of finite presentations, and $\stX$ is a quasi-compact and quasi-separated
scheme then $\Lex_{R}(\QCoh_{\text{fp}}\stX,A)$ is the category of
contraviant, $R$-linear functors $\QCoh_{\text{fp}}\stX\to\Mod A$
which are left exact on right exact sequences in $\QCoh_{\text{fp}}\stX$
(see \ref{thm:equivalence for finite presented sheaves}).
\end{itemize}
When $\shC$ is essentially small there is another cohomological characterization
of the functors in $\Lex_{R}(\shC,A)$. A collection of maps $\shU=\{\E_{i}\to\E\}$
in $\shC$ is called jointly surjective if the map $\bigoplus_{i\in I}\E_{i}\to\E$
is surjective. Given such a collection $\shU$ we set $\Delta_{\shU}=\Imm(\bigoplus_{i\in I}\Omega^{\E_{i}}\to\Omega^{\E})\in\L_{R}(\shC,R)$.
Denote by $\shC^{\oplus}$ the subcategory of $\QCoh\stX$ consisting
of all possible finite direct sums of sheaves in $\shC$. We have:

\begin{thmc}[\ref{prop:test sequences and Extone}, \ref{ prop: Lex and left exact functors}
and \ref{prop:Lex and Grothendieck topology}] Let $\stX$ be a pseudo-algebraic
category fibered in groupoids over $R$ and $\shC\subseteq\QCoh\stX$
be a full and essentially small subcategory generating $\QCoh\stX$.
Then $\Lex_{R}(\shC,A)$ is the full subcategory of $\L_{R}(\shC,A)$
of functors $\Gamma$ satisfying 
\[
\Hom_{\L_{R}(\shC,R)}(\Omega^{\E}/\Delta_{\shU},\Gamma)=\Ext_{\L_{R}(\shC,R)}^{1}(\Omega^{\E}/\Delta_{\shU},\Gamma)=0
\]
for all jointly surjective collections of maps $\shU=\{\E_{i}\to\E\}_{i\in I}$
in $\shC$. If $\stX$ is quasi-compact and the sheaves in $\shC$
are finitely presented we can consider only finite collections $\shU$.

We have $\Lex_{R}(\shC^{\oplus},A)\simeq\Lex_{R}(\shC,A)$ via the
restriction $\shC\to\shC^{\oplus}$ and, if $\shC$ is additive and
$\shJ$ is the smallest Grothendieck topology on $\shC$ containing
the sieves $\Delta_{\shU}$ for all jointly surjective collections
$\shU=\{\E_{i}\to\E\}_{i\in I}$ in $\shC$, then $\Lex_{R}(\shC,A)$
coincides with the category of sheaves of $A$-modules $\shC^{\op}\to\Mod A$
on the site $(\shC,\shJ)$ which are $R$-linear.

\end{thmc}

Theory above has been applied in \cite{Schappi2015} to show the existence
of colimits in the $2$-category of quasi-compact stacks for the fpqc
topology with affine diagonal and in \cite{Tonini2020}, to show a
version of Tannaka's reconstruction for stacks and study certain stacks
of fiber functors.

We now describe some other applications of the above theory.

The first, which is also the original motivation, is the theory of
Galois covers and it is developed in \cite{Tonini2015}. As explained
in the beginning of this introduction, in my Ph.D. thesis \cite{Tonini2013a}
I have worked out theory above in the case $\stX=\Bi G$ and $\shC=\Loc\stX$,
where $G$ is a finite, flat and finitely presented group scheme over
$R$ ($\shC$ generates $\QCoh(\Bi G)$ is this case, see \ref{rem: resolution property for BG cases}).
The proof presented in \cite{Tonini2013a} makes use of representation
theory and can not be generalized to arbitrary categories fibered
in groupoids. Moreover the results in the present paper also apply
to more general affine group schemes over $R$, for instance any flat
affine group scheme defined over a Dedekind domain (see \ref{rem: resolution property for BG cases}
and \ref{thm:general theorem for BG}). The goal of \cite{Tonini2013a}
and \cite{Tonini2015} was to look at Galois covers with group $G$
as particular monoidal functors, as $G$-torsors can be thought of
as particular strong monoidal functors, and the motivation was the
study of non-abelian Galois covers, where a direct approach as in
the abelian case (see \cite{Tonini2013}) fails due to the complexity
of the representation theory.

A second application, which will be hopefully the subject of a future
paper, is to the theory of Cox rings and homogeneous sheafifications.
This is a generalization of the results on quasi-projective schemes
described in Section \ref{sec:Quasi-projective-schemes and more}.
The idea is to consider $\shC_{H}=\{\shL\}_{\shL\in H}\subseteq\Loc\stX$
where $H$ is a subgroup of $\Pic\stX$. As in the projective case
we have a homogeneous coordinate ring
\[
S_{H}=\bigoplus_{\shL\in H}\Hl^{0}(\stX,\shL)
\]
(opportunely defined, see \cite[Theorem A]{Hochenegger2020}), $\L_{R}(\shC_{H},A)$
is equivalent to $\GMod(S_{H}\otimes_{R}A)$, the category of $H$-graded
$(S_{H}\otimes_{R}A)$-modules, $\Omega^{*}$ corresponds to
\[
\Gamma_{*}\colon\QCoh(\stX_{A})\to\GMod(S_{H}\otimes_{R}A)\comma\Gamma_{*}(\shF)=\bigoplus_{\shL\in H}\Hl^{0}(\stX,\shF\otimes\shL)
\]
and its adjoint $\shF_{*,\shC_{H}}$ behaves like a homogeneous sheafification.
Moreover in more concrete geometric situations, e.g. when $\stX$
is a normal variety, there are analogous constructions for reflexive
sheaves of rank $1$ (see \cite[Theorem B]{Hochenegger2020}). We
expect that this theory covers all known cases where $\Gamma_{*}$
is proved to be fully faithful (see for instance \cite[Appendix of Chapter 6]{Cox2011}
and \cite[Section 2]{Kajiwara1998}).

As a last application there is a generalization of Drinfeld's lemma,
which will be the subject of a future paper. Let $\stX$ be a fibered
category over a finite field $\F_{q}$, $\F_{q}\subseteq k$ an algebraically
closed field, $\phi_{k}\colon k\to k$, $\phi_{k}(x)=x^{q}$ the Frobenius,
$\stX_{k}=\stX\times_{\F_{q}}k$ and $\phi_{k}=\id\times\phi_{k}\colon\stX_{k}\to\stX_{k}$
the geometric Frobenius. Then there is a natural functor

\[
\Psi_{\stX}\colon\QCoh_{\text{fp}}(\stX)\to\QCoh_{\text{fp}}(\stX,\phi_{k})
\]
where $\QCoh_{\text{fp}}(\stX)$ is the category of finitely presented
quasi-coherent sheaves and $\QCoh_{\text{fp}}(\stX,\phi_{k})$ is
the category of pairs $(\shF,\sigma)$ where $\shF\in\QCoh_{\text{fp}}(\stX_{k})$
and $\sigma\colon\phi_{k}^{*}\shF\to\shF$ is an isomorphism. Drinfeld
proved that the functor $\Psi_{\stX}$ is an equivalence when $\stX$
is a projective scheme over $\F_{q}$ and its proof used the classical
correspondence between quasi-coherent sheaves on a projective space
and graded modules. Using the generalized correspondence between sheaves
and linear functors, it is possible to extend Drinfeld argument to
more general schemes and stacks, for instance proper algebraic stacks
or arbitrary affine gerbes over $\F_{q}$.

The paper is divided as follows. In the first section we explain how
to work with quasi-coherent sheaves on fibered categories, while in
the second one we introduce sheafification functors and prove basic
properties. In the third section we discuss when the Yoneda functor
$\Omega^{*}$ is fully faithful, while in the fourth and fifth ones
we determine its essential image. The last sections are about applications.

\section*{Notation}

In this paper we work over a base commutative, associative ring $R$
with unity. If not stated otherwise a fiber category will be a category
fibered in groupoids over $\Aff/R$, the category of affine schemes
over $\Spec R$, or, equivalently, the opposite of the category of
$R$-algebras. Recall that by the $2$-Yoneda lemma objects of a fibered
category $\stX$ can be thought of as maps $T\to\stX$ from an affine
scheme. An fpqc stack will be a stack for the fpqc topology.

A map $f\colon\stX'\to\stX$ of fibered categories is called representable
if for all maps $T\to\stX$ from an affine scheme (or an algebraic
space) the fiber product $T\times_{\stX}\stX'$ is (equivalent to)
an algebraic space.

Given a flat and affine group scheme $G$ over $R$ we denote by $\Bi_{R}G$
the stack of $G$-torsors for the \emph{fpqc} topology, which is an
fpqc stack with affine diagonal. When $G\to\Spec R$ is finitely presented
(resp. smooth) then $\Bi_{R}G$ coincides with the stack of $G$-torsors
for the fppf (resp. étale) topology.

By a ``subcategory'' of a given category we mean a ``full subcategory''
if not stated otherwise.

We are going to look at functors whose source category $\shC$ will
be a full subcategory of quasi-coherent sheaves of some fibered category.
Many construction makes sense only when $\shC$ is small, yet the
results obtained are clearly true also for an essentially small category
$\shC$. Therefore in the text we will only consider small categories,
but keeping in mind the previous observation.

\section*{Acknowledgments}

I would like to thank Jarod Alper, Daniel Schäppi, David Rydh, Mattia
Talpo and Angelo Vistoli for the useful conversations I had with them
and all the suggestions they gave me.

\section{\label{sec: Preliminaries on sheaves} Preliminaries on sheaves and
fibered categories}

Many definition and properties showed in this section can be also
found in \cite[Section 4.1]{Talpo2018}. The notion of modules and
quasi-coherent sheaves makes sense on an arbitrary ringed site.
\begin{defn}
\label{def:quasi-coherent site} Let $(\shC,\odi{\shC})$ be a ringed
category. A presheaf of $\odi{\shC}$-modules if a contravariant functor
$\shF\colon\shC\to\Ab$ endowed, for any $T\in\shC$, of an $\odi{\shC}(T)$-module
structure on $\shF(T)$ such that, for any $T'\to T$ in $\shC$ the
map $\shF(T)\to\shF(T')$ is $\odi{\shC}(T)$-linear. We denote by
$\Mod(\shC,\odi{\shC})$ or simply $\Mod\odi{\shC}$ the category
of presheaves of $\odi{\shC}$-modules.

Assume now $(\shC,\odi{\shC})$ is a ringed site. A quasi-coherent
sheaf on $\shC$ is a sheaf of $\odi{\shC}$-modules such that, for
any $T\in\shC$, there exist a covering $\{T_{i}\to T\}$ and an exact
sequence of sheaves on the restricted site $\shC/T_{i}$
\[
(\odi{\shC})_{|T_{i}}^{(I)}\to(\odi{\shC})_{|T_{i}}^{(J)}\to\shF_{|T_{i}}\to0
\]
where $I$ and $J$ are sets and $\shG_{|T_{i}}=\shG\circ r_{T_{i}}$,
$r_{T_{i}}\colon\shC/T_{i}\to\shC$ and $\shG\in\Mod\odi{\shC}$.
We denote by $\QCoh\shC$ the category of quasi-coherent sheaves on
$\shC$.
\end{defn}

Let $\pi\colon\stX\to\Aff/R$ be a fibered category. There is a functor
of rings $\odi{\stX}\colon\stX^{\op}\to\sets$ defined by $\odi{\stX}(\xi)=\Hl^{0}(\odi{\pi(\xi)})$,
so that $(\stX,\odi{\stX})$ is a ringed category.
\begin{defn}
\label{def:quasi-coherent fibered} A quasi-coherent sheaf over $\stX$
is a presheaf of $\odi{\stX}$-modules such that for all maps $\xi\to\eta$
in $\stX$ the induced map
\[
\shF(\eta)\otimes_{\Hl^{0}(\odi{\pi(\eta)})}\Hl^{0}(\odi{\pi(\xi)})\to\shF(\xi)
\]
is an isomorphism. We denote by $\QCoh\stX$ the full subcategory
of $\Mod\odi{\stX}$ of quasi-coherent sheaves.
\end{defn}

If $\catC$ is a (not full) subcategory of $\stX$ we similarly define
presheaves of $(\odi{\stX})_{|\catC}$-modules and quasi-coherent
sheaves on $\catC$ just replacing all occurrences of $\stX$ with
$\catC$. We denote by $\Mod(\odi{\stX})_{|\catC}$ and $\QCoh\catC$
the resulting categories.
\begin{prop}
Let $\stX\to\Aff/R$ be a fibered category and $\shF\in\Mod\odi{\stX}$.
Then $\shF$ is quasi-coherent if and only if it is quasi-coherent
on the site $\stX$ with the Zariski (étale, fppf, fpqc) topology.
In particular quasi-coherent sheaves are sheaves in the fpqc topology.
\end{prop}

\begin{proof}
Let us denote by $\stX_{Z}$, $\stX_{E}$, $\stX_{F}$, $\stX_{\overline{F}}$
the category $\stX$ with the Zariski, étale, fppf, fpqc topology
respectively. Inside $\Mod\odi{\stX}$ there are inclusions
\[
\QCoh\stX_{Z}\subseteq\QCoh\stX_{E}\subseteq\QCoh\stX_{F}\subseteq\QCoh\stX_{\overline{F}}
\]
Moreover it is clear that we can assume that $\stX$ is an affine
scheme, in which case the result has been proved in \cite[Prop 4.13]{Talpo2018}.
\end{proof}
If $f\colon\stY\to\stX$ is a morphism of fibered categories and $\shF\in\Mod\odi{\stX}$
we define $f^{*}\shF=\shF\circ f\colon\stY^{\op}\to\stX^{\op}\to\sets$.
This association defines a functor $f^{*}\colon\Mod\odi{\stX}\to\Mod\odi{\stY}$,
called the pull-back functor, and restricts to a functor $f^{*}\colon\QCoh\stX\to\QCoh\stY$.
Notice that $f^{*}\odi{\stX}=\odi{\stY}$ tautologically.

The category $\Mod\odi{\stX}$ is an abelian category: if $\alpha\colon\shF\to\shG$
is a map and $\xi\colon\Spec B\to\stX$ an object then
\[
\widetilde{\Ker}(\alpha)(\xi)=\Ker(\alpha_{\xi}\colon\shF(\xi)\to\shG(\xi))\comma\widetilde{\Imm}(\alpha)(\xi)=\Imm(\alpha_{\xi}\colon\shF(\xi)\to\shG(\xi))
\]
\[
\Coker(\alpha)(\xi)=\Coker(\alpha_{\xi}\colon\shF(\xi)\to\shG(\xi))
\]
are the kernel, image and cokernel of $\alpha$ in $\Mod\odi{\stX}$.
If $\shF$ and $\shG$ are quasi-coherent then so is $\Coker(\alpha)$
because our category $\stX$ is fibered over affine schemes. In particular
$\alpha\colon\shF\to\shG$ is an epimorphism if and only if it is
surjective objectwise. On the other hand kernels and images are almost
never quasi-coherent, essentially because pull-backs are not left
exact. The category $\QCoh\stX$ is $R$-linear but it is unclear
if it is abelian. There is a natural condition on $\stX$ which allows
us to prove that $\QCoh\stX$ is an $R$-linear abelian category.
\begin{defn}
An \emph{fpqc atlas }(or simply \emph{atlas}) of a fibered category
$\stX$ is a representable fpqc covering $X\to\stX$ from a scheme.
A fiber category is called \emph{pseudo-algebraic }if it has an atlas,
it is called \emph{quasi-compact }if it has an atlas from an affine
scheme.

Let$f\colon\stY\to\stX$ be a morphism of fibered categories. The
map $f$ is called \emph{pseudo-algebraic} (resp. \emph{quasi-compact})
if for all maps $T\to\stX$ from a scheme (resp. quasi-compact scheme)
the fiber product $T\times_{\stX}\stY$ is pseudo-algebraic (resp.
quasi-compact). It is called \emph{quasi-separated} if the diagonal
$\stY\to\stY\times_{\stX}\stY$ is quasi-compact.
\end{defn}

\begin{rem}
Notice that in \cite[Definition 3.18]{Talpo2018} an atlas $X\to\stX$
is required to be schematically representable. In this paper we don't
need this assumption. Clearly if $\stX$ has schematically representable
diagonal the two notions agree.
\end{rem}

\begin{rem}
A map of fibered categories which is locally (in some topology) representable
is not in general representable: if $F\to S$ is a map of functors,
$T\to S$ is a covering in some topology and $F\times_{S}T$ is an
algebraic space we cannot conclude that $F$ is a sheaf in the same
topology. On the other hand a map of stacks in the étale topology
which is fppf locally representable it is representable: this is because
a sheaf in the étale topology which is fppf local an algebraic space
is an algebraic space. It is not clear if a map of fpqc stacks which
is fpqc locally (schematically) representable is representable. Again
the problem is the following: if an fpqc sheaf is fpqc locally on
the base an algebraic space (or even a scheme), then is it an algebraic
space itself?

In our situation we can see that if $\stX$ is pseudo-algebraic fibered
category then the diagonal $\Delta_{\stX}\colon\stX\to\stX\times_{R}\stX$
is only fpqc locally representable.
\end{rem}

Let $f\colon\stY\to\stX$ be a map of fibered categories. If $\stX$
and $f$ are pseudo-algebraic then $\stY$ is pseudo-algebraic. If
$\stY$ is pseudo-algebraic and $\Delta_{\stX}$ is representable
then $f$ is pseudo-algebraic.
\begin{defn}
We define $\stX_{\text{fl}}$ (resp. $\stX_{\text{sm}}$, $\stX_{\text{et}}$)
as the (not full) subcategory of $\stX$ of objects $\xi\colon\Spec B\to\stX$
which are representable and flat (resp. smooth, étale) and the arrows
are morphisms in $\stX$ whose underlying map of affine schemes is
flat (resp. smooth, étale).
\end{defn}

If $X\to\stX$ is an fpqc atlas then by definition $V=X\times_{\stX}X$
is an algebraic space and the two projections $V\rightrightarrows X$
extends to a groupoid in algebraic spaces. We denote by $\QCoh(V\rightrightarrows X)$
the category of quasi-coherent sheaves on $V\rightrightarrows X$
(see \cite[Tag 0440]{SP014}). By standard arguments of fpqc descent
for modules we have (see also \cite[Proposition 4.6]{Talpo2018}):

\begin{prop}
\label{prop:reducing to atlases for quasi-geometric categories} If
$\stX$ admits an fpqc (resp. smooth, étale) atlas then the restriction
$\QCoh\stX\to\QCoh\stX_{\text{fl}}$ (resp. $\QCoh\stX_{\text{sm}}$,
$\QCoh\stX_{\text{et}}$) is an equivalence of categories. If $f\colon X\to\stX$
is an fpqc atlas then $f^{*}\colon\QCoh\stX\to\QCoh X$ is faithful
and it induces an equivalence $\QCoh\stX\to\QCoh(V\rightrightarrows X)$.
\end{prop}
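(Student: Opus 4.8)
The plan is to deduce both assertions from Grothendieck's theorem on fpqc descent for modules, the only real input being that flat representable objects are dense for the fpqc topology on $\stX$. I will treat the flat case; the smooth and \'etale cases are identical after replacing ``flat'' by ``smooth'' or ``\'etale'' throughout, using that smooth and \'etale maps are flat and that the relevant atlas is of the corresponding type.

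First I would establish the density lemma. Fix an fpqc atlas $p\colon X\arr\stX$. Given an object $\eta\colon\Spec A\arr\stX$, the fiber product $X_\eta=X\times_{\stX}\Spec A$ is an algebraic space, fpqc over $\Spec A$; choosing an \'etale surjection $\Spec B\arr X_\eta$ from an affine scheme, the composite $\xi\colon\Spec B\arr X_\eta\arr X\arrdi{p}\stX$ is representable and flat (an \'etale map followed by the fpqc atlas) and the projection $\Spec B\arr\Spec A$ realizes $\xi$ as an object over $\eta$ through a flat morphism, so $\xi\in\stX_{\text{fl}}$ and $\xi\arr\eta$ is an fpqc covering. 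The key structural point is that the entire \v{C}ech nerve of $\xi\arr\eta$ stays inside $\stX_{\text{fl}}$: its $n$-fold fiber products have underlying affine scheme $\Spec(B^{\otimes_A n})$ and map to $\stX$ through a base change of $\xi$ along a flat map, hence remain flat and representable, with flat face and degeneracy maps. Thus every object of $\stX$ admits a \v{C}ech cover internal to $\stX_{\text{fl}}$.

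For the equivalence $\QCoh\stX\arr\QCoh\stX_{\text{fl}}$ I would use that a quasi-coherent sheaf is an fpqc sheaf, so for every $\eta$ and every cover $\xi\arr\eta$ as above the sheaf condition yields
\[
\shF(\eta)=\ker\bigl(\shF(\xi)\rightrightarrows\shF(\Spec(B\otimes_A B))\bigr),
\]
an equalizer computed entirely within $\stX_{\text{fl}}$. This formula makes the restriction functor faithful (a morphism vanishing on $\stX_{\text{fl}}$ vanishes on each $\shF(\eta)$) and full (a morphism on $\stX_{\text{fl}}$ induces a compatible map of equalizers). For essential surjectivity I would define, for $\shG\in\QCoh\stX_{\text{fl}}$, an extension to $\stX$ by the same equalizer formula: since $\shG$ is itself an fpqc sheaf on $\stX_{\text{fl}}$, the value is independent of the chosen cover (compare two covers through their common refinement $\Spec(B\otimes_A B')\arr\stX$, again in $\stX_{\text{fl}}$) and is functorial, and the cartesian condition propagates from $\stX_{\text{fl}}$ to all of $\stX$ because flatness lets the tensor product commute with the defining equalizer. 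For the second statement, faithfulness of $f^{*}\colon\QCoh\stX\arr\QCoh X$ is the same density argument: any $\eta$ is covered by some $\Spec B\arr\stX$ factoring through $f$, so a morphism with $f^{*}\phi=0$ vanishes after restriction to $\Spec B$, hence on $\shF(\eta)$ by injectivity of $\shF(\eta)\arr\shF(\Spec B)$. The comparison functor $\QCoh\stX\arr\QCoh(R\rightrightarrows X)$ is pullback along $f$ equipped with the canonical isomorphism $\pr_{1}^{*}f^{*}\shF\simeq\pr_{2}^{*}f^{*}\shF$ on $R=X\times_{\stX}X$ and its cocycle identity on the triple product; full faithfulness is once more the fpqc sheaf condition for morphisms, and essential surjectivity is effectivity of descent.

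The hard part will be precisely this effectivity of fpqc descent. Because the atlas $X$ and the space $R$ are a scheme and an algebraic space rather than affine, I cannot invoke Grothendieck's affine statement directly; the genuine work is to reduce descent along $f$ to descent along an affine fpqc cover, by selecting compatible affine covers of $X$, $R$ and $X\times_{\stX}X\times_{\stX}X$ and tracking the resulting bookkeeping --- equivalently, by appealing to the machinery of $\QCoh(R\rightrightarrows X)$ for a groupoid in algebraic spaces recalled in \cite[Tag 0440]{SP014}. Everything else is formal manipulation of equalizers and cartesian sheaves, with the only substantial input being the classical descent of modules along faithfully flat ring maps.
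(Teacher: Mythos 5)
There is a genuine gap in your density lemma, and it propagates through your whole argument for the first statement. The composite $\xi\colon\Spec B\arr X_\eta\arr X\arr\stX$ is \emph{not} ``an \'etale map followed by the fpqc atlas'': the middle arrow $X_\eta=X\times_{\stX}\Spec A\arr X$ is a base change of the \emph{arbitrary} object $\eta$, so $\Spec B\arr X$ need not be flat and $\xi$ need not lie in $\stX_{\text{fl}}$. Concretely, take $\stX=X=\A^1_k$ with the identity atlas and $\eta$ the inclusion of the origin: then $X_\eta=\Spec k$, the $\xi$ produced by your recipe is again the origin, and in fact \emph{no} arrow $\xi\arr\eta$ in $\stX$ has flat source, since any such $\xi$ is the restriction of $\eta$ along some $\Spec B\arr\Spec k$ and hence factors through the origin. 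So an arbitrary object of $\stX$ admits no \v{C}ech cover internal to $\stX_{\text{fl}}$, the equalizer $\ker\bigl(\shG(\xi)\rightrightarrows\shG(\Spec(B\otimes_AB))\bigr)$ is not even defined for $\shG\in\QCoh\stX_{\text{fl}}$, and your proofs of faithfulness, fullness and essential surjectivity of the restriction to $\stX_{\text{fl}}$ all rest on objects at which $\shG$ has no value. (Your faithfulness argument for $f^{*}$ does survive, because there you only need $\xi$ to factor through $X$, not to be flat over $\stX$.)

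The repair is to route everything through the atlas rather than through covers of $\eta$ inside $\stX_{\text{fl}}$. Affine schemes flat over $X$ do give objects of $\stX_{\text{fl}}$ (flat representable composed with flat representable), and every object $\zeta\colon\Spec C\arr\stX$ of $\stX_{\text{fl}}$ is covered by such, since $\Spec C\times_{\stX}X\arr X$ is flat because $\zeta$ is; hence restriction identifies $\QCoh\stX_{\text{fl}}$ with $\QCoh(R\rightrightarrows X)$. To return to all of $\stX$, given a sheaf on $X$ with descent datum on $R$ and an arbitrary $\eta\colon\Spec A\arr\stX$, pull the sheaf back to the algebraic space $X_\eta$ (an ordinary pullback, no flatness of $\eta$ needed), observe that the groupoid structure induces a descent datum relative to the fpqc covering $X_\eta\arr\Spec A$, and descend to an $A$-module; quasi-coherence of the result is compatibility of descent with flat base change. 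The descent datum thus comes from the groupoid, not from values of $\shG$ at flat objects over $\eta$ --- this is precisely the step your equalizer formula tried to shortcut. Note also that extracting a single faithfully flat affine $\Spec B\arr\Spec A$ from $X_\eta$ uses the quasi-compactness clause in the definition of an fpqc covering, which you should invoke explicitly.
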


We see that if $\stX$ is pseudo-algebraic then $\QCoh\stX$ is equivalent
to an $R$-linear abelian category, namely $\QCoh(R\rightrightarrows X)$.
Moreover if $\alpha\colon\shF\to\shG$ is a map of quasi-coherent
sheaves then $\Ker(\alpha)$ is defined by taking $\Ker(\alpha_{|\stX_{\text{fl}}})\in\QCoh\stX_{\text{fl}}$,
which is just given by $\Ker(\alpha_{|\stX_{\text{fl}}})(\Spec B\to\stX)=\Ker(\alpha(\xi)\colon\shF(\xi)\to\shG(\xi))$
for $\xi\in\stX_{\text{fl}}$, and then extending it to the whole
$\stX$. If $\stX$ is an algebraic stack or a scheme we see that
$\QCoh\stX$ is equivalent to the usual category of quasi-coherent
sheaves via an $R$-linear and exact functor.
\begin{prop}
\label{prop:right exact sequence for pseudo-algebraic} If $\stX$
is pseudo-algebraic then a sequence $\shF\to\shG\to\shH\to0$ of quasi-coherent
sheaves is exact in $\QCoh(\stX)$ if and only if it is exact in $\Mod(\odi{\stX})$.
In particular if $f\colon\stY\to\stX$ is a map from a pseudo-algebraic
fibered category then $f^{*}\colon\QCoh(\stX)\to\QCoh(\stY)$ is right
exact on right exact sequences.
\end{prop}

\begin{proof}
Via the equivalence $\QCoh(\stX)\to\QCoh(\stX_{\text{fl}})$ the sequence
$\shS$ is the statement is exact in $\QCoh(\stX)$ if and only if
$\shS(B)$ is exact for all $\Spec B\to\stX$ flat and representable.
So if $\shS$ is exact in $\Mod(\stX)$ then it is exact in $\QCoh(\stX)$.
Conversely if $X\to\stX$ is an atlas and $\Spec B\to\stX$ is any
map then $\shS(B)$ is exact because it is so after the fpqc covering
$X\times_{\stX}B\to\Spec B$.
\end{proof}
\begin{defn}
Given a subcategory $\shD$ of $\QCoh\stX$, by an exact sequence
of sheaves in $\shD$ will always mean exact sequence in $\QCoh\stX$
of sheaves belonging to $\shD$.
\end{defn}

We now deal with the problem of defining a right adjoint of a pull-back
functor, that is a push-forward. Given $\shF\in\Mod\odi{\stX}$ we
define the global section $\shF(\stX)=\Hom(\odi{\stX},\shF)$ of $\shF$,
also denoted by $\Hl^{0}(\stX,\shF)$ or simply $\Hl^{0}(\shF)$,
which is an $\odi{\stX}(\stX)$-module. More generally given a map
of fibered categories $g\colon\stZ\to\stX$ we define $\shF(\stZ\to\stX)=(g^{*}\shF)(\stZ)$,
sometimes simply written as $\shF(\stZ)$. If $\stZ=\Spec B$ is affine
we will often write $\shF(B)$ instead of $\shF(\Spec B)$.

Let $f\colon\stY\to\stX$ be a map of fibered categories. The Weil
restriction defines a functor $f_{p}\colon\Mod\odi{\stY}\to\Mod\odi{\stX}$:
given $\shH\in\Mod\odi{\stY}$ and an object $\xi\colon T\to\stX$
of $\stX$ we define 
\[
(f_{p}\shH)(\xi)=\shH(T\times_{\stX}\stY)
\]

\begin{prop}
Let $f\colon\stY\to\stX$ be a map of fibered categories. Then $f_{p}$
is a right adjoint of $f^{*}$ and, if   \[   \begin{tikzpicture}[xscale=1.5,yscale=-1.2]     \node (A0_0) at (0, 0) {$\stY'$};     \node (A0_1) at (1, 0) {$\stY$};     \node (A1_0) at (0, 1) {$\stX'$};     \node (A1_1) at (1, 1) {$\stX$};     \path (A0_0) edge [->]node [auto] {$\scriptstyle{g'}$} (A0_1);     \path (A1_0) edge [->]node [auto] {$\scriptstyle{g}$} (A1_1);     \path (A0_1) edge [->]node [auto] {$\scriptstyle{f}$} (A1_1);     \path (A0_0) edge [->]node [auto] {$\scriptstyle{f'}$} (A1_0);   \end{tikzpicture}   \] 
is a $2$-cartesian diagram of fibered categories, there is an isomorphism
of functors
\[
g^{*}f_{p}\to f'_{p}g'^{*}\colon\Mod\odi{\stY}\to\Mod\odi{\stX'}
\]
If $f$ is affine then $f_{p}(\QCoh\stY)\subseteq\QCoh\stX$ and $(f_{p})_{|\QCoh\stY}\colon\QCoh\stY\to\QCoh\stX$
is right adjoint to $f^{*}\colon\QCoh\stX\to\QCoh\stY$.
\end{prop}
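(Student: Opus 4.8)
The plan is to prove the three assertions in order: the adjunction and the base-change isomorphism are formal manipulations with the fibered structure, whereas the affine case carries the actual content.

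For the adjunction I would produce the unit and counit explicitly. Given $\shF\in\Mod\odi{\stX}$ and an object $\xi\colon T\arr\stX$, write $p\colon T\times_{\stX}\stY\arr\stY$ and $q\colon T\times_{\stX}\stY\arr T$ for the two projections, so that $fp\simeq\xi q$. Pulling back global sections along $q$ gives a map $\shF(\xi)=\Hl^{0}(T,\xi^{*}\shF)\arr\Hl^{0}(T\times_{\stX}\stY,q^{*}\xi^{*}\shF)=(f^{*}\shF)(T\times_{\stX}\stY)=(f_{p}f^{*}\shF)(\xi)$, and these assemble into the unit $\id\arr f_{p}f^{*}$. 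Dually, given $\shG\in\Mod\odi{\stY}$ and $\eta\colon S\arr\stY$, the pair $(\id_{S},\eta)$ defines a section $\delta\colon S\arr S\times_{\stX}\stY$ (the fiber product being formed along $f\eta$) with $p\delta\simeq\eta$ and $q\delta\simeq\id_{S}$; restriction along $\delta$ yields $(f^{*}f_{p}\shG)(\eta)=\shG(S\times_{\stX}\stY)\arr\Hl^{0}(S,\eta^{*}\shG)=\shG(\eta)$, the counit $f^{*}f_{p}\arr\id$. The two triangle identities reduce to the statement that pulling back along $q$ and then restricting along the tautological section $\delta$ is the identity, which is a routine verification using $q\delta\simeq\id_{S}$.

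For base change I would observe that the assertion is a purely formal consequence of the associativity of $2$-fiber products and does not use quasi-coherence. For $\xi'\colon T\arr\stX'$ one has $(g^{*}f_{p}\shG)(\xi')=(f_{p}\shG)(g\xi')=\shG(T\times_{\stX}\stY)$, the fiber product formed along $g\xi'$, while $(f'_{p}g'^{*}\shG)(\xi')=(g'^{*}\shG)(T\times_{\stX'}\stY')=\Hl^{0}(T\times_{\stX'}\stY',(g'\pi')^{*}\shG)$ with $\pi'\colon T\times_{\stX'}\stY'\arr\stY'$. Since the square is $2$-cartesian, $\stY'\simeq\stX'\times_{\stX}\stY$, and the canonical equivalence $T\times_{\stX'}\stY'\simeq T\times_{\stX}\stY$ identifies $g'\pi'$ with $p$; this produces the isomorphism $g^{*}f_{p}\arr f'_{p}g'^{*}$, whose naturality is checked objectwise.

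Finally, suppose $f$ is affine and $\shG\in\QCoh\stY$. To see that $f_{p}\shG$ is quasi-coherent I must check that for every arrow $\xi\arr\eta$ in $\stX$ covering $\Spec A'\arr\Spec A$ the map $(f_{p}\shG)(\eta)\otimes_{A}A'\arr(f_{p}\shG)(\xi)$ is an isomorphism. Because fiber products compose, the square with vertices $\Spec A'\times_{\stX}\stY$, $\Spec A\times_{\stX}\stY$, $\Spec A'$, $\Spec A$ is $2$-cartesian; affineness of $f$ makes the two upper corners affine schemes $\Spec B'$ and $\Spec B$ with $B'\simeq B\otimes_{A}A'$, and quasi-coherence of $\shG$ identifies $\shG(\Spec B)$ and $\shG(\Spec B')$ with a $B$-module $M$ and its pullback $M\otimes_{B}B'\simeq M\otimes_{A}A'$. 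Hence $(f_{p}\shG)(\eta)\otimes_{A}A'=M\otimes_{A}A'\simeq(f_{p}\shG)(\xi)$, as required. Since $f^{*}$ also preserves quasi-coherence, the adjunction of the first step restricts along the full inclusions $\QCoh\subseteq\Mod\odi{}$ to exhibit $(f_{p})_{|\QCoh\stY}\colon\QCoh\stY\arr\QCoh\stX$ as right adjoint to $f^{*}\colon\QCoh\stX\arr\QCoh\stY$. The main obstacle is precisely this third step: everything hinges on the meaning of \emph{affine}, namely that each $T\times_{\stX}\stY$ is an affine scheme, so that the abstractly defined object $\shG(T\times_{\stX}\stY)=\Hl^{0}(T\times_{\stX}\stY,p^{*}\shG)$ becomes an honest $B$-module and the quasi-coherence of $\shG$ can be turned into the identity $M\otimes_{B}B'\simeq M\otimes_{A}A'$. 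The first two steps, by contrast, are bookkeeping with universal properties.
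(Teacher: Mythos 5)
Your proof is correct and follows essentially the same route as the paper's: the adjunction comes from the universal property of sections over the fiber products $T\times_{\stX}\stY$ (the paper delegates this to the Stacks Project, whereas you write out the unit, the counit via the tautological section $\delta$, and the triangle identities explicitly), base change is the same formal consequence of associativity of $2$-fiber products, and the affine case is the same computation $M\otimes_{B}(B\otimes_{A}A')\simeq M\otimes_{A}A'$ that the paper phrases as ``push-forward along an affine map commutes with arbitrary base change.'' The only difference is expository self-containedness, not mathematical substance.
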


\begin{proof}
We define $f^{*}f_{p}\shH\to\shH$ for $\shH\in\Mod\odi{\stY}$ as
\[
f^{*}(f_{p}\shH)(T\to\stY)=(f_{p}\shH)(T\to\stY\to\stX)=\shH(\stY\times_{\stX}T\to\stY)\to\shH(T\to\stY)
\]
Conversely we define $\shG\to f_{p}f^{*}\shG$ for $\shG\in\Mod\odi{\stX}$
as 
\[
\shG(T\to\stX)\to\shG(\stY\times_{\stX}T\to T\to\stX)=f^{*}\shG(\stY\times_{\stX}T\to\stY)=f_{p}(f^{*}\shG)(T\to\stX)
\]
where the last map is induced by the given section $T\to T\times_{\stX}\stY$.
We have to prove that the corresponding maps
\[
\Psi\colon\Hom(f^{*}\shG,\shH)\to\Hom(\shG,f_{p}\shH)\comma\Phi\colon\Hom(\shG,f_{p}\shH)\to\Hom(f^{*}\shG,\shH)
\]
are inverses of each other. Given a map $T\to\stX$ we set $\stY_{T}=\stY\times_{\stX}T$.
Given $\sigma\colon f^{*}\shG\to\shH$ and $T\to\stY$ we have commutative
diagrams     \[   \begin{tikzpicture}[xscale=3.8,yscale=-1.2]     \node (A0_0) at (0, 0.3) {$\shG(T\to \stX)$};     \node (A0_1) at (1, 0.3) {$\shG(\stY_T\to\stX)$};     \node (A1_0) at (0, 1) {$f^*\shG(T\to \stY)$};     \node (A1_1) at (1, 1) {$f^*\shG(\stY_T\to \stY)$};     \node (A1_2) at (2, 1) {$\shH(\stY_T \to \stY)$};     \node (A2_1) at (1, 2) {$f^*\shG(T\to \stY)$};     \node (A2_2) at (2, 2) {$\shH(T\to \stY)$};     
\node[rotate=90] (A3_1) at (0, 0.6) {$=$};     \node[rotate=90] (A3_2) at (1, 0.6) {$=$};     
\path (A1_0) edge [->]node [auto,swap] {$\scriptstyle{\id}$} (A2_1);     \path (A0_0) edge [->]node [auto] {$\scriptstyle{}$} (A0_1);     \path (A1_0) edge [->]node [auto] {$\scriptstyle{a}$} (A1_1);     \path (A1_1) edge [->]node [auto] {$\scriptstyle{\sigma_{\stY_T}}$} (A1_2);     \path (A1_1) edge [->]node [auto] {$\scriptstyle{}$} (A2_1);     \path (A2_1) edge [->]node [auto] {$\scriptstyle{\sigma_T}$} (A2_2);     \path (A1_2) edge [->]node [auto] {$\scriptstyle{c}$} (A2_2);   \end{tikzpicture}   \] The vertical maps are induced by the section $T\to\stY_{T}$, while
$a$ by the projection $\stY_{T}\to T$. The composition $c\sigma_{\stY_{T}}a$
equals $\Phi\Psi(\sigma)_{T}$ and, thanks to the above diagram, $\Phi\Psi(\sigma)_{T}=\sigma$.
Conversely let $\delta\colon\shG\to f_{p}\shH$ and $T\to\stX$. We
have a commutative diagram    \[   \begin{tikzpicture}[xscale=2.3,yscale=-1.2]     \node (A0_0) at (0, 0) {$\shG(T)$};     \node (A0_1) at (1, 0) {$f_p\shH(T)$};     \node (A1_0) at (0, 1) {$\shG(\stY_T)$};     \node (A1_1) at (1, 1) {$f_p \shH(\stY_T)$};     \node (A1_2) at (2, 1) {$f_p\shH (T)$};     \path (A0_1) edge [->]node [auto] {$\scriptstyle{\id}$} (A1_2);     \path (A0_0) edge [->]node [auto] {$\scriptstyle{\delta_T}$} (A0_1);     \path (A1_0) edge [->]node [auto] {$\scriptstyle{\delta_{\stY_T}}$} (A1_1);     \path (A1_1) edge [->]node [auto] {$\scriptstyle{c}$} (A1_2);     \path (A0_0) edge [->]node [auto] {$\scriptstyle{a}$} (A1_0);     \path (A0_1) edge [->]node [auto] {$\scriptstyle{}$} (A1_1);   \end{tikzpicture}   \] The
vertical maps are induced by $\stY_{T}\to T$, while $c$ by the diagonal
$\stY_{T}\to\stY_{T}\times_{\stX}\stY_{T}$. One can check that $c\delta_{\stY_{T}}a$
equals $\Psi\Phi(\delta)_{T}$ and, thanks to the above diagram, $\psi\Phi(\delta)_{T}=\delta_{T}$.

The isomorphism for the base change is tautological. For the last
claim we can assume that $\stX$ is an affine scheme in which case
the result follows because (usual) push-forwards commutes with arbitrary
base changes.
\end{proof}
In general $f_{p}$ does not preserve quasi-coherent sheaves, even
if $f$ is a proper map of schemes. To get a right adjoint of pullback
we have to require more. 
\begin{defn}
A pseudo-algebraic map $f\colon\stY\to\stX$ of fibered categories
is called \emph{flat} if given an object $\xi\colon T\to\stX$ of
$\stX$ and an atlas $V\to T\times_{\stX}\stY$ the resulting map
$V\to T$ is flat.
\end{defn}

If $f\colon\stY\to\stX$ is a map of algebraic stacks then the above
notion extends the classical one, which is made only using smooth
atlases. Indeed one reduces easily to the case of schemes, in which
case by hypothesis there is an fpqc covering $g\colon\stZ\to\stX$
such that $\stZ\to\stX$ is flat. It follows easily that $\stY\to\stX$
is also flat.
\begin{prop}
\label{prop:flat maps are exact} If $f\colon\stY\to\stX$ is a flat
map of pseudo-algebraic fibered categories then $f^{*}\colon\QCoh(\stX)\to\QCoh(\stY)$
is exact.
\end{prop}

\begin{proof}
By \ref{prop:right exact sequence for pseudo-algebraic} we have to
show that $f^{*}$ maps a monomorphism $\shF\to\shG$ in $\QCoh(\stX)$
to a monomorphism in $\QCoh(\stY)$. From \ref{prop:reducing to atlases for quasi-geometric categories},
if $\xi\colon\Spec B\to\stY$ is a representable and flat map we have
to show that $\xi^{*}f^{*}\shF\to\xi^{*}f^{*}\shG$ is injective.
So we can assume $\stY=\Spec B$. If $\pi\colon X\to\stX$ is an atlas
consider the Cartesian diagram      \[   \begin{tikzpicture}[xscale=2.0,yscale=-1.2]     \node (A0_0) at (0, 0) {$Y$};     \node (A0_1) at (1, 0) {$\Spec B$};     \node (A1_0) at (0, 1) {$X$};     \node (A1_1) at (1, 1) {$\stX$};     \path (A0_0) edge [->]node [auto] {$\scriptstyle{h}$} (A0_1);     \path (A1_0) edge [->]node [auto] {$\scriptstyle{\pi}$} (A1_1);     \path (A0_1) edge [->]node [auto] {$\scriptstyle{f}$} (A1_1);     \path (A0_0) edge [->]node [auto] {$\scriptstyle{g}$} (A1_0);   \end{tikzpicture}   \] By
hypothesis $Y$ is an algebraic space, $g\colon Y\to X$ is flat and
$h\colon Y\to\Spec B$ is an fpqc covering. Moreover $\pi^{*}\shF\to\pi^{*}\shG$
is injective by \ref{prop:reducing to atlases for quasi-geometric categories}.
Thus $h^{*}f^{*}\shF\to h^{*}f^{*}\shG$ is injective and therefore
$f^{*}\shF\to f^{*}\shG$ is injective as well.
\end{proof}
\begin{prop}
Let $f\colon\stY\to\stX$ be a map from a pseudo-algebraic stack to
a quasi-separated scheme and such that $f^{*}\colon\QCoh(\stX)\to\QCoh(\stY)$
is exact. Then $f$ is flat.
\end{prop}

\begin{proof}
We first reduce to the case when $\stX$ is affine. Let $U\subseteq\stX$
be an affine open subset. The inclusion map $i\colon U\to\stX$ is
quasi-compact since $\stX$ is quasi-separated and quasi-separated
because a monomorphism. In particular $i_{*}\colon\QCoh(U)\to\QCoh(\stX)$
is well defined. Consider the Cartesian diagram   \[   \begin{tikzpicture}[xscale=1.5,yscale=-1.2]     \node (A0_0) at (0, 0) {$Y$};     \node (A0_1) at (1, 0) {$\stY$};     \node (A1_0) at (0, 1) {$U$};     \node (A1_1) at (1, 1) {$\stX$};     \path (A0_0) edge [->]node [auto] {$\scriptstyle{j}$} (A0_1);     \path (A1_0) edge [->]node [auto] {$\scriptstyle{i}$} (A1_1);     \path (A0_1) edge [->]node [auto] {$\scriptstyle{f}$} (A1_1);     \path (A0_0) edge [->]node [auto] {$\scriptstyle{g}$} (A1_0);   \end{tikzpicture}   \] Given
an injective map $\shG\to\shF$ in $\QCoh(U)$ we have to show that
$g^{*}(\shG\to\shF)$ is still injective. Consider the exact sequence
$\shH_{*}\colon0\to\shK\to i_{*}\shG\to i_{*}\shF$. Since $i^{*}i_{*}\simeq\id$
it follows that 
\[
j^{*}f^{*}\shH_{*}\simeq g^{*}i^{*}\shH_{*}\simeq g^{*}(0\to0\to\shG\to\shF)
\]
Since $f^{*}$ is exact and $j$ is flat the above sequence is exact
as required.

So we can assume $\stX$ affine. if $V\to\stY$ is an fpqc atlas from
a scheme then $V\to\stX$ is flat: if $W\subseteq V$ is an open affine
subset then $\QCoh(\stX)\to\QCoh(V)$ is exact and therefore $W\to\stX$
is flat. Since for any $T\to\stX$ the map $V\times_{\stX}T\to\stY\times_{\stX}T$
is an fpqc atlas, by definition of flatness we can conclude that $\stY\to\stX$
is flat.
\end{proof}
\begin{prop}
\label{prop:properties of push-forward} Let $f\colon\stY\to\stX$
be a quasi-compact and quasi-separated map of pseudo-algebraic fibered
categories. Then the composition $\QCoh\stY\to\Mod\odi{\stX}\to\Mod(\odi{\stX})_{|\stX_{\text{fl}}}$
has values in $\QCoh\stX_{\text{fl}}$. The induced map $f_{*}\colon\QCoh\stY\to\QCoh\stX$
is a right adjoint of $f^{*}\colon\QCoh\stX\to\QCoh\stY$. If   \[   \begin{tikzpicture}[xscale=1.5,yscale=-1.2]     \node (A0_0) at (0, 0) {$\stY'$};     \node (A0_1) at (1, 0) {$\stY$};     \node (A1_0) at (0, 1) {$\stX'$};     \node (A1_1) at (1, 1) {$\stX$};     \path (A0_0) edge [->]node [auto] {$\scriptstyle{g'}$} (A0_1);     \path (A1_0) edge [->]node [auto] {$\scriptstyle{g}$} (A1_1);     \path (A0_1) edge [->]node [auto] {$\scriptstyle{f}$} (A1_1);     \path (A0_0) edge [->]node [auto] {$\scriptstyle{f'}$} (A1_0);   \end{tikzpicture}   \] 
is a $2$-cartesian diagram of fibered categories with $\stX'$ pseudo-algebraic
then $\stY'$ is pseudo-algebraic, $f'$ is quasi-compact and quasi-separated
and there is a natural transformation of functors 
\[
g^{*}f_{*}\to f'_{*}g'^{*}\colon\QCoh\stX\to\QCoh\stY'
\]
which is an isomorphism if $g$ is flat.
\end{prop}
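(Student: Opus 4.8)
The plan is to deduce every assertion from a single flat base-change statement for global sections, feeding on the two facts already in hand: the adjunction $f^{\ast}\dashv f_{p}$ and the presheaf base-change isomorphism $g^{\ast}f_{p}\simeq f'_{p}g'^{\ast}$ from the preceding proposition. By definition, for an object $\xi\colon\Spec B\arr\stX$ one has $(f_{p}\shG)(\xi)=\shG(\Spec B\times_{\stX}\stY)=\Hl^{0}(\stY_{B},\shG_{B})$, writing $\stY_{B}=\Spec B\times_{\stX}\stY$ and $\shG_{B}$ for the pullback of $\shG$. A flat arrow $\xi\arr\eta$ in $\stX_{\text{fl}}$, with $\eta\colon\Spec C\arr\stX$, is the data of a flat ring map $C\arr B$ together with an identification $\stY_{B}\simeq\stY_{C}\times_{\Spec C}\Spec B$. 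Thus the claim that $(f_{p}\shG)|_{\stX_{\text{fl}}}$ lies in $\QCoh\stX_{\text{fl}}$ is exactly the assertion that the canonical map $\Hl^{0}(\stY_{C},\shG_{C})\otimes_{C}B\arr\Hl^{0}(\stY_{B},\shG_{B})$ is an isomorphism whenever $C\arr B$ is flat.

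This flat base-change is the one nontrivial point, and I expect it to be the \emph{main obstacle}; it is also where quasi-separatedness is used. Since $f$ is quasi-compact, $\stY_{C}\arr\Spec C$ admits an affine atlas $V\arr\stY_{C}$; since $f$ is quasi-separated, the diagonal of $\stY_{C}$ over $\Spec C$ is quasi-compact, so $\shR=V\times_{\stY_{C}}V$ is quasi-compact and hence itself admits an affine atlas $W\arr\shR$. As a quasi-coherent sheaf is an fpqc sheaf, global sections are computed as
\[
\Hl^{0}(\stY_{C},\shG_{C})=\Ker\big(\shG_{C}(V)\rightrightarrows\shG_{C}(\shR)\big),\qquad \shG_{C}(\shR)\hookrightarrow\shG_{C}(W).
\]
On affine objects quasi-coherence gives $\shG_{C}(V)\otimes_{C}B\simeq\shG_{B}(V_{B})$ and $\shG_{C}(W)\otimes_{C}B\simeq\shG_{B}(W_{B})$. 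Because $C\arr B$ is flat, $-\otimes_{C}B$ is exact, so it preserves the displayed left-exact sequence; a short diagram chase comparing it with the sequence computing $\Hl^{0}(\stY_{B},\shG_{B})$ from the base-changed atlas (using that $\shG_{C}(\shR)\hookrightarrow\shG_{C}(W)$ stays injective after the flat extension) yields the isomorphism. The role of quasi-separatedness is precisely to make $\shR$ quasi-compact, so that $\shG_{C}(\shR)$ embeds into affine sections and commutes with $-\otimes_{C}B$; without it the ``relations'' object is uncontrolled and the equalizer need not base-change.

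Granting this, I would define $f_{*}\colon\QCoh\stY\arr\QCoh\stX$ as $f_{p}$ followed by restriction to $\stX_{\text{fl}}$ and the equivalence $\QCoh\stX_{\text{fl}}\simeq\QCoh\stX$ of Proposition \ref{prop:reducing to atlases for quasi-geometric categories}. The adjunction is then obtained by restricting $f^{\ast}\dashv f_{p}$: for $\shF\in\QCoh\stX$ and $\shG\in\QCoh\stY$,
\[
\Hom_{\QCoh\stY}(f^{\ast}\shF,\shG)\simeq\Hom_{\Mod\odi{\stX}}(\shF,f_{p}\shG)\simeq\Hom_{\QCoh\stX}(\shF,f_{*}\shG),
\]
the last step holding because $\shF$ is quasi-coherent, hence determined by its restriction to $\stX_{\text{fl}}$, where $f_{p}\shG$ and $f_{*}\shG$ agree. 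This shows $f_{*}$ is right adjoint to $f^{\ast}$.

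Finally, for the cartesian diagram: quasi-compactness and quasi-separatedness of a map are stable under base change, so $f'$ is quasi-compact and quasi-separated; as a quasi-compact map is pseudo-algebraic and $\stX'$ is pseudo-algebraic, $\stY'$ is pseudo-algebraic by the stability recorded after the definition. The transformation $g^{\ast}f_{*}\arr f'_{*}g'^{\ast}$ is the Beck--Chevalley map of the two adjunctions, equivalently the map induced on quasi-coherent restrictions by the presheaf isomorphism $g^{\ast}f_{p}\simeq f'_{p}g'^{\ast}$. To see it is an isomorphism when $g$ is flat, I would evaluate on a flat object $\xi'\colon\Spec B\arr\stX'$: flatness of $g$ makes $g\circ\xi'$ a flat object of $\stX$, where $f_{*}$ coincides with $f_{p}$, so
\[
(g^{\ast}f_{*}\shG)(\xi')=(f_{p}\shG)(g\circ\xi')\simeq(f'_{p}g'^{\ast}\shG)(\xi')=(f'_{*}g'^{\ast}\shG)(\xi').
\]
For nonflat $g$ the object $g\circ\xi'$ need not be flat over $\stX$, which is exactly why only a natural transformation survives in general.
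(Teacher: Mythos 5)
Your argument follows the same route as the paper's own proof: quasi-coherence of $(f_{p}\shG)_{|\stX_{\text{fl}}}$ is obtained from flat base change applied to the left-exact sequence $0\arr\Hl^{0}(\stY_{C},\shG_{C})\arr\shG_{C}(V)\arr\shG_{C}(W)$ coming from an affine atlas $V$ and a finite affine cover $W$ of the (quasi-compact, by quasi-separatedness) space $V\times_{\stY_{C}}V$; the adjunction is inherited from $f^{*}\dashv f_{p}$ by restriction to $\stX_{\text{fl}}$; and the base-change transformation comes from $g^{*}f_{p}\simeq f'_{p}g'^{*}$. All of that is correct and is essentially what the paper does.

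The one step that does not go through as written is the final claim that ``flatness of $g$ makes $g\circ\xi'$ a flat object of $\stX$.'' In this paper an object of $\stX_{\text{fl}}$ must in particular be \emph{representable}, and representability of the composite $\Spec B\arrdi{\xi'}\stX'\arrdi{g}\stX$ is not automatic: for a scheme $T\arr\stX$ one has $\Spec B\times_{\stX}T\simeq\Spec B\times_{\stX'}(\stX'\times_{\stX}T)$, and $\stX'\times_{\stX}T$ is only pseudo-algebraic (a flat map is pseudo-algebraic by definition, not representable), so one only produces an fpqc atlas of $\Spec B\times_{\stX}T$ by an algebraic space --- and effective fpqc descent for algebraic spaces is exactly what the paper warns is not available. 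The repair is the one the paper makes: when $g\circ\xi'\notin\stX_{\text{fl}}$, choose an fpqc covering $\Spec B'\arr\Spec B$ whose composite $\Spec B'\arr\stX'\arr\stX$ \emph{does} lie in $\stX_{\text{fl}}$ (e.g.\ by covering $\Spec B\times_{\stX'}V$, where $V$ is an atlas of $\stX'\times_{\stX}X$ for an atlas $X\arr\stX$). Your pointwise computation applies over $\Spec B'$, and since both $g^{*}f_{*}\shG$ and $f'_{*}g'^{*}\shG$ are quasi-coherent, $\alpha(\xi')\otimes_{B}B'$ being an isomorphism forces $\alpha(\xi')$ to be one. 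With that one-line addition the proof is complete.
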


\begin{proof}
Consider the $2$-Cartesian diagram in the statement. The diagonal
of $f'$ is quasi-compact because it is base change of the diagonal
of $f$. To see that $f_{p}(\shF)_{|\stX_{\text{fl}}}$ is quasi-coherent
for $\shF\in\QCoh\stY$, we can assume $\stX=\Spec B$ affine and
that $\stY$ is quasi-compact with quasi-compact diagonal. If $U=\Spec A\to\stY$
is a fpqc atlas, it follows that $R=U\times_{\stY}U$ is a quasi-compact
algebraic space. By covering $R$ by finitely many affine schemes
$\Spec A_{i}$ we can write $\shF(\stY)$ as kernel of a map $\shF(A)\to\oplus_{i}\shF(A_{i})$.
If we base change along a flat map $B\to B'$ it is now easy to see
that $\shF(\stY\times_{B}B')\simeq\shF(\stY)\otimes_{B}B'$, as required.

To define the natural transformation $\alpha\colon g^{*}f_{*}\to f'_{*}g'^{*}$
notice that there is a natural map $f_{*}\shF\to f_{p}\shF$ which
extends the identity on $\stX_{\text{fl}}$. Applying $g^{*}$ we
get $g^{*}f_{*}\shF\to g^{*}f_{p}\shF\simeq f'_{p}g'^{*}\shF$ and
then, restricting to $\stX'_{\text{fl}}$, a map $(g^{*}f_{*}\shF)_{|\stX'_{\text{fl}}}\to(f'_{*}g'^{*}\shF)_{|\stX'_{\text{fl}}}$.
Since both sides are in $\QCoh\stX'_{\text{fl}}$ this map uniquely
extends to a natural transformation $\alpha$ as required. Finally
assume that $g$ is flat and let $\xi\colon\Spec B\to\stX'\in\stX'_{\text{fl}}$.
If the composition $\Spec B\to\stX$ is in $\stX_{\text{fl}}$ then
one can easily check that $\alpha(\xi)$ is an isomorphism. Otherwise,
by definition of flatness, there exists an fpqc covering $\Spec B'\to\Spec B$
whose composition $\xi'\colon\Spec B'\to\stX'$ satisfies the previous
condition. Since $\alpha(\xi)\otimes_{B}B'\simeq\alpha(\xi')$ we
get the desired result.
\end{proof}
\begin{rem}
There are set-theoretic problems in considering global sections of
presheaves and therefore push-forwards, because $\Mod\odi{\stX}$
is in general not locally small. The common way to solve this problem
is to use Grothendieck universes. Take a universe $U$ and define
rings inside $U$, so that $\Aff/R$ is small (with respect to a bigger
universe). Fibered categories should then be required to be small
too. In this situation it is easy to show that $\Mod\odi{\stX}$ is
locally small and therefore global sections and push-forwards are
well defined. With this approach we have to be careful in considering
(big) rings defined starting from some $\shF\in\Mod\odi{\stX}$: for
instance $\Spec\odi{\stX}(\stX)$ is in general not an object of $\Aff/R$.

Notice that global sections and pushforwards of quasi-coherent sheaves
are always well defined for a pseudo-algebraic fibered category and
a pseudo-algebraic map respectively. The reason is that if $\shF\in\QCoh\stX$
and $p\colon X\to\stX$ is a fpqc atlas then $\shF(\stX)\to(p^{*}\shF)(X)$
is injective and thus $\shF(\stX)$ is a set.

In the rest of the paper we will not be concerned about those set-theoretic
problems.
\end{rem}

\begin{defn}
\label{rem: A module structures of odix modules} If $A$ is an $R$-algebra
and $\shF\in\Mod\odi{\stX}$ then an $A$-module structure on $\shF$
is an $R$-algebra homomorphism $A\to\End_{\stX}(\shF)$. This is
the same data of $A$-module structures on $\shF(\xi)$ commuting
with the $\Hl^{0}(\odi{\pi(\xi)})$-module structure on $\shF(\xi)$
for all $\xi\in\stX$ and such that, for all $\xi\to\eta$ in $\stX$,
the map $\shF(\eta)\to\shF(\xi)$ is $A$-linear. We define $\QCoh_{A}\stX$
as the category of quasi-coherent sheaves over $\stX$ with an $A$-module
structure. We also define $\stX_{A}$ as the fiber product $\Spec A\times_{R}\stX$.
\end{defn}

Notice that if $\stY\to\stX$ is a map of fibered categories and $\shF$
is a presheaf of $\odi{\stX}$-modules with an $A$-module structure
then $g^{*}\shF$ inherits an $A$-module structure. In particular
$g^{*}\colon\QCoh\stX\to\QCoh\stY$ extends to a functor $\QCoh_{A}\stX\to\QCoh_{A}\stY$.
\begin{prop}
Let $A$ be an $R$-algebra. Then the push-forward map $\QCoh\stX_{A}\to\QCoh\stX$
extends naturally to an equivalence $\QCoh\stX_{A}\to\QCoh_{A}\stX$.
\end{prop}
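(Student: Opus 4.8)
The plan is to establish the equivalence by constructing the functor in both directions and checking they are mutually quasi-inverse, using the already-established machinery of $f_p$, $f_*$, and compatible module structures. The statement asserts that $\QCoh\stX_A \arr \QCoh_A\stX$ is an equivalence, where $\stX_A = \Spec A \times_R \stX$ and the functor is a natural extension of push-forward along the projection $p\colon \stX_A \arr \stX$. Since $p$ is affine (it is a base change of $\Spec A \arr \Spec R$, which is affine), the preceding proposition gives that $p_*$ preserves quasi-coherence and is right adjoint to $p^*$. The key observation is that for $\shG \in \QCoh\stX_A$, the push-forward $p_*\shG$ carries a canonical compatible $A$-module structure: the $A$-algebra structure of $\odi{\stX_A}$ over $\odi{\stX}$ induces, via functoriality, an $A$-action on $(p_*\shG)(\xi) = \shG(\xi \times_\stX \stX_A)$ for each $\xi \in \stX$.

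\emph{First} I would make this $A$-action explicit. For an object $\xi\colon T \arr \stX$, one has $\xi \times_\stX \stX_A \simeq T_A := \Spec A \times_R T$, so $(p_*\shG)(\xi) = \shG(T_A)$, and this is a module over $\Hl^0(\odi{T_A}) = \Hl^0(\odi T) \otimes_R A$, hence in particular an $A$-module. One checks that for a morphism $\xi' \arr \xi$ in $\stX$ the transition map $\shG(T_A) \arr \shG(T'_A)$ is $A$-linear, so $p_*\shG$ indeed lands in $\QCoh_A\stX$. This upgrades $p_*$ to a functor $\Phi\colon \QCoh\stX_A \arr \QCoh_A\stX$, which is the functor in the statement.

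\emph{Next} I would build the quasi-inverse $\Psi\colon \QCoh_A\stX \arr \QCoh\stX_A$. Given $\shF \in \QCoh_A\stX$, the pullback $p^*\shF$ lies in $\QCoh\stX_A$, but this is not quite right because it ignores the $A$-structure; instead one should tensor: for an object $\eta\colon \Spec B \arr \stX_A$, which amounts to a pair consisting of $\Spec B \arr \stX$ together with an $R$-algebra map $A \arr B$, set $(\Psi\shF)(\eta) = \shF(\Spec B) \otimes_A B$, where $\shF(\Spec B)$ is viewed as an $A$-module via the structure on $\shF$ and as a $B$-module via quasi-coherence, and the tensor is over the map $A \arr B$. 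This is the natural ``base change along $A \arr B$'' construction. One verifies $\Psi\shF$ is quasi-coherent on $\stX_A$ and that $\Psi$ is functorial.

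\emph{Finally} I would exhibit the unit and counit isomorphisms. The composite $\Phi\Psi$ applied to $\shF$ sends $\xi\colon T \arr \stX$ to $(\Psi\shF)(T_A) = \shF(T) \otimes_A (\Hl^0(\odi T)\otimes_R A) \simeq \shF(T)\otimes_R \Hl^0(\odi T)$; using that $\shF$ is already quasi-coherent with compatible $A$-structure, this is naturally isomorphic to $\shF(T)$, giving $\Phi\Psi \simeq \id$. For the other composite, quasi-coherence of $\shG$ on $\stX_A$ gives that $\shG(\eta)$ for $\eta\colon \Spec B \arr \stX_A$ is recovered by base change from $\shG$ over the projection to the $A$-factor, yielding $\Psi\Phi \simeq \id$. \textbf{The main obstacle} I expect is purely bookkeeping: verifying that the $A$-linearity of all transition maps is preserved and that the two tensor-product descriptions agree coherently across the indexing category — in other words, checking naturality and compatibility of the module structures, rather than any deep geometric input. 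Since $p$ is affine, all the hard analytic content (preservation of quasi-coherence, base-change isomorphisms) is already supplied by the previous proposition, so the proof reduces to these routine but somewhat tedious identifications of module structures.
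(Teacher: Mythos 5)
Your first half is fine: $p\colon\stX_{A}\arr\stX$ is affine, so $p_{*}$ preserves quasi-coherence, and $p_{*}\shG$ inherits a compatible $A$-module structure, giving the functor $\Phi$ into $\QCoh_{A}\stX$. The problem is your quasi-inverse, which is where the actual content of the proposition sits. For $\eta\colon\Spec B\arr\stX_{A}$ the module $M=\shF(\Spec B\arr\stX)$ carries two commuting actions --- the $B$-action from quasi-coherence and the extra $A$-action --- i.e.\ it is a $B\otimes_{R}A$-module, and the correct value of the quasi-inverse is $M\otimes_{B\otimes_{R}A}B$, the base change along the multiplication map $B\otimes_{R}A\arr B$ determined by $A\arr B$ (equivalently, the quotient of $M$ by the elements $a\star m-am$, which identifies the two $A$-actions). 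Your formula $M\otimes_{A}B$ uses only the extra $A$-action and does not collapse the two structures. Concretely, take $\stX=\Spec R$, so that $\QCoh\stX_{A}$ and $\QCoh_{A}\stX$ are both $\Mod A$ and $\Phi$ is the identity: for $\shF$ corresponding to $N\in\Mod A$ and $\eta=\id_{\Spec A}$ one has $\shF(\Spec A\arr\Spec R)=N\otimes_{R}A$, and your formula returns $(N\otimes_{R}A)\otimes_{A}A\simeq N\otimes_{R}A$ rather than $N$, whereas $(N\otimes_{R}A)\otimes_{A\otimes_{R}A}A\simeq N$. The same error propagates into your verification of $\Phi\Psi\simeq\id$: you arrive at $\shF(T)\otimes_{R}\Hl^{0}(\odi T)$ and assert it is naturally isomorphic to $\shF(T)$, which fails whenever $\Hl^{0}(\odi T)\neq R$.

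For comparison, the paper sidesteps these explicit identifications: it notes that the statement is elementary when $\stX$ is affine (it is then the equivalence between $\Mod(B\otimes_{R}A)$ and $B$-modules with a compatible $A$-action) and reduces the general case to the affine one via the $2$-commutative squares relating $h'^{*}\colon\QCoh\stX_{A}\arr\QCoh\Spec(B\otimes_{R}A)$ and $h^{*}\colon\QCoh_{A}\stX\arr\QCoh_{A}\Spec B$ for the various $h\colon\Spec B\arr\stX$. Your plan of writing down the inverse globally and checking unit and counit is perfectly viable, but only after the tensor product is taken over $B\otimes_{R}A$ rather than over $A$; as written, both the construction and the unit computation are incorrect.
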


\begin{proof}
The result is very simple if $\stX$ is an affine scheme. In general,
if we set $g\colon\stX_{A}\to\stX$ for the projection and we consider
$\shG\in\QCoh\stX_{A}$, then $g_{*}\shG\in\QCoh\stX$ and it inherits
an $A$-module structure from the action of $A$ on $\shG$. Therefore
$g_{*}\shG\in\QCoh_{A}\stX$. If $h\colon\Spec B\to\stX$ is a map
consider the diagrams   \[   \begin{tikzpicture}[xscale=3.4,yscale=-1.0]     
\node (A0_0) at (0.3, 0) {$\Spec (B\otimes_R A)$};     
\node (A0_1) at (1, 0) {$\stX_A$};     \node (A0_2) at (2, 0) {$\QCoh \stX_A$};     \node (A0_3) at (3, 0) {$\QCoh \Spec(B\otimes_R A)$};     \node (A1_0) at (0.3, 1) {$\Spec B$};    
\node (A1_1) at (1, 1) {$\stX$};     \node (A1_2) at (2, 1) {$\QCoh_A \stX$};     \node (A1_3) at (3, 1) {$\QCoh_A \Spec B$};     \path (A0_0) edge [->]node [auto] {$\scriptstyle{h'}$} (A0_1);     \path (A0_1) edge [->]node [auto] {$\scriptstyle{g}$} (A1_1);     \path (A1_0) edge [->]node [auto] {$\scriptstyle{h}$} (A1_1);     \path (A0_3) edge [->]node [auto] {$\scriptstyle{g'_*}$} (A1_3);     \path (A0_2) edge [->]node [auto] {$\scriptstyle{g_*}$} (A1_2);     \path (A0_0) edge [->]node [auto] {$\scriptstyle{g'}$} (A1_0);     \path (A0_2) edge [->]node [auto] {$\scriptstyle{h'^*}$} (A0_3);     \path (A1_2) edge [->]node [auto] {$\scriptstyle{h^*}$} (A1_3);   \end{tikzpicture}   \]  The second diagram is $2$-commutative and the last vertical map
is an equivalence. Using those diagrams it is easy to define a quasi-inverse
$\QCoh_{A}\stX\to\QCoh\stX_{A}$ of $g_{*}$. %
\end{proof}
We will almost always regard quasi-coherent sheaves over $\stX_{A}$
as objects of $\QCoh_{A}\stX$.
\begin{rem}
\label{rem: remark on the tensor product over A} The category $\Mod\odi{\stX}$
is symmetric monoidal: if $\shF,\shG\in\Mod\odi{\stX}$ then the formula
\[
(\shF\otimes\shG)(\xi)=\shF(\xi)\otimes\shG(\xi)
\]
defines a presheaf of $\odi{\stX}$-module. Tensor products of quasi-coherent
sheaves are quasi-coherent.

If $\shF,\shG\in\QCoh_{A}\stX$ then $\shF\otimes_{\odi{\stX}}\shG$
does not correspond to the tensor product in $\QCoh\stX_{A}$, but
to the tensor product of their pushforward along $\stX_{A}\to\stX$.
The sheaf $\shF\otimes_{\odi{\stX}}\shG$ has two distinct $A$-module
structures. Under the equivalence $\QCoh\stX_{A}\to\QCoh_{A}\stX$
the tensor product of $\shF$ and $\shG$, that we will denote by
$\shF\otimes_{\odi{\stX_{A}}}\shG$, is given by 
\[
U\longmapsto\shF(U)\otimes_{\Hl^{0}(\odi U)}\shG(U)/\langle ax\otimes y-x\otimes ay\st x\in\shF(U),y\in\shG(U)\rangle
\]
\end{rem}

\begin{defn}
A locally free sheaf or vector bundle $\E$ (of rank $n$) over $\stX$
is a quasi-coherent sheaf such that $\E(\Spec B\to\stX)$ is a finitely
generated projective $B$-module (of rank $n$) for all maps $\Spec B\to\stX$.
We denote by $\Loc\stX$ the subcategory of $\QCoh\stX$ of locally
free sheaves.
\end{defn}

\section{Sheafification functors.}

In this section we define and describe particular functors that generalize
sheafification functors for affine schemes or projective schemes.
The idea is to interpret the category of modules or graded modules
respectively as a category of $R$-linear functors. More precisely:
\begin{defn}
Given a fibered category $\stX$ over a ring $R$, an $R$-algebra
$A$ and a subcategory $\shD$ of $\QCoh\stX$ we define $\L_{R}(\shD,A)$
as the category of contravariant $R$-linear functors $\Gamma\colon\shD\to\Mod A$
and natural transformations as arrows. We define a functor $\Omega^{*}\colon\QCoh_{A}\stX\to\L_{R}(\shD,A)$
by 
\[
\Omega_{-}^{\shF}=\Hom_{\stX}(-,\shF)\colon\shD\to\Mod A
\]
The functor $\Omega^{*}$ is called the \emph{Yoneda functor }associated
with $\shD$. A left adjoint of $\Omega^{*}$ is called a \emph{sheafification
}functor associated with $\shD$. If $\shF\in\QCoh_{A}\stX$ we will
call $\Omega^{\shF}$ the Yoneda functor associated with $\shF$.
\end{defn}

The analogy with the sheafification functor is described in Section
\ref{sec:Quasi-projective-schemes and more}.

Let us fix an $R$-algebra $A$ and a fibered category $\pi\colon\stX\to\Aff/R$.

\subsection{Sheafifying $R$-linear functors.}

In this section we want to explicitly describe sheafification functors
for small subcategories of $\QCoh\stX$. In particular we fix a \emph{small
}(and non empty) subcategory $\shC$ of $\QCoh\stX$.

In the construction of the sheafification functors we will make use
of the coend construction in the settings of categories enriched over
categories of modules over a ring. The general theory simplifies considerably
in this context and we will also apply such construction only in particular
cases. In the following remark we collect all the properties we will
need.
\begin{rem}
\label{rem: coends} Let $\stY$ be a fibered category over $R$,
$F\colon\shC\to\QCoh\stY$ be an $R$-linear functor and $\Gamma\in\L_{R}(\shC,A)$.
The coend of the $R$-linear functor $\Gamma_{-}\otimes_{R}F_{-}\colon\shC^{\op}\times\shC\to\QCoh_{A}\stY$,
denoted by
\[
\int^{\E\in\shC}\Gamma_{\E}\otimes_{R}F_{\E}\in\QCoh_{A}\stY
\]
is the cokernel of the map
\[
\bigoplus_{\E\to\overline{\E}}(\Gamma_{u}\otimes\id_{F_{\E}}-\id_{\Gamma_{\overline{\E}}}\otimes F_{u})\colon\bigoplus_{\E\to\overline{\E}}\Gamma_{\overline{\E}}\otimes_{R}F_{\E}\to\bigoplus_{\E\in\shC}\Gamma_{\E}\otimes_{R}F_{\E}
\]
Moreover it comes equipped with an $A$-linear natural isomorphism
\[
\Hom_{\QCoh_{A}\stY}(\int^{\E\in\shC}\Gamma_{\E}\otimes_{R}F_{\E},\shH)\to\Hom_{\L_{R}(\shC,A)}(\Gamma,\Hom_{\stY}(F_{-},\shH))\text{ for }\shH\in\QCoh_{A}\stY
\]
The proof of this is just observing that data of a map $\omega\colon\Gamma_{\E}\otimes_{R}\shF_{\E}\to\shH$
and a map $\tilde{\omega}\colon\Gamma_{\E}\to\Hom_{\stY}(\shF_{\E},\shH)$
are the same, and the condition that $\tilde{\omega}$ is a natural
transformation is exactly the condition that $\omega$ pass to the
quotient $\int^{\E\in\shC}\Gamma_{\E}\otimes_{R}F_{\E}$. Everything
can be written down by the following expression:
\[
\alpha(\int^{\E\in\shC}\Gamma_{\E}\otimes_{R}F_{\E}\to\shH)(x)=\omega\circ p_{\overline{\E}}(x\otimes-)\colon F_{\overline{\E}}\to\shH\text{ for }\overline{\E}\in\shC,x\in\Gamma_{\overline{\E}}
\]
where $p_{\E}\colon\Gamma_{\E}\otimes_{R}F_{\E}\to\int^{\E\in\shC}\Gamma_{\E}\otimes_{R}F_{\E}$
for $\E\in\shC$ are the structure morphisms. Its inverse is uniquely
determined by the expression 
\[
\alpha^{-1}(\Gamma\to\Hom_{\stY}(F_{-},\shH))\circ p_{\E}\colon\Gamma_{\E}\otimes_{R}\E\to\shH\comma x\otimes y\longmapsto v_{\E}(x)(y)\text{ for }\E\in\shC
\]
Natural transformations $F\to F'$ and $\Gamma\to\Gamma'$ yields
morphisms $\int^{\E\in\shC}\Gamma_{\E}\otimes_{R}F_{\E}\to\int^{\E\in\shC}\Gamma_{\E}\otimes_{R}F'_{\E}$
and $\int^{\E\in\shC}\Gamma_{\E}\otimes_{R}F_{\E}\to\int^{\E\in\shC}\Gamma'_{\E}\otimes_{R}F_{\E}$
respectively. Those can be defined either using Yoneda's lemma and
the above characterization of $\Hom(\int^{\E\in\shC}\Gamma_{\E}\otimes_{R}F_{\E},-)$
or directly using the description of $\int^{\E\in\shC}\Gamma_{\E}\otimes_{R}F_{\E}$
as a cokernel.

All the above claims are standard in the theory of coend in the enriched
settings (in our case enriched by $\Mod R$), but, in this simplified
context, it is elementary to prove them directly.
\end{rem}

We start by showing that $\shC$ (and therefore any essentially small
subcategory of $\QCoh\stX$) admits a sheafification functor.
\begin{prop}
\label{prop:adjoint of Omegastar with the cokernel} The Yoneda functor
$\Omega^{*}\colon\QCoh_{A}\stX\to\L_{R}(\shC,A)$ has a left adjoint
$\shF_{-,\shC}\colon\L_{R}(\shC,A)\to\QCoh_{A}\stX$ given by

\[
\shF_{\Gamma,\shC}=\int^{\E\in\shC}\Gamma_{\E}\otimes_{R}\E\in\QCoh_{A}\stX
\]
where $\E$ denotes the inclusion $\shC\to\QCoh\stX$. Given $\xi\in\stX$
we have that 
\[
\shF_{\Gamma,\shC}(\xi)=\int^{\E\in\shC}\Gamma_{\E}\otimes_{R}\E(\xi)\in\Mod(\Hl^{0}(\odi{\pi(\xi)})\otimes_{R}A)
\]
where $\E(\xi)$ denotes the evaluation $\shC\to\Mod\Hl^{0}(\odi{\pi(\xi)})$
of sheaves in $\xi\in\stX$.
\end{prop}

\begin{proof}
It is enough to apply \ref{rem: coends} with $\stY=\stX$ and $F\colon\shC\to\QCoh\stX$
the inclusion. Using the description of coend as cokernel one can
check that the two functors defined in the statement are canonically
isomorphic.
\end{proof}
\begin{rem}
\label{rem:explicit adjunction}Given $\shH\in\QCoh_{A}\stX$ and
$\Gamma\in\L_{R}(\shC,A)$ the adjunction is
\[
\Hom_{\stX_{A}}(\shF_{\Gamma,\shC},\shH)\simeq\Hom_{\L_{R}(\shC,A)}(\Gamma,\Hom_{\stX}(-,\shH))
\]
Explicitly this map is just
\[
(\Gamma_{\E}\otimes\E\to\shH)\longmapsto(\Gamma_{\E}\to\Hom_{\stX}(\E,\shH))\text{ for }\E\in\shC
\]
where we think of $\shF_{\Gamma,\shC}$ as a quotient of $\bigoplus_{\E}\Gamma_{\E}\otimes\E$
as in \ref{rem: coends}.
\end{rem}

\begin{defn}
\label{def: unit and counit for adjunction} We denote by $\gamma_{\Gamma}\colon\Gamma_{-}\to\Omega_{-}^{\shF_{\Gamma,\shC}}=\Hom_{\stX}(-,\shF_{\Gamma,\shC})$
and $\delta_{\shG}\colon\shF_{\Omega^{\shG},\shC}\to\shG$ for $\Gamma\in\L_{R}(\shC,A)$
and $\shG\in\QCoh_{A}\stX$ the unit and the counit of the adjunction
between $\Omega^{*}\colon\QCoh_{A}\stX\to\L_{R}(\shC,A)$ and $\shF_{*,\shC}\colon\L_{R}(\shC,A)\to\QCoh_{A}\stX$
respectively.

Given $\xi\in\stX$, $\E\in\shC$, $\psi\in\E(\xi)$ and $x\in\Gamma_{\E}$
we denote by $x_{\E,\psi}\in\shF_{\Gamma,\shC}(\xi)$ the image of
$x\otimes\psi$ under the map $\Gamma_{\E}\otimes_{R}\E(\xi)\to\shF_{\Gamma,\shC}(\xi)$
\end{defn}

\begin{prop}
\label{prop:FGammaC on affine schemes} Let $\Gamma\in\L_{R}(\shC,A)$.
The unit $\gamma_{\Gamma}\colon\Gamma_{-}\to\Omega_{-}^{\shF_{\Gamma,\shC}}=\Hom_{\stX}(-,\shF_{\Gamma,\shC})$
is given by   \[   \begin{tikzpicture}[xscale=3.8,yscale=-0.6]     
\node (A0_0) at (0.2, 0) {$\Gamma_\E$};   
\node (A1_0) at (0.2, 1) {$x$}; 
\node (A0_1) at (1, 0) {$\Hom_{\stX}(\E,\shF_{\Gamma,\shC})$};     
\node (A1_1) at (1, 1) {$(\phi \longmapsto x_{\E,\phi})$};     
\path (A0_0) edge [->]node [auto] {$\scriptstyle{}$} (A0_1);     
\path (A1_0) edge [|->,gray]node [auto] {$\scriptstyle{}$} (A1_1);        
\end{tikzpicture}   \]  If $\shG\in\QCoh_{A}\stX$ the counit $\delta_{\shG}\colon\shF_{\Omega^{\shG},\shC}\to\shG$
is given by 
\[
\shF_{\Omega^{\shG},\shC}(\xi)\ni x_{\E,\psi}\longmapsto x(\psi)\in\shG(\xi)\text{ for }\E\in\shC,x\in\Omega_{\E}^{\shG}=\Hom_{\stX}(\E,\shG),\xi\in\stX,\psi\in\E(\xi)
\]
\end{prop}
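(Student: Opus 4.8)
The plan is to recover both the unit and the counit from the adjunction of Proposition~\ref{prop:adjoint of Omegastar with the cokernel} by the standard device of evaluating the adjunction bijection, and its inverse, at identity morphisms. Concretely, I would specialize Remark~\ref{rem: coends} to $\stY=\stX$ and to $F\colon\shC\arr\QCoh\stX$ the inclusion, so that $\shF_{\Gamma,\shC}=\int^{\E\in\shC}\Gamma_{\E}\otimes_{R}\E$ and $\Hom_{\stX}(F_{-},\shH)=\Omega^{\shH}$. The remark then furnishes, for every $\shH\in\QCoh_{A}\stX$, a natural isomorphism
\[
\alpha\colon\Hom_{\QCoh_{A}\stX}(\shF_{\Gamma,\shC},\shH)\arr\Hom_{\L_{R}(\shC,A)}(\Gamma,\Omega^{\shH})
\]
together with the explicit formulas for $\alpha$ and $\alpha^{-1}$ recorded there in terms of the structure morphisms $p_{\E}\colon\Gamma_{\E}\otimes_{R}\E\arr\shF_{\Gamma,\shC}$.

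To compute the unit I would set $\shH=\shF_{\Gamma,\shC}$ and take $\gamma_{\Gamma}=\alpha(\id_{\shF_{\Gamma,\shC}})$. The formula $\alpha(u)(x)=u\circ p_{\E}(x\otimes-)$ from Remark~\ref{rem: coends}, applied with $u=\id$, gives for $x\in\Gamma_{\E}$ the morphism $p_{\E}(x\otimes-)\colon\E\arr\shF_{\Gamma,\shC}$, whose value on $\phi\in\E(\xi)$ is $p_{\E}(x\otimes\phi)$. By the definition of the symbol $x_{\E,\phi}$ in Definition~\ref{def: unit and counit for adjunction} this equals $x_{\E,\phi}$, so $(\gamma_{\Gamma})_{\E}(x)=(\phi\longmapsto x_{\E,\phi})$, as asserted.

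For the counit I would instead set $\Gamma=\Omega^{\shG}$ and $\shH=\shG$, and take $\delta_{\shG}=\alpha^{-1}(\id_{\Omega^{\shG}})$. The inverse formula $\alpha^{-1}(v)\circ p_{\E}(x\otimes y)=v_{\E}(x)(y)$, applied with $v=\id_{\Omega^{\shG}}$, gives $\delta_{\shG}(p_{\E}(x\otimes\psi))=x(\psi)$ for $x\in\Omega_{\E}^{\shG}=\Hom_{\stX}(\E,\shG)$ and $\psi\in\E(\xi)$. Since $p_{\E}(x\otimes\psi)=x_{\E,\psi}$ by definition, this is precisely the asserted formula $\delta_{\shG}(x_{\E,\psi})=x(\psi)$.

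I expect no genuine obstacle here: once the coend isomorphism $\alpha$ of Remark~\ref{rem: coends} is in hand, the statement is pure bookkeeping, and the naturality of $\gamma_{\Gamma}$ and $\delta_{\shG}$ is automatic since they arise from an established adjunction. The only points demanding attention are the identification of the abstract image $p_{\E}(x\otimes\psi)$ with the concrete notation $x_{\E,\psi}$, and the triviality that the identity natural transformation of $\Omega^{\shG}$ has components $(\id)_{\E}(x)=x$, so that $v_{\E}(x)(y)$ collapses to $x(y)$ in the counit computation; both are immediate from the definitions.
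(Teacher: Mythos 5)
Your proposal is correct and follows essentially the same route as the paper: the paper's proof likewise identifies $p_{\E}(x\otimes\psi)=x_{\E,\psi}$ and then reads off both formulas from the explicit description of the coend isomorphism $\alpha$ and its inverse in Remark \ref{rem: coends}. Your write-up merely makes explicit the standard step (evaluating $\alpha$ and $\alpha^{-1}$ at identity morphisms) that the paper leaves as a ``direct check.''
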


\begin{proof}
This follows easily from the description in \ref{rem:explicit adjunction}.
\end{proof}
Given a map $g\colon\stY\to\stX$ of fibered categories we want to
express $g^{*}\shF_{\Gamma,\shC}\in\QCoh_{A}\stY$ for $\Gamma\in\L_{R}(\shC,A)$
as $\shF_{g^{*}\Gamma,g^{*}\shC}$ for a suitable choice of $g^{*}\shC\subseteq\QCoh\stY$
and $g^{*}\Gamma\in\L_{R}(g^{*}\shC,A)$.
\begin{defn}
Let $\stY$ be a fibered category, $g\colon\stY\to\stX$ be a morphism
and $\shD$ be a subcategory of $\QCoh\stX$. We set $g^{*}\shD$
for the subcategory of $\QCoh\stY$ of sheaves $g^{*}\E$ for $\E\in\shD$.
If $\shD'\subseteq\QCoh\stY$ is a subcategory containing $g^{*}\shD$
we can define a restriction functor   \[   \begin{tikzpicture}[xscale=2.7,yscale=-0.6]     \node (A0_0) at (0, 0) {$\L_R(\shD',A)$};     \node (A0_1) at (1, 0) {$\L_R(\shD,A)$};     \node (A1_0) at (0, 1) {$\Gamma$};     \node (A1_1) at (1, 1) {$\Gamma\circ g^*$};     \path (A0_0) edge [->]node [auto] {$\scriptstyle{g_*}$} (A0_1);     \path (A1_0) edge [|->,gray]node [auto] {$\scriptstyle{}$} (A1_1);   \end{tikzpicture}   \] 
\end{defn}

\begin{prop}
\label{thm:adjoint of push forward} Let $\stY$ be a fibered category,
$g\colon\stY\to\stX$ be a morphism and $\shD$ be a subcategory of
$\QCoh\stY$ such that $g^{*}\shC\subseteq\shD$. Then $g_{*}\colon\L_{R}(\shD,A)\to\L_{R}(\shC,A)$
has a left adjoint $g^{*}\colon\L_{R}(\shC,A)\to\L_{R}(\shD,A)$ and
it is given by
\[
(g^{*}\Gamma)_{\shG}=\int^{\E\in\shC}\Gamma_{\E}\otimes_{R}\Hom_{\stY}(\shG,g^{*}\E)\in\Mod A\text{ for }\Gamma\in\L_{R}(\shC,A)\comma\shG\in\shD
\]
where $\Hom_{\stY}(\shG,g^{*}-)$ is thought of as a functor $\shC\to\Mod R$.
If $\stY=\stX$ and $g=\id_{\stX}$, so that $\shC\subseteq\shD$
and $(\id_{\stX})_{*}\colon\L_{R}(\shD,A)\to\L_{R}(\shC,A)$ is the
restriction, then the unit $\Gamma\to(\id_{\stX}^{*}\Gamma)_{|\shC}$
is an isomorphism for $\Gamma\in\L_{R}(\shC,A)$.
\end{prop}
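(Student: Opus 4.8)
The plan is to establish the claimed adjunction
\[
\Hom_{\L_R(\shD,A)}(g^*\Gamma,\Delta)\simeq\Hom_{\L_R(\shC,A)}(\Gamma,g_*\Delta)\comma\Gamma\in\L_R(\shC,A)\comma\Delta\in\L_R(\shD,A)
\]
by recognizing $g^*\Gamma$ as a left Kan extension and then verifying the universal property by hand with the coend machinery of \ref{rem: coends}. Conceptually, $g_*$ is restriction of contravariant functors along the functor $\overline{g}\colon\shC\arr\shD$, $\E\mapsto g^*\E$ (which lands in $\shD$ by the hypothesis $g^*\shC\subseteq\shD$), and the displayed formula $(g^*\Gamma)_\shG=\int^{\E\in\shC}\Gamma_\E\otimes_R\Hom_\stY(\shG,g^*\E)$ is exactly the pointwise left-Kan-extension coend along $\overline{g}$; so left adjointness to $g_*$ is the defining property of the Kan extension. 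I would, however, prove it directly, since the enriched coend in this context has the explicit description recorded in \ref{rem: coends}.

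First I would check that $g^*\Gamma$ is well defined: for fixed $\shG$ the functor $\Hom_\stY(\shG,g^*-)\colon\shC\arr\Mod R$ is covariant and $\Gamma$ is contravariant, so the coend is the cokernel of \ref{rem: coends}, and its functoriality in $\shG$ together with the $A$-action inherited from $\Gamma$ makes $g^*\Gamma$ an object of $\L_R(\shD,A)$. The heart of the argument is then a chain of natural isomorphisms. A map $g^*\Gamma\arr\Delta$ is a family of $A$-linear maps $(g^*\Gamma)_\shG\arr\Delta_\shG$ natural in $\shG\in\shD$; applying the coend isomorphism $\alpha$ of \ref{rem: coends} (with the target $\QCoh_A\stY$ replaced by $\Mod A$) converts, for each $\shG$, such a map into a morphism $\Gamma\arr\Hom_A(\Hom_\stY(\shG,g^*-),\Delta_\shG)$ in $\L_R(\shC,A)$, i.e. an $\E$-dinatural family of $R$-linear maps $\Hom_\stY(\shG,g^*\E)\arr\Hom_A(\Gamma_\E,\Delta_\shG)$.

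Next I would fix $\E$ and let $\shG$ vary. Because $g^*\E\in\shD$ and $\shD$ is a full subcategory of $\QCoh\stY$, the covariant functor $\shG\mapsto\Hom_\stY(\shG,g^*\E)$ is, on $\shD$, the presheaf represented by $g^*\E$; the Yoneda lemma therefore identifies the $\shG$-natural families $\Hom_\stY(-,g^*\E)\arr\Hom_A(\Gamma_\E,\Delta_-)$ with the single module $\Hom_A(\Gamma_\E,\Delta_{g^*\E})$. Feeding the residual dinaturality in $\E$ through the definition $(g_*\Delta)_\E=\Delta_{g^*\E}$ produces precisely a natural transformation $\Gamma\arr g_*\Delta$. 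The hard part will be this Yoneda step: one must track the two variances (contravariant in $\E$, covariant-then-evaluated in $\shG$) and justify exchanging the order of the $\E$- and $\shG$-families in the $\Mod R$-enriched setting, the inclusion $g^*\shC\subseteq\shD$ being exactly what makes $\Hom_\stY(-,g^*\E)_{|\shD}$ representable. I would then check that the resulting bijection is natural in both $\Gamma$ and $\Delta$, so that it is a genuine adjunction and not merely a pointwise isomorphism; this is routine but should be spelled out.

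Finally, for the last assertion I would specialize to $g=\id_{\stX}$, where $\overline{g}$ is the inclusion $\shC\hookrightarrow\shD$ of full subcategories and $g^*\E=\E$, so that
\[
(\id_{\stX}^{*}\Gamma)_\shG=\int^{\E\in\shC}\Gamma_\E\otimes_R\Hom_\stX(\shG,\E)\comma\shG\in\shD
\]
For $\shG\in\shC$ fullness gives $\Hom_\stX(\shG,\E)=\Hom_\shC(\shG,\E)$, so this is the density (co-Yoneda) coend, canonically isomorphic to $\Gamma_\shG$; equivalently, the unit of a left Kan extension along a fully faithful functor is invertible. A direct check that this canonical isomorphism coincides with the unit $\Gamma\arr(\id_{\stX}^{*}\Gamma)_{|\shC}$ completes the proof.
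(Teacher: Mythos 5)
Your proposal is correct and follows essentially the same route as the paper: the paper also verifies the adjunction by combining the coend's universal property from \ref{rem: coends} with evaluation at $\id_{g^{*}\E}$ (your Yoneda step, written out by hand as $\mu_{\underline{\gamma}}(x)=\gamma_{g^{*}\E,\E}(x)(\id_{g^{*}\E})$), and proves the unit is an isomorphism by the same Yoneda/density argument applied to $\Hom_{A}(\alpha_{\overline{\E}},\shH)$. The only difference is cosmetic: you name the abstract principles (pointwise left Kan extension along $\E\mapsto g^{*}\E$, invertibility of the unit along a fully faithful functor) that the paper checks explicitly.
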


\begin{proof}
Let $\Omega\in\L_{R}(\shD,A)$, $\Gamma\in\L_{R}(\shC,A)$ and $\shG\in\shD$.
Applying \ref{rem: coends} with $F=\Hom_{\stY}(\shG,g^{*}-)\colon\shC\to\Mod(R)$
and $\Gamma\colon\shC\to\Mod(A)$ we get a bijection between $A$-linear
maps $(g^{*}\Gamma)_{\shG}\to\Omega_{\shG}$ and the set of $A$-linear
natural transformations $\Gamma_{\E}\to\Hom_{R}(\Hom(\shG,g^{*}\E),\Omega_{\shG})$
for $\E\in\shC$. Thinking of $(g^{*}\Gamma)_{\shG}$ as a quotient
of $\bigoplus_{\E\in\shC}\Gamma_{\E}\otimes\Hom(\shG,g^{*}\E)$ the
previous map is just the canonical map
\[
\Gamma_{\E}\to\Hom_{R}(\Hom(\shG,g^{*}\E),\Omega_{\shG})\longmapsto\Gamma_{\E}\otimes\Hom(\shG,g^{*}\E)\to\Omega_{\shG}
\]
With this description in mind it is elementary to check that a natural
transformation $g^{*}\Gamma\to\Omega$ corresponds to a collection
$\underline{\gamma}$ of $A$-linear maps $\gamma_{\shG,\E}\colon\Gamma_{\E}\to\Hom_{R}(\Hom(\shG,g^{*}\E),\Omega_{\shG})$
such that
\[
\gamma_{\shG,\E}(x)(\phi\circ u)=\Omega_{u}(\gamma_{\overline{\shG},\E}(x)(\phi))\text{ for }x\in\Gamma_{\E},\phi\colon\overline{\shG}\to g^{*}\E,u\colon\shG\to\overline{\shG}\in\shD
\]
\[
\gamma_{\shG,\E'}(\Gamma_{v}(x))(\phi')=\gamma_{\shG,\E}(x)(g^{*}v\circ\phi')\text{ for }x\in\Gamma_{\E},\phi'\colon\shG\to g^{*}\E',v\colon\E'\to\E
\]
The first is the functoriality in $\shG$, the second in $\E$. Given
any collection of maps $\gamma_{\shG,\E}\colon\Gamma_{\E}\to\Hom_{R}(\Hom(\shG,g^{*}\E),\Omega_{\shG})$
define
\[
\mu(\gamma)_{\E}\colon\Gamma_{\E}\to\Omega_{g^{*}\E}\comma\mu(\gamma)_{\E}(x)=\gamma_{g^{*}\E,\E}(x)(\id_{g^{*}\E})
\]
Conversely, given any collection of maps $\mu_{\E}\colon\Gamma_{\E}\to\Omega_{g^{*}\E}$
define
\[
\gamma(\mu)_{\shG,\E}\colon\Gamma_{\E}\to\Hom_{R}(\Hom(\shG,g^{*}\E),\Omega_{\shG})\comma\gamma(\mu)_{\shG,\E}(x)(\psi)=\Omega_{\psi}(\mu(x))
\]
Now come the tedious verification that $\gamma_{\shG,\E}$ yields
a natural transformation $g^{*}\Gamma\to\Omega$ if and only if $\mu_{\E}$
define a natural transformation $\Gamma\to g_{*}\Omega$ and that
this defines a bijection $\Hom(g^{*}\Gamma,\Omega)\simeq\Hom(\Gamma,g_{*}\Omega)$.
This is elementary and can be carried on just using symbols and expressions
above.

Assume now $\stY=\stX$ and $g=\id_{\stX}$ and let $\Gamma\in\L_{R}(\shC,A)$
and $\overline{\E}\in\shC$. Denote by $\alpha\colon\Gamma\to(\id_{\stX}^{*}\Gamma)_{|\shC}$
the unit morphism. If $p_{\widetilde{\E}}\colon\Gamma_{\widetilde{\E}}\otimes\Hom_{\stX}(\overline{\E},\widetilde{\E})\to(\id_{\stX}^{*}\Gamma)_{\overline{\E}}$
are the structure morphisms as in \ref{rem: coends}, then
\[
\alpha_{\overline{\E}}\colon\Gamma_{\overline{\E}}\to(\id_{\stX}^{*}\Gamma)_{\overline{\E}}\comma\alpha_{\overline{\E}}(x)=p_{\overline{\E}}(x\otimes\id_{\overline{\E}})
\]
In particular, given $\shH\in\Mod A$ and using \ref{rem: coends},
the map $\Hom_{A}(\alpha_{\overline{\E}},\shH)\colon\Hom_{A}((\id_{\stX}^{*}\Gamma)_{\overline{\E}},\shH)\to\Hom_{A}(\Gamma_{\overline{\E}},\shH)$
sends an $A$-linear natural transformation $\delta\colon\Gamma_{-}\to\Hom_{R}(\Hom_{\stX}(\overline{\E},-),\shH)$
to $\Gamma_{\overline{\E}}\ni x\longmapsto\delta(x)(\id_{\overline{\E}})\in\shH$.
Since $\delta$ corresponds to an $R$-linear natural transformation
$\Hom_{\stX}(\overline{\E},-)\to\Hom_{A}(\Gamma_{-},\shH)$, we can
rewrite $\Hom_{A}(\alpha_{\overline{\E}},\shH)$ as
\[
\Hom_{\L_{R}(\shC,R)}(\Hom(\overline{\E},-),\Delta)\to\Delta_{\overline{\E}}\comma\omega\mapsto\omega(\id_{\overline{\E}})\comma\Delta=\Hom_{A}(\Gamma_{*},\shH)
\]
By enriched Yoneda lemma we can conclude that $\Hom_{A}(\alpha_{\overline{\E}},\shH)$
is an isomorphism and, therefore, that $\alpha_{\overline{\E}}$ is
an isomorphism as required.
\end{proof}
\begin{rem}
Proposition above can also be reinterpreted using Kan extension. Indeed
given an $R$-linear functor $\Gamma\colon\shC\to(\Mod A)^{\op}$
(notice the opposite category) and $g^{*}\colon\shC\to\shD$ then
$g^{*}\Gamma\colon\shC\to(\Mod A)^{\op}$ is the right Kan extension
of $\Gamma$ along $g^{*}$. This follows from the definition of the
Kan extension and the adjointness we proved. In particular one could
have proved proposition above using standard results about cocomplete
categories. On the other hand we need the coend description.
\end{rem}

The above proposition yields a natural extension of any $\Gamma\in\L_{R}(\shC,A)$
to a functor $\Gamma^{ex}\in\L_{R}(\QCoh\stX,A)$. By abuse of notation
we will denote them by the same symbol $\Gamma$. This means that
if $\Gamma\in L_{R}(\shC,A)$ and $\shG\in\QCoh\stX$ then we can
evaluate $\Gamma$ on $\shG$, writing $\Gamma_{\shG}$.

Given a map $g\colon\stY\to\stX$ we will denote by $g^{*}\colon\L_{R}(\shC,A)\to\L_{R}(g^{*}\shC,A)$
the left adjoint of the restriction $\L_{R}(g^{*}\shC,A)\to\L_{R}(\shC,A)$.
So, given $\Gamma\in\L_{R}(\shC,A)$, $g^{*}\Gamma$ is a functor
$g^{*}\shC\to\Mod A$ but it also defines a functor $\QCoh\stY\to\Mod A$
denoted, by our convention, by the same symbol. By \ref{thm:adjoint of push forward}
the functor $g^{*}\Gamma\colon\QCoh\stY\to\Mod A$ coincides with
the value of the left adjoint of the restriction $\L_{R}(\QCoh\stY,A)\to\L_{R}(\shC,A)$.
\begin{rem}
\label{rem: extends to h0OX}Given $\Gamma\in\L_{R}(\shC,A)$ and
$\E\in\shC$ we have $R$-linear morphisms of rings 
\[
\Hl^{0}(\odi{\stX})\simeq\End_{\stX}(\odi{\stX})\to\End_{\stX}(\E)\to\End_{A}(\Gamma_{\E})
\]
This defines a lifting of $\Gamma$ to an $R$-linear functor $\Gamma\colon\shC\to\Mod(\Hl^{0}(\odi{\stX})\otimes_{R}A)$
and an equivalence
\[
\L_{R}(\shC,A)\to\L_{R}(\shC,\Hl^{0}(\odi{\stX})\otimes_{R}A)
\]
In particular, if $g\colon\Spec B\to\stX$ is a map and $\Gamma\in\L_{R}(\shC,A)$
then $(g^{*}\Gamma)_{B}$ has a $B\otimes_{R}A$-module structure.
By \ref{prop:adjoint of Omegastar with the cokernel} and \ref{thm:adjoint of push forward}
there is a canonical $A$-linear isomorphism 
\[
\shF_{\Gamma,\shC}(B)\simeq(g^{*}\Gamma)_{B}
\]
and it is easy to see that it is also $B$-linear.

\end{rem}

\begin{prop}
\label{prop:pullbacks and compatibilities} Let $g\colon\stY\to\stX$
be a morphism of fibered categories and $\shD\subseteq\QCoh(\stY)$
such that $g^{*}\shC\subseteq\shD$. If $g^{*}\colon\QCoh(\stX)\to\QCoh(\stY)$
has a right adjoint $g_{*}\colon\QCoh(\stY)\to\QCoh(\stX)$ then there
is a canonical isomorphism $g_{*}(\Omega_{|\shD}^{\shN})\simeq\Omega_{|\shC}^{g_{*}\shN}$
for $\shN\in\QCoh_{A}\stY,$that is a $2$-commutative diagram   \[   \begin{tikzpicture}[xscale=3.0,yscale=-1.2]     \node (A0_0) at (0, 0) {$\QCoh_A(\stY)$};     \node (A0_1) at (1, 0) {$\L_R(\shD,A)$};     \node (A1_0) at (0, 1) {$\QCoh_A(\stX)$};     \node (A1_1) at (1, 1) {$\L_R(\shC,A)$};     \path (A0_0) edge [->]node [auto] {$\scriptstyle{\Omega^*}$} (A0_1);     \path (A0_0) edge [->]node [auto] {$\scriptstyle{g_*}$} (A1_0);     \path (A0_1) edge [->]node [auto] {$\scriptstyle{g_*}$} (A1_1);     \path (A1_0) edge [->]node [auto] {$\scriptstyle{\Omega^*}$} (A1_1);   \end{tikzpicture}   \] Moreover,
without any assumption of $g$, but assuming $\shD$ small, there
exists an isomorphism $g^{*}\shF_{\Gamma,\shC}\simeq\shF_{g^{*}\Gamma,g^{*}\shC}$
natural in $\Gamma\in\L_{R}(\shC,A)$, that is a $2$-commutative
diagram   \[   \begin{tikzpicture}[xscale=3.1,yscale=-1.2]     \node (A0_0) at (0, 0) {$\L_R(\shC,A)$};     \node (A0_1) at (1, 0) {$\QCoh_A \stX$};     \node (A1_0) at (0, 1) {$\L_R(\shD,A)$};     \node (A1_1) at (1, 1) {$\QCoh_A \stY$};     \path (A0_0) edge [->]node [auto] {$\scriptstyle{\shF_{-,\shC}}$} (A0_1);     \path (A0_0) edge [->]node [auto] {$\scriptstyle{g^*}$} (A1_0);     \path (A0_1) edge [->]node [auto] {$\scriptstyle{g^*}$} (A1_1);     \path (A1_0) edge [->]node [auto] {$\scriptstyle{\shF_{-,\shD}}$} (A1_1);   \end{tikzpicture}   \] Concretely
the isomorphism is of the form   \[   \begin{tikzpicture}[xscale=3.0,yscale=-1.2]     \node (A0_0) at (0, 0) {$g^*(\Gamma_\E \otimes \E)$};     \node (A0_1) at (1, 0) {$(g*\Gamma)_{g^*\E}\otimes g^*\E$};     \node (A1_0) at (0, 1) {$g^*\shF_{\Gamma,\shC}$};     \node (A1_1) at (1, 1) {$\shF_{g^*\Gamma,\shD}$};     \path (A0_0) edge [->]node [auto] {$\scriptstyle{}$} (A0_1);     \path (A1_0) edge [->]node [auto] {$\scriptstyle{}$} (A1_1);     \path (A0_1) edge [->]node [auto] {$\scriptstyle{}$} (A1_1);     \path (A0_0) edge [->]node [auto] {$\scriptstyle{}$} (A1_0);   \end{tikzpicture}   \] where
$\Gamma_{\E}\to(g^{*}\Gamma)_{g^{*}\E}=(g_{*}g^{*}\Gamma)_{\E}$ is
the adjunction map. In particular the isomorphism $g^{*}\shF_{\Gamma,\shC}\to\shF_{g^{*}\Gamma,g^{*}\shC}$
is compatible with composition of maps $g\colon\stY\to\stX$.
\end{prop}

\begin{proof}
The first isomorphism is clear because 
\[
\Omega_{\E}^{g_{*}\shN}=\Hom_{\stX}(\E,g_{*}\shN)\simeq\Hom_{\stY}(g^{*}\E,\shN)=\Omega_{g^{*}\E}^{\shN}=(g_{*}\Omega^{\shN})_{\E}
\]
In particular, assuming the existence of $g_{*}$, the second part
of the statement is a consequence of the first by adjointness. We
have to prove the second part without this.

Let $\Gamma\in\L_{R}(\shC,A)$, $\xi\colon\Spec B\to\stY$ be a map
and $N\in\Mod B\otimes_{R}A$. Denote by $F\colon\shC\to\Mod B$ and
$G\colon\shD\to\Mod B$ the functors obtained evaluating the sheaves
in $g\xi$ and $\xi$ respectively. In particular $F=G\circ g^{*}=g_{*}G$.
We have
\[
(g^{*}\shF_{\Gamma,\shC})(B)=\shF_{\Gamma,\shC}(g\xi)=\int^{\E\in\shC}\Gamma_{\E}\otimes F_{\E}\text{ and }\shF_{g^{*}\Gamma,\shD}(B)=\int^{\shH\in\shD}(g^{*}\Gamma)_{\shH}\otimes G_{\shH}
\]
They implies respectively that 
\[
\Hom_{B\otimes_{R}A}((g^{*}\shF_{\Gamma,\shC})(B),N)\simeq\Hom_{\L_{R}(\shC,A)}(\Gamma,\Hom_{B}(F,N))
\]
\[
\Hom_{B\otimes_{R}A}(\shF_{g^{*}\Gamma,\shD}(B),N)\simeq\Hom_{\L_{R}(\shD,A)}(g^{*}\Gamma,\Hom_{B}(G,N))
\]
Since $\Hom_{B}(F,N)=g_{*}\Hom_{B}(G,N)$ the above modules are canonically
isomorphic. In particular we get an isomorphism $(g^{*}\shF_{\Gamma,\shC})(B)\simeq\shF_{g^{*}\Gamma,\shD}(B)$.
By a direct check we see that this map fits in the last commutative
diagram in the statement evaluated in $\xi\colon\Spec B\to\stY$.
This shows that $g^{*}\shF_{\Gamma,\shC}\to\shF_{g^{*}\Gamma,\shD}$
is well defined and an isomorphism. Naturality in $\Gamma\in\L_{R}(\shC,A)$
also follows easily.
\end{proof}
\begin{rem}
\label{rem:sheafification of an extension} For $g=\id_{\stX}\colon\stX\to\stX$
and $\shC\subseteq\shD$ proposition above is telling us that if $\Gamma\in\L_{R}(\shC,A)$
and we extend it to $\Gamma^{ex}\in\L_{R}(\shD,A)$ then $\shF_{\Gamma,\shC}\simeq\shF_{\Gamma^{ex},\shD}$.
\end{rem}

\begin{rem}
\label{rem: LR and QCoh as stacks} If $A\to A'$ is a morphism of
$R$-algebras then we have pull-back functors $\L_{R}(\shC,A)\to\L_{R}(\shC,A')$
and $\QCoh_{A}\stX\to\QCoh_{A'}\stX$. The first one is obtained considering
the tensor product $-\otimes_{A}A'$, while the second one corresponds
to the pullback $\QCoh\stX_{A}\to\QCoh\stX_{A'}$ along the projection
$\stX_{A'}\to\stX_{A}$. Alternatively, those functors are left adjoints
to the restriction of scalars $\L_{R}(\shC,A')\to\L_{R}(\shC,A)$
and $\QCoh_{A'}\stX\to\QCoh_{A}\stX$ respectively. It is easy to
see that in this way we obtain two fpqc stacks (not in groupoids)
$\L_{R}(\shC,-)$ and $\QCoh_{-}\stX$ over the category of affine
$R$-schemes. Notice that the functor $\Omega^{*}\colon\QCoh_{-}\stX\to\L_{R}(\shC,-)$
is not a morphism of stacks because $\Hom_{\stX}(\E,\shG)\otimes_{A}A'\not\simeq\Hom_{\stX}(\E,\shG\otimes_{A}A')$
in general for $\E\in\shC$ and $\shG\in\QCoh_{A}\stX$.
\end{rem}

\begin{prop}
\label{prop:base change for FstarC} The functor $\shF_{*,\shC}\colon\L_{R}(\shC,-)\to\QCoh_{-}\stX$
is a morphism of stacks.
\end{prop}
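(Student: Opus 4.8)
The plan is to exploit the adjunction $\shF_{*,\shC}\dashv\Omega^{*}$ together with the description of the two stack structures in \ref{rem: LR and QCoh as stacks}, and to deduce the base-change compatibility of $\shF_{*,\shC}$ by passing to \emph{mates}. The subtle point, already flagged in \ref{rem: LR and QCoh as stacks}, is that $\Omega^{*}$ is \emph{not} a morphism of stacks: it fails to commute with the base-change functors $-\otimes_{A}A'$. Hence one cannot simply take left adjoints of the base-change square for $\Omega^{*}$. The key observation that saves the argument is that $\Omega^{*}$ does commute, strictly, with the functors in the opposite direction, namely restriction of scalars.

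Fix a morphism $\phi\colon A\arr A'$ of $R$-algebras, and let $\rho$ denote restriction of scalars along $\phi$, both on sheaves, $\rho\colon\QCoh_{A'}\stX\arr\QCoh_{A}\stX$, and on linear functors, $\rho\colon\L_{R}(\shC,A')\arr\L_{R}(\shC,A)$. By \ref{rem: LR and QCoh as stacks} the base-change functors that define the stack structures, $q\colon\QCoh_{A}\stX\arr\QCoh_{A'}\stX$ and $p\colon\L_{R}(\shC,A)\arr\L_{R}(\shC,A')$, are exactly the left adjoints of these two restriction functors, while by \ref{prop:adjoint of Omegastar with the cokernel} the functor $\shF_{*,\shC}$ is left adjoint to $\Omega^{*}$ over each algebra. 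First I would check that $\Omega^{*}$ commutes with restriction of scalars: for $\shH\in\QCoh_{A'}\stX$ and $\E\in\shC$ the group $\Omega^{\shH}_{\E}=\Hom_{\stX}(\E,\shH)$ is literally unchanged when one restricts the module structure of $\shH$ from $A'$ to $A$, and the induced $A$-action on it is the $A'$-action precomposed with $\phi$. Hence $\rho\circ\Omega^{*}_{A'}=\Omega^{*}_{A}\circ\rho$ on the nose.

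I would then take left adjoints of this commuting square of right adjoints. Since the left adjoint of a composite is the composite of the left adjoints in the reverse order, the equality $\rho\circ\Omega^{*}_{A'}=\Omega^{*}_{A}\circ\rho$ dualizes to a natural isomorphism $\shF_{*,\shC}\circ p\simeq q\circ\shF_{*,\shC}$ of functors $\L_{R}(\shC,A)\arr\QCoh_{A'}\stX$; explicitly $\shF_{\Gamma\otimes_{A}A',\shC}\simeq(\shF_{\Gamma,\shC})\otimes_{A}A'$, naturally in $\Gamma\in\L_{R}(\shC,A)$. This is precisely the assertion that $\shF_{*,\shC}$ sends the Cartesian (base-change) arrows of $\L_{R}(\shC,-)$ to Cartesian arrows of $\QCoh_{-}\stX$, i.e. that it is a morphism of the fibered categories over $\Aff/R$. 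The cocycle condition over a composable pair $A\arr A'\arr A''$ is the standard coherence of the mate correspondence and reduces to uniqueness of adjoints, so it is routine.

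The main obstacle here is conceptual rather than computational: one must recognize that the naive route of dualizing the base-change square fails (exactly because $\Omega^{*}$ is not a morphism of stacks) and replace it by the restriction-of-scalars square, for which $\Omega^{*}$ \emph{is} compatible. As a cross-check one may instead argue directly from the coend formula $\shF_{\Gamma,\shC}=\int^{\E\in\shC}\Gamma_{\E}\otimes_{R}\E$ of \ref{prop:adjoint of Omegastar with the cokernel}: the functor $q=-\otimes_{A}A'$ is a left adjoint, hence preserves the direct sums and the cokernel defining the coend, while the associativity isomorphism $(\Gamma_{\E}\otimes_{R}\E)\otimes_{A}A'\simeq(\Gamma_{\E}\otimes_{A}A')\otimes_{R}\E$ identifies the building blocks, yielding the same natural isomorphism.
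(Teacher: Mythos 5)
Your argument is exactly the paper's: the paper also observes that $\Omega^{*}$ commutes with restriction of scalars along $A\arr A'$, and obtains the base-change compatibility of $\shF_{*,\shC}$ by taking left adjoints of that commuting square (using \ref{rem: LR and QCoh as stacks} to identify the left adjoints of restriction with the base-change functors defining the two stack structures). The proposal is correct and adds only an optional cross-check via the coend formula.
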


\begin{proof}
Given a morphism $A\to A'$ of $R$-algebras we have a $2$-commutative
diagram   \[   \begin{tikzpicture}[xscale=2.9,yscale=-1.0]     \node (A0_0) at (0, 0) {$\QCoh_{A'}\stX$};     \node (A0_1) at (1, 0) {$\L_R(\shC,A')$};     \node (A1_0) at (0, 1) {$\QCoh_A \stX$};     \node (A1_1) at (1, 1) {$\L_R(\shC,A)$};     \path (A0_0) edge [->]node [auto] {$\scriptstyle{\Omega^*}$} (A0_1);     \path (A0_0) edge [->]node [auto] {$\scriptstyle{}$} (A1_0);     \path (A0_1) edge [->]node [auto] {$\scriptstyle{}$} (A1_1);     \path (A1_0) edge [->]node [auto] {$\scriptstyle{\Omega^*}$} (A1_1);   \end{tikzpicture}   \] 
where the vertical arrows are obtained by restricting the scalars
from $A'$ to $A$. Using \ref{rem: LR and QCoh as stacks} and taking
the left adjoint functors of the functors in the diagram we exactly
get the $2$-commutative diagram expressing the fact that $\shF_{*,\shC}$
preserves Cartesian arrows.
\end{proof}
We conclude this section by showing that, when considering sheafification
functors $\shF_{-,\shC}$, we can always reduce problems to the case
when $\shC$ is an additive category. Moreover in this case the sections
of $\shF_{-,\shC}$ have a nice expression in terms of a direct limit.
\begin{defn}
Given a subcategory $\shD$ of $\QCoh\stX$ we denote by $\shD^{\oplus}$
the subcategory of $\QCoh\stX$ whose objects are all finite direct
sums of sheaves in $\shD$.
\end{defn}

Notice that if $\shD$ is small then $\shD^{\oplus}$ is small.
\begin{prop}
\label{prop:find a directed category} Let $\shD\subseteq\QCoh(\stX)$
be a subcategory and $\alB$ be an additive $R$-linear category.
Then $\shD\subseteq\shD^{\oplus}$ induces an equivalence between
the category of (contravariant) $R$-linear functors $\shD^{\oplus}\to\alB$
and the category of (contravariant) $R$-linear functors $\shD\to\alB$.

In particular the restriction $\L_{R}(\shD^{\oplus},A)\to\L_{R}(\shD,A)$
is an equivalence. If $\shD$ is small and $\Gamma\in\L_{R}(\shD^{\oplus},A)$
then we have a canonical isomorphism $\shF_{\Gamma,\shC^{\oplus}}\simeq\shF_{\Gamma_{|\shC},\shC}$.
\end{prop}

\begin{proof}
The contravariant case follows from the covariant one replacing $\alB$
by $\alB^{\op}$. Given $\E\in\shD^{\oplus}$ let us fix a finite
set $I_{\E}\subseteq\text{Obj}(\shD)$ with an isomorphism $\E\simeq\bigoplus_{V\in I_{\E}}V$.
If $\E\in\shD$ we choose $I_{\E}=\{\E\}$. Given an $R$-linear functor
$\Gamma\colon\shD\to\alB$ we define an extension $\Delta^{\Gamma}\colon\shD^{\oplus}\to\alB$
as follows. On objects we set
\[
\Delta_{\E}^{\Gamma}=\bigoplus_{V\in I_{\E}}\Gamma_{V}\in\alB
\]
A morphism $\phi\colon\E\to\E'$ in $\shD^{\oplus}$ is completely
determined by a matrix $(\phi_{V,W})_{V\in I_{\E},W\in I_{\E'}}$
with entries $\phi_{V,W}\in\Hom_{\shD}(V,W)$. We define $\Delta_{\E}^{\Gamma}\to\Delta_{\E'}^{\Gamma}$
as the map corresponding to the matrix $(\Gamma_{\phi_{V,W}})_{V\in I_{\E},W\in I_{\E'}}$.
Using the linearity of $\Gamma$ we can deduce that $\Delta^{\Gamma}\colon\shD^{\oplus}\to\alB$
is a well defined $R$-linear functor extending $\Gamma$. Moreover
it is easy to see that this is a quasi-inverse to the restriction
of functors. Last claim follows from \ref{rem:sheafification of an extension}.
\end{proof}
\begin{rem}
\label{rem: natural transf are linear for additive categories} If
$\catC$ is an $R$-linear and additive category and $F,G\colon\catC\to\Mod A$
are $R$-linear (covariant or contravariant) functors then any natural
transformation $\lambda\colon F\to G$ of functors of sets is $R$-linear.
Indeed by considering $\catC^{\op}$ we can consider only covariant
functors. In this case it is easy to show that the maps $\lambda_{X}\colon F(X)\to G(X)$
for $X\in\catC$ are $R$-linear using functoriality on the map $r\id_{X}\colon X\to X$
for $r\in R$ and $\pr_{1},\pr_{2},\pr_{1}+\pr_{2}\colon X\oplus X\to X$,
where $\pr_{*}$ are the projections.
\end{rem}

\begin{rem}
\label{rem:non filtered colimit} If $I$ is a small category and
$F\colon I\to\QCoh\stY$ is a functor (covariant or contravariant),
for some category fibered in groupoids $\stY$ over $R$, then the
colimit $\varinjlim_{i}F(i)$ always exists in $\QCoh(\stY)$. Indeed,
replacing $I$ by $I^{\op}$, we can assume $F$ contravariant. In
that case the colimit is the cokernel of the map
\[
\bigoplus_{i\to j}F(j)\to\bigoplus_{k}F(k)\comma(\alpha\colon i\to j,x\in F(j))\longmapsto F(\alpha)(x)-x\in F(i)\oplus F(j))
\]
Moreover for later reference we observe the following. If for all
objects $i,j\in I$ there exists $k\in I$ and maps $k\to i,k\to j$
then all elements of $\varinjlim_{i}F(i)$ comes from some $F(q)$.
\end{rem}

\begin{defn}
Let $\Spec B\to\stX$ be a map. We denote by $J_{B,\shC}$ the category
of pairs $(\E,\psi)$ where $\E\in\shC^{\oplus}$ and $\psi\in\E(B)$.
Given $\Gamma\in\L_{R}(\shC,A)$ we have a functor $\Gamma\colon J_{B,\shC}\to\Mod A$
given by $\Gamma_{\E,\psi}=\Gamma_{\E}$.
\end{defn}

We will make colimits over the category $J_{B,\shC}$, but we warn
the reader that this is not filtered in general.
\begin{prop}
\label{prop:FGammaCB as a direct limit} Let $\Spec B\to\stX$ be
a map and $\Gamma\in\L_{R}(\shC,A)$. The category $J_{B,\shC}$ is
non-empty and for all $\xi,\xi'\in J_{B,\shC}$ there exists $\xi''\in J_{B,\shC}$
and maps $\xi''\to\xi$, $\xi''\to\xi'$. The $A$-linear maps $\Gamma_{\E,\psi}=\Gamma_{\E}\to\shF_{\Gamma,\shC^{\oplus}}(B)\simeq\shF_{\Gamma,\shC}(B)$,
$x\longmapsto x_{\E,\psi}$ for $(\E,\psi)\in J_{B,\shC}$ (see \ref{def: unit and counit for adjunction})
induce an $A$-linear isomorphism 
\[
\varinjlim_{(\E,\psi)\in J_{B,\shC}^{\op}}\Gamma_{\E,\psi}\to\shF_{\Gamma,\shC}(B)
\]
The multiplication by $b\in B$ on the first limit is induced by mapping
$\Gamma_{\E,\psi}$ to $\Gamma_{\E,b\psi}$ using $\id_{\Gamma_{\E}}$
for $(\E,\psi)\in J_{B,\shC}$. In other words 
\[
x_{\E,b\psi}=bx_{\E,\psi}\text{ for }\E\in\shC^{\oplus},x\in\Gamma_{\E},\psi\in\E(B),b\in B
\]
Finally every element of $\shF_{\Gamma,\shC}(B)$ is of the form $x_{\E,\psi}$
for some $(\E,\psi)\in J_{B,\shC}$.
\end{prop}

\begin{proof}
By \ref{prop:find a directed category} we can assume $\shC=\shC^{\oplus}$.
Denote by $\shH$ and $\alpha\colon\shH\to\shF_{\Gamma,\shC}(B)$
the limit and the map in the statement respectively. The category
$J_{B,\shC}$ is not empty because $(\E,0)\in J_{B,\shC}$ for all
$\E\in\shC$ and the map $\alpha$ is well defined because, for all
$x\in\Gamma_{\overline{\E}}$ and for all $u\colon(\E,\psi)\to(\overline{\E},u(\psi))$
we have $x_{\overline{\E},u(\psi)}=(\Gamma_{u}(x))_{\E,\psi}$, by
definition of $\shF_{\Gamma,\shC}(B)$ as coend. Moreover if $(\E_{1},\psi_{1}),(\E_{2},\psi_{2})\in J_{B,\shC}$
then we have maps $\pr_{i}\colon(\E_{1}\oplus\E_{2},\psi_{1}\oplus\psi_{2})\to(\E_{i},\psi_{i})$
for $i=1,2$, where $\pr_{i}$ is the projection.

Given an $A$-module $N$ then $\Hom_{A}(\shH,N)$ is $A$-linearly
isomorphic to the set of natural transformations of sets $\beta_{\E}\colon\E(B)\to\Hom_{A}(\Gamma_{\E},N)$.
Since $\shC$ is additive, by \ref{rem: natural transf are linear for additive categories},
those transformations are automatically $R$-linear. So, if we denote
by $F\colon\shC\to\Mod(B)$ the evaluation if $\Spec B\to\stX$, we
have
\[
\Hom_{A}(\shH,N)\simeq\Hom_{\L_{R}(C,R)}(F,\Hom_{A}(\Gamma,N))\simeq\Hom_{\L_{R}(\shC,A)}(\Gamma,\Hom_{R}(F,N))
\]
By \ref{rem: coends} and \ref{prop:adjoint of Omegastar with the cokernel}
the last module is $\Hom_{A}(\shF_{\Gamma,\shC}(B),N)$. One can now
check that the induced map $\shH\to\shF_{\Gamma,\shC}(B)$ is the
one in the statement. The last claim follows by construction: $x_{\E,b\psi}$
is the image of $x\otimes(b\psi)=b(x\otimes\psi)$ under the map $\Gamma_{\E}\otimes_{R}\E(B)\to\shF_{\Gamma,\shC}(B)$.
The last claim follows from \ref{rem:non filtered colimit}. %
\end{proof}

\subsection{Sheafifying $R$-linear monoidal functors.}

In this section we show how ``ring structures'' on a quasi-coherent
sheaf over $\stX$ correspond to ``monoidal'' structures on the
corresponding Yoneda functor.

We start setting up some definitions:
\begin{defn}
\label{def: pseudo monoidal commutativi associative} Let $\catC$
and $\catD$ be $R$-linear symmetric monoidal categories. A (contravariant)
\emph{pseudo-monoidal }functor $\Omega\colon\catC\to\catD$ is an
$R$-linear (and contravariant) functor together with a natural transformation
\[
\iota_{V,W}^{\Omega}\colon\Omega_{V}\otimes\Omega_{W}\to\Omega_{V\otimes W}\text{ for }V,W\in\catC
\]
A (contravariant) pseudo-monoidal functor $\Omega\colon\catC\to\catD$
is

\begin{enumerate}
\item \emph{\label{enu:commutative for functors} symmetric} or commutative
if for all $V,W\in\catC$ the following diagram is commutative$$
\begin{tikzpicture}[xscale=2.7,yscale=-1.2]     \node (A0_1) at (1, 0) {$\Omega_V\otimes \Omega_W$};     \node (A0_2) at (2, 0) {$\Omega_{V\otimes W}$};     \node (A1_1) at (1, 1) {$\Omega_W\otimes \Omega_V$};     \node (A1_2) at (2, 1) {$\Omega_{W\otimes V}$};     \path (A0_1) edge [->]node [auto] {$\scriptstyle{\iota_{V,W}^\Omega}$} (A0_2);     \path (A0_2) edge [->]node [auto] {$\scriptstyle{}$} (A1_2);     \path (A1_1) edge [->]node [auto] {$\scriptstyle{\iota_{W,V}^\Omega}$} (A1_2);     \path (A0_1) edge [->]node [auto] {$\scriptstyle{}$} (A1_1);   \end{tikzpicture}
$$where the vertical arrows are the obvious isomorphisms;
\item \emph{\label{enu:associative for functors} associative} if for all
$V,W,Z\in\catC$ the following diagram is commutative $$
 \begin{tikzpicture}[xscale=4,yscale=-1.2]     \node (A0_0) at (0, 0) {$\Omega_V\otimes \Omega_W\otimes \Omega_Z$};     \node (A0_1) at (1, 0) {$\Omega_{V\otimes W}\otimes \Omega_Z$};     \node (A1_0) at (0, 1) {$\Omega_V\otimes \Omega_{W\otimes Z}$};     \node (A1_1) at (1, 1) {$\Omega_{V\otimes W\otimes Z}$};     \path (A0_0) edge [->]node [auto] {$\scriptstyle{\iota_{V,W}^\Omega \otimes \id}$} (A0_1);     \path (A1_0) edge [->]node [auto] {$\scriptstyle{\iota_{V,W\otimes Z}^\Omega}$} (A1_1);     \path (A0_0) edge [->]node [auto] {$\scriptstyle{\id\otimes \iota_{W,Z}^\Omega}$} (A1_0);     \path (A0_1) edge [->]node [auto] {$\scriptstyle{\iota_{V\otimes W,Z}^\Omega}$} (A1_1);   \end{tikzpicture}
$$
\end{enumerate}
If $I$ and $J$ are the unit objects of $\catC$ and $\catD$ respectively,
a unity for $\Omega$ is a morphism $1\colon J\to\Omega_{I}$ such
that, for all $V\in\catC$, the compositions
\[
\Omega_{V}\otimes J\to\Omega_{V}\otimes\Omega_{I}\to\Omega_{V\otimes I}\to\Omega_{V}\text{ and }J\otimes\Omega_{V}\to\Omega_{I}\otimes\Omega_{V}\to\Omega_{I\otimes V}\to\Omega_{V}
\]
coincide with the natural isomorphisms $\Omega_{V}\otimes J\to\Omega_{V}$
and $J\otimes\Omega_{V}\to\Omega_{V}$ respectively. A (contravariant)\emph{
monoidal} functor $\Omega\colon\catC\to\catD$ is a symmetric and
associative pseudo-monoidal (contravariant) functor with a unity $1$.
A (contravariant) strong monoidal functor $\Omega\colon\catC\to\catD$
is a (contravariant) monoidal functors such that all the maps $\iota_{V,W}^{\Omega}$and
$1\colon J\to\Omega_{I}$ are isomorphisms.
\end{defn}

A morphism of pseudo-monoidal functors $(\Omega,\iota^{\Omega})\to(\Gamma,\iota^{\Gamma})$,
called a monoidal morphism or transformation, is a natural transformation
$\Omega\to\Gamma$ which commutes with the monoidal structures $\iota^{*}$.
A morphism of monoidal functors is a monoidal transformation preserving
the unities.
\begin{defn}
\label{def: monoidal subcategories} We define the categories:

\begin{itemize}
\item $\Rings_{A}\stX$, whose objects are $\alB\in\QCoh_{A}\stX$ with
an $A$-linear map $m\colon\alB\otimes_{\odi{\stX_{A}}}\alB\to\alB$,
called the multiplication;
\item $\QAlg_{A}\stX$, as the (not full) subcategory of $\Rings_{A}\stX$
whose objects are $\alB$ with a commutative, associative multiplication
with a unity and the arrows are morphisms preserving unities;
\end{itemize}
We also set $\Rings\stX=\Rings_{R}\stX$ and $\QAlg\stX=\QAlg_{R}\stX$. 

Let $\shD$ be a monoidal subcategory of $\QCoh\stX$, that is a subcategory
such that $\odi{\stX}\in\shD$ and for all $\E,\E'\in\shD$ we have
$\E\otimes\E'\in\shD$. We define the category $\PML_{R}(\shD,A)$
(resp. $\ML_{R}(\shD,A)$), whose objects are $\Gamma\in\L_{R}(\shD,A)$
with a pseudo-monoidal (resp. monoidal) structure.
\end{defn}

\begin{rem}
If $q\colon\stX_{A}\to\stX$ is the projection, the equivalence $\QCoh\stX_{A}\to\QCoh_{A}\stX$
extends to an equivalence 
\[
q_{*}\colon\QRings\stX_{A}\to\QRings_{A}\stX
\]
Indeed, if $\shG\in\QCoh\stX_{A}$ then $q_{*}(\shG\otimes_{\odi{\stX_{A}}}\shG)=q_{*}\shG\otimes_{\odi{\stX_{A}}}q_{*}\shG$
(see \ref{rem: remark on the tensor product over A}). We will use
the following notation, which is somehow implicit in the definition
of $\QAlg_{A}\stX$: a sheaf $\alB\in\QRings_{A}\stX$ with $\alB\simeq q_{*}\alB'$
is associative (resp. commutative, has a unity, ...) if $\alB'$ has
the same property.

If $\alB\in\QRings_{A}\stX$ with multiplication $m$, then the composition
$\alB\otimes_{\odi{\stX}}\alB\to\alB\otimes_{\odi{\stX_{A}}}\alB\to\alB$
induces a ring structure on $\alB$ as an $\odi{\stX}$-module, i.e.
$\alB\in\QRings\stX$. Moreover $\alB\in\QRings_{A}\stX$ is associative
(resp. commutative, has a unity) if and only if $\alB\in\QRings\stX$
has the same property. If $\alB\in\QAlg_{A}\stX$ we can form the
relative spectrum $\Spec\alB$ over $\stX_{A}$ and also over $\stX$.
The final result is the same.
\end{rem}

\begin{rem}
For $\alB\in\Rings_{A}\stX$ or $\Gamma\in\PMon_{R}(\shD,A)$ having
a unity is a property, not an additional datum. Indeed in both cases
unities are unique. For rings it is obvious. In the case of functors,
using that left and right units $u\colon J\otimes J\to J$, $v\colon I\otimes I\to I$
coincides, the following diagram shows that two unities $\alpha,\beta\colon J\to\Gamma_{I}$
coincide.    \[   \begin{tikzpicture}[xscale=2.0,yscale=-1.2]     \node (A0_0) at (0, 0) {$\Gamma_I$};     \node (A0_1) at (1, 0) {$\Gamma_I \otimes J$};     \node (A1_0) at (0, 1) {$J$};     \node (A1_1) at (1, 1) {$J\otimes J$};     \node (A1_2) at (2, 1) {$\Gamma_I \otimes \Gamma_I$};     \node (A1_3) at (3, 1) {$\Gamma_{I\otimes I}$};     \node (A1_4) at (4, 1) {$\Gamma_I$};     \node (A2_0) at (0, 2) {$\Gamma_I$};     \node (A2_1) at (1, 2) {$J\otimes \Gamma_I$};     \path (A0_1) edge [->]node [auto] {$\scriptstyle{\id\otimes \beta}$} (A1_2);     \path (A0_0) edge [->]node [auto] {$\scriptstyle{}$} (A0_1);     \path (A2_0) edge [->]node [auto] {$\scriptstyle{}$} (A2_1);     \path (A1_0) edge [->]node [auto] {$\scriptstyle{u^{-1}}$} (A1_1);     \path (A1_0) edge [->]node [auto] {$\scriptstyle{\alpha}$} (A0_0);     \path (A1_1) edge [->]node [auto] {$\scriptstyle{\alpha\otimes \id}$} (A0_1);     \path (A1_1) edge [->]node [auto] {$\scriptstyle{\alpha\otimes \beta}$} (A1_2);     \path (A1_3) edge [->]node [auto] {$\scriptstyle{\Gamma_v}$} (A1_4);     \path (A1_0) edge [->]node [auto] {$\scriptstyle{\beta}$} (A2_0);     \path (A1_1) edge [->]node [auto,swap] {$\scriptstyle{\id\otimes \beta}$} (A2_1);     \path (A1_2) edge [->]node [auto] {$\scriptstyle{}$} (A1_3);     \path (A2_0) edge [->,bend left=40]node [auto,swap] {$\scriptstyle{\id}$} (A1_4);     \path (A0_0) edge [->,bend right=40]node [auto] {$\scriptstyle{\id}$} (A1_4);     \path (A2_1) edge [->]node [auto,swap] {$\scriptstyle{\alpha\otimes \id}$} (A1_2);   \end{tikzpicture}   \]  
\end{rem}

Let $\shD$ be a monoidal subcategory of $\QCoh\stX$. If $\alB\in\Rings_{A}\stX$
with multiplication $m$, we endow $\Omega^{\alB}\in\L_{R}(\shD,A)$
with the pseudo monoidal structure
\[
\iota^{\alB}\colon\Hom(\E,\alB)\otimes_{A}\Hom(\overline{\E},\alB)\to\Hom(\E\otimes\overline{\E},\alB\otimes_{\odi{\stX_{A}}}\alB)\to\Hom(\E\otimes\overline{\E},\alB)\comma\E,\overline{\E}\in\shD
\]
that is $\iota_{\E,\overline{\E}}^{\alB}(\phi\otimes\psi)=m\circ(\phi\otimes\psi)$.
If $1\in\alB$ is a unity then we set 
\[
A\to\Omega_{\odi{\stX}}^{\alB}=\Hl^{0}(\alB)\comma1\mapsto1_{\alB}=1
\]

\begin{prop}
\label{prop:Omegastar on algebras} The structures defined above yield
an extension of the functor $\Omega^{*}\colon\QCoh_{A}\stX\to\L_{R}(\shD,A)$
to a functor $\Omega^{*}\colon\QRings_{A}\stX\to\PML_{R}(\shD,A)$.
Moreover if $\alB\in\Rings_{A}\stX$ is associative (resp. commutative,
has a unity $1\in\alB$) then $\Omega^{\alB}$ is associative (resp.
symmetric, has unity $1_{\alB}\in\Omega_{\odi{\stX}}^{\alB}$). In
particular we also get a functor $\Omega^{*}\colon\QAlg_{A}\stX\to\ML_{R}(\shD,A)$.
\end{prop}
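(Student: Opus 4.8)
The plan is to verify every assertion by directly unwinding the definition $\iota^{\alB}_{\E,\overline{\E}}(\phi\otimes\psi)=m\circ(\phi\otimes\psi)$ and exploiting functoriality of the tensor product and of composition; each statement reduces to the corresponding property of the multiplication $m$. First I would check that $\iota^{\alB}$ is well defined and natural. The assignment $(\phi,\psi)\longmapsto m\circ(\phi\otimes\psi)$, where $\phi\otimes\psi\colon\E\otimes\overline{\E}\arr\alB\otimes_{\odi{\stX_{A}}}\alB$, is $A$-bilinear because moving a scalar $a\in A$ across the tensor is harmless in $\alB\otimes_{\odi{\stX_{A}}}\alB$, where the two $A$-module structures are identified (see \ref{rem: remark on the tensor product over A}); hence it factors through $\Hom(\E,\alB)\otimes_{A}\Hom(\overline{\E},\alB)$. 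Naturality in $\E$ and $\overline{\E}$ is immediate: for $u\colon\E'\arr\E$ and $v\colon\overline{\E}'\arr\overline{\E}$ one has $m\circ((\phi u)\otimes(\psi v))=m\circ(\phi\otimes\psi)\circ(u\otimes v)$, which is exactly the commutativity of the naturality square. Thus $\Omega^{\alB}\in\PML_{R}(\shD,A)$.

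Next I would treat functoriality on morphisms and the three extra properties, each of which is a one-line reduction. If $f\colon\alB\arr\alB'$ lies in $\Rings_{A}\stX$ then $f\circ m=m'\circ(f\otimes_{\odi{\stX_{A}}}f)$, so postcomposing the defining formula gives $f\circ\iota^{\alB}_{\E,\overline{\E}}(\phi\otimes\psi)=\iota^{\alB'}_{\E,\overline{\E}}((f\phi)\otimes(f\psi))$, i.e.\ $\Omega^{f}$ commutes with the monoidal structures, and if $f$ preserves the unit then $\Omega^{f}(1_{\alB})=1_{\alB'}$. For associativity I would evaluate both ways around the square of \ref{def: pseudo monoidal commutativi associative}\ref{enu:associative for functors} on $\phi\otimes\psi\otimes\chi$, obtaining $m\circ(m\otimes\id)\circ(\phi\otimes\psi\otimes\chi)$ versus $m\circ(\id\otimes m)\circ(\phi\otimes\psi\otimes\chi)$, so the diagram commutes exactly when $m$ is associative. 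For symmetry, naturality of the symmetry isomorphism $\tau$ of $\QCoh\stX$ rewrites the left side of the diagram in \ref{def: pseudo monoidal commutativi associative}\ref{enu:commutative for functors} as $m\circ\tau_{\alB,\alB}\circ(\psi\otimes\phi)$, which agrees with the right side $m\circ(\psi\otimes\phi)$ precisely when $m$ is commutative. For the unit, under the canonical identification $\E\otimes\odi{\stX}\simeq\E$ the composite $m\circ(\phi\otimes 1_{\alB})$ reduces to $\phi$ by the unit axiom for $\alB$, so $1_{\alB}\in\Hl^{0}(\alB)=\Omega^{\alB}_{\odi{\stX}}$ is a unity for $\Omega^{\alB}$; combining the three properties yields the restricted functor $\Omega^{*}\colon\QAlg_{A}\stX\arr\ML_{R}(\shD,A)$.

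The hard part, such as it is, is purely bookkeeping: one must keep the two tensor products $\otimes_{\odi{\stX}}$ and $\otimes_{\odi{\stX_{A}}}$ separate and check that the associativity and symmetry constraints of $\shD$ inside $\QCoh\stX$ are compatible with those of $\QCoh_{A}\stX$ under the identifications used to form $\phi\otimes\psi$. Once these coherence checks are in place, every required diagram commutes by functoriality together with the matching identity for $m$, and no further input is needed.
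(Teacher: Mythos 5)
Your proof is correct and follows exactly the route the paper intends: the paper's own proof is simply the remark that everything follows by applying the definitions, and your write-up carries out those checks (well-definedness of $\iota^{\alB}$ over $\otimes_{A}$, naturality, functoriality in $\alB$, and the reduction of associativity, symmetry and unity to the corresponding properties of $m$) in the expected way.
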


\begin{proof}
Consider the expression
\[
\iota_{\E,\overline{\E}}^{\alB}(\phi\otimes\psi):\E\otimes\overline{\E}\ni x\otimes y\longmapsto\phi(x)\cdot\psi(y)\in\alB\text{ for }\phi\colon\E\to\alB,\psi\colon\overline{\E}\to\alB
\]
Here $\cdot$ indicates the multiplication in $\alB$. Associativity
of $\Omega^{\alB}$ in particular is
\[
(\phi_{1}(x_{1})\cdot\phi_{2}(x_{2}))\cdot\phi_{3}(x_{3})=\phi_{1}(x_{1})\cdot(\phi_{2}(x_{2})\cdot\phi_{3}(x_{3}))\text{ for }\phi_{i}\colon\E_{i}\to\alB,x_{i}\in\E_{i}
\]
which clearly follows if $\alB$ is associative. Similarly $\Omega^{\alB}$
is commutative and has a unity if 
\[
\phi_{1}(x_{1})\cdot\phi_{2}(x_{2})=\phi_{2}(x_{2})\cdot\phi_{1}(x_{1})\text{ and }\phi_{1}(x_{1})\cdot1=1\cdot\phi_{1}(x_{1})=\phi_{1}(x_{1})\text{ for }\phi_{i}\colon\E_{i}\to\alB,x_{i}\in\E_{i}
\]
respectively.
\end{proof}
Let $\shC$ be a small monoidal subcategory of $\QCoh\stX$. Given
$\Gamma\in\PMon_{R}(\shC,A)$ with monoidal structure $\iota$, we
denote $\alA_{\Gamma,\shC}=\shF_{\Gamma,\shC}$ and define the multiplication
$m_{\Gamma}\colon\alA_{\Gamma,\shC}\otimes_{\odi{\stX_{A}}}\alA_{\Gamma,\shC}\to\alA_{\Gamma,\shC}$
by
\[
\alA_{\Gamma,\shC}(B)\otimes_{B\otimes_{R}A}\alA_{\Gamma,\shC}(B)\ni x_{\E,\psi}\otimes\overline{x}_{\overline{\E},\overline{\psi}}\to[\iota_{\E,\overline{\E}}(x\otimes\overline{x})]_{\E\otimes\overline{\E},\psi\otimes\overline{\psi}}\in\alA_{\Gamma,\shC}(B)
\]
where $\Spec B\to\stX$ is a map, $\E,\overline{\E}\in\shC$, $\psi\in\E(B)$,
$\overline{\psi}\in\overline{\E}(B)$, $x\in\Gamma_{\E}$, $\overline{x}\in\Gamma_{\overline{\E}}$
(see \ref{def: unit and counit for adjunction}). In other words we
claim that the sum over all $\E,\overline{\E}\in\shC$ of the compositions
\[
\Gamma_{\E}\otimes\E\otimes\Gamma_{\overline{\E}}\otimes\overline{\E}\to\Gamma_{\E}\otimes\Gamma_{\overline{\E}}\otimes(\E\otimes\overline{\E})\to\Gamma_{\E\otimes\overline{\E}}\otimes(\E\otimes\overline{\E})\to\alA_{\Gamma,\shC}
\]
induces a map $m_{\Gamma}\colon\alA_{\Gamma,\shC}\otimes_{\odi{\stX_{A}}}\alA_{\Gamma,\shC}\to\alA_{\Gamma,\shC}$
. We continue to denote by $\alA_{\Gamma,\shC}$ the sheaf $\shF_{\Gamma,\shC}$
together with the multiplication map just defined. If $1\in\Gamma_{\odi{\stX}}$
is a unity we set $1_{\Gamma}\in\alA_{\Gamma,\shC}$ the image of
$1$ under the morphism $\Gamma_{\odi{\stX}}\to\Omega_{\odi{\stX}}^{\alA_{\Gamma,\shC}}=\Hl^{0}(\alA_{\Gamma,\shC})$.
\begin{prop}
\label{prop: from sheaves to algebras}The structures defined above
yield an extension of the functor $\shF_{*,\shC}\colon\L_{R}(\shC,A)\to\QCoh_{A}\stX$
to a functor $\alA_{*,\shC}\colon\PML_{R}(\shC,A)\to\Rings_{A}\stX$
which is left adjoint to $\Omega^{*}\colon\Rings_{A}\stX\to\PML_{R}(\shC,A)$.
More precisely, if $\alB\in\Rings_{A}\stX$ then the morphism $\delta_{\alB}\colon\alA_{\Omega^{\alB},\shC}\to\alB$
preserves multiplications and unities, while if $\Gamma\in\PMon_{R}(\shC,A)$
then the natural transformation $\gamma_{\Gamma}\colon\Gamma\to\Omega^{\alA_{\Gamma,\shC}}$
is monoidal and preserves unities (see \ref{def: unit and counit for adjunction}). 

If $\Gamma\in\PMon_{R}(\shC,A)$ is associative (resp. symmetric,
has a unity $1\in\Gamma_{\odi{\stX}}$) then $\alA_{\Gamma,\shC}$
is associative (resp. commutative, has unity $1_{\Gamma}\in\alA_{\Gamma,\shC}$).
In particular we get a functor $\alA_{*,\shC}\colon\ML_{R}(\shC,A)\to\QAlg_{A}\stX$
which is left adjoint to $\Omega^{*}\colon\QAlg_{A}\stX\to\ML_{R}(\shC,A)$.
\end{prop}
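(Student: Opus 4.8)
The plan is to lift the adjunction $\shF_{*,\shC}\dashv\Omega^{*}$ of \ref{prop:Fstar and Omegastar are adjoint} along the faithful forgetful functors $\Rings_{A}\stX\arr\QCoh_{A}\stX$ and $\PML_{R}(\shC,A)\arr\L_{R}(\shC,A)$, checking at each stage that the extra monoidal, resp. multiplicative, data is matched. First I would verify that $m_{\Gamma}$ is well defined. The cleanest way is to observe that, since tensoring preserves colimits, \ref{rem: coends}, \ref{prop:adjoint of Omegastar with the cokernel} and \ref{rem: remark on the tensor product over A} give a canonical isomorphism
\[
\alA_{\Gamma,\shC}\otimes_{\odi{\stX_{A}}}\alA_{\Gamma,\shC}\simeq\int^{\E,\overline{\E}\in\shC}(\Gamma_{\E}\otimes_{A}\Gamma_{\overline{\E}})\otimes_{R}(\E\otimes_{\odi{\stX}}\overline{\E})
\]
where the identification forced by $\otimes_{\odi{\stX_{A}}}$ of the two competing $A$-module structures is exactly what turns $\Gamma_{\E}\otimes_{R}\Gamma_{\overline{\E}}$ into $\Gamma_{\E}\otimes_{A}\Gamma_{\overline{\E}}$. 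By the universal property of this double coend, an $\odi{\stX_{A}}$-linear map to $\alA_{\Gamma,\shC}$ is the same as a family $(\Gamma_{\E}\otimes_{A}\Gamma_{\overline{\E}})\otimes_{R}(\E\otimes\overline{\E})\arr\alA_{\Gamma,\shC}$ dinatural in $\E,\overline{\E}$; I would take $(x\otimes\overline{x})\otimes(\psi\otimes\overline{\psi})\longmapsto\iota_{\E,\overline{\E}}(x\otimes\overline{x})_{\E\otimes\overline{\E},\psi\otimes\overline{\psi}}$, whose dinaturality is precisely the naturality of the pseudo-monoidal structure $\iota^{\Gamma}$ together with the compatibility $x_{\overline{\E},u(\psi)}=(\Gamma_{u}x)_{\E,\psi}$ of \ref{prop:FGammaCB as a direct limit}. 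This yields $m_{\Gamma}$, recovers the formula in the statement, and makes functoriality of $\alA_{*,\shC}$ immediate, since a monoidal transformation commutes with the $\iota$'s and hence induces a multiplicative map.

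For the adjunction I would reduce everything to two short computations on the generators $x_{\E,\psi}$, using the explicit unit and counit of \ref{prop:Fstar and Omegastar are adjoint}. A direct check shows that $\gamma_{\Gamma}(x)=(\psi\mapsto x_{\E,\psi})$ makes $\iota^{\alA_{\Gamma,\shC}}(\gamma_{\Gamma}x\otimes\gamma_{\Gamma}\overline{x})$ and $\gamma_{\Gamma}(\iota^{\Gamma}_{\E,\overline{\E}}(x\otimes\overline{x}))$ both send $\psi\otimes\overline{\psi}$ to $\iota_{\E,\overline{\E}}(x\otimes\overline{x})_{\E\otimes\overline{\E},\psi\otimes\overline{\psi}}$, so $\gamma_{\Gamma}$ is monoidal; and that $\delta_{\alB}(x_{\E,\psi})=x(\psi)$ satisfies $\delta_{\alB}\circ m_{\Omega^{\alB}}=m_{\alB}\circ(\delta_{\alB}\otimes\delta_{\alB})$ because $\iota^{\alB}_{\E,\overline{\E}}(x\otimes\overline{x})=m_{\alB}\circ(x\otimes\overline{x})$ by the definition of $\Omega^{*}$ on $\Rings_{A}\stX$, so $\delta_{\alB}$ is multiplicative; both visibly respect the unities $1_{\Gamma}$ and $1_{\alB}$.

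Granting these, I would write the bijection $\Hom_{\QCoh_{A}\stX}(\shF_{\Gamma,\shC},\alB)\simeq\Hom_{\L_{R}(\shC,A)}(\Gamma,\Omega^{\alB})$ in the standard adjoint form $\beta\mapsto\Omega^{*}(\beta)\circ\gamma_{\Gamma}$ and $\alpha\mapsto\delta_{\alB}\circ\alA_{*,\shC}(\alpha)$. Since $\Omega^{*}$ and $\alA_{*,\shC}$ are functors on the ring, resp. pseudo-monoidal, level and $\gamma_{\Gamma},\delta_{\alB}$ are monoidal, resp. multiplicative, these formulas carry multiplicative morphisms to monoidal ones and back; as they are already inverse to each other on the underlying $\Hom$-sets, they are inverse bijections on the subsets of structure-preserving morphisms. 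This is the asserted adjunction $\alA_{*,\shC}\dashv\Omega^{*}$ between $\PML_{R}(\shC,A)$ and $\Rings_{A}\stX$, and the same formulas restrict to unity-preserving morphisms because $\gamma_{\Gamma}$ and $\delta_{\alB}$ preserve unities.

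Finally, to transfer the structural properties I would again evaluate $m_{\Gamma}$ on generators via the double-coend description: associativity of $m_{\Gamma}$ reduces to the associativity diagram \ref{enu:associative for functors} for $\iota^{\Gamma}$ (both triple products send $\psi\otimes\overline{\psi}\otimes\psi''$ to the same class), commutativity reduces to the symmetry diagram \ref{enu:commutative for functors}, and the unity axiom shows $1_{\Gamma}$ is a two-sided identity. Hence $\alA_{*,\shC}$ restricts to a functor $\ML_{R}(\shC,A)\arr\QAlg_{A}\stX$, and combined with the unity-preservation of $\gamma_{\Gamma},\delta_{\alB}$ the adjunction restricts to $\alA_{*,\shC}\dashv\Omega^{*}$ between $\ML_{R}(\shC,A)$ and $\QAlg_{A}\stX$. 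The main obstacle is the bookkeeping in the first paragraph: one must keep straight the two $A$-actions on $\alA_{\Gamma,\shC}\otimes\alA_{\Gamma,\shC}$ and confirm that the coend identification and the dinaturality hold $\odi{\stX_{A}}$-linearly. Once $m_{\Gamma}$ is known to be well defined, every remaining verification is a routine check on the generators $x_{\E,\psi}$.
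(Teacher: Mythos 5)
Your proof is correct and follows the route the paper intends: the paper's own proof consists of the single remark that all verifications follow from the definitions and are left to the reader, and your write-up is exactly those verifications carried out. The one organizational idea you add --- identifying $\alA_{\Gamma,\shC}\otimes_{\odi{\stX_{A}}}\alA_{\Gamma,\shC}$ with the double coend $\int^{\E,\overline{\E}}(\Gamma_{\E}\otimes_{A}\Gamma_{\overline{\E}})\otimes_{R}(\E\otimes_{\odi{\stX}}\overline{\E})$ so that well-definedness of $m_{\Gamma}$ becomes dinaturality of the family built from $\iota^{\Gamma}$ --- is sound and is in fact the cleanest way to make the paper's generator-level formula for the multiplication rigorous.
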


\begin{proof}
We first show that the multiplication is well defined. Using twice
the universal property of the coend definition of $\alA_{\Gamma,\shC}(B)$
we have that 
\[
\Hom_{B\otimes_{R}A}(\alA_{\Gamma,\shC}(B),\End_{B\otimes_{R}A}(\alA_{\Gamma,\shC}(B)))
\]
 equals
\[
\Hom_{\L_{R}(\shC,A)}(\Gamma,\Hom_{B}(F,\Hom_{\L_{R}(\shC,A)}(\Gamma,\Hom_{B}(F,\alA_{\Gamma,\shC}(B)))))
\]
An object in the above set can be seen as a map
\[
\Gamma_{\E}\times\E(B)\times\Gamma_{\overline{\E}}\times\overline{\E}(B)\to\alA_{\Gamma,\shC}
\]
which is $A$-linear in the first and third coordinates and $B$-linear
in the others. Moreover there is a compatibility with respect to morphisms
in the variables $\E$ and $\overline{\E}$. Thus everything boils
down in proving these properties for the map
\[
(x,\psi,\overline{x},\overline{\psi})\mapsto[\iota_{\E,\overline{\E}}(x\otimes\overline{x})]_{\E\otimes\overline{\E},\psi\otimes\overline{\psi}}
\]
This is elementary and left to the reader. Also the fact that if $\Gamma$
is a associative (resp. symmetric, has a unity $1\in\Gamma_{\odi{\stX}}$)
then $\alA_{\Gamma,\shC}$ is associative (resp. commutative, has
unity $1_{\Gamma}\in\alA_{\Gamma,\shC}$) is a tedious elementary
proof.

The fact that $\gamma_{\Gamma}\colon\Gamma\to\Omega^{\alA_{\Gamma,\shC}}$
is monoidal and preserve the unit really comes from construction.
Same for the proof that $\delta_{\alB}\colon\alA_{\Omega^{\alB},\shC}\to\alB$
preserves multiplications and unities. The adjointness of the functor
in the statement follows formally from this.
\end{proof}

\section{Yoneda embeddings.}

In this section we address the problem of when the Yoneda functor
$\QCoh_{A}\stX\to\L_{R}(\shC,A)$ is fully faithful and describe its
essential image. This will led us to the notion of generating category
and left exactness for functors in $\L_{R}(\shC,A)$.

We fix an $R$-algebra $A$, a pseudo-geometric fibered category $\stX$
and a small subcategory $\shC$ of $\QCoh\stX$. In particular $\QCoh(\stX)$
is an abelian category.
\begin{defn}
Let $\shD\subseteq\QCoh\stX$ be a subcategory. A sheaf $\shG\in\QCoh(\stX)$
is generated by $\shD$ if there exists a surjective morphism 
\[
\bigoplus_{i\in I}\E_{i}\to\shG
\]
where $I$ is a set and $\E_{i}\in\shD$ for all $i\in I$. A sheaf
$\shG\in\QCoh_{A}\stX$ is generated by $\shD$ if it is so as an
object of $\QCoh\stX$. Equivalently, a sheaf $\shG\in\QCoh\stX_{A}$
is generated by $\shD$ if $h_{*}\shG\in\QCoh\stX$ is so, where $h\colon\stX_{A}\to\stX$
is the projection. We define $\QCoh_{A}^{\shD}\stX$ as the subcategory
of $\QCoh_{A}\stX$ of sheaves $\shG$ generated by $\shD$ and such
that, for all maps $\E\to\shG$ with $\E\in\shD^{\oplus}$, also $\Ker\psi$
is generated by $\shD$.

If $\shD'$ is another subcategory of $\QCoh\stX$ we will say that
$\shD$ generates $\shD'$ if all quasi-coherent sheaves in $\shD'$
are generated by $\shD$.
\end{defn}

\begin{rem}
\label{rem: local characterization of generation by D} Consider a
set of morphisms $\{U_{j}=\Spec B_{j}\to\stX\}_{j\in J}$ such that
$\sqcup_{j}U_{j}\to\stX$ is an atlas. By \ref{prop:reducing to atlases for quasi-geometric categories}
we have the following characterizations. If $\shG\in\QCoh\stX$ then
$\shG$ is generated by $\shD$ if and only if 
\[
\forall j\in J\comma x\in\shG(B_{j})\comma\exists\E\in\shD^{\oplus}\comma\phi\in\E(B_{j})\comma u\colon\E\to\shG\text{ such that }u(\phi)=x
\]
If $\shH\in\QCoh(\stX)$ and $\psi\colon\shH\to\shG$ is a map then
$\Ker\psi$ is generated by $\shD$ if and only
\[
\forall j\in J\comma y\in\shH(B_{j})\text{ with }\psi(y)=0\comma\exists\ \overline{\E}\in\shD^{\oplus}\comma\phi\in\overline{\E}(B_{j})\comma v\colon\overline{\E}\to\shH\text{ such that }\psi v=0\comma v(\phi)=y
\]
In particular if $\shG\in\QCoh^{\shD}\stX$, $\shH\in\QCoh\stX$ is
generated by $\shD$ and $\shH\to\shG$ is a map then $\Ker\alpha$
is generated by $\shD$.
\end{rem}

\begin{prop}
\label{prop:D autogenerates stable sum}If $\shD$ is a subcategory
of $\QCoh\stX$ then the category $\QCoh_{A}^{\shD}\stX$ is stable
by direct sums. In particular $\shD\subseteq\QCoh^{\shD}\stX\iff\shD^{\oplus}\subseteq\QCoh^{\shD}\stX$.
\end{prop}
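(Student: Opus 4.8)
The plan is to reduce the statement to the kernel condition in the definition of $\QCoh_{A}^{\shD}\stX$ and then to exploit the ``strong consequence'' recorded at the end of \ref{rem: local characterization of generation by D}: if $\shG\in\QCoh^{\shD}\stX$, $\shH\in\QCoh\stX$ is generated by $\shD$ and $\shH\arr\shG$ is any map, then $\Ker$ of it is generated by $\shD$. Since being generated by $\shD$ concerns only the underlying object in $\QCoh\stX$, the $A$-linear structure plays no role, direct sums and kernels in $\QCoh_{A}\stX$ compute the corresponding objects in $\QCoh\stX$, and I may argue in $\QCoh\stX$, which is abelian because $\stX$ is pseudo-algebraic. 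The one elementary fact I will reuse, immediate from the local characterization, is that an arbitrary direct sum of sheaves generated by $\shD$ is generated by $\shD$: a section of $\bigoplus_{l}\shG_{l}$ over some $B_{j}$ in an atlas has finite support, and on that finite set each component lifts through an object of $\shD^{\oplus}$, so the whole section lifts. In particular $\bigoplus_{l}\shG_{l}$ is generated by $\shD$ whenever each $\shG_{l}$ is, and only the kernel condition requires work.

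The core step is the case of two (hence, by induction, finitely many) summands. Let $\shG_{1},\shG_{2}\in\QCoh_{A}^{\shD}\stX$, let $\E\in\shD^{\oplus}$ and let $\psi\colon\E\arr\shG_{1}\oplus\shG_{2}$ have components $\psi_{1},\psi_{2}$. The defining property of $\QCoh^{\shD}\stX$ for $\shG_{1}$, applied to $\psi_{1}$, shows that $K=\Ker\psi_{1}$ is generated by $\shD$. Restricting $\psi_{2}$ yields a map $K\arr\shG_{2}$ from a sheaf generated by $\shD$ into $\shG_{2}\in\QCoh^{\shD}\stX$, so by the strong consequence its kernel is generated by $\shD$; and this kernel is exactly $\Ker\psi_{1}\cap\Ker\psi_{2}=\Ker\psi$. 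Thus $\shG_{1}\oplus\shG_{2}\in\QCoh_{A}^{\shD}\stX$, and iterating handles all finite direct sums (the case of a single object being trivial). This already gives the ``in particular'': if $\shD\subseteq\QCoh^{\shD}\stX$ then every object of $\shD^{\oplus}$ is a finite sum of objects of $\shD$ and hence lies in $\QCoh^{\shD}\stX$, so $\shD^{\oplus}\subseteq\QCoh^{\shD}\stX$; the reverse implication is trivial since $\shD\subseteq\shD^{\oplus}$.

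For an arbitrary family it remains to treat $\Ker\psi$ for a map $\psi\colon\E\arr\bigoplus_{l}\shG_{l}$ with $\E\in\shD^{\oplus}$, and this is the step I expect to be the main obstacle. The difficulty is that the condition $\psi v=0$ on the lift $v\colon\overline{\E}\arr\E$ required by the local characterization involves every index $l$ simultaneously, so one cannot simply project onto a finite sub-family: writing $\shG=\bigcup_{F}\bigoplus_{l\in F}\shG_{l}$ as a filtered union and setting $N_{F}=\psi^{-1}(\bigoplus_{l\in F}\shG_{l})$, one checks that $\Ker\psi=\Ker(N_{F}\arr\bigoplus_{l\in F}\shG_{l})$ for every finite $F$, but $N_{F}$ is itself the kernel of a map of the very same shape into $\bigoplus_{l\notin F}\shG_{l}$, so no reduction is gained for free. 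The way I would finish is to observe that any morphism from an object of $\shD^{\oplus}$ into $\bigoplus_{l}\shG_{l}$ factors through a finite sub-sum as soon as the objects of $\shD$ are finitely generated and $\stX$ is quasi-compact (the image is then generated by finitely many sections of finite support), which covers all the cases of interest ($\shD=\Loc\stX$, $\Coh\stX$, or $\{\odi{\stX}(n)\}$): under this hypothesis $\psi$ factors through some $\bigoplus_{l\in F}\shG_{l}\in\QCoh_{A}^{\shD}\stX$ and the infinite case collapses to the finite one already settled. Securing this factorization step, and in particular deciding how much finiteness on $\shD$ is genuinely needed, is the delicate point of the argument.
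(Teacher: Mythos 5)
Your core step is exactly the paper's proof: for $\psi=\psi_{1}\oplus\psi_{2}\colon\E\arr\shG_{1}\oplus\shG_{2}$ one writes $\Ker\psi=\Ker(\psi_{1}|_{\Ker\psi_{2}})$ and invokes the final observation of the local-characterization remark, so the finite case is settled identically. Your long worry about arbitrary infinite direct sums is well-founded as mathematics but moot here: the paper's own proof treats only the binary (hence finite) case, which is all that the ``in particular'' about $\shD^{\oplus}$ requires, so no extra finiteness hypothesis on $\shD$ needs to be secured.
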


\begin{proof}
Let $\shF,\shG\in\QCoh_{A}^{\shD}\stX$. Clearly $\shF\oplus\shG$
is generated by $\shD$. Now consider a map $\alpha\colon\E\to\shF\oplus\shG$
with $\E\in\shD^{\oplus}$ and write $\alpha=\phi\oplus\psi$. By
\ref{rem: local characterization of generation by D} it follows that
$\Ker\alpha=\Ker(\phi_{|\Ker(\psi)}\colon\Ker\psi\to\shF)$ is generated
by $\shD$ because $\Ker(\psi)$ is generated by $\shD$ and $\shF\in\QCoh_{A}^{\shD}\stX$.
\end{proof}
If $g\colon U=\Spec B\to\stX$ is a map from a scheme then $(J_{B,\shC})^{\op}$
(see \ref{prop:FGammaCB as a direct limit}) is not a filtered category
in general. Thus if $\Gamma\in\L_{R}(\shC,A)$ and $x_{\E,\phi}\in\shF_{\Gamma,\shC}(B)$
it is very difficult to understand when $x_{\E,\phi}$ is zero in
$\shF_{\Gamma,\shC}(B)$. Luckily, under some hypothesis this is possible.
\begin{lem}
\label{lem:when J is filtered} Assume $\shC\subseteq\QCoh^{\shC}\stX$.
Then for all flat maps $g\colon\Spec B\to\stX$ the category $(J_{B,\shC^{\oplus}})^{\text{op}}$
is filtered. In this case, given $\Gamma\in\L_{R}(\shC,A)$, every
element of $\shF_{\Gamma,\shC}(B)$ are of the form $x_{\E,\phi}$
for some $(\E,\phi)\in J_{B,\shC}$ and such an element is zero if
and only if there exists $(\overline{\E},\overline{\phi})\to(\E,\phi)$
in $J_{B,\shC}$ such that $\Gamma_{u}(x)=0$.
\end{lem}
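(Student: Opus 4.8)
The plan is to prove Lemma~\ref{lem:when J is filtered} in two steps: first, establish that $(J_{B,\shC^\oplus})^{\op}$ is filtered under the flatness hypothesis; second, deduce the vanishing criterion from the standard description of elements of a filtered colimit.

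Let me think about what needs to be shown.

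**Part 1: Filteredness of $(J_{B,\shC^\oplus})^{\op}$.**

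Recall $J_{B,\shC}$ (Definition before Prop~\ref{prop:FGammaCB as a direct limit}) has objects pairs $(\E,\psi)$ with $\E \in \shC^\oplus$ and $\psi \in \E(B)$, and arrows $(\E,\psi) \to (\E',\psi')$ are maps $u\colon \E\to \E'$ with $u(\psi)=\psi'$. Wait—let me check the direction. Actually in the direct limit $\varinjlim_{(\E,\psi)\in J_{B,\shC}^{\op}}$ we take the opposite. So we work in $J_{B,\shC}$ and form the colimit over the opposite, meaning the diagram is contravariant... Actually the statement is that $(J_{B,\shC^\oplus})^{\op}$ is filtered.

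A category is filtered iff: (a) nonempty, (b) any two objects have a cocone (object with maps from both... in filtered sense: maps TO a common object), (c) any two parallel arrows are coequalized.

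In $J_{B,\shC}$, Prop~\ref{prop:FGammaCB as a direct limit} already shows: nonempty, and any two objects $\xi,\xi'$ admit $\xi''$ with maps $\xi''\to\xi$, $\xi''\to\xi'$ (via direct sum). So in $(J_{B,\shC^\oplus})^{\op}$, any two objects have maps into a common object—that's condition (b) for filtered.

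So the real content is condition (c): coequalizing parallel arrows. In $(J_{B,\shC^\oplus})^{\op}$, two parallel arrows correspond to two arrows $u,v\colon (\E',\phi')\to(\E,\phi)$ in $J_{B,\shC^\oplus}$, i.e. $u,v\colon \E'\to\E$ with $u(\phi')=\phi=v(\phi')$. I need an arrow $(\E,\phi)\to(\overline\E,\overline\phi)$ in $J$, i.e. $w\colon\E\to\overline\E$ with $w(\phi)=\overline\phi$, such that $wu=wv$, i.e. $w(u-v)=0$.

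So: given $u,v\colon\E'\to\E$ with $(u-v)(\phi')=0$, I want $w\colon\E\to\overline\E$ with $\overline\E\in\shC^\oplus$, $w(u-v)=0$, and I can set $\overline\phi=w(\phi)$.

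This is where flatness and $\shC\subseteq\QCoh^\shC\stX$ come in. Let $\alpha=u-v\colon\E'\to\E$. Then $\phi'\in\E'(B)$ lies in $\Ker\big(\alpha(B)\colon\E'(B)\to\E(B)\big)$. I want to "kill" $\alpha$ on a neighborhood of $\phi'$ by postcomposing with a map to some $\shC^\oplus$-object annihilating the image-relevant part. The natural candidate is $w=\coker\alpha$, the cokernel $\E\to\Coker\alpha$—but $\Coker\alpha$ need not be in $\shC^\oplus$. Here is the crux: since $\shC\subseteq\QCoh^\shC\stX$, I claim $\Ker\alpha$ is generated by $\shC$. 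Indeed $\E,\E'\in\shC^\oplus\subseteq\QCoh^\shC\stX$ (the previous Proposition shows $\QCoh^\shC_A\stX$ is stable under direct sums, and $\shC\subseteq\QCoh^\shC\stX\iff\shC^\oplus\subseteq\QCoh^\shC\stX$), and by the last sentence of Remark~\ref{rem: local characterization of generation by D}, for a map $\E'\to\E$ between objects generated by $\shC$ with target in $\QCoh^\shC\stX$, the kernel is generated by $\shC$.

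So there is $\overline\E\in\shC^\oplus$ and $v'\colon\overline\E\to\E'$ with image landing in $\Ker\alpha$ and hitting $\phi'$ locally at $B$. By flatness and the local criterion (Remark~\ref{rem: local characterization of generation by D}, second display), since $g\colon\Spec B\to\stX$ is flat and representable, I can find $\overline\E\in\shC^\oplus$, a section $\overline\phi\in\overline\E(B)$, and $w\colon\overline\E\to\E$ with $\alpha\circ(\text{something})=0$ and $w(\overline\phi)=\phi$. I need to be careful about directions here. Let me reconsider: the cleanest route is to take the cokernel $c\colon\E\to\Coker\alpha$, observe that $c(\phi)$ need not vanish, but rather I want $w$ factoring through which equalizes $u,v$. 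Since $c\alpha=0$ and I need $c\in\shC^\oplus$, which fails, I instead generate $\Coker\alpha$ by $\shC$: there is $\overline\E\in\shC^\oplus$ surjecting onto $\Coker\alpha$, but I need a map FROM $\E$, not onto $\Coker\alpha$. The hard part will be exactly this lifting. The resolution: flatness guarantees $\E(\zeta)$-type surjectivity statements hold at the level of $B$-sections, so over the flat atlas point I can lift $c(\phi)\in\Coker\alpha(B)$ along the surjection $\overline\E(B)\to\Coker\alpha(B)$ (using that $\shC$ generates this cokernel locally at $B$, per the first display of Remark~\ref{rem: local characterization of generation by D}), then use projectivity of $\E(B)$ (it is $B$-projective since $\E\in\Loc$... but $\shC$-objects need not be locally free—only $\E(B)$ being a flat/localized module) to build $w$. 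This lifting/diagram-chase at the flat point $B$ is the technical heart.

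**Part 2: The vanishing criterion.**

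Once $(J_{B,\shC^\oplus})^{\op}$ is filtered, Prop~\ref{prop:FGammaCB as a direct limit} identifies $\shF_{\Gamma,\shC}(B)\simeq\varinjlim_{(\E,\phi)\in J_{B,\shC}^{\op}}\Gamma_{\E,\phi}$. In a filtered colimit of $A$-modules, an element represented by $x\in\Gamma_{\E,\phi}$ is zero iff it is annihilated by some transition map out of $(\E,\phi)$, i.e. there exists an arrow $(\E,\phi)\to(\overline\E,\overline\phi)$ in $J_{B,\shC^\oplus}$—equivalently $(\overline\E,\overline\phi)\to(\E,\phi)$ in the opposite—with the induced map $\Gamma_{\E}=\Gamma_{\E,\phi}\to\Gamma_{\overline\E,\overline\phi}=\Gamma_{\overline\E}$ sending $x\mapsto 0$. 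Unwinding the contravariance: an arrow $(\overline\E,\overline\phi)\xrightarrow{u}(\E,\phi)$ in $J_{B,\shC}$ is $u\colon\overline\E\to\E$ with $u(\overline\phi)=\phi$, inducing $\Gamma_u\colon\Gamma_\E\to\Gamma_{\overline\E}$. So $x_{\E,\phi}=0$ iff there is such $(\overline\E,\overline\phi)\xrightarrow{u}(\E,\phi)$ with $\Gamma_u(x)=0$, which is exactly the stated criterion. I would write this step as a direct appeal to the standard characterization of zero in a filtered colimit of modules, reducing to $J_{B,\shC}$ via $J_{B,\shC^\oplus}$ using Prop~\ref{prop:find a directed category} (so that replacing $\shC$ by $\shC^\oplus$ is harmless).

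I expect Part~1, condition (c), to be the main obstacle—specifically producing the $\shC^\oplus$-object $\overline\E$ and the morphism $w$ that coequalizes $u,v$ while carrying $\phi$ appropriately; this is precisely where both hypotheses (flatness of $g$, giving the local lifting criterion of Remark~\ref{rem: local characterization of generation by D}, and $\shC\subseteq\QCoh^\shC\stX$, giving that the relevant kernel/cokernel is generated by $\shC$) are used together.
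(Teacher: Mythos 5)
There is a genuine gap in Part 1, and it comes from verifying the wrong coequalizing condition. For $(J_{B,\shC^{\oplus}})^{\op}$ to be filtered you must show that $J_{B,\shC^{\oplus}}$ is \emph{cofiltered}: given parallel arrows $u,v\colon(\E',\phi')\arr(\E,\phi)$ in $J$, you need an arrow $h\colon(\E'',\phi'')\arr(\E',\phi')$ \emph{into the source} with $uh=vh$ (equalizing by precomposition); under $(-)^{\op}$ this becomes the required coequalizer out of the common target. You instead set out to produce $w\colon(\E,\phi)\arr(\overline{\E},\overline{\phi})$ \emph{out of the target} with $wu=wv$ --- that is the filteredness condition for $J$ itself, not for $J^{\op}$. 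This wrong turn is what forces you toward $\Coker(u-v)$, where, as you yourself observe, the argument does not close: there is no reason a map from $\E$ to an object of $\shC^{\oplus}$ killing $\Imm(u-v)$ should exist, the objects of $\shC$ need not be locally free so no projectivity of $\E(B)$ is available, and the hypothesis $\shC\subseteq\QCoh^{\shC}\stX$ says nothing about cokernels. The step you defer as ``the technical heart'' is not a technicality; it is the point where the proof fails.

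The irony is that you already wrote down the correct argument and then walked away from it. You note that $\phi'\in\Ker\bigl((u-v)(B)\bigr)$ and that $\shK=\Ker(u-v)$ is generated by $\shC$ because $\E\in\shC^{\oplus}\subseteq\QCoh^{\shC}\stX$; flatness of $g$ is exactly what identifies $\shK(B)$ with $\Ker\bigl(\E'(B)\arr\E(B)\bigr)$, so $\phi'\in\shK(B)$. By the local criterion of \ref{rem: local characterization of generation by D} and additivity of $\shC^{\oplus}$ there are $\E''\in\shC^{\oplus}$, $\phi''\in\E''(B)$ and $h\colon\E''\arr\shK\subseteq\E'$ with $h(\phi'')=\phi'$; then $(\E'',\phi'')\arrdi{h}(\E',\phi')$ satisfies $(u-v)h=0$, which is precisely the equalizing arrow needed. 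This is the paper's proof. Your Part 2 (reading off the vanishing criterion from the standard description of elements in a filtered colimit, after reducing to $\shC^{\oplus}$ via \ref{prop:find a directed category}) is correct and matches the paper, but it only becomes available once Part 1 is repaired as above.
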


\begin{proof}
By \ref{prop:find a directed category} and \ref{prop:D autogenerates stable sum}
we can assume $\shC=\shC^{\oplus}$. The last claim follows from \ref{prop:FGammaCB as a direct limit}
and the fact that $J_{B,\shC}^{\op}$ is filtered, which we now prove.
By \ref{prop:FGammaCB as a direct limit} it remains to show that
for all maps $\alpha,\beta\colon(\E,\phi)\to(\overline{\E},\overline{\phi})$
in $J_{B,\shC}$ there exists $u\colon(\E',\phi')\to(\E,\phi)$ in
$J_{B,\shC}$ such that $\alpha u=\beta u$. Let $\shK=\Ker((\alpha-\beta)\colon\E\to\overline{\E})$.
Since $\Spec B\to\stX$ is flat, by \ref{prop:flat maps are exact}
one has 
\[
\phi\in\shK(B)=\Ker((\alpha_{B}-\beta_{B})\colon\E(B)\to\overline{\E}(B))
\]
By assumption $\shK$ is generated by $\shC$ and, since $\shC$ is
additive, there exist $\E'\in\shC$, a map $u\colon\E'\to\shK$ and
$\phi'\in\E'(B)$ such that $u(\phi')=\phi$. So $(\E',\phi')\to(\E,\phi)$
is an arrow in $J_{B,\shC}$ such that $(\alpha-\beta)u=0$ as required.
\end{proof}
In what follows we work out sufficient (and sometimes necessary) conditions
for the surjectivity or injectivity of $\delta_{\shG}\colon\shF_{\Omega^{\shG},\shC}\to\shG$.
Recall that $\delta_{\shG}(u_{\E,\phi})=u(\phi)$ for $\E\in\shC^{\oplus}$,
$\Spec B\to\stX$, $\phi\in\E(B)$, $u\in\Omega_{\E}^{\shG}=\Hom_{\stX}(\E,\shG)$
(see \ref{prop:FGammaC on affine schemes}).
\begin{lem}
\label{lem:surjectivity natural map F} If $\shG\in\QCoh_{A}\stX$
the map $\delta_{\shG}\colon\shF_{\Omega^{\shG},\shC}\to\shG$ is
surjective if and only if $\shG$ is generated by $\shC$.
\end{lem}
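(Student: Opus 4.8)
The plan is to reduce surjectivity of the quasi-coherent map $\delta_{\shG}$ to a pointwise statement over an atlas, and then match it term by term with the local description of ``generated by $\shC$'' recorded in \ref{rem: local characterization of generation by D}. First I would fix an atlas $\{U_j=\Spec B_j\arr\stX\}_{j\in J}$ with $\sqcup_j U_j\arr\stX$ an fpqc covering. Since $\delta_{\shG}$ is a morphism of quasi-coherent sheaves, it is an epimorphism if and only if it is pointwise surjective (Section~\ref{sec: Preliminaries on sheaves}). Its cokernel is again quasi-coherent, hence by \ref{prop:reducing to atlases for quasi-geometric categories} it vanishes iff it vanishes after restriction to $\stX_{\text{fl}}$; as pull-back along the atlas is right exact and faithful, $\delta_{\shG}$ is surjective iff each $\delta_{\shG}(B_j)\colon\shF_{\Omega^{\shG},\shC}(B_j)\arr\shG(B_j)$ is surjective.

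Next I would identify the image of $\delta_{\shG}(B_j)$. By \ref{prop:FGammaCB as a direct limit} every element of $\shF_{\Omega^{\shG},\shC}(B_j)$ has the form $x_{\E,\phi}$ with $\E\in\shC^{\oplus}$, $x\in\Omega_{\E}^{\shG}=\Hom_{\stX}(\E,\shG)$ and $\phi\in\E(B_j)$, and by the counit formula of \ref{prop:FGammaC on affine schemes} we have $\delta_{\shG}(x_{\E,\phi})=x(\phi)$. Thus the image of $\delta_{\shG}(B_j)$ is exactly the set of those $x(\phi)\in\shG(B_j)$ obtained from all such triples $(\E,x,\phi)$.

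Finally I would compare with generation. Surjectivity of all $\delta_{\shG}(B_j)$ then reads: for every $j\in J$ and every $y\in\shG(B_j)$ there exist $\E\in\shC^{\oplus}$, a morphism $x\colon\E\arr\shG$ and $\phi\in\E(B_j)$ with $x(\phi)=y$. This is verbatim the local criterion of \ref{rem: local characterization of generation by D} for $\shG$ being generated by $\shC$ (generation by $\shC$ and by $\shC^{\oplus}$ coinciding). Both implications of the lemma therefore follow at once. I expect no serious obstacle here; the only point needing care is the first reduction, namely that surjectivity of the quasi-coherent map $\delta_{\shG}$ may be tested on sections over the atlas, which rests on quasi-coherence of its cokernel together with right exactness and faithfulness of pull-back along the atlas.
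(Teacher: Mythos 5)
Your proof is correct and follows essentially the same route as the paper's: reduce to the atlas via \ref{prop:reducing to atlases for quasi-geometric categories}, describe the image of $\delta_{\shG}$ on sections over $\Spec B_j$ using \ref{prop:FGammaCB as a direct limit} together with the counit formula, and match this with the local criterion of \ref{rem: local characterization of generation by D}. The only cosmetic difference is that the paper invokes \ref{prop:find a directed category} up front to assume $\shC=\shC^{\oplus}$, which you handle implicitly by noting that generation by $\shC$ and by $\shC^{\oplus}$ coincide.
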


\begin{proof}
By \ref{prop:find a directed category} we can assume $\shC=\shC^{\oplus}$.
Let $\{g_{i}\colon U_{i}=\Spec B_{i}\to\stX\}$ be a set of maps such
that $\sqcup_{i}U_{i}\to\stX$ is an atlas. By \ref{prop:reducing to atlases for quasi-geometric categories}
$\delta_{\shG}$ is surjective if and only if $\delta_{\shG,U_{i}}\colon\shF_{\Omega^{\shG},\shC}(U_{i})\to\shG(U_{i})$
is surjective for all $i\in I$. By \ref{prop:FGammaCB as a direct limit}
$\Imm\delta_{\shG,U_{i}}$ is the set of elements of $\shG(U_{i})$
of the form $\delta_{\shG}(u_{\E,\phi})=u(\phi)$ for $\E\in\shC^{\oplus}$,
$\phi\in\E(U_{i})$, $u\in\Omega_{\E}^{\shG}=\Hom_{\stX}(\E,\shG)$.
So the claim follows from \ref{rem: local characterization of generation by D}.
\end{proof}
\begin{lem}
\label{lem:injectivity natural map F} Let $\shG\in\QCoh_{A}\stX$.
If for all maps $\E\to\shG$ with $\E\in\shC^{\oplus}$ the kernel
$\Ker\phi$ is generated by $\shC$ then the map $\delta_{\shG}\colon\shF_{\Omega^{\shG},\shC}\to\shG$
is injective. The converse holds if $\shC\subseteq\QCoh^{\shC}\stX$.
\end{lem}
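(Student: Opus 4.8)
The plan is to test injectivity of $\delta_{\shG}$ pointwise on an atlas, in the same spirit as the proof of \ref{lem:surjectivity natural map F}. Fix a family $\{g_{i}\colon U_{i}=\Spec B_{i}\arr\stX\}$ with $\sqcup_{i}U_{i}\arr\stX$ an fpqc atlas; each $g_{i}$ is in particular flat and representable. Since pullback $\QCoh\stX\arr\QCoh(\sqcup_{i}U_{i})$ is faithful (see \ref{prop:reducing to atlases for quasi-geometric categories}) it reflects the zero sheaf, and since kernels of maps of quasi-coherent sheaves are computed pointwise on $\stX_{\textup{fl}}$ and each $g_{i}$ is flat, one has $(\Ker\delta_{\shG})(g_{i})=\Ker(\delta_{\shG}(g_{i}))$. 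Hence $\delta_{\shG}$ is injective if and only if each $\delta_{\shG}(g_{i})$ is. By \ref{prop:FGammaCB as a direct limit} every element of $\shF_{\Omega^{\shG},\shC}(B_{i})$ has the form $u_{\E,\phi}$ with $\E\in\shC^{\oplus}$, $u\in\Omega_{\E}^{\shG}=\Hom_{\stX}(\E,\shG)$ and $\phi\in\E(B_{i})$, and $\delta_{\shG}(u_{\E,\phi})=u(\phi)$ (see \ref{prop:FGammaC on affine schemes}); so the whole argument reduces to deciding when such an element is killed.

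For the forward implication I would suppose $\delta_{\shG}(u_{\E,\phi})=u(\phi)=0$ in $\shG(B_{i})$. Because $g_{i}$ is flat, $\phi$ lies in $(\Ker u)(B_{i})$. By hypothesis $\Ker u$ is generated by $\shC$ (as $u\colon\E\arr\shG$ with $\E\in\shC^{\oplus}$), so by the local criterion of \ref{rem: local characterization of generation by D} there are $\overline{\E}\in\shC^{\oplus}$, a map $v\colon\overline{\E}\arr\Ker u\subseteq\E$ and $\overline{\phi}\in\overline{\E}(B_{i})$ with $v(\overline{\phi})=\phi$; note $u\circ v=0$. Then $v\colon(\overline{\E},\overline{\phi})\arr(\E,\phi)$ is an arrow of $J_{B_{i},\shC}$, and the defining colimit relation (from the proof of \ref{prop:FGammaCB as a direct limit}) gives $u_{\E,\phi}=(\Omega_{v}^{\shG}u)_{\overline{\E},\overline{\phi}}=(u\circ v)_{\overline{\E},\overline{\phi}}=0$. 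Thus each $\delta_{\shG}(g_{i})$ is injective, and therefore so is $\delta_{\shG}$.

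For the converse I would assume $\shC\subseteq\QCoh^{\shC}\stX$ and $\delta_{\shG}$ injective, fix $\phi\colon\E\arr\shG$ with $\E\in\shC^{\oplus}$, and verify the criterion of \ref{rem: local characterization of generation by D} for $\Ker\phi$: given an atlas map $g_{j}$ and $y\in\E(B_{j})$ with $\phi(y)=0$, I must produce $\overline{\E}\in\shC^{\oplus}$, $v\colon\overline{\E}\arr\E$ and $\chi\in\overline{\E}(B_{j})$ with $\phi v=0$ and $v(\chi)=y$. Viewing $\phi$ as an element of $\Omega_{\E}^{\shG}$, the element $\phi_{\E,y}\in\shF_{\Omega^{\shG},\shC}(B_{j})$ satisfies $\delta_{\shG}(\phi_{\E,y})=\phi(y)=0$, hence is zero by injectivity of $\delta_{\shG}(g_{j})$. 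Since $\shC\subseteq\QCoh^{\shC}\stX$ and $g_{j}$ is flat, \ref{lem:when J is filtered} applies: $(J_{B_{j},\shC^{\oplus}})^{\op}$ is filtered and the vanishing of $\phi_{\E,y}$ yields an arrow $v\colon(\overline{\E},\chi)\arr(\E,y)$ in $J_{B_{j},\shC}$ with $\Omega_{v}^{\shG}(\phi)=\phi\circ v=0$. This $v$ is exactly the data required, so $\Ker\phi$ is generated by $\shC$.

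The bookkeeping with the subscript notation $x_{\E,\phi}$ and the colimit relations is routine. The one genuinely delicate step is the reduction to the atlas in the first paragraph: unlike surjectivity (pointwise, since epimorphisms are pointwise surjective), injectivity is not a pointwise condition because $g_{i}^{*}$ need not be left exact, and one must argue instead through the pointwise description of kernels on $\stX_{\textup{fl}}$ together with faithfulness of pullback to the atlas. The flatness of the atlas maps is used in an essential way twice — to place $\phi$ in $(\Ker u)(B_{i})$ in the forward direction, and to invoke \ref{lem:when J is filtered} in the converse, which is precisely where the extra hypothesis $\shC\subseteq\QCoh^{\shC}\stX$ enters.
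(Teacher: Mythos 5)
Your proof is correct and follows essentially the same route as the paper's: reduce to the atlas via \ref{prop:reducing to atlases for quasi-geometric categories}, use \ref{prop:FGammaCB as a direct limit} to write elements as $u_{\E,\phi}$, kill them via a factorization through $\Ker u$ in the forward direction, and invoke \ref{lem:when J is filtered} together with the local criterion of \ref{rem: local characterization of generation by D} for the converse. The extra discussion of why injectivity can be tested on the flat atlas is a sound elaboration of what the paper leaves to the cited proposition.
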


\begin{proof}
By \ref{prop:find a directed category} we can assume $\shC=\shC^{\oplus}$.
Let $\{g_{i}\colon U_{i}=\Spec B_{i}\to\stX\}$ be a set of maps such
that $\sqcup_{i}U_{i}\to\stX$ is an atlas. By \ref{prop:reducing to atlases for quasi-geometric categories}
$\delta_{\shG}$ is injective if and only if $\delta_{\shG,U_{i}}\colon\shF_{\Omega^{\shG},\shC}(U_{i})\to\shG(U_{i})$
is injective for all $i\in I$. We start proving that $\delta_{\shG}$
is injective if the hypothesis in the statement are fulfilled. So
let $z\in\Ker\delta_{\shG,U_{i}}$. By \ref{prop:FGammaCB as a direct limit}
there exists $\E\in\shC$, $\phi\in\E(U_{i})$ and $u\colon\E\to\shG$
such that $z=u_{\E,\phi}$. Moreover $\delta_{\shG}(u_{\E,\phi})=u(\phi)=0$.
Set $\shK=\Ker u$. Since $\phi\in\shK(U_{i})$ and, by hypothesis,
$\shK$ is generated by $\shC$ there exist $\overline{\E}\in\shC$,
$\overline{\phi}\in\overline{\E}(U_{i})$ and a map $v\colon\overline{\E}\to\shK$
such that $v(\overline{\phi})=\phi$. If we denote by $v$ also the
composition $\overline{\E}\to\shK\to\E$ we have
\[
u_{\E,\phi}=(\Omega_{v}^{\shG}(u))_{\overline{\E},\overline{\phi}}=(uv)_{\overline{\E},\overline{\phi}}=0_{\overline{\E},\overline{\phi}}=0\text{ in \ensuremath{\shF_{\Gamma,\shC}(U_{i})}}
\]
Assume now that $\delta_{\shG}$ is injective and $\shC\subseteq\QCoh^{\shC}\stX$
and let $u\colon\E\to\shG$ be a map with $\E\in\shC$. We have to
prove that $\shK=\Ker u$ is generated by $\shC$. If $\phi\in\shK(U_{i})\subseteq\E(U_{i})$,
then $u(\phi)=\delta_{\shG}(u_{\E,\phi})=0$. So $u_{\E,\phi}=0$
and the conclusion follows from \ref{rem: local characterization of generation by D}
and \ref{lem:when J is filtered}.
\end{proof}
In general we can still conclude that:
\begin{prop}
\label{prop: delta E is an isomorphism} If $\E\in\shC^{\oplus}$
then the map $\delta_{\E}\colon\shF_{\Omega^{\E},\shC}\to\E$ is an
isomorphism.
\end{prop}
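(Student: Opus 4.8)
The plan is to construct an explicit inverse of $\delta_{\E}$ on sections over affine schemes. Surjectivity is immediate: since $\E\in\shC^{\oplus}$ is visibly generated by $\shC$ (the identity $\E\arr\E$ already realizes it as a quotient of a finite sum of objects of $\shC$), \ref{lem:surjectivity natural map F} gives that $\delta_{\E}$ is surjective. Injectivity is the real content, and it cannot be obtained from \ref{lem:injectivity natural map F}, since that would require the kernel of every map $\E'\arr\E$ with $\E'\in\shC^{\oplus}$ to be generated by $\shC$, which we are not assuming. Instead I would exhibit a set-theoretic two-sided inverse directly.

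First I would reduce to the case in which $\shC$ is additive and $\E$ is an object of $\shC$. By \ref{prop:find a directed category} the restriction $\L_{R}(\shC^{\oplus},A)\arr\L_{R}(\shC,A)$ is an equivalence with inverse its left adjoint; it carries $\Omega^{\E}$ restricted to $\shC$ back to $\Omega^{\E}=\Hom_{\stX}(-,\E)$ on $\shC^{\oplus}$, and it yields a canonical isomorphism $\shF_{\Omega^{\E},\shC}\simeq\shF_{\Omega^{\E},\shC^{\oplus}}$ identifying the two counits. Hence it suffices to treat $\shF_{\Omega^{\E},\shC^{\oplus}}$, where $\E$ now lies in the indexing category; so I assume $\shC=\shC^{\oplus}$ and $\E\in\shC$.

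Now fix a map $\Spec B\arr\stX$. By \ref{prop:FGammaCB as a direct limit} every element of $\shF_{\Omega^{\E},\shC}(B)$ is of the form $x_{\E',\psi}$ with $\E'\in\shC$, $x\in\Omega^{\E}_{\E'}=\Hom_{\stX}(\E',\E)$ and $\psi\in\E'(B)$, and by \ref{prop:FGammaC on affine schemes} the counit is $\delta_{\E}(x_{\E',\psi})=x(\psi)\in\E(B)$. The key step is the colimit identity $x_{\E',\psi}=(\id_{\E})_{\E,\,x(\psi)}$. Indeed, the morphism $x\colon(\E',\psi)\arr(\E,x(\psi))$ is an arrow of $J_{B,\shC}$, and the defining relation of the coend, which for an arrow $(\E',\psi)\arrdi{u}(\overline{\E},u(\psi))$ and $z\in\Omega^{\E}_{\overline{\E}}$ reads $z_{\overline{\E},u(\psi)}=(\Omega^{\E}_{u}(z))_{\E',\psi}$, applied to $u=x$, $\overline{\E}=\E$ and $z=\id_{\E}$ gives $(\id_{\E})_{\E,x(\psi)}=(\id_{\E}\circ x)_{\E',\psi}=x_{\E',\psi}$. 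In particular every section is already of the tautological form $(\id_{\E})_{\E,\eta}$ with $\eta=x(\psi)\in\E(B)$.

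Then I would set $\sigma_{B}\colon\E(B)\arr\shF_{\Omega^{\E},\shC}(B)$, $\eta\longmapsto(\id_{\E})_{\E,\eta}$, and verify that it inverts $\delta_{\E}$ on $B$-sections: the computation $\delta_{\E}(\sigma_{B}(\eta))=\id_{\E}(\eta)=\eta$ gives $\delta_{\E}\circ\sigma_{B}=\id$, while $\sigma_{B}(\delta_{\E}(x_{\E',\psi}))=\sigma_{B}(x(\psi))=(\id_{\E})_{\E,x(\psi)}=x_{\E',\psi}$ gives $\sigma_{B}\circ\delta_{\E}=\id$. Thus $\delta_{\E}$ is bijective on every affine section; being an $A$-linear morphism of quasi-coherent sheaves, it is then an isomorphism by \ref{prop:reducing to atlases for quasi-geometric categories}. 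The one delicate point is the colimit identity above: it works precisely because $\E$ itself is an object of the indexing category, which is exactly why the preliminary passage to $\shC^{\oplus}$ is indispensable; everything else is formal.
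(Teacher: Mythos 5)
Your proof is correct, and it takes a more explicit route than the paper's. The paper argues abstractly: the adjunction identifies $\Hom_{\QCoh_{A}\stX}(\shF_{\Omega^{\E},\shC},\shH)$ with $\Hom_{\L_{R}(\shC,R)}(\Omega^{\E},\Omega^{\shH})$, the enriched Yoneda lemma identifies the latter with $\Hom_{\stX}(\E,\shH)$, the composite is induced by $\delta_{\E}$, and one concludes by the ordinary Yoneda lemma in $\QCoh\stX$. You instead unwind the coend on sections: after the same preliminary reduction to $\shC=\shC^{\oplus}$ via \ref{prop:find a directed category}, the coend relation applied to the arrow $(\E',\psi)\arrdi{x}(\E,x(\psi))$ and the element $\id_{\E}$ gives $x_{\E',\psi}=(\id_{\E})_{\E,x(\psi)}$, so every section is tautological and $\eta\longmapsto(\id_{\E})_{\E,\eta}$ is a two-sided set-theoretic inverse of $\delta_{\E}$ on $B$-points, which suffices since $\delta_{\E}$ is already known to be a morphism of quasi-coherent sheaves. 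The mathematical content is the same (your key identity is exactly the co-Yoneda/density formula underlying the enriched Yoneda lemma, and is presumably the ``direct check'' the paper alludes to), but your version is self-contained and makes visible where the hypothesis $\E\in\shC^{\oplus}$ enters: $\E$ must be an object of the indexing category for the tautological element $(\id_{\E})_{\E,\eta}$ to exist. Your side remark that \ref{lem:injectivity natural map F} is not applicable here, because no hypothesis on kernels of maps $\E'\arr\E$ is being assumed, is also correct and worth making.
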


\begin{proof}
By \ref{prop:find a directed category} we can assume $\shC=\shC^{\oplus}$.
Let $\shH\in\QCoh\stX$. The map 
\[
\Hom_{\stX}(\E,\shH)\to\Hom_{\L_{R}(\shC,R)}(\Omega^{\E},\Omega^{\shH})\simeq\Hom_{\stX}(\shF_{\Omega^{\E},\shC},\shH)
\]
maps $\id_{\E}$ to $\delta_{\E}$ and thus is induced by $\delta_{\E}$.
By the enriched Yoneda's lemma or a direct check we see that the above
map and therefore $\delta_{\E}$ are isomorphisms. %
\end{proof}
\begin{thm}
\label{thm:when Omegastar is fully faithful} Let $\shD_{A}$ be the
subcategory of $\QCoh_{A}\stX$ of sheaves $\shG$ such that $\delta_{\shG}\colon\shF_{\Omega^{\shG},\shC}\to\shG$
is an isomorphism. Then $\shD_{A}$ is an additive category containing
$\QCoh_{A}^{\shC}\stX$, $\shC^{\oplus}\subseteq\shD_{R}$ and the
functor
\[
\Omega^{*}\colon\shD_{A}\to\L_{R}(\shC,A)
\]
is fully faithful. Moreover if $\shC\subseteq\QCoh^{\shC}\stX$ then
$\shD_{A}=\QCoh_{A}^{\shC}\stX$.
\end{thm}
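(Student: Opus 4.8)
The plan is to treat $\shD_{A}$ as the full subcategory of $\QCoh_{A}\stX$ on which the counit $\delta$ of the adjunction $\shF_{*,\shC}\dashv\Omega^{*}$ (Proposition \ref{prop:Fstar and Omegastar are adjoint}) is invertible, so that every assertion reduces to a formal property of this adjunction combined with the three lemmas detecting surjectivity, injectivity and bijectivity of $\delta_{\shG}$ (Lemmas \ref{lem:surjectivity natural map F} and \ref{lem:injectivity natural map F}, and Proposition \ref{prop: delta E is an isomorphism}).

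First I would dispatch the easy inclusions. The containment $\shC^{\oplus}\subseteq\shD_{R}$ is exactly Proposition \ref{prop: delta E is an isomorphism}. For $\QCoh_{A}^{\shC}\stX\subseteq\shD_{A}$, take $\shG\in\QCoh_{A}^{\shC}\stX$: since $\shG$ is generated by $\shC$, Lemma \ref{lem:surjectivity natural map F} gives surjectivity of $\delta_{\shG}$, and since every $\Ker(\E\arr\shG)$ with $\E\in\shC^{\oplus}$ is generated by $\shC$, Lemma \ref{lem:injectivity natural map F} gives injectivity; hence $\delta_{\shG}$ is an isomorphism and $\shG\in\shD_{A}$.

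Next, additivity. As $\shD_{A}$ is full in the additive category $\QCoh_{A}\stX$, it is enough to see it contains $0$ and is closed under finite direct sums. The endofunctor $\shG\mapsto\shF_{\Omega^{\shG},\shC}$ is additive, being the composite of the right adjoint $\Omega^{*}$ and the left adjoint $\shF_{*,\shC}$, and $\delta$ is a natural transformation of additive functors; consequently $\delta_{\shF\oplus\shG}$ is identified with $\delta_{\shF}\oplus\delta_{\shG}$ (using $\Omega^{\shF\oplus\shG}\simeq\Omega^{\shF}\oplus\Omega^{\shG}$ and that $\shF_{*,\shC}$ preserves direct sums). A direct sum of isomorphisms is an isomorphism, so $\shF,\shG\in\shD_{A}$ forces $\shF\oplus\shG\in\shD_{A}$, and $\delta_{0}$ is trivially invertible.

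For full faithfulness I would run the standard argument that a right adjoint becomes fully faithful once restricted to the locus where the counit is invertible. For $\shG,\shH\in\shD_{A}$ the adjunction gives $\Hom(\shF_{\Omega^{\shG},\shC},\shH)\simeq\Hom(\Omega^{\shG},\Omega^{\shH})$, and precomposition with the isomorphism $\delta_{\shG}$ gives $\Hom(\shG,\shH)\simeq\Hom(\shF_{\Omega^{\shG},\shC},\shH)$. Composing these two bijections and invoking the triangle identity $\Omega^{\delta_{\shG}}\circ\gamma_{\Omega^{\shG}}=\id_{\Omega^{\shG}}$ shows that the resulting map $\Hom(\shG,\shH)\arr\Hom(\Omega^{\shG},\Omega^{\shH})$ sends $f$ to $\Omega^{f}$, i.e.\ it is $\Omega^{*}$; hence $\Omega^{*}$ is bijective on Hom-sets. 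Finally, assuming $\shC\subseteq\QCoh^{\shC}\stX$, I would prove $\shD_{A}\subseteq\QCoh_{A}^{\shC}\stX$: if $\delta_{\shG}$ is an isomorphism it is in particular surjective and injective, so $\shG$ is generated by $\shC$ by Lemma \ref{lem:surjectivity natural map F}, while the converse half of Lemma \ref{lem:injectivity natural map F}---which needs precisely this hypothesis---guarantees that each $\Ker(\E\arr\shG)$ with $\E\in\shC^{\oplus}$ is generated by $\shC$. The only non-formal ingredients are these three lemmas; the one step deserving care is checking, via the zig-zag identity, that the counit-based bijection genuinely recomputes $\Omega^{*}$ on morphisms rather than some twist of it.
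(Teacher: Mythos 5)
Your proposal is correct and follows essentially the same route as the paper, which proves additivity from the additivity of $\Omega^{*}$ and $\shF_{*,\shC}$ and derives everything else from Lemmas \ref{lem:surjectivity natural map F}, \ref{lem:injectivity natural map F}, Proposition \ref{prop: delta E is an isomorphism}, and the fact that $\delta$ is the counit of an adjunction; you have merely written out the details the paper leaves implicit. (One cosmetic remark: in the full-faithfulness step, naturality of $\delta$ already identifies the composite bijection with $f\mapsto\Omega^{f}$, so the zig-zag identity is not strictly needed there.)
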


\begin{proof}
The category $\shD_{A}$ is additive because $\Omega^{*}$ and $\shF_{*,\shC}$
are additive. All the other claims follows from \ref{lem:surjectivity natural map F},
\ref{lem:injectivity natural map F}, \ref{prop: delta E is an isomorphism}
and the fact that $\delta_{\shG}$ is the counit of an adjiunction.
\end{proof}

\section{Left exactness}

We keep the notation from the previous section. So $\stX$ is a pseudo-algebraic
fibered category over a ring $R$, $\shC\subseteq\QCoh(\stX)$ is
a small full subcategory and $\shD\subseteq\QCoh(\stX)$ is any full
subcategory. The symbol $A$ instead will always denote an $R$-algebra.

Now we want to address the problem of what is the essential image
of the Yoneda functor $\Omega^{*}\colon\QCoh_{A}\stX\to\L_{R}(\shD,A)$.
We will see that for $\shF\in\QCoh_{A}\stX$ the associated Yoneda
functor $\Omega^{\shF}$ is always ``left exact'' and we will give
sufficient conditions assuring that ``left exact'' functors in $\L_{R}(\shD,A)$
are Yoneda functors associated with some quasi-coherent sheaf on $\stX$.
Since $\shD$ is not abelian, we introduce an ad hoc notion of left
exactness.
\begin{defn}
Let $\shD$ be a subcategory of $\QCoh\stX$. A map
\[
\alpha=(\alpha_{kj})\colon\bigoplus_{k\in K}\E_{k}\to\bigoplus_{j\in J}\E_{j}\text{ with }\E_{k},\E_{j}\in\shD
\]
where $K$ and $J$ are sets, is called locally finite (with respect
to the decomposition) if all the maps $\E_{k}\to\bigoplus_{j}\E_{j}$
factors through a finite sum. If $\Gamma\in\L_{R}(\shD,A)$ we set
\[
\Gamma_{\alpha}\colon\prod_{j\in J}\Gamma_{\E_{j}}\to\prod_{k}\Gamma_{\E_{k}}\comma x=(x_{j})_{j}\mapsto(\sum_{j}\Gamma_{\alpha_{kj}}(x_{j}))_{k}
\]
\end{defn}

\begin{rem}
It is easy to verify that the above association is functorial, that
is $\Gamma_{\beta\circ\alpha}=\Gamma_{\alpha}\circ\Gamma_{\beta}$
when it makes sense. If $K$ and $J$ are finite then $\Gamma_{\alpha}$
is obtained applying the extension $\Gamma\in\L_{R}(\shD^{\oplus},A)$
(see \ref{prop:find a directed category}) on the map $\alpha$.

We should warn the reader that even if $\alpha$ is an arrow in $\shD$
the map $\Gamma_{\alpha}$ in definition above is not obtained applying
$\Gamma$ on the arrow. The problem here is that $\Gamma$ may not
transform direct sums into direct products. This is clearly an abuse
of notation, but we hope it will not led to confusion. The map $\alpha$
as well as the notion of local finiteness should be interpreted in
a category of ``decompositions''.
\end{rem}

\begin{defn}
\label{def: left exact functors} A \emph{test sequence }for $\shD$
is an exact sequence
\begin{equation}
\bigoplus_{k\in K}\E_{k}\to\bigoplus_{j\in J}\E_{j}\to\E\to0\text{ with }\E,\E_{j},\E_{k}\in\shD\text{ for all }j\in J,k\in K\label{eq:exact sequence for functor exactness}
\end{equation}
in $\QCoh\stX$ where the first map is locally finite. We will also
say that it is a test sequence for $\E\in\shD$. A finite test sequence
for $\E\in\shD$ is an exact sequence
\[
\E''\to\E'\to\E\to0\text{ with }\E',\E''\in\shD^{\oplus}
\]
Given $\Gamma\in\L_{R}(\shD,A)$ we say that $\Gamma$ is exact on
the test sequence (\ref{eq:exact sequence for functor exactness})
if the sequence \begin{equation}
\begin{tikzpicture}[xscale=2.9,yscale=-0.8]     
\node (A0_2) at (2, 0) {$(x_j )_j$};     
\node (A0_3) at (3, 0) {$(\sum_j \Gamma_{u_{kj}}(x_j))_k$};     
\node (A1_0) at (0.4, 1) {$0$};     
\node (A1_1) at (1.1, 1) {$\Gamma_\E$};     
\node (A1_2) at (2, 1) {$\displaystyle{\prod_{j\in J}\Gamma_{\E_j}}$};     
\node (A1_3) at (3, 1) {$\displaystyle{\prod_{k\in K}\Gamma_{\E_k}}$};    
\node (A2_1) at (1.1, 2) {$x$};     
\node (A2_2) at (2, 2) {$(\Gamma_{u_j}(x))_j$};     
\path (A0_2) edge [|->,gray]node [auto] {$\scriptstyle{}$} (A0_3);     
\path (A1_0) edge [->]node [auto] {$\scriptstyle{}$} (A1_1);     
\path (A2_1) edge [|->,gray]node [auto] {$\scriptstyle{}$} (A2_2);     
\path (A1_2) edge [->]node [auto] {$\scriptstyle{}$} (A1_3);     
\path (A1_1) edge [->]node [auto] {$\scriptstyle{}$} (A1_2);   
\end{tikzpicture}
\label{Gamma on a test sequence}
\end{equation} is exact, where $u_{j}\colon\E_{j}\to\E$, $u_{kj}\colon\E_{k}\to\E_{j}$
denotes the maps in (\ref{eq:exact sequence for functor exactness}).
We say that $\Gamma$ is left exact if it is exact on all test sequences
in $\shD$. We define $\Lex_{R}(\shD,A)$ as the subcategory of $\L_{R}(\shD,A)$
of left exact functors.
\end{defn}

\begin{rem}
\label{rem:extending to test sequences} If $\shD\subseteq\QCoh^{\shD}\stX$
then any surjective map $\mu\colon\bigoplus_{j\in J}\E_{j}\to\E\to0$
can be extended to a test sequence in $\shC$. More generally if $u\colon\bigoplus_{j\in J}\E_{j}\to\bigoplus_{p\in P}\E_{p}$
is a locally finite map, then there exists a locally finite map $\bigoplus_{k}\E_{k}\to\bigoplus_{j}\E_{j}$
whose image is $\Ker(u)$. Indeed let $Q$ be the set of finite subsets
of $J$ and for any $T\in Q$ let $P_{T}\subseteq P$ be a finite
subsets such that each $\E_{j}$ for $j\in T$ maps inside $\bigoplus_{p\in P_{T}}\E_{p}$.
By \ref{prop:D autogenerates stable sum} $\shD^{\oplus}\subseteq\QCoh^{\shD}\stX$
and therefore we can find exact sequences
\[
\bigoplus_{k\in K_{T}}\E_{k}\to\bigoplus_{j\in T}\E_{j}\to\bigoplus_{p\in P_{T}}\E_{p}
\]
Since $u$ is locally finite we have an exact sequence
\[
\bigoplus_{T\in Q}\bigoplus_{k\in K_{T}}\E_{k}\to\bigoplus_{j\in J}\E_{j}\to\bigoplus_{p\in P}\E_{p}
\]
where the first map is locally finite. In particular given a Cartesian
diagram of solid arrows   \[   \begin{tikzpicture}[xscale=2.0,yscale=-1.2]     \node (A0_0) at (0, 0) {$\bigoplus_s \E_s$};     \node (A1_1) at (1, 1) {$\shH$};     \node (A1_2) at (2, 1) {$\bigoplus_t \E_t$};     \node (A2_1) at (1, 2) {$\bigoplus_q \E_q$};     \node (A2_2) at (2, 2) {$\bigoplus_p \E_p$};     \path (A0_0) edge [->,dashed,bend right=20]node [auto] {$\scriptstyle{}$} (A1_2);     \path (A1_1) edge [->]node [auto] {$\scriptstyle{}$} (A1_2);     \path (A1_2) edge [->]node [auto] {$\scriptstyle{}$} (A2_2);     \path (A0_0) edge [->,dashed]node [auto] {$\scriptstyle{}$} (A1_1);     \path (A1_1) edge [->]node [auto] {$\scriptstyle{}$} (A2_1);     \path (A2_1) edge [->]node [auto] {$\scriptstyle{}$} (A2_2);     \path (A0_0) edge [->,dashed,bend left=20]node [auto] {$\scriptstyle{}$} (A2_1);   \end{tikzpicture}   \] in
which all the maps are locally finite then there exists a surjective
map $\bigoplus_{s}\E_{s}\to\shH$ such that the other dashed arrows
are locally finite. This is because $\shH$ is the kernel of the difference
map
\[
(\bigoplus_{q}\E_{q})\bigoplus(\bigoplus_{t}\E_{t})\to\bigoplus_{p}\E_{p}
\]
which is locally finite.
\end{rem}

\begin{prop}
\label{prop:OmegaF is left exact} If $\shF\in\QCoh_{A}\stX$ then
$\Omega^{\shF}\in\Lex_{R}(\shD,A)$. More generally $\Omega^{\shF}$
is exact on all exact sequences of the form
\[
\bigoplus_{j\in J}\E_{j}\to\bigoplus_{q\in Q}\E_{q}\to0\text{ or }\bigoplus_{k\in K}\E_{k}\to\bigoplus_{j\in J}\E_{j}\to\bigoplus_{q\in Q}\E_{q}\to0\text{ for }\E_{k},\E_{j},\E_{q}\in\shC
\]
where the maps involved are locally finite.
\end{prop}

\begin{proof}
It is enough to observe that $\Hom_{\stX}(-,\shF)$ is left exact
in the usual sense and that
\[
\Hom_{\stX}(\bigoplus_{i}\E_{i},\shF)\simeq\prod_{i}\Hom(\E_{i},\shF)\simeq\prod_{i}\Omega_{\E_{i}}^{\shF}
\]
\end{proof}
\begin{prop}
\label{prop:excatness properties of Omega* and shF*} The functor
$\Omega^{*}\colon\QCoh_{A}\stX\to\L_{R}(\shD,A)$ is left exact. If
$\shC\subseteq\QCoh^{\shC}\stX$ then $\shF_{*,\shC}\colon\L_{R}(\shC,A)\to\QCoh_{A}\stX$
is exact.
\end{prop}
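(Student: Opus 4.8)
The plan is to handle the two assertions separately: the first is formal, while the second reduces, under the hypothesis $\shC\subseteq\QCoh^{\shC}\stX$, to the exactness of a \emph{filtered} colimit.

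For the left exactness of $\Omega^{*}$ I would simply invoke the adjunction: by Proposition \ref{prop:Fstar and Omegastar are adjoint} the functor $\Omega^{*}$ is the right adjoint of $\shF_{*,\shC}$, so it preserves kernels and is therefore left exact. Since kernels in $\L_{R}(\shC,A)$ are computed objectwise in $\Mod A$, this is equivalent to the observation that each functor $\shF\longmapsto\Omega_{\E}^{\shF}=\Hom_{\stX}(\E,\shF)$ is left exact, i.e.\ the usual left exactness of $\Hom$ in the abelian category $\QCoh_{A}\stX$. Either phrasing settles the first claim.

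For the exactness of $\shF_{*,\shC}$ I would first note that, being a left adjoint (Proposition \ref{prop:Fstar and Omegastar are adjoint}) and additive, it is automatically right exact, so only left exactness is at stake; equivalently, I must show that it carries an arbitrary short exact sequence $0\arr\Gamma'\arr\Gamma\arr\Gamma''\arr0$ in $\L_{R}(\shC,A)$ to a short exact sequence in $\QCoh_{A}\stX$. Using Proposition \ref{prop:find a directed category} I would replace $\shC$ by $\shC^{\oplus}$ and thus assume $\shC$ additive and closed under finite direct sums; this affects neither $\shF_{*,\shC}$ nor, up to the induced equivalence, the objectwise notion of exactness in $\L_{R}(\shC,A)$. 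Next, by Proposition \ref{prop:reducing to atlases for quasi-geometric categories} the exactness of the resulting complex in $\QCoh_{A}\stX$ may be tested on $\stX_{\text{fl}}$, that is at every flat representable point $\Spec B\arr\stX$.

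The heart of the argument is the evaluation at such a point. By Proposition \ref{prop:FGammaCB as a direct limit} there is, naturally in $\Gamma$, an identification
\[
\shF_{\Gamma,\shC}(B)\simeq\varinjlim_{(\E,\psi)\in J_{B,\shC}^{\op}}\Gamma_{\E,\psi},
\]
whose transition maps depend only on the arrows of $J_{B,\shC}$ and not on the chosen sequence. This is precisely where the hypothesis enters: by Lemma \ref{lem:when J is filtered}, since $\shC\subseteq\QCoh^{\shC}\stX$, the indexing category $J_{B,\shC}^{\op}$ is filtered for every flat representable $\Spec B\arr\stX$. As $0\arr\Gamma'_{\E,\psi}\arr\Gamma_{\E,\psi}\arr\Gamma''_{\E,\psi}\arr0$ is exact in $\Mod A$ for each object $(\E,\psi)$ and these assemble into a short exact sequence of diagrams over $J_{B,\shC}^{\op}$, passing to the filtered colimit (an exact operation on $\Mod A$) yields the exact sequence
\[
0\arr\shF_{\Gamma',\shC}(B)\arr\shF_{\Gamma,\shC}(B)\arr\shF_{\Gamma'',\shC}(B)\arr0.
\]
Running this over all $\xi\in\stX_{\text{fl}}$ gives exactness in $\QCoh_{A}\stX$. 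The main obstacle, and the only place where $\shC\subseteq\QCoh^{\shC}\stX$ is indispensable, is securing the filteredness of $J_{B,\shC}^{\op}$; without it the colimit above need not be exact and $\shF_{*,\shC}$ may fail to be left exact.
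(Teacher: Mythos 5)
Your proposal is correct and follows essentially the same route as the paper: left exactness of $\Omega^{*}$ from the left exactness of $\Hom_{\stX}(\E,-)$ (or equivalently the adjunction of \ref{prop:Fstar and Omegastar are adjoint}), and exactness of $\shF_{*,\shC}$ by testing on an atlas via \ref{prop:reducing to atlases for quasi-geometric categories}, writing $\shF_{\Gamma,\shC}(B)$ as the colimit over $J_{B,\shC}^{\op}$ from \ref{prop:FGammaCB as a direct limit}, and using that this category is filtered by \ref{lem:when J is filtered} under the hypothesis $\shC\subseteq\QCoh^{\shC}\stX$. The only cosmetic difference is that you split off right exactness via the adjunction and treat short exact sequences, whereas the paper applies the filtered-colimit argument directly to an arbitrary exact sequence $\Gamma'\arr\Gamma\arr\Gamma''$.
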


\begin{proof}
For the first claim it is enough to use that $\Hom_{\stX}(\E,-)$
is left exact. For the last part of the statement consider a set of
maps $\{U_{i}=\Spec B_{i}\to\stX\}_{i\in I}$ such that $\sqcup_{i}U_{i}\to\stX$
is an atlas. Let also $\Gamma'\to\Gamma\to\Gamma''$ be an exact sequence
in $\L_{R}(\shC,A)$. By \ref{prop:FGammaCB as a direct limit} the
sequence $\shF_{\Gamma',\shC}(B_{i})\to\shF_{\Gamma,\shC}(B_{i})\to\shF_{\Gamma'',\shC}(B_{i})$
are exact for all $i\in I$ because limit of exact sequences $\Gamma_{\E,\phi}'\to\Gamma_{\E,\phi}\to\Gamma_{\E,\phi}''$
over the category $(J_{B_{i},\shC})^{\op}$, which is filtered thanks
to \ref{lem:when J is filtered}. Applying \ref{prop:reducing to atlases for quasi-geometric categories}
we get the result.
\end{proof}
Recall that if $\Gamma\in\L_{R}(\shC,A)$ then $\gamma_{\Gamma,\E}\colon\Gamma_{\E}\to\Omega_{\E}^{\shF_{\Gamma,\shC}}=\Hom_{\stX}(\E,\shF_{\Gamma,\shC})$
is given by $\gamma_{\Gamma,\E}(x)(\phi)=x_{\E,\phi}$ for $\E\in\shC$,
$x\in\Gamma_{\E}$, $\Spec B\to\stX$ and $\phi\in\E(B)$ (see \ref{prop:FGammaC on affine schemes}).
\begin{lem}
\label{lem:injectivity natural transformation Omega} Assume $\shC\subseteq\QCoh^{\shC}\stX$.
If $\Gamma\in\Lex_{R}(\shC,A)$ and the map $\Omega^{*}\circ\shF_{*,\shC}(\gamma_{\Gamma})\colon\Omega^{\shF_{\Gamma,\shC}}\to\Omega^{\shF_{\Omega^{\shF_{\Gamma,\shC}},\shC}}$
is an isomorphism then the natural transformation $\gamma_{\Gamma}\colon\Gamma\to\Omega^{\shF_{\Gamma,\shC}}$
is an isomorphism.
\end{lem}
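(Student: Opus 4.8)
The plan is to prove separately that $\gamma_{\Gamma}$ is injective and that it is surjective (objectwise on $\shC$), the point being that injectivity needs only $\Gamma\in\Lex_{R}(\shC,A)$, while surjectivity is exactly where the hypothesis that $\Omega^{*}\shF_{*,\shC}(\gamma_{\Gamma})$ is an isomorphism enters. By \ref{prop:find a directed category} (and the comparison of $\Lex$ over $\shC$ and $\shC^{\oplus}$) I may assume $\shC=\shC^{\oplus}$ is additive; then for every flat representable $\Spec B\arr\stX$ the category $(J_{B,\shC})^{\op}$ is filtered and $\shF_{\Gamma,\shC}(B)=\varinjlim_{(\E,\psi)}\Gamma_{\E}$ by \ref{lem:when J is filtered} and \ref{prop:FGammaCB as a direct limit}, which is the computational tool I will use throughout.

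First I would show $\gamma_{\Gamma}$ is injective for any $\Gamma\in\Lex_{R}(\shC,A)$. Let $x\in\Gamma_{\E}$ with $\gamma_{\Gamma,\E}(x)=0$, i.e. $x_{\E,\phi}=0$ for all sections $\phi$ of $\E$ over the pieces of an atlas. By the vanishing criterion of \ref{lem:when J is filtered}, each such $\phi$ admits a reduction $(\overline{\E},\overline{\phi})\arrdi{u}(\E,\phi)$ with $\Gamma_{u}(x)=0$; assembling these gives a jointly surjective family $u\colon\bigoplus_{\lambda}\overline{\E}_{\lambda}\arr\E$ with $\Gamma_{u}(x)=0$. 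Since $\shC\subseteq\QCoh^{\shC}\stX$, the kernel of $u$ is generated by $\shC$ (\ref{rem: local characterization of generation by D}), so $u$ completes to a test sequence for $\E$; exactness of $\Gamma$ on it forces $\Gamma_{\E}\arr\prod_{\lambda}\Gamma_{\overline{\E}_{\lambda}}$ to be injective, whence $x=0$. The one technical point here is to check that the constructed sequence meets the finite-support clause in \ref{def: left exact functors}, which follows from the local generation criterion together with the fact that sections of a direct sum are finitely supported.

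Next I would use the hypothesis to force $\shF_{*,\shC}(\gamma_{\Gamma})$ to be an isomorphism. By the triangle identities of the adjunction $\shF_{*,\shC}\dashv\Omega^{*}$ (\ref{prop:Fstar and Omegastar are adjoint}) one has $\Omega^{*}(\delta_{\shF_{\Gamma,\shC}})\circ\Omega^{*}\shF_{*,\shC}(\gamma_{\Gamma})=\id$, so $\Omega^{*}(\delta_{\shF_{\Gamma,\shC}})$ is an isomorphism. Now $\delta_{\shF_{\Gamma,\shC}}$ is surjective because $\shF_{\Gamma,\shC}$ is generated by $\shC$ (\ref{prop:presentation of Fgammashc}, \ref{lem:surjectivity natural map F}), and $\shF_{*,\shC}(\gamma_{\Gamma})$ is a split monomorphism with retraction $\delta_{\shF_{\Gamma,\shC}}$; hence $\shK:=\Ker\delta_{\shF_{\Gamma,\shC}}$ is a direct summand, and therefore a \emph{quotient}, of $\shF_{\Omega^{\shF_{\Gamma,\shC}},\shC}$, so $\shK$ is generated by $\shC$. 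Applying $\Hom_{\stX}(\E,-)$ to $0\arr\shK\arr\shF_{\Omega^{\shF_{\Gamma,\shC}},\shC}\arrdi{\delta}\shF_{\Gamma,\shC}\arr0$ and using that $\Omega^{*}(\delta_{\shF_{\Gamma,\shC}})$ is injective gives $\Hom_{\stX}(\E,\shK)=0$ for all $\E\in\shC$; since a nonzero sheaf generated by $\shC$ receives a nonzero map from some $\E\in\shC$, we get $\shK=0$, so $\delta_{\shF_{\Gamma,\shC}}$ and $\shF_{*,\shC}(\gamma_{\Gamma})$ are isomorphisms.

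Finally, for surjectivity set $C=\Coker\gamma_{\Gamma}$ in $\L_{R}(\shC,A)$; as $\gamma_{\Gamma}$ is injective we have a short exact sequence $0\arr\Gamma\arr\Omega^{\shF_{\Gamma,\shC}}\arr C\arr0$. Because $\shF_{*,\shC}$ is exact (\ref{prop:excatness properties of Omega* and shF*}) and $\shF_{*,\shC}(\gamma_{\Gamma})$ is an isomorphism, $\shF_{C,\shC}=\Coker\shF_{*,\shC}(\gamma_{\Gamma})=0$. To deduce $C=0$, note that $\Gamma$ and $\Omega^{\shF_{\Gamma,\shC}}$ both lie in $\Lex_{R}(\shC,A)$ (\ref{prop:OmegaF is left exact}); evaluating the short exact sequence on any test sequence and using that arbitrary products are exact in $\Mod A$, the long exact cohomology sequence shows that $C$ is injective in the first spot of every test sequence (its $H^{0}$ vanishes because $H^{0}$ of $\Omega^{\shF_{\Gamma,\shC}}$ and $H^{1}$ of $\Gamma$ do). This is precisely the input needed to rerun the injectivity mechanism of the second step with $C$ in place of $\Gamma$: every element of $C_{\E}$ dies in $\shF_{C,\shC}=0$, hence is killed by an injective map and so vanishes. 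Therefore $C=0$, $\gamma_{\Gamma}$ is surjective, and together with injectivity it is an isomorphism. I expect the main obstacle to be this last step — transferring test-sequence exactness from $\Gamma$ and $\Omega^{\shF_{\Gamma,\shC}}$ to the cokernel $C$, and carefully handling the finite-support conditions built into the definition of a test sequence.
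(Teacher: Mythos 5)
Your proposal is correct and follows essentially the same route as the paper: injectivity of $\gamma_{\Gamma}$ via the filtered description of $\shF_{\Gamma,\shC}(B)$ from \ref{lem:when J is filtered} together with exactness on a constructed test sequence, then killing the cokernel $C$ by combining the hypothesis with the split triangle identity and rerunning the injectivity mechanism through the three-row diagram (your long-exact-sequence argument that the $0$-th cohomology of the test-sequence complex of $C$ vanishes is exactly the paper's diagram chase). The only variation is that you prove the stronger intermediate fact that $\delta_{\shF_{\Gamma,\shC}}$ and $\shF_{*,\shC}(\gamma_{\Gamma})$ are isomorphisms, whereas the paper only extracts $\Omega^{\shF_{C,\shC}}=0$ from the split exact sequence; both feed into the final step identically.
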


\begin{proof}
Let $\{U_{i}=\Spec B_{i}\to\stX\}_{i\in I}$ be a set of maps such
that $\sqcup_{i}U_{i}\to\stX$ is an atlas and let $\Psi\in\L_{R}(\shC,A)$
and $x\in\Ker\gamma_{\Psi,\E}$ for some $\E\in\shC$. We are going
to prove that there exists a surjective map $\mu=\oplus_{j}\mu_{j}\colon\oplus_{j\in J}\E_{j}\to\E$
with $\E_{j},\E\in\shC$ such that $\Psi_{u_{j}}(x)=0$ for all $j$.

If $\phi\in\E(U_{i})$, by \ref{lem:when J is filtered} and the fact
that $\gamma_{\Psi,\E}(x)(\phi)=x_{\E,\phi}$ is zero in $\shF_{\Psi,\shC}(U_{i})$,
there exists $(\E_{\phi},y_{\phi})\in J_{B_{i},\shC}$ and a map $u_{\phi}\colon(\E_{\phi},y_{\phi})\to(\E,\phi)$
such that $\Psi_{u_{\phi}}(x)=0$. Consider the induced map
\[
\bigoplus_{i\in I}\bigoplus_{\phi\in\E(U_{i})}\E_{\phi}\to\E
\]
which is surjective by \ref{prop:reducing to atlases for quasi-geometric categories}.
Writing all the $\E_{\phi}\in\shC^{\oplus}$ as sums of sheaves in
$\shC$ we get the desired surjective map.

We return now to the proof of the statement. Given $x\in\Ker\gamma_{\Gamma,\E}$,
considering a surjection $\mu$ as above for $\Psi=\Gamma$, extending
it via \ref{rem:extending to test sequences} and using the exactness
of $\Gamma$ we can conclude that $x=0$. This means that the natural
transformation $\gamma_{\Gamma}\colon\Gamma\to\Omega^{\shF_{\Gamma,\shC}}$
is injective. Set now $\Pi=\Coker\gamma_{\Gamma,\shC}$. By \ref{prop:excatness properties of Omega* and shF*}
we have an exact sequence
\[
0\to\shF_{\Gamma,\shC}\to\shF_{\Omega^{\shF_{\Gamma,\shC}},\shC}\to\shF_{\Pi,\shC}\to0
\]
This is a split sequence because the composition of $\shF_{*,\shC}(\gamma_{\Gamma})\colon\shF_{\Gamma,\shC}\to\shF_{\Omega^{\shF_{\Gamma,\shC}},\shC}$
and $\delta_{\shF_{\Gamma,\shC}}\colon\shF_{\Omega^{\shF_{\Gamma,\shC}},\shC}\to\shF_{\Gamma,\shC}$
is the identity. So $\Omega^{*}$ maintains the exactness of the above
sequence and therefore 
\[
\Omega^{\shF_{\Pi,\shC}}=\Coker(\Omega^{*}\circ\shF_{*,\shC}(\gamma_{\Gamma}))=0
\]
by hypothesis. We want to prove that $\Pi=0$. Let $x\in\Pi_{\E}$
for $\E\in\shC$. Since $\Omega^{\shF_{\Pi,\shC}}=0$ we have $x\in\Ker\gamma_{\Pi,\E}$.
Consider a surjection $\mu=\oplus_{j}\mu_{j}$ constructed as above
starting from $x\in\Pi_{\E}$ and $\Psi=\Pi$. By \ref{rem:extending to test sequences}
$\mu$ can be extended to a test sequence $\oplus_{k}\E_{k}\to\oplus_{j}\E_{i}\to\E$
because $\shC\subseteq\QCoh^{\shC}\stX$. Since $\Omega^{\shF_{\Gamma,\shC}}\in\Lex_{R}(\shC,A)$
by \ref{prop:OmegaF is left exact} we get a commutative diagram   \[   \begin{tikzpicture}[xscale=2.0,yscale=-1.4]     
\node (A0_1) at (1, 0) {$0$};     
\node (A0_2) at (2, 0) {$0$};     
\node (A1_0) at (0, 1) {$0$};     
\node (A1_1) at (1, 1) {$\Gamma_\E$};     
\node (A1_2) at (2, 1) {$\Omega^{\shF_{\Gamma,\shC}}_\E$};     
\node (A1_3) at (3, 1) {$\Pi_\E$};     
\node (A1_4) at (4, 1) {$0$};     
\node (A2_0) at (0, 2) {$0$};     
\node (A2_1) at (1, 2) {$\displaystyle\prod_{j\in J}\Gamma_{\E_j}$};     
\node (A2_2) at (2, 2) {$\displaystyle\prod_{j\in J}\Omega^{\shF_{\Gamma,\shC}}_{\E_j}$};     
\node (A2_3) at (3, 2) {$\displaystyle\prod_{j\in J}\Pi_{\E_j}$};     
\node (A2_4) at (4, 2) {$0$};     \node (A3_0) at (0, 3) {$0$};     
\node (A3_1) at (1, 3) {$\displaystyle\prod_{k\in K}\Gamma_{\E_k}$};     
\node (A3_2) at (2, 3) {$\displaystyle\prod_{k\in K}\Omega^{\shF_{\Gamma,\shC}}_{\E_k}$};    
\path (A2_1) edge [->]node [auto] {$\scriptstyle{}$} (A3_1);    
\path (A2_2) edge [->]node [auto] {$\scriptstyle{}$} (A2_3);     
\path (A1_2) edge [->]node [auto] {$\scriptstyle{}$} (A1_3);     
\path (A2_1) edge [->]node [auto] {$\scriptstyle{}$} (A2_2);     
\path (A1_0) edge [->]node [auto] {$\scriptstyle{}$} (A1_1);     
\path (A3_0) edge [->]node [auto] {$\scriptstyle{}$} (A3_1);     
\path (A1_3) edge [->]node [auto] {$\scriptstyle{\beta}$} (A2_3);     
\path (A1_1) edge [->]node [auto] {$\scriptstyle{}$} (A1_2);     
\path (A2_3) edge [->]node [auto] {$\scriptstyle{}$} (A2_4);     
\path (A2_2) edge [->]node [auto] {$\scriptstyle{}$} (A3_2);     
\path (A1_3) edge [->]node [auto] {$\scriptstyle{}$} (A1_4);     
\path (A1_1) edge [->]node [auto] {$\scriptstyle{}$} (A2_1);     
\path (A0_2) edge [->]node [auto] {$\scriptstyle{}$} (A1_2);     
\path (A0_1) edge [->]node [auto] {$\scriptstyle{}$} (A1_1);     
\path (A2_0) edge [->]node [auto] {$\scriptstyle{}$} (A2_1);     
\path (A3_1) edge [->]node [auto] {$\scriptstyle{}$} (A3_2);     
\path (A1_2) edge [->]node [auto] {$\scriptstyle{}$} (A2_2);   
\end{tikzpicture}   \]  in which all the rows and the first two columns are exact. By diagram
chasing it is easy to conclude that $\beta$ is injective. Since by
construction $\beta(x)=0$ we can conclude that $x=0$.
\end{proof}
\begin{defn}
We define $\Lex_{R}^{\shC}(A)$ as the subcategory of $\Lex_{R}(\shC,A)$
of functors $\Gamma$ such that $\shF_{\Gamma,\shC}\in\QCoh_{A}^{\shC}\stX$.
\end{defn}

\begin{thm}
\label{thm:relative theorem for equivalence} Assume $\shC\subseteq\QCoh^{\shC}\stX$.
Then the functors 
\[
\Omega^{*}\colon\QCoh_{A}^{\shC}\stX\to\Lex_{R}^{\shC}(A)\text{ and }\shF_{*,\shC}\colon\Lex_{R}^{\shC}(A)\to\QCoh_{A}^{\shC}\stX
\]
are quasi-inverses of each other. 
\end{thm}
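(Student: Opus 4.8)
The plan is to exploit that $\shF_{*,\shC}$ is the left adjoint of $\Omega^*$ (see \ref{prop:Fstar and Omegastar are adjoint}), so that $\gamma$ and $\delta$ are the unit and counit of this adjunction, and to combine two facts already established: that the counit $\delta_{\shG}$ is invertible precisely on $\QCoh_A^{\shC}\stX$, and that the unit $\gamma_\Gamma$ is invertible on $\Lex_R^{\shC}(A)$. First I would check that the two functors land in the indicated subcategories. For $\shF_{*,\shC}$ this is tautological, since $\Lex_R^{\shC}(A)$ is by definition the full subcategory of those $\Gamma$ with $\shF_{\Gamma,\shC}\in\QCoh_A^{\shC}\stX$. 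For $\Omega^*$, given $\shG\in\QCoh_A^{\shC}\stX$, \ref{prop:OmegaF is left exact} yields $\Omega^{\shG}\in\Lex_R(\shC,A)$; moreover, since $\shC\subseteq\QCoh^{\shC}\stX$, \ref{thm:when Omegastar is fully faithful} identifies $\shD_A$ with $\QCoh_A^{\shC}\stX$, so $\delta_{\shG}\colon\shF_{\Omega^{\shG},\shC}\to\shG$ is an isomorphism and hence $\shF_{\Omega^{\shG},\shC}\simeq\shG\in\QCoh_A^{\shC}\stX$, giving $\Omega^{\shG}\in\Lex_R^{\shC}(A)$.

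Next I would verify that the unit and counit restrict to isomorphisms. The counit $\delta_{\shG}$ is an isomorphism for every $\shG\in\QCoh_A^{\shC}\stX=\shD_A$, again directly from \ref{thm:when Omegastar is fully faithful}. For the unit, fix $\Gamma\in\Lex_R^{\shC}(A)$; then $\shF_{\Gamma,\shC}\in\QCoh_A^{\shC}\stX=\shD_A$, so $\delta_{\shF_{\Gamma,\shC}}$ is an isomorphism. The triangle identity for the adjunction reads $\delta_{\shF_{\Gamma,\shC}}\circ\shF_{*,\shC}(\gamma_\Gamma)=\id_{\shF_{\Gamma,\shC}}$, which forces $\shF_{*,\shC}(\gamma_\Gamma)$ to be an isomorphism. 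Applying the functor $\Omega^*$ shows that $\Omega^*\circ\shF_{*,\shC}(\gamma_\Gamma)$ is an isomorphism, so the hypotheses of \ref{lem:injectivity natural transformation Omega} are satisfied and $\gamma_\Gamma$ is an isomorphism.

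With these four checks in place the conclusion is formal: $\gamma$ and $\delta$ provide, on the respective subcategories, natural isomorphisms $\id_{\Lex_R^{\shC}(A)}\simeq\Omega^*\circ\shF_{*,\shC}$ and $\shF_{*,\shC}\circ\Omega^*\simeq\id_{\QCoh_A^{\shC}\stX}$, so $\Omega^*$ and $\shF_{*,\shC}$ are quasi-inverse equivalences. The genuine obstacle is not in this step but has already been isolated and discharged upstream: it is the implication ``$\Gamma$ left exact with $\shF_{\Gamma,\shC}$ generated by $\shC$'' $\Rightarrow$ ``$\gamma_\Gamma$ is an isomorphism'', which is the technical content of \ref{lem:injectivity natural transformation Omega} (injectivity of $\gamma_\Gamma$ resting on the filteredness of \ref{lem:when J is filtered}, and the vanishing of the cokernel by a diagram chase over a test sequence, using that $\Omega^{\shF_{\Gamma,\shC}}$ is itself left exact by \ref{prop:OmegaF is left exact}). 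Everything remaining here is the standard ``an adjunction restricts to an equivalence between the subcategories on which the unit and counit are invertible'' bookkeeping, so I expect no new difficulty to arise at this final stage.
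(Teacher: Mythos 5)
Your proposal is correct and follows essentially the same route as the paper: the triangle identity forces $\shF_{*,\shC}(\gamma_{\Gamma})$ to be an isomorphism because $\delta_{\shF_{\Gamma,\shC}}$ is one (by \ref{thm:when Omegastar is fully faithful}), and then \ref{lem:injectivity natural transformation Omega} upgrades this to $\gamma_{\Gamma}$ being an isomorphism. The only difference is that you spell out the verification that the two functors land in the stated subcategories, which the paper leaves implicit.
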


\begin{proof}
Let $\Gamma\in\L_{R}(\shC,A)$ be such that $\shF_{\Gamma,\shC}\in\QCoh_{A}^{\shC}\stX$.
The composition
\[
\delta_{\shF_{\Gamma,\shC}}\circ\shF_{*,\shC}(\gamma_{\Gamma})\colon\shF_{\Gamma,\shC}\to\shF_{\Omega^{\shF_{\Gamma,\shC}},\shC}\to\shF_{\Gamma,\shC}
\]
is the identity and $\delta_{\shF_{\Gamma,\shC}}$ is an isomorphism
since $\shF_{\Gamma,\shC}\in\QCoh_{A}^{\shC}\stX$ by \ref{thm:when Omegastar is fully faithful}.
Thus $\shF_{*,\shC}(\gamma_{\Gamma})$ and therefore $\Omega^{*}\circ\shF_{*,\shC}(\gamma_{\Gamma})$
are isomorphisms. By \ref{lem:injectivity natural transformation Omega}
we can conclude that if $\Gamma\in\Lex_{R}^{\shC}(A)$ then $\gamma_{\Gamma}\colon\Gamma\to\Omega^{\shF_{\Gamma,\shC}}$
is an isomorphism. The result then follows from \ref{thm:when Omegastar is fully faithful}
and \ref{prop:OmegaF is left exact}.
\end{proof}
The following result allow us to extend results from small subcategories
of $\QCoh\stX$ to any subcategory.
\begin{prop}
\label{prop: QCoh X is Grothendieck} The category $\QCoh\stX$ is
generated by a small subcategory. Equivalently $\QCoh\stX$ has a
generator, that is there exists $\E\in\QCoh\stX$ such that $\{\E\}$
generates $\QCoh\stX$. %
\end{prop}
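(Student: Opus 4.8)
The plan is to reduce $\QCoh\stX$ to an explicit category of descent data over a small diagram and then produce a small generating family by a cardinality bound. First note that the two formulations are equivalent: if a small subcategory $\shC$ generates $\QCoh\stX$, then the single sheaf $\bigoplus_{\E\in\shC}\E$ (a set-indexed direct sum, as $\shC$ is small) is a generator, while conversely a single generator $\E$ spans the small subcategory $\{\E\}$. So it suffices to exhibit a small generating subcategory.

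Since $\stX$ is pseudo-algebraic, I would fix an atlas and cover it by affines to obtain flat representable maps $U_i=\Spec B_i\arr\stX$ with $\sqcup_i U_i\arr\stX$ an fpqc covering. By \ref{prop:reducing to atlases for quasi-geometric categories} a quasi-coherent sheaf is determined by its restriction to $\stX_{\text{fl}}$, so I would enlarge $\{U_i\arr\stX\}$ to a small category $\stZ$ of flat representable maps from affine schemes, closed under fiber products after refining by affine coverings, so that every quasi-coherent sheaf is equivalent to a descent datum $(\underline M,\underline\xi)=((M_z)_{z\in\stZ},(\xi_\phi)_{\phi\in\Arr(\stZ)})$, where $M_z$ is a $B_z$-module ($B_z=\odi\stX(z)$) and the $\xi_\phi$ are compatible base-change isomorphisms. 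Call $D(\stX)$ the resulting category; it is equivalent to $\QCoh\stX$.

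Next I would fix an infinite cardinal $\aleph\geq|\Arr(\stZ)|$ and set $\shC=\{(\underline M,\underline\xi)\in D(\stX)\st\text{each }M_z\text{ is generated by at most }\aleph\text{ elements}\}$. That $\shC$ is small follows because, for a fixed generator-bounded $\underline M$, there is only a set of compatible $\underline\xi$, and the $B_z$-modules with at most $\aleph$ generators form a set (they are quotients of $B_z^{(\aleph)}$). To see that $\shC$ generates, given $\shF\in\QCoh\stX$, a point $z_0\in\stZ$ and a section $m\in\shF(z_0)=M_{z_0}$, I would form the smallest quasi-coherent subsheaf of $\shF$ containing $m$ via the saturation operator $J(\underline N)_z=\langle\xi_\phi(N_{\overline z})\st z\arrdi\phi\overline z\rangle_{B_z}$, starting from $N_{z_0}=\langle m\rangle_{B_{z_0}}$ and $N_z=0$ otherwise, and iterating $J^\infty(\underline N)=\cup_n J^n(\underline N)$. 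This $J^\infty(\underline N)$ is stable under all transition maps, hence is a genuine quasi-coherent subsheaf; it contains $m$; and crucially it lies in $\shC$, since $|\Arr(\stZ)|\le\aleph$ with $\aleph$ infinite forces each step of $J$, and the countable union, to preserve the bound of $\aleph$ generators. Thus every section of $\shF$ lies in a subsheaf belonging to $\shC$, yielding a surjection from a direct sum of objects of $\shC$ onto $\shF$.

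The main obstacle is the set-theoretic bookkeeping in the last paragraph: one must verify that $D(\stX)$ is really equivalent to $\QCoh\stX$ (the effectivity and sheaf conditions being packaged into the $\xi_\phi$ being isomorphisms compatible with composition), and that $J^\infty$ simultaneously lands in quasi-coherent subsheaves and respects the cardinality bound through the infinitely many iterations. The hypothesis $\aleph\ge|\Arr(\stZ)|$ with $\aleph$ infinite is exactly what controls the growth of generators under $J$; everything else is routine fpqc descent and cardinal arithmetic.
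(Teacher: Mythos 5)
Your argument is correct, and in fact it reproduces almost verbatim the direct proof the author drafted for this proposition (it survives as a commented-out block in the source): present $\QCoh\stX$ as descent data over a small diagram $\stZ$ of flat affine charts, fix an infinite cardinal $\aleph\geq|\Arr(\stZ)|$, and show that the sub-descent-datum generated by a single section, saturated by the operator $J^{\infty}$, stays within the $\aleph$-generator bound. The paper's \emph{printed} proof takes a different, much shorter route: it reduces to the category $\QCoh(R\rightrightarrows X)$ of quasi-coherent modules on the groupoid $R\rightrightarrows X$ attached to an atlas, via \ref{prop:reducing to atlases for quasi-geometric categories}, and then simply cites the Stacks Project result [Tag 0780] that such a category has a generator. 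The two routes buy different things: the citation outsources exactly the set-theoretic bookkeeping you flag as the main obstacle (the cited result is itself proved by the same cardinality/saturation argument, but on a groupoid presentation rather than on the ad hoc diagram $\stZ$), whereas your version is self-contained and makes explicit where flatness of the transition maps enters (injectivity of $H_{\overline z}\otimes_{B_{\overline z}}B_z\arr M_z$, so that the saturated submodule really is a quasi-coherent subsheaf). One small point worth stating explicitly in your write-up: ``the modules with at most $\aleph$ generators form a set'' should be read as ``form an essentially small class, being quotients of $B_z^{(\aleph)}$ by its set of submodules''; this is what the smallness of $\shC$ actually rests on.
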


\begin{proof}
Follows from \ref{prop:reducing to atlases for quasi-geometric categories}
and \cite[Tag 0780]{SP014}.
\end{proof}
\begin{rem}
\label{rem: finding small subcategory generating QCoh X} If $\shD\subseteq\QCoh\stX$
generates $\QCoh\stX$ there always exists a small subcategory $\overline{\shD}\subseteq\shD$
that generates $\QCoh\stX$. Indeed if $\E$ is a generator of $\QCoh\stX$
it is enough to take a subset of sheaves in $\shD$ that generates
$\E$.
\end{rem}

\begin{thm}
\label{thm:general theorem for equivalences} Let $\shD\subseteq\QCoh\stX$
be a subcategory that generates $\QCoh\stX$. Then the functor
\[
\Omega^{*}\colon\QCoh_{A}\stX\to\Lex_{R}(\shD,A)
\]
is an equivalence of categories and, if $\shD$ is small, $\shF_{*,\shD}\colon\Lex_{R}(\shD,A)\to\QCoh_{A}\stX$
is a quasi-inverse. In particular if $\overline{\shD}\subseteq\shD$
is a subcategory that generates $\QCoh\stX$ the restriction functor
$\Lex_{R}(\shD,A)\to\Lex_{R}(\overline{\shD},A)$ is an equivalence.
\end{thm}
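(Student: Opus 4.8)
The plan is to first dispose of the case $\shD$ small, where \ref{thm:relative theorem for equivalence} applies almost verbatim, and then bootstrap to an arbitrary generating subcategory through a small one.

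Suppose first $\shD$ is small. Since $\shD$ generates $\QCoh\stX$, every quasi-coherent sheaf is a quotient of a direct sum of objects of $\shD$; and as $\stX$ is pseudo-algebraic the category $\QCoh\stX$ is abelian (\ref{prop:reducing to atlases for quasi-geometric categories}), so every kernel is again quasi-coherent and hence also generated by $\shD$. Unwinding the definitions, this says precisely that $\shD\subseteq\QCoh^{\shD}\stX$ and that $\QCoh_A^{\shD}\stX=\QCoh_A\stX$; since $\shF_{\Gamma,\shD}$ is always quasi-coherent it gives as well $\Lex_R^{\shD}(A)=\Lex_R(\shD,A)$. Now \ref{thm:relative theorem for equivalence} applies and yields that $\Omega^*$ and $\shF_{*,\shD}$ are mutually quasi-inverse equivalences between $\QCoh_A\stX$ and $\Lex_R(\shD,A)$, which is exactly the assertion for small $\shD$.

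For arbitrary $\shD$ I would choose, by \ref{rem: finding small subcategory generating QCoh X}, a small subcategory $\overline{\shD}\subseteq\shD$ still generating $\QCoh\stX$. By \ref{prop:OmegaF is left exact} the Yoneda functor lands in $\Lex$ for every subcategory, so that $\rho\circ\Omega^*_{\shD}=\Omega^*_{\overline{\shD}}$, where $\rho\colon\Lex_R(\shD,A)\arr\Lex_R(\overline{\shD},A)$ is restriction and $\Omega^*_{\overline{\shD}}$ is an equivalence by the small case. It therefore suffices to prove that $\rho$ is an equivalence: then $\Omega^*_{\shD}$ is one as well (this is the main statement), while the equivalence of $\rho$ is exactly the final ``in particular'' assertion. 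As candidate quasi-inverse I take $\sigma\colon\Lex_R(\overline{\shD},A)\arr\Lex_R(\shD,A)$, $\overline{\Gamma}\longmapsto\Omega^{\shF_{\overline{\Gamma},\overline{\shD}}}_{|\shD}$. The relation $\rho\sigma\simeq\id$ is immediate from the small case, since $\rho\sigma(\overline{\Gamma})=\Omega^*_{\overline{\shD}}(\shF_{\overline{\Gamma},\overline{\shD}})\simeq\overline{\Gamma}$.

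The whole weight of the argument rests on $\sigma\rho\simeq\id$, that is, on extending the adjunction unit $\gamma\colon\Gamma_{|\overline{\shD}}\arr\Omega^{\shF}_{|\overline{\shD}}$ (with $\shF=\shF_{\Gamma_{|\overline{\shD}},\overline{\shD}}$, an isomorphism by the small case) from $\overline{\shD}$ to all of $\shD$. The decisive point, and the one I expect to be the main obstacle, is that every $\E\in\shD$ admits a \emph{test sequence} $\bigoplus_{k}\overline{\E}_k\arr\bigoplus_{j}\overline{\E}_j\arr\E\arr0$ with all terms in $\overline{\shD}$. One takes a surjection $p\colon\bigoplus_{j\in J}\overline{\E}_j\arr\E$ with $\overline{\E}_j\in\overline{\shD}$; its kernel $\shK$ is quasi-coherent, and over an affine atlas every section of $\shK$ involves only finitely many coordinates, so $\shK$ is the filtered union of the subsheaves $\shK\cap\bigoplus_{j\in S}\overline{\E}_j$ over finite $S\subseteq J$. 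Generating each of these (possible since $\overline{\shD}$ generates $\QCoh\stX$) by objects of $\overline{\shD}$ and assembling the maps produces a surjection $\bigoplus_k\overline{\E}_k\arr\shK$ in which each $\overline{\E}_k$ lands in a finite sub-sum $\bigoplus_{j\in S}\overline{\E}_j$, which is exactly the finiteness condition of \ref{def: left exact functors}. Granting this, both $\Gamma$ and $\Omega^{\shF}$, being exact on this test sequence, identify $\Gamma_{\E}$ and $\Omega^{\shF}_{\E}$ with the kernel of $\prod_j(-)_{\overline{\E}_j}\arr\prod_k(-)_{\overline{\E}_k}$, and the isomorphism $\gamma$ on $\overline{\shD}$ matches the two kernels, giving $\gamma_{\E}\colon\Gamma_{\E}\arr\Omega^{\shF}_{\E}$. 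Injectivity of $\Gamma_{\E}\arr\prod_j\Gamma_{\overline{\E}_j}$, again from left exactness, shows that $\gamma_{\E}$ is independent of the chosen presentation and natural in $\E$, so $\gamma$ extends to a natural isomorphism $\Gamma\simeq\Omega^{\shF}_{|\shD}=\sigma\rho(\Gamma)$. Hence $\rho$, and with it $\Omega^*_{\shD}$, is an equivalence.
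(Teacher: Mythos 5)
Your reduction to the small case is identical to the paper's (via \ref{thm:relative theorem for equivalence}, after observing that $\QCoh_A^{\shD}\stX=\QCoh_A\stX$ and $\Lex_R^{\shD}(A)=\Lex_R(\shD,A)$), and the geometric input you isolate for the large case --- that every $\E\in\shD$ admits a test sequence whose other terms lie in the small generating subcategory $\overline{\shD}$, obtained by writing the kernel of $\bigoplus_j\overline{\E}_j\arr\E$ as a filtered union over finite subsums and generating each piece --- is correct and is exactly the mechanism of \ref{rem: local characterization of generation by D}. From there your route diverges: you build an explicit quasi-inverse $\sigma$ of the restriction $\rho$ by extending the unit isomorphism $\gamma$ from $\overline{\shD}$ to $\shD$ along such test sequences. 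The paper never extends anything by hand: it notes that for every set $I\subseteq\shD$ the category $\shC_I=\overline{\shD}\cup I$ is again small and generating, so all restrictions $\Lex_R(\shC_I,A)\arr\Lex_R(\shC_{\overline I},A)$ are equivalences by the small case; essential surjectivity of $\rho$ then comes for free from $\rho\circ\Omega^*_{\shD}=\Omega^*_{\overline{\shD}}$, and full faithfulness from the bijectivity of the Hom-restrictions applied to the finite enlargements $\shC_{\{\E\}}$ and $\shC_{\{\E,\E'\}}$. That formal argument completely sidesteps the well-definedness problem your construction must face.

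And there your write-up has a genuine gap. The sentence ``injectivity of $\Gamma_{\E}\arr\prod_j\Gamma_{\overline{\E}_j}$ shows that $\gamma_{\E}$ is independent of the chosen presentation and natural in $\E$'' does not work as stated: both assertions compare two maps \emph{into} $\Omega^{\shF}_{\E}$ (resp.\ $\Omega^{\shF}_{\E'}$), so injectivity of a map \emph{out of} $\Gamma_{\E}$ is of no help. What is actually needed is injectivity of $\Omega^{\shF}_{\E}\arr\prod_j\Omega^{\shF}_{\overline{\E}_j}$ together with the compatibility
\[
\Omega^{\shF}_{w}(\gamma_{\E}(x))=\gamma_{\overline{\E}''}(\Gamma_{w}(x))
\]
for an \emph{arbitrary} map $w\colon\overline{\E}''\arr\E$ with $\overline{\E}''\in\overline{\shD}$, not only for the presentation maps $u_j$ used to define $\gamma_{\E}$; this is not automatic since $w$ need not factor through any $u_j$. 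It is provable: take the kernel of $w\oplus(-\bigoplus_j u_j)\colon\overline{\E}''\oplus\bigoplus_j\overline{\E}_j\arr\E$, cover it by objects of $\overline{\shD}$ landing in finite subsums, and use that the resulting jointly surjective family $s_t\colon\overline{\E}_t\arr\overline{\E}''$ makes $\Omega^{\shF}_{\overline{\E}''}\arr\prod_t\Omega^{\shF}_{\overline{\E}_t}$ injective, while each $ws_t$ equals a finite sum $\sum_j u_jh_{tj}$ with $h_{tj}\colon\overline{\E}_t\arr\overline{\E}_j$, to which $R$-linearity of $\Gamma$ and naturality of $\gamma$ on $\overline{\shD}$ apply. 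With this lemma supplied, independence of the presentation and naturality in $\E$ both follow and your proof closes; without it, $\sigma\rho(\Gamma)\simeq\Gamma$ is not established.
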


\begin{proof}
If $\shD$ is small we have $\QCoh_{A}\stX=\QCoh_{A}^{\shD}\stX$,
$\Lex_{R}(\shD,A)=\Lex_{R}^{\shD}(A)$ and everything follows from
\ref{thm:relative theorem for equivalence}. In particular the restriction
$\Lex_{R}(\shD,A)\to\Lex_{R}(\overline{\shD},A)$ is an equivalence
if $\overline{\shD}\subseteq\shD$, they are small and they generate
$\QCoh\stX$. Assume now that $\shD$ is not necessarily small and
consider a small subcategory $\shC\subseteq\shD$ that generates $\QCoh\stX$,
which exists thanks to \ref{rem: finding small subcategory generating QCoh X}.
The proof of the statement is complete if we prove that the restriction
functor $\Lex_{R}(\shD,A)\to\Lex_{R}(\shC,A)$ is an equivalence.
Given a set $I\subseteq\shD$ we set $\shC_{I}=\shC\cup I\subseteq\shD$.
For all sets $I$, quasi-coherent sheaves are generated by $\shC_{I}$.
Note that we have the restriction functor $-_{|C_{I}}\colon\Lex_{R}(\shD,A)\to\Lex_{R}(\shC_{I},A)$
and the composition $\QCoh_{A}\stX\to\Lex_{R}(\shD,A)\to\Lex_{R}(C_{I},A)$
is an equivalence for all $I$. In particular $-_{|C}\colon\Lex_{R}(\shD,A)\to\Lex_{R}(\shC,A)$
is essentially surjective. We will conclude by proving that it is
fully faithful. Let $\Gamma,\Gamma'\in\Lex_{R}(\shD,A)$. If $\overline{I}\subseteq I$
the restriction functor $\Lex_{R}(\shC_{I},A)\to\Lex_{R}(\shC_{\overline{I}},A)$
is an equivalence of categories. In particular the map $\Hom(\Gamma_{|\shC_{I}},\Gamma'_{|\shC_{I}})\to\Hom(\Gamma_{|\shC_{\overline{I}}},\Gamma'_{|\shC_{\overline{I}}})$
is bijective. Using this, it is elementary to prove that also the
map $\Hom(\Gamma,\Gamma')\to\Hom(\Gamma_{|\shC},\Gamma'_{|\shC})$
is bijective.
\end{proof}

\section{Cohomological exactness}

We keep the notation from the previous section. So $\stX$ is a pseudo-algebraic
fibered category over a ring $R$, $\shC\subseteq\QCoh(\stX)$ is
a small full subcategory and $\shD\subseteq\QCoh(\stX)$ is any full
subcategory. The symbol $A$ instead will always denote an $R$-algebra.

In this section we want to present a cohomological interpretation
of the functors in $\Lex_{R}(\shD,A)$, which will allow us to show
that it is often enough to consider just finite test sequences instead
of arbitrary test sequences.
\begin{rem}
In an abelian category $\alA$, given $X,Y\in\alA$ we can always
define the abelian group $\Ext^{1}(X,Y)$ as the group of extensions
(regardless if $\alA$ has enough injectives) and it has the usual
nice properties on short exact sequences. See 010J\cite[Tag 010J]{SP014}.
In order to avoid set-theoretic problems one should require that $\alA$
is locally small and that, given $X,Y\in\alA$, $\Ext^{1}(X,Y)$ is
a set. This is the case for $\alA=\L_{R}(\shC,R)$, for instance by
looking at the cardinalities of the $\Gamma_{\E}$ for $\Gamma\in\L_{R}(\shC,R)$
and $\E\in\shC$.
\end{rem}

\begin{defn}
\label{def: cohomologically left exact} Given a surjective map $\mu=\oplus_{j}\mu_{j}\colon\oplus_{j}\E_{j}\to\E$
with $\E,\E_{j}\in\shC$ we set $\Omega^{\mu}=\oplus_{j}\Omega^{\mu_{j}}\colon\oplus_{j}\Omega^{\E_{j}}\to\Omega^{\E}$.
A functor $\Gamma\in\L_{R}(\shC,A)$ is\emph{ cohomologically left
exact }on $\mu$ if
\begin{equation}
\Hom_{\L_{R}(\shC,R)}(\Omega^{\E}/\Imm(\Omega^{\mu}),\Gamma)=\Ext_{\L_{R}(\shC,R)}^{1}(\Omega^{\E}/\Imm(\Omega^{\mu}),\Gamma)=0\label{eq:cohomological vanishing left exact functors}
\end{equation}
It is cohomologically left exact if it is so on all surjections $\mu$
as above.
\end{defn}

We setup some notation. We denote by $\Phi_{\shC}(\E)$ for $\E\in\shC$
the set of subfunctor of $\Omega^{\E}$of the form $\Imm(\Omega^{\mu})$
for some surjective map $\mu\colon\oplus_{j}\E_{i}\to\E$ with $\E_{j}\in\shC$
and by $\Phi_{\shC}$ the disjoint union of all the $\Phi_{\shC}(\E)$:
an object of $\Delta\in\Phi_{\shC}$ is actually a pair $(\Delta,\E)$
with $\Delta\in\Phi_{\shC}(\E)$. Given $\Delta=\Imm(\Omega^{\mu})\in\Phi_{\shC}(\E)$
and $\Gamma\in\L_{R}(\shC,A)$ we will say that $\Gamma$ is cohomologically
left exact on $\Delta$ if it is cohomologically left exact on $\mu$.
Notice that, using Yoneda's lemma, we have an $A$-linear isomorphism
\[
\Hom_{\L_{R}(\shC,R)}(\oplus_{j}\Omega^{\E_{j}},\Gamma)\simeq\prod_{j}\Gamma_{\E_{j}}\text{ for all }\E_{j}\in\shC\comma\Gamma\in\L_{R}(\shC,A)
\]
In particular 
\[
\Hom_{\L_{R}(\shC,R)}(\Omega^{\E},\Gamma)\simeq\Gamma_{\E}
\]
 is the evaluation in $\E\in\shC$, is exact: an element $x\in\Gamma_{\E}$
can be thought of as a natural transformation $\Omega^{\E}\to\Gamma$.
which implies that $\Ext_{\L_{R}(\shC,R)}^{1}(\Omega^{\E},-)=0$.

By Yoneda's lemma, we obtain a functorial map 
\[
\Hom_{\L_{R}(\shC,R)}(\oplus_{j}\Omega^{\E_{j}},\oplus_{k}\Omega^{\E_{k}})\to\Hom(\oplus_{j}\E_{j},\oplus_{k}\E_{k})
\]
which is an isomorphism onto the set of locally finite maps.

Let $\mu\colon\oplus_{j}\E_{j}\to\E$ be a surjective map and set
$\Delta=\Imm(\Omega^{\mu})$. Then $\Delta_{\E'}$ is the set of maps
$\E'\to\E$ which factors through $\mu$ via a locally finite map
$\E'\to\oplus_{j}\E_{j}$. Given a map $u\colon\overline{\E}\to\E$
in $\shC$ we set $u^{-1}(\Delta)=\Delta\times_{\Omega^{\E}}\Omega^{\overline{\E}}\subseteq\Omega^{\overline{\E}}$:
$u^{-1}(\Delta)_{\E'}$ is the set of maps $\E'\to\overline{\E}$
such that $\E'\to\overline{\E}\to\E$ factors through $\mu$ via a
locally finite map $\E'\to\oplus_{j}\E_{j}$. Notice that if $\shC\subseteq\QCoh^{\shC}\stX$
then $u^{-1}(\Delta)\in\Phi_{\shC}(\overline{\E})$. Indeed applying
the last remark in \ref{rem:extending to test sequences} on the maps
$\overline{\E}\to\E$ and $\mu\colon\bigoplus_{j}\E_{j}\to\E$ and
denoting by $\shH$ their fiber product we obtain a surjective map
$\bigoplus_{s}\E_{s}\to\overline{\E}$ factoring through $\shH$ and
such that each $\E_{s}\to\overline{\E}$ belongs to $u^{-1}(\Delta)_{\E_{s}}$.
It follows that the obvious map
\[
\mu'\colon\bigoplus_{\tilde{\E}\in\shC}\bigoplus_{\omega\in u^{-1}(\Delta)_{\tilde{\E}}}\tilde{\E}\to\E
\]
is surjective and clearly $u^{-1}(\Delta)=\Imm(\Omega^{\mu'})$.
\begin{lem}
\label{lem:almost left exact functors and hom} Let $\mu\colon\oplus_{j}\E_{i}\to\E$
be a surjective map with $\E_{j},\E\in\shC$ and $\Gamma\in\L_{R}(\shC,A)$.
Then there is an exact sequence of $A$-modules
\[
0\to\Hom_{\L_{R}(\shC,R)}(\Omega^{\E}/\Imm(\Omega^{\mu}),\Gamma)\to\Gamma_{\E}\to\prod_{j}\Gamma_{\E_{j}}
\]
If $\shT:\oplus_{k}\E_{k}\to\oplus_{j}\E_{j}\to\E\to0$ is a test
sequence and $\Gamma$ is exact on $\shT$ then $\Gamma$ is cohomologically
left exact on $\mu$. The converse holds if the map 
\[
\Hom_{\L_{R}(\shC,R)}(\Ker(\Omega^{\mu}),\Gamma)\to\prod_{k}\Gamma_{\E_{k}}
\]
obtained applying $\Hom_{\L_{R}(\shC,R)}(-,\Gamma)$ to the map $\oplus_{k}\Omega^{\E_{k}}\to\Ker(\Omega^{\mu})$
is injective.
\end{lem}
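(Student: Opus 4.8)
The plan is to move everything into the abelian category $\L_R(\shC,R)$ (in which all $\Hom$ and $\Ext^1$ below are taken) and reduce the three assertions to elementary comparisons of kernels and images, using throughout the Yoneda identifications $\Hom(\Omega^{\E'},\Gamma)\simeq\Gamma_{\E'}$ and $\Hom(\oplus_i\Omega^{\E_i},\Gamma)\simeq\prod_i\Gamma_{\E_i}$ together with the vanishing $\Ext^1(\Omega^{\E'},-)=0$ recorded before the statement.

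First I would obtain the exact sequence by applying the left exact contravariant functor $\Hom(-,\Gamma)$ to the short exact sequence $0\arr\Imm(\Omega^\mu)\arr\Omega^\E\arr\Omega^\E/\Imm(\Omega^\mu)\arr0$. Yoneda's lemma turns the middle term into $\Gamma_\E$, and the surjection $\Omega^\mu\colon\oplus_j\Omega^{\E_j}\arr\Imm(\Omega^\mu)$ embeds $\Hom(\Imm(\Omega^\mu),\Gamma)$ into $\prod_j\Gamma_{\E_j}$; composing yields $0\arr\Hom(\Omega^\E/\Imm(\Omega^\mu),\Gamma)\arr\Gamma_\E\arr\prod_j\Gamma_{\E_j}$, and a direct check identifies the last map with $x\longmapsto(\Gamma_{\mu_j}(x))_j$.

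Next I would set up the dictionary. Write $\phi\colon\Gamma_\E\arr\prod_j\Gamma_{\E_j}$ and $\psi\colon\prod_j\Gamma_{\E_j}\arr\prod_k\Gamma_{\E_k}$ for the two maps of the complex (\ref{Gamma on a test sequence}), and let $r\colon\prod_j\Gamma_{\E_j}=\Hom(\oplus_j\Omega^{\E_j},\Gamma)\arr\Hom(\Ker\Omega^\mu,\Gamma)$ be restriction along $\Ker\Omega^\mu\hookrightarrow\oplus_j\Omega^{\E_j}$. Left exactness of $\Hom(-,\Gamma)$ on $0\arr\Ker\Omega^\mu\arr\oplus_j\Omega^{\E_j}\arr\Imm\Omega^\mu\arr0$ gives $\Ker r=\Hom(\Imm\Omega^\mu,\Gamma)$, and the first step shows $\Imm\phi\subseteq\Ker r$ and $\Ker\phi=\Hom(\Omega^\E/\Imm\Omega^\mu,\Gamma)$. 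Feeding $\Ext^1(\Omega^\E,\Gamma)=0$ into the long exact $\Hom$--$\Ext^1$ sequence of the short exact sequence of the first step identifies $\Ext^1(\Omega^\E/\Imm\Omega^\mu,\Gamma)$ with $\Coker(\Gamma_\E\arr\Ker r)$. Hence $\Gamma$ is cohomologically left exact on $\mu$ exactly when $\phi$ is injective and $\Imm\phi=\Ker r$. On the other side, the locally finite map $\alpha=(u_{kj})$ of the test sequence corresponds, under the isomorphism recalled before the statement, to $\tilde\alpha\colon\oplus_k\Omega^{\E_k}\arr\oplus_j\Omega^{\E_j}$; since $\mu\alpha=0$ it factors as $\oplus_k\Omega^{\E_k}\arrdi{\beta}\Ker\Omega^\mu\hookrightarrow\oplus_j\Omega^{\E_j}$, and tracing the Yoneda identifications shows $\psi=\Hom(\beta,\Gamma)\circ r$. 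Thus $\Gamma$ is exact on $\shT$ exactly when $\phi$ is injective and $\Imm\phi=\Ker\psi$.

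With both criteria available the two implications are immediate. Since $\psi=\Hom(\beta,\Gamma)\circ r$ we always have $\Ker r\subseteq\Ker\psi$, while $\Imm\phi\subseteq\Ker r$ from the first step. If $\Gamma$ is exact on $\shT$ then $\Imm\phi=\Ker\psi\supseteq\Ker r\supseteq\Imm\phi$, so all three coincide and $\phi$ is injective, giving cohomological left exactness on $\mu$ (no extra hypothesis is needed here). Conversely, if $\Gamma$ is cohomologically left exact on $\mu$ then $\phi$ is injective and $\Imm\phi=\Ker r$; if in addition $\Hom(\beta,\Gamma)$ is injective then $\Ker\psi=\Ker(\Hom(\beta,\Gamma)\circ r)=\Ker r=\Imm\phi$, and $\Gamma$ is exact on $\shT$. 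The main obstacle is the bookkeeping in setting up the dictionary: matching the two product-maps of (\ref{Gamma on a test sequence}) with $r$ and $\Hom(\beta,\Gamma)$ through the Yoneda dictionary, and producing $\beta$ by recognizing $\alpha$ as locally finite and using $\mu\alpha=0$. Once the identity $\psi=\Hom(\beta,\Gamma)\circ r$ and the equality $\Ker r=\Hom(\Imm\Omega^\mu,\Gamma)$ are secured, the rest is the inclusion chain $\Imm\phi\subseteq\Ker r\subseteq\Ker\psi$ together with the use of injectivity to collapse the second inclusion.
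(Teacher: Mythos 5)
Your proposal is correct and follows essentially the same route as the paper: both split $\oplus_j\Omega^{\E_j}\arr\Omega^{\E}\arr0$ into the two short exact sequences through $\Imm(\Omega^{\mu})$ and $\Ker(\Omega^{\mu})$, apply $\Hom(-,\Gamma)$, use $\Ext^{1}(\Omega^{\E},\Gamma)=0$ to identify $\Ext^{1}(\Omega^{\E}/\Imm(\Omega^{\mu}),\Gamma)$ with the relevant cokernel, and factor the second map of the test-sequence complex through $\Hom(\Ker(\Omega^{\mu}),\Gamma)$. Your inclusion chain $\Imm\phi\subseteq\Ker r\subseteq\Ker\psi$ is just an explicit rendering of the paper's commutative diagram with exact first row and central column, so there is nothing to add.
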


\begin{proof}
Set $\Delta=\Imm(\Omega^{\mu})$, $K=\Ker(\Omega^{\mu})$. Consider
the diagram   \[   \begin{tikzpicture}[xscale=2.7,yscale=-1.1]     
\node (A0_0) at (0.47, 0) {$\Hom(\Omega^\E/\Delta,\Gamma)$};     
\node (A0_1) at (1.3, 0) {$\Gamma_\E$};     
\node (A0_2) at (2, 0) {$\Hom(\Delta,\Gamma)$};     
\node (A1_2) at (2, 1) {$\prod_j \Gamma_{\E_j}$};     
\node (A2_2) at (2, 2) {$\Hom(K,\Gamma)$};     
\node (A0_3) at (3, 0) {$\Ext^1(\Omega^\E/\Delta,\Gamma)$};     
\node (A0_4) at (4, 0) {$\Ext^1(\Omega^\E,\Gamma)$};     
\node (A2_3) at (2.8, 2) {$\prod_k \Gamma_{\E_k}$};     
\path (A0_1) edge [->]node [auto] {$\scriptstyle{\alpha}$} (A1_2);     
\path (A1_2) edge [->]node [auto] {$\scriptstyle{\beta}$} (A2_3);     \path (A0_0) edge [right hook->]node [auto] {$\scriptstyle{}$} (A0_1);     \path (A0_1) edge [->]node [auto] {$\scriptstyle{}$} (A0_2);     \path (A0_2) edge [right hook->]node [auto] {$\scriptstyle{}$} (A1_2);
\path (A2_2) edge [->]node [auto] {$\scriptstyle{\lambda}$} (A2_3);     
\path (A0_3)edge [->]node [auto] {$\scriptstyle{}$} (A0_4);     \path (A0_2) edge [->]node [auto] {$\scriptstyle{}$} (A0_3);     \path (A1_2) edge [->]node [auto] {$\scriptstyle{}$} (A2_2);   \end{tikzpicture}   \]  The convention here is that $\beta$ and $\lambda$ are defined only
when a test sequence $\shT$ as in the statement exists and we will
not use them for the first statement. All the other maps are obtained
splitting $\Omega^{\E_{j}}\to\Omega^{\E}\to0$ into two exact sequences
and applying $\Hom(-,\Gamma)$, so that the first line and the central
column are exact. The map $\alpha$ obtained as composition is the
map defined in the first sequence in the statement. In particular
the first claim follows. So let's focus on the second one. The map
$\lambda$ is the second map in the statement while the map $\beta$
together with $\alpha$ are the maps defining the sequence (\ref{Gamma on a test sequence}).
Since $\Ext^{1}(\Omega^{\E},\Gamma)=0$ also the second claim follows.
\end{proof}
\begin{lem}
\label{lem:key lemma for reducing test sequences} Let $\Gamma,K\in\L_{R}(\shC,R)$
and $u\colon\oplus_{q}\Omega^{\E_{q}}\to K$ be a map, where $\E_{q}\in\shC$.
If for all $\Omega^{\E}\to K$ with $\E\in\shC$ there exists a surjective
map $v\colon\oplus_{t}\E_{t}\to\E$ with $\E_{t}\in\shC$ such that
the composition $\oplus_{t}\Omega^{\E_{t}}\to\Omega^{\E}\to K$ factors
through $u$ and $\Gamma$ is cohomologically left exact on $v$ then
the map 
\[
\Hom_{\L_{R}(\shC,R)}(K,\Gamma)\to\Hom_{\L_{R}(\shC,R)}(\oplus_{q}\Omega^{\E_{q}},\Gamma)\simeq\prod_{q}\Gamma_{\E_{q}}
\]
is injective.
\end{lem}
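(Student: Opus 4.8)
The plan is to reformulate the asserted injectivity as a statement about natural transformations and then, via Yoneda's lemma, reduce the problem to the representable functors $\Omega^{\E}$, where the hypothesis on $v$ and cohomological left exactness apply directly. First I would note that the displayed map sends $\psi\in\Hom_{\L_R(\shC,R)}(K,\Gamma)$ to the element of $\prod_q\Gamma_{\E_q}$ corresponding, under the Yoneda isomorphism $\Hom_{\L_R(\shC,R)}(\oplus_q\Omega^{\E_q},\Gamma)\simeq\prod_q\Gamma_{\E_q}$, to the composite $\psi\circ u\colon\oplus_q\Omega^{\E_q}\arr\Gamma$. Hence injectivity is precisely the implication that $\psi\circ u=0$ forces $\psi=0$, and this is what I would aim to prove.

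So suppose $\psi\circ u=0$. To conclude $\psi=0$ it suffices, once more by Yoneda's lemma, to show that $\psi\circ w=0$ for every $\E\in\shC$ and every morphism $w\colon\Omega^{\E}\arr K$: such $w$ are in bijection with $K_{\E}$, and the composites $\psi\circ w$ compute the component $\psi_{\E}$. Fixing such a $w$, I would invoke the hypothesis applied to $w\colon\Omega^{\E}\arr K$ to produce a surjection $v\colon\oplus_t\E_t\arr\E$ with $\E_t\in\shC$, a factorization $w\circ\Omega^{v}=u\circ g$ for some $g\colon\oplus_t\Omega^{\E_t}\arr\oplus_q\Omega^{\E_q}$, and the fact that $\Gamma$ is cohomologically left exact on $v$.

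The decisive computation is then $\psi\circ w\circ\Omega^{v}=\psi\circ u\circ g=0$. Setting $\Delta=\Imm(\Omega^{v})\in\Phi_{\shC}(\E)$ and factoring $\Omega^{v}$ as the surjection $\oplus_t\Omega^{\E_t}\arr\Delta$ followed by the inclusion $\Delta\arr\Omega^{\E}$, this vanishing says exactly that $\psi\circ w\colon\Omega^{\E}\arr\Gamma$ restricts to zero on $\Delta$, and therefore factors through $\Omega^{\E}/\Delta$. Cohomological left exactness of $\Gamma$ on $v$ provides $\Hom_{\L_R(\shC,R)}(\Omega^{\E}/\Delta,\Gamma)=0$ (see \ref{def: cohomologically left exact}), so the factored map vanishes and $\psi\circ w=0$. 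As $\E$ and $w$ were arbitrary, $\psi=0$, which is the desired injectivity.

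I do not expect a serious obstacle here: the argument is a double application of Yoneda's lemma bracketing a single short diagram chase, and the only thing to watch is the bookkeeping of the variance of each $\Omega^{(-)}$ and of the factorization of $\Omega^{v}$ through its image $\Delta$. It is worth remarking that only the $\Hom$-vanishing half of cohomological left exactness enters this statement, the $\Ext^1$-vanishing playing no role.
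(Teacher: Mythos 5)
Your proposal is correct and follows essentially the same route as the paper: fix $w\colon\Omega^{\E}\arr K$ corresponding to $x\in K_{\E}$, invoke the hypothesis to get $v$ and the factorization through $u$, and conclude from the vanishing of $\Hom_{\L_{R}(\shC,R)}(\Omega^{\E}/\Imm(\Omega^{v}),\Gamma)$. The paper packages the last step as the injectivity of $\Gamma_{\E}\arr\prod_{t}\Gamma_{\E_{t}}$ via the exact sequence of Lemma \ref{lem:almost left exact functors and hom}, which is the same observation you make by factoring $\psi\circ w$ through $\Omega^{\E}/\Delta$; your remark that only the $\Hom$-vanishing half of cohomological left exactness is used is also accurate.
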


\begin{proof}
Let $\E\in\shC$ and $x\in K_{\E}$, which corresponds to a map $\Omega^{\E}\to K$.
Consider the data given by hypothesis with respect to this last map.
We have commutative diagrams   \[   \begin{tikzpicture}[xscale=2.5,yscale=-1.2]     
\node (A0_0) at (0, 0) {$\oplus_t \Omega^{\E_t}$};     
\node (A0_1) at (1, 0) {$\Omega^{ \E}$};     
\node (A0_2) at (2, 0) {$\Hom(K,\Gamma)$};     
\node (A0_3) at (3, 0) {$\Gamma_{ \E}$};     
\node (A1_0) at (0, 1) {$\oplus_q \Omega^{\E_q}$};     
\node (A1_1) at (1, 1) {$K$};     
\node (A1_2) at (2, 1) {$\prod_q \Gamma_{\E_q}$};     
\node (A1_3) at (3, 1) {$\prod_t \Gamma_{\E_t}$};     
\path (A0_0) edge [->]node [auto] {$\scriptstyle{\Omega^v}$} (A0_1);     
\path (A0_1) edge [->]node [auto] {$\scriptstyle{x}$} (A1_1);     
\path (A1_0) edge [->]node [auto] {$\scriptstyle{u}$} (A1_1);     
\path (A0_3) edge [->]node [auto] {$\scriptstyle{\delta}$} (A1_3);     
\path (A0_2) edge [->]node [auto] {$\scriptstyle{\lambda}$} (A1_2);     
\path (A0_0) edge [->]node [auto] {$\scriptstyle{}$} (A1_0);     
\path (A0_2) edge [->]node [auto] {$\scriptstyle{\gamma}$} (A0_3);     
\path (A1_2) edge [->]node [auto] {$\scriptstyle{}$} (A1_3);   
\end{tikzpicture}   \]  where the second diagram is obtained by applying $\Hom(-,\Gamma)$
to the first one. The map $\lambda$ is the map in the statement,
while $\gamma$ is the evaluation in $x\in K_{\E}$. Thanks to \ref{lem:almost left exact functors and hom}
and since $\Gamma$ is cohomologically left exact on $v$ the map
$\delta$ is injective. So if $\phi\in\Hom(K,\Gamma)$ is such that
$\lambda(\phi)=0$ it follows that $\gamma(\phi)=\phi_{\E}(x)=0$,
as required.
\end{proof}
\begin{thm}
\label{prop:test sequences and Extone} If $\shC\subseteq\QCoh^{\shC}\stX$
then $\Lex_{R}(\shC,A)$ coincides with the subcategory of $\L_{R}(\shC,A)$
of cohomologically left exact functors.
\end{thm}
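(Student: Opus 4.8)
The plan is to prove the two inclusions separately, the whole point being to translate the "test sequence" exactness that defines $\Lex_{R}(\shC,A)$ into the cohomological vanishing of \ref{def: cohomologically left exact}. The two tools doing the translation are \ref{lem:almost left exact functors and hom}, which relates exactness on a test sequence with the vanishing of $\Hom$ and $\Ext^{1}$ of $\Omega^{\E}/\Imm(\Omega^{\mu})$ against $\Gamma$, and \ref{lem:key lemma for reducing test sequences}, which provides the injectivity statement needed for the converse half of \ref{lem:almost left exact functors and hom}. Throughout I use that $\shC\subseteq\QCoh^{\shC}\stX$ together with the last assertion of \ref{rem: local characterization of generation by D} and the stability of $\QCoh^{\shC}\stX$ under direct sums.

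First I would show $\Lex_{R}(\shC,A)$ consists of cohomologically left exact functors. Fix a surjection $\mu\colon\oplus_{j}\E_{j}\arr\E$ with $\E_{j},\E\in\shC$. Since $\Ker\mu$ is a subsheaf of the direct sum $\oplus_{j}\E_{j}$, each of its sections over an affine atlas lies in a finite subsum; moreover $\Ker\mu$ is generated by $\shC$ by \ref{rem: local characterization of generation by D} (applied to $\oplus_{j}\E_{j}\arr\E$, using $\E\in\QCoh^{\shC}\stX$). Exactly as in the construction of $u^{-1}(\Delta)$ preceding \ref{lem:almost left exact functors and hom}, these two facts realize $\Ker\mu$ as the image of a \emph{locally finite} map $\oplus_{k}\E_{k}\arr\oplus_{j}\E_{j}$, so that $\oplus_{k}\E_{k}\arr\oplus_{j}\E_{j}\arrdi{\mu}\E\arr0$ is a test sequence. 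A functor $\Gamma\in\Lex_{R}(\shC,A)$ is exact on it, so the forward implication of \ref{lem:almost left exact functors and hom} shows $\Gamma$ is cohomologically left exact on $\mu$; as $\mu$ was arbitrary, $\Gamma$ is cohomologically left exact.

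For the reverse inclusion, let $\Gamma$ be cohomologically left exact and fix a test sequence $\shT\colon\oplus_{k}\E_{k}\arrdi{\alpha}\oplus_{j}\E_{j}\arrdi{\mu}\E\arr0$; I must show the complex (\ref{Gamma on a test sequence}) is exact. Left-exactness at $\Gamma_{\E}$ is immediate from the first exact sequence of \ref{lem:almost left exact functors and hom} together with the vanishing $\Hom_{\L_{R}(\shC,R)}(\Omega^{\E}/\Imm(\Omega^{\mu}),\Gamma)=0$. For exactness in the middle, by the converse part of \ref{lem:almost left exact functors and hom} it suffices that the map $\Hom_{\L_{R}(\shC,R)}(\Ker(\Omega^{\mu}),\Gamma)\arr\prod_{k}\Gamma_{\E_{k}}$ induced by $u\colon\oplus_{k}\Omega^{\E_{k}}\arr\Ker(\Omega^{\mu})$ be injective, and this is exactly where I would apply \ref{lem:key lemma for reducing test sequences} with $K=\Ker(\Omega^{\mu})$. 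To verify its hypothesis, take $\Omega^{\E'}\arr K$ with $\E'\in\shC$; by Yoneda it is a locally finite map $\E'\arr\oplus_{j}\E_{j}$ killed by $\mu$, hence a map $\E'\arr\Ker\mu=\Imm\alpha$. Form $P=\Ker\bigl(\E'\oplus(\oplus_{k}\E_{k})\arr\oplus_{j}\E_{j}\bigr)$, the map being the difference of the two maps into $\oplus_{j}\E_{j}$, so that $P=\E'\times_{\Ker\mu}(\oplus_{k}\E_{k})$. Then $P\arr\E'$ is surjective (pullback of the epimorphism $\oplus_{k}\E_{k}\arr\Ker\mu$), and $P$ is generated by $\shC$ by \ref{rem: local characterization of generation by D}, being the kernel of a map from the $\shC$-generated sheaf $\E'\oplus(\oplus_{k}\E_{k})$ to $\oplus_{j}\E_{j}\in\QCoh^{\shC}\stX$. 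Choosing a \emph{locally finite} surjection $\oplus_{t}\E_{t}\arr P\hookrightarrow\E'\oplus(\oplus_{k}\E_{k})$ (possible since $P$ is a $\shC$-generated subsheaf of a direct sum), the projection to $\E'$ is a surjection $v\colon\oplus_{t}\E_{t}\arr\E'$ while the projection to $\oplus_{k}\E_{k}$ is locally finite, hence yields $\oplus_{t}\Omega^{\E_{t}}\arr\oplus_{k}\Omega^{\E_{k}}$ through which $\oplus_{t}\Omega^{\E_{t}}\arrdi{\Omega^{v}}\Omega^{\E'}\arr K$ factors via $u$; and $\Gamma$ is cohomologically left exact on $v$ by assumption. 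Thus \ref{lem:key lemma for reducing test sequences} applies, the injectivity holds, and $\Gamma$ is exact on $\shT$.

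The main obstacle I anticipate is the bookkeeping around local finiteness: in both directions I must guarantee that the $\shC$-generating maps extracted for $\Ker\mu$ and for $P$ can genuinely be taken locally finite into the ambient direct sums, so that the sequences produced are honest test sequences and the factorizations $\Imm(\Omega^{(-)})$ and "factors through $u$" hold on the nose. Since this is precisely the mechanism already isolated in the paragraph preceding \ref{lem:almost left exact functors and hom}, I expect to reuse that construction verbatim rather than redo it, leaving only routine diagram chasing.
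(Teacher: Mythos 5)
Your proof is correct and follows essentially the same route as the paper's: both directions reduce to \ref{lem:almost left exact functors and hom} via the existence of a locally finite test sequence extending any surjection $\mu$ (using $\shC\subseteq\QCoh^{\shC}\stX$ and \ref{rem: local characterization of generation by D}), and the injectivity hypothesis of the converse half is supplied by \ref{lem:key lemma for reducing test sequences} applied to the kernel of the difference map $\E'\oplus(\oplus_{k}\E_{k})\arr\oplus_{j}\E_{j}$ --- exactly the sheaf the paper calls $\shH$ and you call $P$. No changes needed.
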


\begin{proof}
Let $\mu=\oplus_{j}\mu_{j}\colon\oplus_{j\in J}\E_{j}\to\E$ be a
surjective map with $\E,\E_{j}\in\shC$ and set $\Delta=\Imm(\Omega^{\mu})$,
$K=\Ker(\Omega^{\mu})$. By \ref{rem:extending to test sequences}
there exists a test sequence $\oplus_{k}\E_{k}\to\oplus_{j}\E_{j}\to\E\to0$.
Using \ref{lem:almost left exact functors and hom} we have to prove
that if $\Gamma\in\L_{R}(\shC,R)$ is cohomologically left exact then
$\lambda\colon\Hom(K,\Gamma)\to\prod_{k}\Gamma_{\E_{k}}$ is injective.
We are going to apply \ref{lem:key lemma for reducing test sequences}
with respect to the map $\oplus_{k}\Omega^{\E_{k}}\to K$. If $\overline{\E}\in\shC$,
a map $\Omega^{\overline{\E}}\to K$ is a locally finite map $\overline{\E}\to\oplus_{j}\E_{j}$
which is zero composed by $\mu$, or, equivalently, mapping in the
image of $\oplus_{k}\E_{k}\to\oplus_{j}\E_{j}$. We apply the last
remark in \ref{rem:extending to test sequences} to the maps $\oplus_{k}\E_{k}\to\oplus_{j}\E_{j}$
and $\overline{\E}\to\oplus_{j}\E_{j}$. If $\shH$ is their fiber
product there is a surjective map $\oplus_{s}\E_{s}\to\shH$ with
$\E_{t}\in\shC$ such that $\oplus_{s}\E_{s}\to\oplus_{k}\E_{k}$
is locally finite and $\oplus_{s}\E_{s}\to\overline{\E}$ is surjective.
This gives the desired factorization for applying \ref{lem:key lemma for reducing test sequences}.
\end{proof}
We now show how to reduce the number of test sequences in order to
check when a $\Gamma\in\L_{R}(\shC,A)$ belongs to $\Lex_{R}(\shC,A)$.
The following is the key lemma:
\begin{lem}
\label{lem: reducing the set of test sequences}Let $\Phi'\subseteq\Phi_{\shC}$
such that, for all $\E\in\shC$ and $\Delta\in\Phi_{\shC}(\E)$ there
exists $\Delta'\in\Phi'\cap\Phi_{\shC}(\E)$ such that $\Delta'\subseteq\Delta$
(inside $\Omega^{\E}$). If $\Gamma\in\L_{R}(\shC,A)$, $\shC\subseteq\QCoh^{\shC}\stX$
and $\Gamma$ is cohomologically left exact on all the elements of
$\Phi'$ then $\Gamma$ is cohomologically left exact.
\end{lem}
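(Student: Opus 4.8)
The plan is to prove that $\Gamma$ is cohomologically left exact on an arbitrary surjection $\mu\colon\oplus_{j}\E_{j}\arr\E$ with $\E,\E_{j}\in\shC$, i.e.\ on $\Delta=\Imm(\Omega^{\mu})\in\Phi_{\shC}(\E)$. First I would invoke the hypothesis to pick $\Delta'\in\Phi'\cap\Phi_{\shC}(\E)$ with $\Delta'\subseteq\Delta$, writing $\Delta'=\Imm(\Omega^{\mu'})$ for a surjection $\mu'\colon\oplus_{t}\E_{t}\arr\E$; then $\Gamma$ is cohomologically left exact on $\Delta'$ by assumption. Setting $Q=\Omega^{\E}/\Delta$ and $Q'=\Omega^{\E}/\Delta'$, the inclusion $\Delta'\subseteq\Delta$ gives a short exact sequence $0\arr\Delta/\Delta'\arr Q'\arr Q\arr0$ in $\L_{R}(\shC,R)$. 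Applying $\Hom_{\L_{R}(\shC,R)}(-,\Gamma)$ and using $\Hom_{\L_{R}(\shC,R)}(Q',\Gamma)=\Ext^{1}_{\L_{R}(\shC,R)}(Q',\Gamma)=0$, the long exact sequence yields $\Hom_{\L_{R}(\shC,R)}(Q,\Gamma)=0$ together with an isomorphism $\Ext^{1}_{\L_{R}(\shC,R)}(Q,\Gamma)\simeq\Hom_{\L_{R}(\shC,R)}(\Delta/\Delta',\Gamma)$. Hence, using the sequence $0\arr\Delta'\arr\Delta\arr\Delta/\Delta'\arr0$, cohomological left exactness of $\Gamma$ on $\mu$ reduces to the injectivity of the restriction $\Hom_{\L_{R}(\shC,R)}(\Delta,\Gamma)\arr\Hom_{\L_{R}(\shC,R)}(\Delta',\Gamma)$.

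Since $\oplus_{t}\Omega^{\E_{t}}\arr\Delta'$ is surjective, the target embeds in $\prod_{t}\Gamma_{\E_{t}}$, so this restriction is injective as soon as $\Hom_{\L_{R}(\shC,R)}(\Delta,\Gamma)\arr\prod_{t}\Gamma_{\E_{t}}$ is injective, and I would obtain the latter from \ref{lem:key lemma for reducing test sequences} applied to $K=\Delta$ and the map $u\colon\oplus_{t}\Omega^{\E_{t}}\arr\Delta$ obtained by corestricting $\Omega^{\mu'}$ (its image being $\Delta'\subseteq\Delta$). Thus everything comes down to verifying the hypothesis of \ref{lem:key lemma for reducing test sequences}: given $\overline{\E}\in\shC$ and a map $\Omega^{\overline{\E}}\arr\Delta$, that is an element $w\in\Delta_{\overline{\E}}\subseteq\Hom_{\stX}(\overline{\E},\E)$, I must produce a surjection $v\colon\oplus_{r}\E_{r}\arr\overline{\E}$ with $\E_{r}\in\shC$ such that $\oplus_{r}\Omega^{\E_{r}}\arrdi{\Omega^{v}}\Omega^{\overline{\E}}\arr\Delta$ factors through $u$ and $\Gamma$ is cohomologically left exact on $v$.

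To build such a $v$ I would first produce an auxiliary surjection landing in $\Delta'$, then shrink it using the hypothesis on $\Phi'$. For the first step, form the fibre product $\shP=\overline{\E}\times_{\E}(\oplus_{t}\E_{t})$ of $w$ and $\mu'$ in the abelian category $\QCoh\stX$; as $\mu'$ is surjective so is $p_{1}\colon\shP\arr\overline{\E}$, and $\shP$ is the kernel of $(w,-\mu')\colon\overline{\E}\oplus(\oplus_{t}\E_{t})\arr\E$. Since the source is generated by $\shC$ and $\E\in\shC\subseteq\QCoh^{\shC}\stX$, \ref{rem: local characterization of generation by D} shows that $\shP$ is generated by $\shC$; moreover, exactly as in the proof of \ref{prop:test sequences and Extone}, the local generators may be chosen so that the resulting surjection $q\colon\oplus_{s}\E_{s}\arr\shP$ has $p_{2}q\colon\oplus_{s}\E_{s}\arr\oplus_{t}\E_{t}$ locally finite (sections of $\oplus_{t}\E_{t}$ over an affine have finite support). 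With $v_{0}=p_{1}q$, the relation $wp_{1}=\mu'p_{2}$ gives $wv_{0}=\mu'(p_{2}q)$, so $\oplus_{s}\Omega^{\E_{s}}\arrdi{\Omega^{v_{0}}}\Omega^{\overline{\E}}\arr\Delta$ factors through $u=\Omega^{\mu'}$. For the second step I would apply the hypothesis to $\overline{\E}$ and $\Imm(\Omega^{v_{0}})\in\Phi_{\shC}(\overline{\E})$, obtaining $\Delta_{0}'\in\Phi'\cap\Phi_{\shC}(\overline{\E})$ with $\Delta_{0}'\subseteq\Imm(\Omega^{v_{0}})$, and take $v\colon\oplus_{r}\E_{r}\arr\overline{\E}$ with $\Imm(\Omega^{v})=\Delta_{0}'$. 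Then $\Gamma$ is cohomologically left exact on $v$ by assumption, and because $\Imm(\Omega^{v})\subseteq\Imm(\Omega^{v_{0}})$ the image of $\oplus_{r}\Omega^{\E_{r}}\arrdi{\Omega^{v}}\Omega^{\overline{\E}}\arr\Delta$ is contained in that of $v_{0}$, hence in $\Delta'$; as $\oplus_{r}\Omega^{\E_{r}}$ is projective in $\L_{R}(\shC,R)$ (a coproduct of representables) and $u$ has image $\Delta'$, this composite factors through $u$. This checks the hypothesis of \ref{lem:key lemma for reducing test sequences} and finishes the argument.

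The homological bookkeeping in the first paragraph is routine diagram chasing. The delicate point is the construction of the surjection $q$ with $p_{2}q$ locally finite, which is exactly what makes the relevant composite an honest morphism in $\L_{R}(\shC,R)$ and is where the generation hypothesis $\shC\subseteq\QCoh^{\shC}\stX$ is genuinely used; the other subtlety is the twofold appeal to the cofinality of $\Phi'$ (once for $\E$ to produce $\Delta'$, once for $\overline{\E}$ to shrink $v_{0}$ into a surjection on which $\Gamma$ is known to be cohomologically left exact), so that the recursion does not become circular.
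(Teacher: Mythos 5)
Your proposal is correct and follows essentially the same route as the paper: the same long-exact-sequence reduction to the vanishing of $\Hom(\Delta/\Delta',\Gamma)$, the same appeal to \ref{lem:key lemma for reducing test sequences} with $K=\Delta$ and the map onto $\Delta'$, and the same fibre-product/locally-finite-generators construction (the paper packages this as the statement $\psi^{-1}(\Delta')\in\Phi_{\shC}(\overline{\E})$ in the discussion preceding \ref{lem:almost left exact functors and hom}, then shrinks it into $\Phi'$ by cofinality, exactly as you shrink $\Imm(\Omega^{v_0})$). The only difference is cosmetic: you first build an element of $\Phi_{\shC}(\overline{\E})$ inside $\psi^{-1}(\Delta')$ and then invoke cofinality, whereas the paper invokes cofinality on $\psi^{-1}(\Delta')$ itself.
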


\begin{proof}
Consider $\Delta\in\Phi_{\shC}(\E)$, $\Delta'\subseteq\Delta$ with
$\Delta'\in\Phi'$ and the exact sequence $0\to\Delta/\Delta'\to\Omega^{\E}/\Delta'\to\Omega^{\E}/\Delta\to0$.
Applying $\Hom(-,\Gamma)$, the only non trivial vanishing to check
is $\Hom(\Delta/\Delta',\Gamma)=0$. In other words we need to prove
that 
\[
\Hom(\Delta,\Gamma)\to\Hom(\Delta',\Gamma)
\]
is injective. Write $\Delta'=\Imm(\Omega^{u})$, where $u\colon\oplus_{q}\E_{q}\to\E$.
Since $\bigoplus_{q}\Omega^{\E_{q}}\to\Delta'$ is surjective it is
enough to show that $\Hom(-,\Gamma)$ maps $\Omega^{u}\colon\bigoplus_{q}\Omega^{\E_{q}}\to\Delta$
to an injective map. We are going to prove that the hypothesis of
\ref{lem:key lemma for reducing test sequences} for $K=\Delta$ are
met. If $\Omega^{\E'}\to\Delta\subseteq\Omega^{\E}$ is a map corresponding
to $\psi\colon\E'\to\E$, then $\psi^{-1}(\Delta')\in\Phi_{\shC}$
and, by hypothesis, we can find $\Phi'\ni\Imm(\Omega^{v})\subseteq\psi^{-1}(\Delta')$;
the last inclusion tells us that $v$ is the factorization required
for \ref{lem:key lemma for reducing test sequences}. Moreover $\Gamma$
is cohomologically left exact on $v$ by hypothesis.
\end{proof}
\begin{prop}
\label{ prop: Lex and left exact functors} Let $\shD\subseteq\QCoh\stX$
be a subcategory and $\Gamma\in\L_{R}(\shD,A)$. If $\Gamma\in\Lex_{R}(\shD,A)$
then $\Gamma$ is exact on finite test sequences in $\shD$ and transforms
any arbitrary direct sum of objects of $\shD$ and which belongs to
$\shD$ into a product. The converse holds if one of the following
conditions is satisfied:

\begin{itemize}
\item the category $\shD$ is stable by arbitrary direct sums;
\item all the sheaves in $\shD$ are finitely presented, $\shD\subseteq\QCoh^{\shD}\stX$
and $\stX$ is quasi-compact. In this case $\Gamma\in\Lex_{R}(\shD,A)$
if and only if it is cohomologically left exact on all surjective
maps $\E'\to\E$ with $\E\in\shD$ and $\E'\in\shD^{\oplus}$.
\end{itemize}
In any of the above cases, if moreover $\shD$ is additive and all
surjections in $\shD$ have kernel in $\shD$ then $\Lex_{R}(\shD,A)$
is the subcategory of $\L_{R}(\shD,A)$ of functors which are left
exact on short exact sequences in $\shD$ and transforms arbitrary
direct sums in products.
\end{prop}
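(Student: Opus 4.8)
The plan is to prove the forward implication straight from the definitions and to obtain the converse by routing each half through the cohomological criterion of \ref{prop:test sequences and Extone}. For the forward direction, note first that a finite test sequence is in particular a test sequence (write the $\shD^\oplus$-terms as finite sums of objects of $\shD$; the local finiteness condition is vacuous), so exactness on all test sequences gives exactness on finite ones for free. To see that $\Gamma\in\Lex_R(\shD,A)$ carries a direct sum $\E=\bigoplus_{i\in I}\E_i$ that happens to lie in $\shD$ (with each $\E_i\in\shD$) to a product, I would feed $\Gamma$ the degenerate test sequence $0\arr\bigoplus_{i\in I}\E_i\arrdi{\id}\E\arr0$ (no $\E_k$-terms); its associated complex $0\arr\Gamma_\E\arr\prod_{i}\Gamma_{\E_i}\arr0$ is then exact, and the resulting isomorphism $\Gamma_\E\simeq\prod_i\Gamma_{\E_i}$ is exactly the canonical map induced by the inclusions.

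For the converse under the first hypothesis, that $\shD$ is closed under arbitrary direct sums, the key observation is that a test sequence $\bigoplus_{k}\E_k\arrdi{\alpha}\bigoplus_{j}\E_j\arrdi{\mu}\E\arr0$ now has $\bigoplus_j\E_j$ and $\bigoplus_k\E_k$ in $\shD\subseteq\shD^\oplus$, so it is literally a finite test sequence. Exactness of $\Gamma$ on it makes $0\arr\Gamma_\E\arr\Gamma_{\bigoplus_j\E_j}\arr\Gamma_{\bigoplus_k\E_k}$ exact, and the hypothesis that $\Gamma$ sends direct sums to products identifies $\Gamma_{\bigoplus_j\E_j}\simeq\prod_j\Gamma_{\E_j}$ and $\Gamma_{\bigoplus_k\E_k}\simeq\prod_k\Gamma_{\E_k}$ compatibly with the differentials, where the local finiteness of $\alpha$ ensures the comparison map is well defined. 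This is precisely the complex required to be exact for $\Gamma$ to belong to $\Lex_R(\shD,A)$.

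For the converse under the second hypothesis I would first record that the direct-sum-to-product condition is automatic there: an infinite direct sum of nonzero finitely presented sheaves fails to be finitely generated, so on the quasi-compact $\stX$ the only direct sums of objects of $\shD$ lying in $\shD$ are essentially finite. By \ref{prop:test sequences and Extone} membership in $\Lex_R(\shD,A)$ is equivalent to cohomological left exactness on all surjections $\bigoplus_j\E_j\arr\E$ with terms in $\shD$. I would then cut down to finite surjections via \ref{lem: reducing the set of test sequences}, taking $\Phi'$ to consist of the images $\Imm(\Omega^{\mu_0})$ of finite surjections $\mu_0\colon\E'\arr\E$ with $\E'\in\shD^\oplus$: given any surjection $\mu\colon\bigoplus_j\E_j\arr\E$, since $\E$ is finitely generated and $\stX$ admits an affine atlas (on which $\E$ becomes a finitely generated module), finitely many summands already surject onto $\E$, producing $\mu_0$ with $\Imm(\Omega^{\mu_0})\subseteq\Imm(\Omega^\mu)$. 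Finally, to tie finite surjections to finite test sequences, I would use that for a finite surjection $\mu\colon\E'\arr\E$ the kernel is generated by $\shD$ (as $\shD\subseteq\QCoh^{\shD}\stX$) and is finitely generated (kernel of a map of finitely presented sheaves over a quasi-compact base), so a finite subsum yields a finite test sequence $\E''\arr\E'\arrdi{\mu}\E\arr0$; exactness of $\Gamma$ on it gives cohomological left exactness on $\mu$ by \ref{lem:almost left exact functors and hom}. Chaining these equivalences delivers both the stated criterion and the converse.

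For the last assertion, assume additionally that $\shD$ is additive (hence $\shD=\shD^\oplus$) and that every surjection in $\shD$ has kernel in $\shD$. I would show that ``exact on finite test sequences'' coincides with ``left exact'' in the sense of \ref{def: left exact functors}. One direction is immediate, since a short exact sequence in $\shD$ is a finite test sequence. Conversely, given a finite test sequence $\E''\arrdi{\beta}\E'\arrdi{\mu}\E\arr0$, set $\shK=\Ker\mu\in\shD$: left exactness on $0\arr\shK\arr\E'\arr\E\arr0$ makes $0\arr\Gamma_\E\arr\Gamma_{\E'}\arr\Gamma_\shK$ exact, while left exactness applied to the surjection $\E''\arr\shK$ (kernel again in $\shD$) makes $\Gamma_\shK\arr\Gamma_{\E''}$ injective, and the two combine to give exactness of $0\arr\Gamma_\E\arr\Gamma_{\E'}\arr\Gamma_{\E''}$. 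Substituting this equivalence into the main characterization gives the claim. The hardest step, and the one requiring genuine input rather than formal bookkeeping, is the second bullet: making the reduction to finite surjections and the passage to finite test sequences interlock correctly, which is exactly where finite presentation of the sheaves and quasi-compactness of $\stX$ are indispensable, both to shrink surjections to finite subsums and to control the kernels.
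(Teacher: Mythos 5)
Your proposal is correct and follows essentially the same route as the paper: the degenerate test sequence $0\arr\bigoplus_i\E_i\arrdi{\id}\E\arr0$ for the direct-sum claim, the identification of infinite test sequences with finite ones under the first hypothesis, and, for the second bullet, exactly the paper's combination of \ref{prop:test sequences and Extone}, the reduction lemma \ref{lem: reducing the set of test sequences} applied to images of finite surjections, and the finite-type-kernel argument feeding into \ref{lem:almost left exact functors and hom}. Your extra observation that the direct-sum-to-product condition is automatic in the finitely presented case, and your explicit kernel-splitting argument for the final assertion (which the paper leaves as ``easy''), are both sound.
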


\begin{proof}
If $\Gamma\in\Lex_{R}(\shD,A)$ then it is clearly exact on finite
test sequences. Given a set $\{\E_{j}\in\shD\}_{j\in J}$ set $\E=\bigoplus_{j}\E_{j}$.
If $\E\in\shD$, then the sequence
\[
0\to\bigoplus_{j\in J}\E_{j}\to\E\to0
\]
is a test sequence and therefore we get that the natural map $\Gamma_{\E}\to\prod_{j}\Gamma_{\E_{j}}$
is an isomorphism. Moreover the last part of the statement follows
easily from the first part. Finally if $\shD$ is stable by arbitrary
direct sums is easy to see that the converse holds.  

So we focus on the second point and we assume that all the sheaves
in $\shD$ are finitely presented, $\shD\subseteq\QCoh^{\shD}\stX$
and that $\stX$ is quasi-compact. Since the class of finitely presented
quasi-coherent sheaves on $\stX$ modulo isomorphism is a set, we
can assume $\shD=\shC$ small. Let $\Phi'\subseteq\Phi_{\shC}$ be
the subset of functors of the form $\Imm(\Omega^{\mu})$ for some
surjective map $\mu\colon\E'\to\E$ with $\E\in\shC$ and $\E'\in\shC^{\oplus}$.
The set $\Phi'$ satisfies the hypothesis of \ref{lem: reducing the set of test sequences}:
if $v\colon\oplus_{j\in J}\E_{j}\to\E$ is a surjective map then there
exists a finite subset $J_{0}\subseteq J$ such that $v_{|\E'}\colon\E'=\oplus_{j\in J_{0}}\E_{j}\to\E$
is surjective because $\E$ is of finite type and $\stX$ is quasi-compact.
In particular, taking into account \ref{prop:test sequences and Extone},
the last claim of the second point follows. It remains to show that
if $\Gamma\in\L_{R}(\shC,A)$ is exact on finite test sequences then
$\Gamma$ is cohomologically left exact on all the elements of $\Phi'$.
Let $\mu\colon\E'\to\E$ be a surjective map with $\E\in\shC$ and
$\E'\in\shC^{\oplus}$. Since $\E$ is finitely presented and $\E'$
is of finite type it follows that $\Ker(\mu)$ is of finite type and,
since $\stX$ is quasi-compact and $\shC\subseteq\QCoh^{\shC}\stX$,
there exists $\E''\in\shC^{\oplus}$ and a surjective map $\E''\to\Ker(\mu)$.
Thus $\E''\to\E'\to\E\to0$ is a finite test sequence and by \ref{lem:almost left exact functors and hom}
it follows that $\Gamma$ is cohomologically left exact on $\mu$
as required. %
\end{proof}
There is another characterization of $\Lex_{R}(\shC,A)$ in terms
of sheaves on a site. Although we will not use it in this paper, I
think it is worth to point out. We refer to \cite[Tag 00YW]{SP014}
for general definitions and properties. We start by comparing $\Lex_{R}(\shC,A)$
and $\Lex_{R}(\shC^{\oplus},A)$.
\begin{prop}
\label{prop:Lex for Coplus and C} If $\shC\subseteq\QCoh^{\shC}\stX$
then the equivalence $\L_{R}(\shC^{\oplus},A)\simeq\L_{R}(\shC,A)$
maps $\Lex_{R}(\shC^{\oplus},A)$ to $\Lex_{R}(\shC,A)$.
\end{prop}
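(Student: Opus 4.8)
The plan is to prove the slightly sharper statement that for every $\Gamma\in\L_R(\shC^{\oplus},A)$ one has $\Gamma\in\Lex_R(\shC^{\oplus},A)$ if and only if $\Gamma_{|\shC}\in\Lex_R(\shC,A)$; since the equivalence of \ref{prop:find a directed category} is precisely $\Gamma\leftrightarrow\Gamma_{|\shC}$, this gives the asserted ``maps to'' (the ``only if'' half) and in fact shows the equivalence restricts to one between the two $\Lex$ subcategories. The ``only if'' direction is immediate: a test sequence in $\shC$ is \emph{a fortiori} a test sequence in $\shC^{\oplus}$ (its terms lie in $\shC\subseteq\shC^{\oplus}$ and the local finiteness condition is unchanged), and the complex (\ref{Gamma on a test sequence}) attached to it involves only the values of $\Gamma$ on objects of $\shC$; hence exactness of $\Gamma$ on it is verbatim exactness of $\Gamma_{|\shC}$ on it.

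The substance is the converse. First I would note that $\shC\subseteq\QCoh^{\shC}\stX$ forces $\shC^{\oplus}\subseteq\QCoh^{\shC^{\oplus}}\stX$, using that $\QCoh_A^{\shD}\stX$ is stable under direct sums and that $\QCoh^{\shC}\stX=\QCoh^{\shC^{\oplus}}\stX$ (same generation class, same kernel condition). Thus \ref{prop:test sequences and Extone} applies to both $\shC$ and $\shC^{\oplus}$: membership in $\Lex$ is equivalent to cohomological left exactness. Next, the restriction $\Phi\colon\L_R(\shC^{\oplus},R)\arr\L_R(\shC,R)$ is an exact equivalence (\ref{prop:find a directed category}), so it preserves $\Hom$ and $\Ext^{1}$, and it sends $\Omega^{\E}$, for $\E=\bigoplus_{l=1}^{m}\E_{l}\in\shC^{\oplus}$ with $\E_{l}\in\shC$, to $\bigoplus_{l}\Omega^{\E_{l}}\in\L_R(\shC,R)$. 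Writing $M=\Gamma_{|\shC}$, cohomological left exactness of $\Gamma$ over $\shC^{\oplus}$ is therefore equivalent to the vanishing $\Hom_{\L_R(\shC,R)}(C,M)=\Ext^{1}_{\L_R(\shC,R)}(C,M)=0$, where $C=\Coker(\Phi\Omega^{\mu})$ runs over the cokernels attached to surjections $\mu\colon\bigoplus_{j}\E_{j}\arr\E$ in $\shC^{\oplus}$.

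I would then establish these vanishings by induction on the number $m$ of $\shC$-summands of the target $\E$. For $m=1$ the target lies in $\shC$ and the required vanishing is exactly the hypothesis that $M$ is cohomologically left exact over $\shC$ (\ref{prop:test sequences and Extone}). For the inductive step, split $\E=\E_{m}\oplus\E'$ with $\E'=\bigoplus_{l<m}\E_{l}$, set $\nu=\Phi\Omega^{\mu}$, and apply the snake lemma to the square comparing $\nu$ with the projection $\Omega^{\E}\arr\Omega^{\E_{m}}$. This produces a short exact sequence $0\arr\Ker(C\arr C_{m})\arr C\arr C_{m}\arr0$, in which $C_{m}=\Coker(\Phi\Omega^{\mu_{m}})$ is attached to the induced surjection $\mu_{m}\colon\bigoplus_{j}\E_{j}\arr\E_{m}$ onto a target in $\shC$, so the base hypothesis kills $\Hom(C_{m},M)$ and $\Ext^{1}(C_{m},M)$, while $\Ker(C\arr C_{m})$ is a cokernel attached to a surjection onto $\E'$ with $m-1$ summands, so the inductive hypothesis kills its $\Hom$ and $\Ext^{1}$. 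Feeding this into the long exact $\Hom$/$\Ext^{1}$ sequence sandwiches $\Hom(C,M)$ and $\Ext^{1}(C,M)$ between vanishing groups, completing the induction.

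The hard part is the identification of $\Ker(C\arr C_{m})$ as a cokernel of the required form, i.e. as $\Coker(\Omega^{\sigma})$ for an honest surjection $\sigma\colon\bigoplus_{t}\E_{t}\arr\E'$ in $\shC^{\oplus}$. Concretely, one must check that the image of the snake connecting map $\partial$ equals $\Imm(\Omega^{\sigma})$. This is exactly where $\shC\subseteq\QCoh^{\shC}\stX$ enters: by \ref{rem: local characterization of generation by D} the kernel subsheaf $\Ker(\bigoplus_{j}\E_{j}\arr\E_{m})$ is the image of a \emph{locally finite} map out of a direct sum of objects of $\shC$, and under the correspondence between locally finite maps $\bigoplus_{t}\E_{t}\arr\bigoplus_{j}\E_{j}$ and morphisms $\bigoplus_{t}\Omega^{\E_{t}}\arr\bigoplus_{j}\Omega^{\E_{j}}$ this realizes $\Ker\nu_{m}$ as such an image; composing with the projection $\bigoplus_{j}\E_{j}\arr\E'$ (which restricts to a surjection on that kernel, since the original map is surjective onto $\E$) yields the desired $\sigma$. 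Once this bookkeeping is in place, everything else is a routine chase through the long exact sequences.
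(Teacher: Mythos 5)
Your argument is correct in outline but follows a genuinely different route from the paper's. The paper handles the nontrivial direction (restriction in $\Lex_{R}(\shC,A)$ implies the extension lies in $\Lex_{R}(\shC^{\oplus},A)$) in one stroke: it introduces the subfamily $\Phi'\subseteq\Phi_{\shC^{\oplus}}$ of ``split'' sieves $\Delta_{1}\oplus\cdots\oplus\Delta_{r}$ with $\Delta_{i}=\psi_{i}^{-1}(\Delta)\in\Phi_{\shC}(\E'_{i})$, notes that $\Omega^{\E}/(\oplus_{i}\Delta_{i})\simeq\oplus_{i}(\Omega^{\E'_{i}}/\Delta_{i})$ so the hypothesis gives cohomological left exactness on $\Phi'$, and then invokes \ref{lem: reducing the set of test sequences} with the containment $\Delta_{1}\oplus\cdots\oplus\Delta_{r}\subseteq\Delta$. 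You instead peel off one summand of the target at a time by induction, using the snake lemma and the long exact $\Hom/\Ext^{1}$ sequence; this is more self-contained (it bypasses \ref{lem: reducing the set of test sequences} and \ref{lem:key lemma for reducing test sequences}) at the cost of more bookkeeping, and both arguments ultimately rest on the same key input, namely that $u^{-1}(\Delta)\in\Phi_{\shC^{\oplus}}$ for $\Delta\in\Phi_{\shC^{\oplus}}$ when $\shC\subseteq\QCoh^{\shC}\stX$ (the observation recorded just before \ref{lem:almost left exact functors and hom}). One step of yours needs repair: the map $\sigma$ you build by composing a locally finite cover of $\Ker(\mu_{m})$ with the projection to $\E'$ only yields the inclusion $\Imm(\Omega^{\sigma})\subseteq i'^{-1}(\Imm\Omega^{\mu})$, not equality, since a map from an object of $\shC^{\oplus}$ into that kernel need not lift along your cover; the inclusion alone kills $\Hom$ of $\Ker(C\arr C_{m})$ but leaves a $\Hom(Q,M)$ term obstructing the $\Ext^{1}$ vanishing. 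The fix is immediate: use the paper's exact identification $i'^{-1}(\Imm\Omega^{\mu})=\Imm(\Omega^{\mu'})$ for the tautological surjection $\mu'$ indexed by all elements of the pulled-back sieve, whose surjectivity is established by precisely the kernel-generation argument you quote; with that substitution your induction closes.
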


\begin{proof}
We can assume $A=R$. Let $\Gamma\in\L_{R}(\shC^{\oplus},R)$ such
that $\Gamma\in\Lex_{R}(\shC,R)$ and consider $\Phi'\subseteq\Phi_{\shC^{\oplus}}$
the set of subfunctors $\Delta\subseteq\Omega^{\E}$ that can be written
as follows: $\E=\E_{1}\oplus\cdots\oplus\E_{r}$ and there are surjective
maps $\mu_{k}\colon\oplus_{q}\E_{q,k}\to\E_{k}$ for $\E_{k},\E_{q,k}\in\shC$
such that $\Delta=\Imm(\Omega^{\mu_{1}})\oplus\cdots\oplus\Imm(\Omega^{\mu_{r}})$.
Since for such $\Delta$ we have 
\[
\Omega^{\E}/\Delta\simeq\oplus_{i}(\Omega^{\E_{i}}/\Delta_{i})
\]
 it follows that $\Gamma$ is cohomologically left exact on all the
elements of $\Phi'$. Taking into account \ref{prop:test sequences and Extone},
in order to conclude that $\Gamma\in\Lex_{R}(\shC^{\oplus},R)$ we
can show that $\Phi'\subseteq\Phi_{\shC^{\oplus}}$ satisfies the
hypothesis of \ref{lem: reducing the set of test sequences}. So let
$\Delta=\Imm(\Omega^{\mu})\in\Phi_{\shC^{\oplus}}$ where $\mu\colon\oplus_{q}\E_{q}\to\E$
where $\E,\E_{q}\in\shC^{\oplus}$. If $\E=\E'_{1}\oplus\cdots\oplus\E'_{r}$
with $\E'_{i}\in\shC$ and $\psi_{i}\colon\E'_{i}\to\E$ are the inclusions
then $\Delta_{i}=\psi_{i}^{-1}(\Delta)\in\Phi_{\shC^{\oplus}}(\E'_{i})=\Phi_{\shC}(\E'_{i})$
and it is easy to see that $\Phi'\ni\Delta_{1}\oplus\cdots\oplus\Delta_{r}\subseteq\Delta$
as required.
\end{proof}
\begin{thm}
\label{prop:Lex and Grothendieck topology} Assume $\shC\subseteq\QCoh^{\shC}\stX$
and let $\shJ$ be the smallest Grothendieck topology on $\shC^{\oplus}$
containing $\Phi_{\shC^{\oplus}}$ then $\Lex_{R}(\shC^{\oplus},A)$
is the category of sheaves of $A$-modules on $(\shC^{\oplus},\shJ)$
which are $R$-linear. In other words $\Gamma\in\L_{R}(\shC^{\oplus},A)$
is left exact if and only if it a sheaf on $(\shC^{\oplus},\shJ)$.
\end{thm}

\begin{proof}
We can assume $A=R$ and $\shC=\shC^{\oplus}$. If $\Delta\subseteq\Omega^{\E}$
is a sieve and $f\colon\E'\to\E$ we set $f^{-1}(\Delta)=\Delta\times_{\Omega^{\E}}\Omega^{\E'}\subseteq\Omega^{\E'}$.
Let $\widetilde{\shJ}$ be the set of sieves $\Delta\subseteq\Omega^{\E}$
of $\shC$ such that, for all $\Gamma\in\Lex_{R}(\shC,R)$ and maps
$f\colon\E'\to\E$ the map
\[
\Hom_{\sets}(\Omega^{\E'},\Gamma)\to\Hom_{\sets}(f^{-1}(\Delta),\Gamma)
\]
is bijective. Here $\Hom_{\sets}$ denotes the set of natural transformation
of functors with values in $\sets$. The set $\widetilde{\shJ}$ is
a Grothendieck topology on $\shC$ such that all functors in $\Lex_{R}(\shC,R)$
are sheaves: see \cite[Tag 00Z9]{SP014}. Notice that, by \ref{rem: natural transf are linear for additive categories},
if $A,B\in\L_{R}(\shC,R)$ then $\Hom_{\sets}(A,B)=\Hom_{\L_{R}(\shC,R)}(A,B)$.
Moreover if $\Delta\in\Phi_{\shC}$ and $f\colon\E'\to\E$ is a map
in $\shC$ then $\Delta'=f^{-1}(\Delta)\in\Phi_{\shC}$. If $\Gamma\in\L_{R}(\shC,R)$
then, applying $\Hom_{\L_{R}(\shC,R)}(-,\Gamma)$ on the exact sequence
$0\to\Delta'\to\Omega^{\E'}\to\Omega^{\E'}/\Delta'\to0$ and taking
into account that $\Ext^{1}(\Omega^{\E'},\Gamma)=0$ we obtain an
exact sequence 
\[
0\to\Hom(\Omega^{\E'}/\Delta',\Gamma)\to\Hom_{\sets}(\Omega^{\E'},\Gamma)\to\Hom_{\sets}(\Delta',\Gamma)\to\Ext^{1}(\Omega^{\E'}/\Delta',\Gamma)\to0
\]
By \ref{prop:test sequences and Extone} $\Phi_{\shC}\subseteq\widetilde{J}$
and therefore $\shJ\subseteq\widetilde{\shJ}$, which means that if
$\Gamma\in\Lex_{R}(\shC,A)$ then $\Gamma$ is a sheaf on $(\shC,\shJ)$.
Conversely if $\Gamma$ is a sheaf on $(\shC,\shJ)$ we immediately
see from the above sequence that $\Gamma$ is cohomologically left
exact, which ends the proof.
\end{proof}

\section{Some special cases}

We now apply \ref{thm:general theorem for equivalences} and \ref{ prop: Lex and left exact functors}
in some (more) concrete situations. The symbol $A$ will denote an
$R$-algebra.

This is Gabriel-Popescu's theorem for the category $\QCoh\stX$.
\begin{thm}
\label{thm: Gabriel Popescu's theorem}{[}Gabriel-Popescu's theorem{]}
Let $\stX$ be a pseudo-algebraic fibered category. If $\E$ is a
generator of $\QCoh\stX$ then the functor
\[
\Hom_{\stX}(\E,-)\colon\QCoh_{A}\stX\to\Mod_{\text{right}}(\End_{\stX}(\E)\otimes_{R}A)
\]
is fully faithful and has an exact left adjoint.
\end{thm}

\begin{proof}
It follows from \ref{thm:general theorem for equivalences} and \ref{prop:excatness properties of Omega* and shF*}
with $\shD=\shC=\{\E\}$: in this case 
\[
\L_{R}(\shC,A)\simeq\Mod_{\text{right}}(\End_{\stX}(\E)\otimes_{R}A)\comma\Gamma\longmapsto\Gamma_{\E}
\]
\end{proof}
\begin{thm}
\label{thm:equivalence for QCoh X} Let $\stX$ be a pseudo-algebraic
fibered category over $R$. The category $\Lex_{R}(\QCoh\stX,A)$
is the category of contravariant, $R$-linear and left exact functors
$\Gamma\colon\QCoh\stX\to\Mod A$ which transform arbitrary direct
sums into products. Moreover the functor
\[
\Omega^{*}\colon\QCoh_{A}\stX\to\Lex_{R}(\QCoh\stX,A)
\]
is an equivalence of categories.
\end{thm}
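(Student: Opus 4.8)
The plan is to deduce both assertions directly from the two general results already established, namely Proposition \ref{ prop: Lex and left exact functors} and Theorem \ref{thm:general theorem for equivalences}, specialized to the choice $\shD=\QCoh\stX$. The present statement is precisely this special case, so the only real work is to check that the (non-small) subcategory $\shD=\QCoh\stX$ satisfies the hypotheses of each cited result.

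For the characterization of $\Lex_R(\QCoh\stX,A)$, I would apply Proposition \ref{ prop: Lex and left exact functors} with $\shD=\QCoh\stX$. The category $\QCoh\stX$ is stable under arbitrary direct sums, so the first of the two listed conditions is met. Moreover $\QCoh\stX$ is additive and, since $\stX$ is pseudo-algebraic (the standing hypothesis of this section), it is an abelian category: by Proposition \ref{prop:reducing to atlases for quasi-geometric categories} it is equivalent to $\QCoh(R\rightrightarrows X)$, so kernels of maps of quasi-coherent sheaves are again quasi-coherent. In particular every surjection in $\QCoh\stX$ has its kernel in $\QCoh\stX$, so the additional hypothesis of the final clause of that proposition holds. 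The clause then yields that $\Lex_R(\QCoh\stX,A)$ is exactly the subcategory of $\L_R(\QCoh\stX,A)$ of those $R$-linear contravariant functors that are left exact (on short exact sequences, in the sense of Definition \ref{def: left exact functors}) and transform arbitrary direct sums into products, which is the first assertion.

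For the equivalence, I would invoke Theorem \ref{thm:general theorem for equivalences}, again with $\shD=\QCoh\stX$. This subcategory trivially generates $\QCoh\stX$, since each $\shG\in\QCoh\stX$ is a quotient of itself via $\id_{\shG}$. The theorem then gives directly that $\Omega^*\colon\QCoh_A\stX\arr\Lex_R(\QCoh\stX,A)$ is an equivalence of categories. Note that smallness of $\shD$ is required in that theorem only to exhibit the quasi-inverse in the explicit form $\shF_{*,\shD}$, not for the equivalence itself; hence the non-smallness of $\QCoh\stX$ poses no obstacle here.

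There is essentially no hard step, since the entire content has been packaged into the two general theorems and this is merely the extremal instance $\shC=\QCoh\stX$. The only point demanding attention is the appeal to the abelianness of $\QCoh\stX$, which rests on the pseudo-algebraicity of $\stX$; this is what guarantees that kernels of surjections stay quasi-coherent and thereby makes the "all surjections have kernel in $\shD$" condition of Proposition \ref{ prop: Lex and left exact functors} available.
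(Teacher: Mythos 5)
Your proposal is correct and follows exactly the paper's own route: the paper proves this theorem by the one-line citation of Proposition \ref{ prop: Lex and left exact functors} and Theorem \ref{thm:general theorem for equivalences} with $\shD=\QCoh\stX$. Your verification of the hypotheses (stability under arbitrary direct sums, existence of kernels via pseudo-algebraicity, trivial self-generation, and the observation that smallness is only needed for the explicit quasi-inverse) simply makes explicit what the paper leaves implicit.
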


\begin{proof}
Follows from \ref{thm:general theorem for equivalences} and \ref{ prop: Lex and left exact functors}
with $\shD=\QCoh\stX$.
\end{proof}
In what follows $\QCoh_{\text{fp}}\stX$ denotes the category of quasi-coherent
sheaves of finite presentation.
\begin{thm}
\label{thm:equivalence for finite presented sheaves} Let $\stX$
be a quasi-compact fibered category over $R$ such that $\QCoh_{\text{fp}}\stX$
generates $\QCoh\stX$ (e.g. a quasi-compact and quasi-separated scheme
by \cite[Section 6.9]{EGAI}). Then $\Lex_{R}(\QCoh_{\text{fp}}\stX,A)$
is the category of contravariant, $R$-linear functors $\QCoh_{\text{fp}}\stX\to\Mod A$
which are left exact on right exact sequences. Moreover the functors
\[
\Omega^{*}\colon\QCoh_{A}\stX\to\Lex_{R}(\QCoh_{\text{fp}}\stX,A)\comma\shF_{*,\QCoh_{\text{fp}}(\stX)}\colon\Lex_{R}(\QCoh_{\text{fp}}\stX,A)\to\QCoh_{A}\stX
\]
are quasi-inverses of each other.
\end{thm}

\begin{proof}
Follows from \ref{thm:general theorem for equivalences} and \ref{ prop: Lex and left exact functors}
\end{proof}
When $\stX$ is a noetherian algebraic stack then $\QCoh_{\text{fp}}\stX=\Coh\stX$
is abelian and generates $\QCoh\stX$ (see \cite[Prop 15.4]{Laumon1999}).
In particular we obtain:
\begin{thm}
\label{thm:equivalence for noetherian algebraic stacks} Let $\stX$
be a noetherian algebraic stack. The category $\Lex_{R}(\Coh\stX,A)$
is the category of contravariant, $R$-linear and left exact functors
$\Coh\stX\to\Mod A$. Moreover the functor
\[
\Omega^{*}\colon\QCoh_{A}\stX\to\Lex_{R}(\Coh\stX,A)\comma\shF_{*,\Coh(\stX)}\colon\Lex_{R}(\Coh\stX,A)\to\QCoh_{A}\stX
\]
are quasi-inverses of each other.
\end{thm}

\begin{thm}
\label{thm:equivalence for Loc X} Let $\stX$ be a quasi-compact
fibered category over $R$ such that $\Loc\stX$ generates $\QCoh\stX$.
Then $\Lex_{R}(\Loc\stX,A)$ is the category of contravariant, $R$-linear
and left exact functors $\Loc\stX\to\Mod A$. Moreover the functors
\[
\Omega^{*}\colon\QCoh_{A}\stX\to\Lex_{R}(\Loc\stX,A)\comma\shF_{*,\Loc(\stX)}\colon\Lex_{R}(\Loc\stX,A)\to\QCoh_{A}\stX
\]
are quasi-inverses of each other.
\end{thm}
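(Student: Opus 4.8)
The plan is to derive both assertions from the two main results of this section, applied with $\shD=\Loc\stX$. The hypothesis that $\Loc\stX$ generates $\QCoh\stX$ is precisely what is required by Theorem~\ref{thm:general theorem for equivalences}, which then yields at once that
\[
\Omega^{*}\colon\QCoh_{A}\stX\arr\Lex_{R}(\Loc\stX,A)
\]
is an equivalence of categories (a quasi-inverse being provided by $\shF_{*,\shC}$ after replacing $\Loc\stX$ by a small generating subcategory $\shC$ as in Remark~\ref{rem: finding small subcategory generating QCoh X}). This settles the ``moreover'' statement, so the remaining content is the concrete identification of $\Lex_{R}(\Loc\stX,A)$ with the $R$-linear, contravariant, left exact functors.

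For this I would invoke the second bullet of Proposition~\ref{ prop: Lex and left exact functors}, checking its hypotheses in turn. First, every sheaf in $\Loc\stX$ is finitely presented, being finitely generated projective over each affine $\Spec B\arr\stX$. Second, $\stX$ is quasi-compact by assumption. Third, one needs $\Loc\stX\subseteq\QCoh^{\Loc\stX}\stX$; in fact the resolution property forces $\QCoh^{\Loc\stX}\stX=\QCoh\stX$, since every quasi-coherent sheaf is generated by $\Loc\stX$, and for any map $\E\arrdi{\psi}\shG$ with $\E\in(\Loc\stX)^{\oplus}$ the kernel $\Ker\psi$ is again a quasi-coherent sheaf, hence again generated by $\Loc\stX$. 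With these conditions the proposition shows that $\Gamma\in\Lex_{R}(\Loc\stX,A)$ exactly when $\Gamma$ is cohomologically left exact on all surjections $\E'\arr\E$ with $\E\in\Loc\stX$ and $\E'\in(\Loc\stX)^{\oplus}=\Loc\stX$.

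To pass from this to the clean ``left exact'' description I would verify the two extra conditions in the final clause of Proposition~\ref{ prop: Lex and left exact functors}: that $\Loc\stX$ is additive (finite direct sums of finite-rank locally free sheaves are locally free) and that every surjection in $\Loc\stX$ has its kernel in $\Loc\stX$. The latter is the only genuinely geometric point: a surjection $\E'\arr\E$ of finite-rank locally free sheaves splits over each affine $\Spec B\arr\stX$ because $\E(B)$ is projective, so $\Ker(\E'(B)\arr\E(B))$ is a direct summand of $\E'(B)$ and hence finitely generated projective; since kernels of maps of quasi-coherent sheaves on a pseudo-algebraic $\stX$ are quasi-coherent, the kernel lies in $\Loc\stX$. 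The proposition then identifies $\Lex_{R}(\Loc\stX,A)$ with the $R$-linear left exact functors that transform arbitrary direct sums into products.

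The last step is to observe that the direct-sum clause is vacuous here: an infinite direct sum of nonzero locally free sheaves is not of finite rank and so is not an object of $\Loc\stX$, whence the only direct sums available in $\Loc\stX$ are finite, and these are automatically carried to finite products by any $R$-linear (hence additive) functor. Thus the condition imposes nothing and the characterization collapses to ``$R$-linear and left exact'', as claimed. I expect no deep obstacle in this argument — the heavy lifting is entirely inside the cited results, in particular the reduction from arbitrary to finite test sequences in Proposition~\ref{ prop: Lex and left exact functors}, which is exactly where quasi-compactness of $\stX$ and the resolution property are used; the only nonformal verification left to the present proof is the splitting argument for kernels of surjections together with the remark that $\Loc\stX$ admits no infinite direct sums.
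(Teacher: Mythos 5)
Your proposal is correct and follows exactly the paper's own route: the paper's proof is the one-line observation that the statement follows from Theorem~\ref{thm:general theorem for equivalences} and Proposition~\ref{ prop: Lex and left exact functors}, ``taking into account that all surjections in $\Loc\stX$ have kernels in $\Loc\stX$'' — precisely the splitting argument you spell out. Your additional verifications (finite presentation, $\Loc\stX\subseteq\QCoh^{\Loc\stX}\stX$ via the resolution property, vacuity of the infinite-direct-sum condition) are the details the paper leaves implicit, and they check out.
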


\begin{proof}
Follows from \ref{thm:general theorem for equivalences} and \ref{ prop: Lex and left exact functors},
taking into account that all surjections in $\Loc\stX$ have kernels
in $\Loc\stX$.
\end{proof}
\begin{thm}
\label{thm:equivalence for affine schemes} Let $B$ be an $R$-algebra
and $\shD\subseteq\Mod B$ be a subcategory that generates $\Mod B$,
that is there exists $\E_{1},\dots,\E_{r}\in\shD$ with a surjective
map $\bigoplus_{i}\E_{i}\to B$. Then the functor
\[
\Omega^{*}\colon\Mod(A\otimes_{R}B)\to\Lex_{R}(\shD,A)
\]
is an equivalence of categories. Moreover if $\shD\subseteq\Loc B$
then $\Lex_{R}(\shD,A)=\L_{R}(\shD,A)$.
\end{thm}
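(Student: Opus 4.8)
The plan is to deduce the statement from the general equivalence \ref{thm:general theorem for equivalences}, after which a direct computation shows that, over $\Loc B$, left exactness is automatic.

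For the first assertion I regard $B$ as the quasi-compact fibered category $\stX=\Spec B$ over $R$, the identity being an affine fpqc atlas, so that $\stX$ is pseudo-algebraic. Then $\QCoh\stX=\Mod B$ and, through the equivalence $\QCoh\stX_{A}\simeq\QCoh_{A}\stX$, one has $\QCoh_{A}\stX=\Mod(A\otimes_{R}B)$. The hypothesis is exactly that $\shD$ generates $\QCoh\stX$: from a finite surjection $\bigoplus_{i}\E_{i}\arr B$ one gets, for any $B$-module $M$ written as a quotient of a free module $B^{(S)}$, a surjection $\bigoplus_{S}\bigoplus_{i}\E_{i}\arr M$; conversely, generation forces $B$ to be a quotient of a sum of objects of $\shD$, which may be taken finite since $B$ is generated by $1$. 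Thus \ref{thm:general theorem for equivalences} applies and gives that $\Omega^{*}\colon\Mod(A\otimes_{R}B)\arr\Lex_{R}(\shD,A)$ is an equivalence.

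For the second assertion I must check that every $\Gamma\in\L_{R}(\shD,A)$ is already exact on each test sequence (see \ref{def: left exact functors})
\[
\bigoplus_{k}\E_{k}\arrdi{\alpha}\bigoplus_{j}\E_{j}\arrdi{\mu}\E\arr0,
\]
the crucial feature being that the objects of $\shD$ are finitely generated and projective. Projectivity of $\E$ splits $\mu$ by a section $s=(s_{j})_{j}$, and finite generation makes $s$ supported on a finite set $J_{0}$; from $\id_{\E}=\sum_{j\in J_{0}}u_{j}s_{j}$ and the contravariance and $R$-linearity of $\Gamma$ one gets $\sum_{j}\Gamma_{s_{j}}\Gamma_{u_{j}}=\id_{\Gamma_{\E}}$, so $\Gamma_{\E}\arr\prod_{j}\Gamma_{\E_{j}}$ is injective. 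For exactness in the middle I would lift the idempotent $\id-s\mu$ of $\bigoplus_{j}\E_{j}$, whose image is $\Ker\mu=\Imm\alpha$, through the surjection $\alpha$, using that $\bigoplus_{j}\E_{j}$ is projective; since each $\E_{l}$ is finitely generated this yields $\beta=(v_{lk})$ with $\{k\st v_{lk}\neq0\}$ finite for every $l$, together with the matrix identity $\delta_{lj}\id_{\E_{l}}-s_{j}u_{l}=\sum_{k}u_{kj}v_{lk}$.

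Given $(x_{j})_{j}$ in the kernel of $\prod_{j}\Gamma_{\E_{j}}\arr\prod_{k}\Gamma_{\E_{k}}$, I would set $x=\sum_{j\in J_{0}}\Gamma_{s_{j}}(x_{j})\in\Gamma_{\E}$ and apply $\Gamma$ to the matrix identity to obtain
\[
\Gamma_{u_{l}}(x)=\sum_{j}\Gamma_{s_{j}u_{l}}(x_{j})=x_{l}-\sum_{k}\Gamma_{v_{lk}}\Bigl(\sum_{j}\Gamma_{u_{kj}}(x_{j})\Bigr)=x_{l},
\]
the last step because $\sum_{j}\Gamma_{u_{kj}}(x_{j})=0$ for every $k$. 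Hence $x$ maps to $(x_{j})_{j}$, the defining sequence is exact, and $\Gamma\in\Lex_{R}(\shD,A)$; combined with the first part this shows $\Omega^{*}\colon\Mod(A\otimes_{R}B)\arr\L_{R}(\shD,A)$ is an equivalence. I expect the only delicate point to be keeping track of the finiteness conditions that make every displayed sum finite — the finite support of $s$ and of $\beta$, and the defining finiteness of $\alpha$ in the test sequence — so that the reindexing in the central computation is legitimate; the rest is a formal consequence of projectivity and $R$-linearity.
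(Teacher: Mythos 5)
Your proof is correct, and the first half is exactly the paper's argument (identify $\QCoh_{A}\Spec B$ with $\Mod(A\otimes_{R}B)$ and invoke \ref{thm:general theorem for equivalences}). For the second half you take a genuinely different route. The paper argues indirectly: every short exact sequence in $\Loc B$ splits, so every $R$-linear functor $\Loc B\arr\Mod A$ is left exact, whence $\L_{R}(\Loc B,A)=\Lex_{R}(\Loc B,A)$ by \ref{thm:equivalence for Loc X}; a given $\Gamma\in\L_{R}(\shD,A)$ is then extended to $\overline{\Gamma}\in\L_{R}(\Loc B,A)$ via the left Kan extension of \ref{thm:adjoint of push forward}, and since every test sequence for $\shD$ is a test sequence for $\Loc B$ the restriction $\Gamma=\overline{\Gamma}_{|\shD}$ lies in $\Lex_{R}(\shD,A)$. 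You instead verify exactness on an arbitrary test sequence directly, splitting $\mu$ by projectivity of $\E$, lifting the idempotent $\id-s\mu$ through $\alpha$ by projectivity of $\bigoplus_{j}\E_{j}$, and running the resulting matrix identity through $\Gamma$; the finiteness bookkeeping you flag (finite support of $s$ and of each row of $\beta$, local finiteness of $\alpha$) does make every double sum genuinely finite, so the interchange of summations is legitimate and the computation $\Gamma_{u_{l}}(x)=x_{l}$ closes the argument. Your version is longer but more self-contained: it avoids \ref{thm:equivalence for Loc X} (and hence the cohomological characterization of $\Lex$ behind \ref{ prop: Lex and left exact functors}) and the Kan-extension step, at the price of an explicit idempotent computation; the paper's version is shorter on the page but leans on more of the surrounding machinery.
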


\begin{proof}
If $\stX=\Spec B$, then $\QCoh_{A}\stX\simeq\Mod(A\otimes_{R}B)$
and the first part follows from \ref{thm:general theorem for equivalences}.
For the last claim, observe that any $\Gamma\colon\Loc B\to\Mod A$
is exact because any short exact sequence in $\Loc B$ splits. By
\ref{thm:equivalence for Loc X} we can conclude that $\L_{R}(\Loc B,A)=\Lex_{R}(\Loc B,A)$.
If now $\shD\subseteq\Loc B$ and $\Gamma\in\L_{R}(\shD,A)$, we can
extend it to $\overline{\Gamma}\in\L_{R}(\Loc B,A)$ and therefore
$\Gamma=\overline{\Gamma}_{|\shD}\in\Lex_{R}(\shD,A)$.
\end{proof}
We want to extend Theorem \ref{thm:general theorem for equivalences}
to functors with monoidal structures.
\begin{defn}
If $\shD$ is a monoidal subcategory of $\QCoh\stX$ we define $\PMLex_{R}(\shD,A)$
(resp. $\MLex_{R}(\shD,A)$) as the subcategory of $\PML_{R}(\shD,A)$
(resp. $\ML_{R}(\shD,A)$) of functors $\Gamma$ such that $\Gamma\in\Lex_{R}(\shD,A)$.
\end{defn}

\begin{thm}
\label{thm:fundamental for sheaf of algebras and monoidal functors}
Let $\shD$ be a monoidal subcategory of $\QCoh\stX$ that generates
it. Then the functors
\[
\Omega^{*}\colon\Rings_{A}\stX\to\PMLex_{R}(\shD,A)\text{ and }\Omega^{*}\colon\QAlg_{A}\stX\to\MLex_{R}(\shD,A)
\]
(see \ref{prop:Omegastar on algebras}) are equivalence of categories.
If $\shD$ is small a quasi inverse is given by $\alA_{*,\shD}\colon\PMLex_{R}(\shD,A)\to\Rings_{A}\stX$
and $\alA_{*,\shD}\colon\MLex_{R}(\shD,A)\to\QAlg_{A}\stX$ respectively
(see \ref{prop: from sheaves to algebras}). Moreover if $\overline{\shD}\subseteq\shD$
is a monoidal subcategory that generates $\QCoh\stX$ the restriction
functors $\PMLex_{R}(\shD,A)\to\PMLex_{R}(\overline{\shD},A)$ and
$\MLex_{R}(\shD,A)\to\MLex_{R}(\overline{\shD},A)$ are equivalences.
\end{thm}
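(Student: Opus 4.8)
The plan is to bootstrap everything from the non-monoidal equivalence $\Omega^{*}\colon\QCoh_{A}\stX\arr\Lex_{R}(\shD,A)$ of \ref{thm:general theorem for equivalences}, using repeatedly the principle that a morphism in $\Rings_{A}\stX$ (resp.\ a pseudo-monoidal transformation in $\PML_{R}(\shD,A)$) which is an isomorphism on underlying objects is automatically an isomorphism, since the inverse of an underlying iso respects the multiplication (resp.\ the structures $\iota^{*}$). First I would dispose of the case $\shD$ \emph{small}. Here \ref{prop: from sheaves to algebras} provides the adjunction $\alA_{*,\shD}\dashv\Omega^{*}$ between $\PML_{R}(\shD,A)$ and $\Rings_{A}\stX$, whose unit $\gamma_{\Gamma}$ and counit $\delta_{\alB}$ are a monoidal transformation and a ring morphism whose underlying transformations are the unit and counit of $\shF_{*,\shD}\dashv\Omega^{*}$. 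As $\shD$ generates $\QCoh\stX$ we have $\QCoh_{A}\stX=\QCoh_{A}^{\shD}\stX$ and $\Lex_{R}(\shD,A)=\Lex_{R}^{\shD}(A)$, so by \ref{thm:relative theorem for equivalence} those underlying transformations are isomorphisms (for $\Gamma\in\Lex_{R}(\shD,A)$ and all $\alB$). Hence $\gamma_{\Gamma}$ and $\delta_{\alB}$ are isomorphisms in $\PMLex_{R}(\shD,A)$ and $\Rings_{A}\stX$, which gives the small case; the $\MLex/\QAlg$ version follows because $\gamma_{\Gamma},\delta_{\alB}$ preserve symmetry, associativity and unities by \ref{prop:Omegastar on algebras} and \ref{prop: from sheaves to algebras}.

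The technical heart is a rigidity lemma that I would isolate as follows. Let $\shC\subseteq\shD$ be a \emph{monoidal} subcategory generating $\QCoh\stX$, let $(\Gamma,\iota^{\Gamma}),(\Gamma',\iota^{\Gamma'})\in\PMLex_{R}(\shD,A)$, and let $\eta\colon\Gamma\arr\Gamma'$ be a transformation in $\Lex_{R}(\shD,A)$ with $\eta_{|\shC}$ monoidal; I claim $\eta$ is monoidal. Fix $V,W\in\shD$ and set $P_{V,W}=\eta_{V\otimes W}\circ\iota^{\Gamma}_{V,W}$ and $Q_{V,W}=\iota^{\Gamma'}_{V,W}\circ(\eta_{V}\otimes\eta_{W})$; both are natural in $(V,W)$ and agree on $\shC\times\shC$. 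Picking surjections $\bigoplus_{i}\E_{i}\arr V$, $\bigoplus_{j}\E_{j}'\arr W$ with components $a_{i}\colon\E_{i}\arr V$, $b_{j}\colon\E_{j}'\arr W$ and $\E_{i},\E_{j}'\in\shC$, right-exactness of $\otimes$ yields a surjection $\bigoplus_{i,j}\E_{i}\otimes\E_{j}'\arr V\otimes W$ with $\E_{i}\otimes\E_{j}'\in\shC$ (this is where monoidality of $\shC$ is used), which extends to a test sequence by \ref{rem: local characterization of generation by D}; since $\Gamma'\in\Lex_{R}(\shD,A)$, the map $\Gamma'_{V\otimes W}\arr\prod_{i,j}\Gamma'_{\E_{i}\otimes\E_{j}'}$ is injective. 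A two-variable naturality computation shows that the $(i,j)$-component of $P_{V,W}$ equals $P_{\E_{i},\E_{j}'}\circ(\Gamma_{a_{i}}\otimes\Gamma_{b_{j}})$, and similarly for $Q$; as $P_{\E_{i},\E_{j}'}=Q_{\E_{i},\E_{j}'}$ we conclude $P_{V,W}=Q_{V,W}$, i.e.\ $\eta$ is monoidal. The same injectivity argument, applied to the diagrams of \ref{def: pseudo monoidal commutativi associative}, shows that a pseudo-monoidal structure on an object of $\Lex_{R}(\shD,A)$ is determined by its restriction to $\shC$, and that symmetry, associativity and unitality descend from $\shC$ to $\shD$ (the last two via triple tensor products and $\odi{\stX}\in\shC$, all lying in $\shC$).

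With these in hand I would treat the general $\shD$ by reduction to a small monoidal generating $\shC\subseteq\shD$, which exists by taking the monoidal closure (under $\otimes$, together with $\odi{\stX}$) of a small generating subset furnished by \ref{rem: finding small subcategory generating QCoh X}. The restriction $r\colon\PMLex_{R}(\shD,A)\arr\PMLex_{R}(\shC,A)$ is then an equivalence: faithfulness and the lifting of transformations come from the non-monoidal restriction equivalence $\Lex_{R}(\shD,A)\arr\Lex_{R}(\shC,A)$ of \ref{thm:general theorem for equivalences}, each lifted transformation being monoidal by the rigidity lemma; essential surjectivity holds because any $\Gamma\in\PMLex_{R}(\shC,A)$ is pseudo-monoidally isomorphic to $(\Omega^{\alB})_{|\shC}$ (small case, $\alB=\alA_{\Gamma,\shC}$) while $\Omega^{\alB}\in\PMLex_{R}(\shD,A)$ by \ref{prop:OmegaF is left exact} and \ref{prop:Omegastar on algebras}. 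Since $r\circ\Omega^{*}=\Omega^{*}$ (restriction of the Yoneda functor) and $\Omega^{*}\colon\Rings_{A}\stX\arr\PMLex_{R}(\shC,A)$ is an equivalence by the small case, the first equivalence follows for arbitrary $\shD$; the $\MLex/\QAlg$ statement is obtained by restricting to the respective subcategories, which $r$ and $\Omega^{*}$ preserve and reflect because symmetry, associativity and unitality descend from $\shC$ as above. For the last assertion, given monoidal generating $\overline{\shD}\subseteq\shD$ I pick a small monoidal generating $\shC\subseteq\overline{\shD}$ and factor $r$ through $\PMLex_{R}(\overline{\shD},A)$; as $\PMLex_{R}(\shD,A)\arr\PMLex_{R}(\shC,A)$ and $\PMLex_{R}(\overline{\shD},A)\arr\PMLex_{R}(\shC,A)$ are both equivalences, so is $\PMLex_{R}(\shD,A)\arr\PMLex_{R}(\overline{\shD},A)$, and likewise for $\MLex$.

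I expect the main obstacle to be precisely the rigidity lemma of the second paragraph. Everything else is formal transport across an equivalence already established in \ref{thm:general theorem for equivalences}, but guaranteeing that the monoidal structure itself, and monoidality of transformations, genuinely descends to and is detected on the generating monoidal subcategory is the delicate point: it rests on combining the two-variable naturality of the comparison maps with the injectivity into a product furnished by the $\Lex$ hypothesis, and it is essential that $\shC$ be closed under tensor products so that $\E_{i}\otimes\E_{j}'$ remains in $\shC$.
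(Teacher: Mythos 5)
Your proof is correct and its overall skeleton coincides with the paper's: first the small case, where the adjunction $\alA_{*,\shD}\dashv\Omega^{*}$ of \ref{prop: from sheaves to algebras} has unit and counit whose underlying transformations are those of $\shF_{*,\shD}\dashv\Omega^{*}$, hence are isomorphisms by \ref{thm:general theorem for equivalences}, and an invertible (pseudo-)monoidal transformation or multiplicative map is automatically a (pseudo-)monoidal or ring isomorphism; then a reduction of the general case to a small monoidal generating subcategory. Where you diverge is in how the reduction step is justified. The paper introduces the filtered family $\shC_{I}$ of small monoidal generating subcategories (the monoidal closures of $\shC\cup I$ for sets $I\subseteq\shD$) and transports everything through the already-established equivalences $\Omega^{*}\colon\Rings_{A}\stX\arr\PMLex_{R}(\shC_{I},A)$: since any pair $V,W\in\shD$ together with $V\otimes W$ lies in $\shC_{\{V,W\}}$, monoidality of a lifted transformation, and the compatibility constraints of a lifted pseudo-monoidal structure, are detected inside some $\shC_{I}$ where the small case applies, and the argument of \ref{thm:general theorem for equivalences} glues these. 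You instead prove a direct rigidity lemma: the surjection $\bigoplus_{i,j}\E_{i}\otimes\E_{j}'\arr V\otimes W$ with $\E_{i}\otimes\E_{j}'\in\shC$ extends to a test sequence, the $\Lex$ hypothesis gives injectivity of $\Gamma'_{V\otimes W}\arr\prod_{i,j}\Gamma'_{\E_{i}\otimes\E_{j}'}$, and two-variable naturality reduces the comparison of $\eta_{V\otimes W}\circ\iota^{\Gamma}_{V,W}$ with $\iota^{\Gamma'}_{V,W}\circ(\eta_{V}\otimes\eta_{W})$ to the case of objects of $\shC$. Both mechanisms work; the paper's is shorter because it reuses the formal gluing over the $\shC_{I}$, while yours is more self-contained and makes explicit exactly which exactness property forces monoidal structures and monoidality of transformations to descend from the generating subcategory. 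Your handling of the $\MLex/\QAlg$ refinement and of the final two-out-of-three argument for $\overline{\shD}\subseteq\shD$ matches the paper's.
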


\begin{proof}
Assume that $\shD$ is small. Then $\Omega^{*}\colon\Rings_{A}\stX\to\PMLex_{R}(\shD,A)$
and $\alA_{*,\shD}\colon\PMLex_{R}(\shD,A)\to\Rings_{A}\stX$ are
quasi-inverses of each other because, by \ref{prop: from sheaves to algebras},
we have natural transformations $\id\to\Omega^{*}\circ\alA_{*,\shD}$
and $\alA_{*,\shD}\circ\Omega^{*}\to\id$ which are isomorphisms thanks
to \ref{thm:general theorem for equivalences}. In particular if $\overline{\shD}\subseteq\shD$
is a monoidal subcategory that generates $\QCoh\stX$ then the restriction
functor $\PMLex_{R}(\shD,A)\to\PMLex_{R}(\overline{\shD},A)$ is an
equivalence. Since $\gamma$ and $\beta$ preserve unities by \ref{prop: from sheaves to algebras},
the same holds if we replace $\Rings_{A}\stX$ by $\QAlg_{A}\stX$
and $\PMLex_{R}(\shD,A)$ by $\MLex_{R}(\shD,A)$.

Now assume that $\shD$ is general. Notice that there exists a small
subcategory $\shC'\subseteq\shD$ that generates $\QCoh\stX$ thanks
to \ref{rem: finding small subcategory generating QCoh X}. If $I\subseteq\shD$
is a set we set $\shC_{I}\subseteq\shD$ for the category whose objects
are isomorphic to a (multiple) tensor product with factors in $C\cup I$
and $\odi{\stX}$. We have that $\shC_{I}$ is a collection of small
monoidal subcategories of $\shD$, that generate $\QCoh\stX$ and
such that $\shC_{I}\subseteq\shC_{I'}$ if $I\subseteq I'$. We can
show that the restrictions $\PMLex_{R}(\shD,A)\to\PMLex_{R}(\shC_{\emptyset},A)$
and $\MLex_{R}(\shD,A)\to\MLex_{R}(\shC_{\emptyset},A)$ are equivalences
by proceeding as in the proof of \ref{thm:general theorem for equivalences}.
All the other claims in the statement follow easily from this fact.
\end{proof}
\begin{thm}
\label{thm:rewriting of general equivalences for monoidal functors}
The theorems \ref{thm:equivalence for QCoh X}, \ref{thm:equivalence for finite presented sheaves},
\ref{thm:equivalence for noetherian algebraic stacks}, \ref{thm:equivalence for Loc X}
continue to hold if we replace $\Lex_{R}$ by $\PMLex_{R}$ (resp.
$\MLex_{R}$), $\QCoh_{A}\stX$ by $\QRings_{A}\stX$ (resp. $\QAlg_{A}\stX$),
$\shF_{*,\shC}$ by $\alA_{*,\shC}$ and the word ``functors'' by
``pseudo-monoidal functors'' (resp. ``monoidal functors'').
\end{thm}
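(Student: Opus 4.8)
The plan is to leverage Theorem \ref{thm:fundamental for sheaf of algebras and monoidal functors} as a monoidal enhancement of the equivalences already established in Theorems \ref{thm:equivalence for QCoh X}, \ref{thm:equivalence for noetherian algebraic stacks} and \ref{thm:equivalence for Loc X}. The three target theorems each assert that a certain $\Lex_R(\shD,A)$ has a concrete description (contravariant, $R$-linear, left exact functors, possibly with the direct-sum-to-product condition) and that $\Omega^*\colon\QCoh_A\stX\arr\Lex_R(\shD,A)$ is an equivalence, for $\shD=\QCoh\stX$, $\Coh\stX$ (noetherian algebraic case) and $\Loc\stX$ (quasi-compact with resolution property) respectively. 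In each of these cases $\shD$ is a \emph{monoidal} subcategory of $\QCoh\stX$ generating it: $\odi{\stX}$ lies in each and each is stable under $\otimes$. So the first step is to record that each relevant $\shD$ is monoidal, which is immediate since $\QCoh\stX$, $\Coh\stX$ and $\Loc\stX$ are all closed under tensor product and contain the unit $\odi{\stX}$.

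The core of the argument is then just to invoke Theorem \ref{thm:fundamental for sheaf of algebras and monoidal functors} with each of these three choices of $\shD$. That theorem gives, for a monoidal generating subcategory $\shD$, equivalences $\Omega^*\colon\Rings_A\stX\arr\PMLex_R(\shD,A)$ and $\Omega^*\colon\QAlg_A\stX\arr\MLex_R(\shD,A)$. By definition $\PMLex_R(\shD,A)$ (resp. $\MLex_R(\shD,A)$) is the subcategory of $\PML_R(\shD,A)$ (resp. $\ML_R(\shD,A)$) consisting of functors whose underlying object of $\L_R(\shD,A)$ lies in $\Lex_R(\shD,A)$. The plan is to observe that the forgetful functors $\PMLex_R(\shD,A)\arr\Lex_R(\shD,A)$ and $\MLex_R(\shD,A)\arr\Lex_R(\shD,A)$ fit into a strictly commuting square with the forgetful functors $\QRings_A\stX\arr\QCoh_A\stX$ and $\QAlg_A\stX\arr\QCoh_A\stX$, with $\Omega^*$ on both the top and bottom rows (this compatibility is exactly the content of Proposition \ref{prop:Omegastar on algebras}, which says the monoidal $\Omega^*$ extends the plain $\Omega^*$). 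Consequently the concrete description of the underlying functors transported across $\Omega^*$ in Theorems \ref{thm:equivalence for QCoh X}--\ref{thm:equivalence for Loc X} applies verbatim to the underlying functors of objects in $\PMLex_R$ and $\MLex_R$: the extra structure is a pseudo-monoidal (resp. monoidal) structure, and the left-exactness/direct-sum conditions are imposed exactly on the forgetful image in $\Lex_R(\shD,A)$.

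Concretely, for each of the three theorems I would state that replacing $\Lex_R$ by $\PMLex_R$ and $\QCoh_A\stX$ by $\QRings_A\stX$ yields an equivalence because: the underlying description of $\Lex_R(\shD,A)$ is unchanged (by the cited theorem), so $\PMLex_R(\shD,A)$ is precisely the category of pseudo-monoidal functors $\shD\arr\Mod A$ whose underlying $R$-linear functor has the stated left-exactness property; and $\Omega^*$ is an equivalence onto it by Theorem \ref{thm:fundamental for sheaf of algebras and monoidal functors}. The monoidal case with $\MLex_R$ and $\QAlg_A\stX$ is identical, using the second equivalence of that theorem. In the $\Coh\stX$ and $\Loc\stX$ cases one should note that the generating hypotheses required by Theorem \ref{thm:fundamental for sheaf of algebras and monoidal functors} are exactly the hypotheses already assumed (noetherian algebraic, resp. quasi-compact with resolution property), so no new conditions arise.

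I expect the only genuine subtlety, rather than an obstacle, to be bookkeeping about which exactness condition the word ``functors'' refers to in each theorem. In Theorem \ref{thm:equivalence for QCoh X} one must carry along the ``transform arbitrary direct sums into products'' clause, while in Theorems \ref{thm:equivalence for noetherian algebraic stacks} and \ref{thm:equivalence for Loc X} plain left exactness suffices (by Proposition \ref{ prop: Lex and left exact functors}, since $\Coh\stX$ is abelian and $\Loc\stX$ has kernels of surjections inside it). Since $\PMLex_R$ and $\MLex_R$ are defined so that the monoidal object lies in $\Lex_R$, and $\Lex_R$ already has these concrete descriptions, the clauses transfer without change; the proof therefore reduces to a one-line application of Theorem \ref{thm:fundamental for sheaf of algebras and monoidal functors} together with the three cited theorems, with no independent calculation required.
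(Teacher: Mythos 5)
Your proposal is correct and is essentially the argument the paper intends: the theorem is stated without an explicit proof precisely because it is the immediate combination of Theorem \ref{thm:fundamental for sheaf of algebras and monoidal functors} (applied to the monoidal generating subcategories $\QCoh\stX$, $\Coh\stX$, $\Loc\stX$ in their respective settings) with the concrete descriptions of $\Lex_{R}(\shD,A)$ already established in Theorems \ref{thm:equivalence for QCoh X}--\ref{thm:equivalence for Loc X}, using that $\PMLex_{R}$ and $\MLex_{R}$ are by definition cut out inside $\PML_{R}$ and $\ML_{R}$ by membership of the underlying functor in $\Lex_{R}$. Your bookkeeping of which exactness clause transfers in each case is also the right observation.
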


\section{\label{sec:Quasi-projective-schemes and more}Quasi projective schemes
and more}

The goal of this section is to show the relation between the sheafification
functors we defined and the classical one for projective schemes.
The dictionary is explained in the following result. In what follows
$\stX$ is a pseudo-algebraic fibered category over a ring $R$ and
$A$ is an $R$-algebra. If $S$ is a graded algebra we denote by
$\GMod(S)$ the category of graded $S$-modules.
\begin{prop}
\label{prop:dictionary invertible sheaves projective} Let $\shL$
be an invertible sheaf over $\stX$ and set $\shC_{\shL}=\{\shL^{\otimes n}\}_{n\in T}$,
where $T$ is either $\N$ or $\Z$, and 
\[
S_{\shL}=\bigoplus_{n\in T}\Hl^{0}(\shL^{-\otimes n})
\]
with its canonical $\Hl^{0}(\odi{\stX})$-algebra structure. Then
\[
\L_{R}(\shC_{\shL},A)\to\GMod(S_{\shL}\otimes_{R}A)\comma\Gamma\longmapsto\bigoplus_{n\in T}\Gamma_{\shL^{\otimes n}}
\]
is well defined and an equivalence of categories. Under this equivalence
$\Omega^{*}\colon\QCoh_{A}\stX\to\L_{R}(\shC_{\shL},A)$ become
\[
\QCoh_{A}\stX\to\GMod(S_{\shL}\otimes_{R}A)\comma\shG\longmapsto\bigoplus_{n\in T}\Hl^{0}(\shG\otimes\shL^{-\otimes n})
\]
while $\shF_{\Gamma,\shC}\colon\L_{R}(\shC,A)\to\QCoh_{A}\stX$ become
the functor $\widetilde{-}\colon\GMod(S_{\shL}\otimes_{R}A)\to\QCoh_{A}\stX$
defined by
\[
\widetilde{M}(\xi\colon\Spec B\to\stX)=(M\otimes_{S_{\shL}}S_{\xi^{*}\shL})_{0}
\]
\end{prop}

\begin{proof}
Given a $T$-graded $A$-module $\bigoplus_{n}T_{n}$ a graded $S_{\shL}\otimes_{R}A$-module
structure is given by $R$-linear maps
\[
\Hl^{0}(\shL^{-\otimes q})\to\Hom_{A}(T_{m},T_{m+q})\text{ for }m,q\in T
\]
satisfying the obvious conditions. Instead a structure of contravariant
$R$-linear functor $\Gamma\colon\shC_{\shL}\to\Mod A$ with $\Gamma_{\shL^{\otimes n}}=T_{n}$
is given by maps
\[
\Hom_{\stX}(\shL^{\otimes(m+q)},\shL^{\otimes m})\to\Hom_{A}(T_{m},T_{m+q})\text{ for }m,q\in T
\]
again satisfying certain compatibility conditions. Thus one has to
check that the isomorphisms
\[
\Hl^{0}(\shL^{-\otimes q})\to\Hom_{\stX}(\shL^{\otimes(m+q)},\shL^{\otimes m})
\]
are compatible with tensor products, which is an elementary computation.
The description of $\Omega^{*}$ follows from definition because
\[
\Omega_{\shL^{\otimes n}}^{\shG}=\Hom_{\stX}(\shL^{\otimes n},\shG)=\Hl^{0}(\shG\otimes\shL^{-\otimes n})
\]
Let $\xi\colon\Spec B\to\stX$ be a map. Notice that $\xi^{*}\shC_{\shL}\simeq\shC_{\xi^{*}\shL}$
and that there is a canonical graded algebra morphism $S_{\shL}\to S_{\xi^{*}\shL}$.
It is clear that the restriction $\xi_{*}\colon\L_{R}(\shC_{\xi^{*}\shL},A)\to\L_{R}(\shC_{\shL},A)$
corresponds to the restriction $\GMod(S_{\xi^{*}\shL}\otimes_{R}A)\to\GMod(S_{\shL}\otimes_{R}A)$.
It therefore follows that $\xi^{*}\colon\L_{R}(\shC_{\shL},A)\to L_{R}(\shC_{\xi^{*}\shL},A)$
correspond to the tensor product $-\otimes_{S_{\shL}}S_{\xi^{*}\shL}\colon\GMod(S_{\shL}\otimes_{R}A)\to\GMod(S_{\xi^{*}\shL}\otimes_{R}A)$.
On the other hand one can check easily that
\[
\Omega^{*}\colon\QCoh_{A}\Spec B=\Mod(B\otimes_{R}A)\to\GMod(S_{\xi^{*}\shL}\otimes_{R}A)\simeq\L_{R}(\shC_{\xi^{*}\shL},A)\comma N\longmapsto N\otimes_{(B\otimes_{R}A)}S_{\xi^{*}\shL}
\]
and that this is natural in $\xi\in\stX$. Its adjoint is 
\[
\GMod(S_{\xi^{*}\shL}\otimes_{R}A)\to\Mod(B\otimes_{R}A)\comma M\longmapsto M_{0}
\]
By \ref{prop:pullbacks and compatibilities} we obtain the description
of $\shF_{\Gamma,\shC}$.
\end{proof}
\begin{thm}
\label{thm:for projective schemes} Let $\stX$ be a pseudo-algebraic
fibered category over $R$, $\shL$ a line bundle on $\stX$ and assume
that $\shC_{\shL}=\{\shL^{\otimes n}\}_{n\in T}$ generates $\QCoh(\stX)$,
where $T$ is either $\N$ or $\Z$. Set $S_{\shL}=\bigoplus_{n\in T}\Hl^{0}(\shL^{-\otimes n})$.
Then the functors 
\[
\Gamma_{*}\colon\QCoh_{A}\stX\to\GMod(S_{\shL}\otimes_{R}A)\comma\shG\longmapsto\Gamma_{*}(\shG)=\bigoplus_{n\in T}\Hl^{0}(\shG\otimes\shL^{-\otimes n})
\]
\[
\widetilde{-}\colon\GMod(S_{\shL}\otimes_{R}A)\to\QCoh_{A}\stX\comma\widetilde{M}(\xi\colon\Spec B\to\stX)=(M\otimes_{S_{\shL}}S_{\xi^{*}\shL})_{0}
\]
are well defined and the first one is right adjoint to the second
one. Moreover $\Omega^{*}$ is fully faithful and left exact, $\widetilde{-}$
is exact and the morphism 
\[
\widetilde{\Gamma_{*}(\shG)}\to\shG\text{ for }\shG\in\QCoh_{A}\stX
\]
is an isomorphism.
\end{thm}

\begin{proof}
All claims are consequence of \ref{thm:when Omegastar is fully faithful}
, \ref{prop:excatness properties of Omega* and shF*} and \ref{prop:dictionary invertible sheaves projective}.
\end{proof}
\begin{rem}
\label{rem:quasi-affine on sheaves} If $p\colon\stY\to\stX$ is a
representable and quasi-affine map of pseudo-algebraic fibered categories
and $\shF\in\QCoh(\stX)$ then $p^{*}p_{*}\shF\to\shF$ is surjective.
In particular if $\shC\subseteq\QCoh(\stX)$ generates $\QCoh(\stX)$
then $p^{*}\shC\subseteq\QCoh(\stY)$ generates $\QCoh(\stY)$.

The last claim follows from the first because if $\bigoplus_{j}\E_{j}\to p_{*}\shF$
is a surjective map with $\E_{j}\in\shC$ then 
\[
\bigoplus_{j}p^{*}\shE_{j}\to p^{*}p_{*}\shF\to\shF
\]
is a surjective map with $p^{*}\E_{j}\in p^{*}\shC$. For the first
claim, since the problem is fpqc local on the target, we can assume
$\stX$ affine, so that $\stY$ would be a quasi-affine scheme. In
this situation we have to prove that all quasi-coherent sheaves on
$\stY$ are generated by global sections. If $j\colon\stY\to\Spec C$
is an open immersion and $\shF\in\QCoh(\stY)$ then one find a surjective
map $\shH\to j_{*}\shF$ from a free sheaf and $j^{*}\shH\to j^{*}j_{*}\shF\simeq\shF$
will be again a surjective map from a free sheaf.
\end{rem}

\begin{rem}
Here we describe some situations in which Theorem \ref{thm:for projective schemes}
can be applied.
\begin{itemize}
\item If $f\colon X\to\PP_{R}^{n}$ is a quasi-affine map and $\shL=f^{*}\odi{\PP_{R}^{n}}(-1)$
then $\shC_{\shL}=\{\shL^{\otimes n}\}_{n\in\N}$ generates $\QCoh(X)$.
This follows applying \ref{rem:quasi-affine on sheaves} to the quasi-affine
map $X\to\PP_{R}^{n}\to\PP_{\Z}^{n}$ and the analogous and classical
result on $\PP_{\Z}^{n}$.
\item If $X$ is a quasi-compact and quasi-projective scheme over $R$,
$j\colon X\to\PP_{R}^{n}$ is an immersion and $\shL=j^{*}\odi{\PP_{R}^{n}}(-1)$
then $\shC_{\shL}=\{\shL^{\otimes n}\}_{n\in\N}$ generates $\QCoh(X)$.
This is a particular case of the previous one. Since $X$ is quasi-compact
and $\PP_{R}^{n}$ is quasi-separated it follows that $X\to\PP_{R}^{n}$
is a quasi-compact immersion. By \cite[Tag 01QV]{SP014} it follows
that $X\to\PP_{R}^{n}$ is quasi-affine.
\item If $U$ is a quasi-affine scheme with an action of $\Gm$, $\stX=[U/\Gm]$
and $\shL$ is the line bundle corresponding to $\stX\to\Bi\Gm$ then
$\shC_{\shL}=\{\shL^{\otimes n}\}_{n\in\Z}$ generates $\QCoh(\stX)$.
Indeed applying \ref{rem:quasi-affine on sheaves} to the quasi-affine
map $\stX\to\Bi\Gm$ one reduces to $\stX=\Bi\Gm$ as a stack over
$\Spec\Z$. Quasi-coherent sheaves on $\Bi\Gm$ (over $\Spec\Z$)
are all of the form $\bigoplus_{n\in\Z}G_{n}\otimes\shL^{\otimes n}$
for abelian groups $G_{n}$. Thus $\shC_{\shL}$ generates $\QCoh(\stX)$.
\item If $U$ is a quasi-affine scheme with an action of $\mu_{d}$, $\stX=[U/\mu_{d}]$
and $\shL$ is the $d$-torsion line bundle corresponding to $\stX\to\Bi\mu_{d}$
then $\shC_{\shL}=\{\shL^{\otimes n}\}_{n\in\N}$ generates $\QCoh(\stX)$.
Indeed applying \ref{rem:quasi-affine on sheaves} to the quasi-affine
map $\stX\to\Bi\mu_{d}$ one reduces to $\stX=\Bi\mu_{d}$ as a stack
over $\Spec\Z$. Quasi-coherent sheaves on $\Bi\mu_{d}$ (over $\Spec\Z$)
are all of the form $\bigoplus_{n=0}^{d-1}G_{n}\otimes\shL^{\otimes n}$
for abelian groups $G_{n}$. Thus $\shC_{\shL}$ generates $\QCoh(\stX)$.
\end{itemize}
\end{rem}

\section{\label{sec: group schemes representation} Group schemes and representations.}

Let $G$ be a flat and affine group scheme over $R$. In this section
we want to interpret the results obtained in the case $\stX=\Bi_{R}G$,
the stack of $G$-torsors for the \emph{fpqc} topology, which is a
quasi-compact fpqc stack with affine diagonal.

If $A$ is an $R$-algebra, by standard theory we have that $\QCoh_{A}\Bi_{R}G$
is the category $\Mod^{G}A$ of $G$-comodules over $A$. Recall that
the regular representation $R[G]$ of $G$ is by definition $\Hl^{0}(G,\odi G)$
with the coaction induced by the right action of $G$ on itself. By
definition it comes equipped with a morphism of $R$-algebras $\varepsilon\colon R[G]\to R$
induced by the unit section of $G$.
\begin{rem}
\label{rem: crucial isomorphism invariants RG and modules} If $M\in\Mod^{G}A$
then the composition
\[
(M\otimes_{R}R[G])^{G}\to M\otimes_{R}R[G]\arrdi{\id\otimes\varepsilon}M
\]
 is an isomorphism. This follows from \cite[3.4]{Jantzen2003} applied
to $G=H$.
\end{rem}

We start with a criterion to find a set of generators for $\QCoh\Bi_{R}G$.
\begin{prop}
\label{thm:generating sheaves for BG} If the regular representation
$R[G]$ is a filtered direct limit of modules $B_{i}\in\Mod^{G}R$
which are finitely presented as $R$-modules then $\{\duale B_{i}\}_{i\in I}$
generates $\Mod^{G}R$.
\end{prop}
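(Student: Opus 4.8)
The plan is to exploit the crucial isomorphism of Remark~\ref{rem: crucial isomorphism invariants RG and modules} together with the hypothesis $R[G]=\colim_i B_i$, reducing generation to an explicit construction of comodule maps out of the duals $\duale{B_i}$. Recall that $\QCoh\Bi_R G\simeq\Mod^{G}R$, the category of $G$-comodules over $R$, and that ``$\{\duale{B_i}\}$ generates'' means the tautological map from a direct sum of copies of the $\duale{B_i}$ is an epimorphism.

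First I would fix $M\in\Mod^{G}R$ and, using the local characterization of generation (Remark~\ref{rem: local characterization of generation by D}) for the fpqc atlas $\Spec R\arr\Bi_R G$ given by the trivial torsor, observe that it suffices to show: for every $m\in M=M(\Spec R)$ there exist $i\in I$, a comodule morphism $u\colon\duale{B_i}\arr M$, and a section $\phi\in\duale{B_i}(\Spec R)=\Hom_R(B_i,R)$ with $u(\phi)=m$. The natural candidate for $\phi$ is $\varepsilon_i$, the restriction to $B_i\arr R[G]\arrdi{\varepsilon}R$ of the counit (note that $\varepsilon_i$ is only a section over the atlas, not a global, i.e.\ invariant, section).

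The central step is the following chain. By Remark~\ref{rem: crucial isomorphism invariants RG and modules} the inverse of the displayed isomorphism is induced by the coaction $\rho_M\colon M\arr M\otimes_R R[G]$; in particular $\rho_M(m)\in(M\otimes_R R[G])^{G}$ and $(\id_M\otimes\varepsilon)(\rho_M(m))=m$. Since $R[G]=\colim_i B_i$ is a \emph{filtered} colimit of comodules and $(-)^{G}$ is an equalizer, which commutes with filtered colimits in $\Mod R$ (these being exact), I obtain
\[
(M\otimes_R R[G])^{G}\simeq\colim_i (M\otimes_R B_i)^{G}.
\]
Hence $\rho_M(m)$ is the image of some invariant $w\in(M\otimes_R B_i)^{G}$ for a suitable $i$. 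Using that $\QCoh\Bi_R G$ is closed monoidal, there is a natural evaluation morphism $M\otimes_R B_i\arr\Homsh(\duale{B_i},M)$; taking $G$-invariants turns $w$ into a comodule map $u\colon\duale{B_i}\arr M$ whose underlying $R$-linear map on the atlas is the contraction $\phi\longmapsto(\id_M\otimes\phi)(w)$. Evaluating on $\varepsilon_i$ then gives $u(\varepsilon_i)=(\id_M\otimes\varepsilon)(\rho_M(m))=m$, exactly as required.

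The main obstacle is this last step, namely making sense of $\duale{B_i}$ for a \emph{merely} finitely presented (not necessarily locally free) comodule. Finite presentation is used precisely here: it guarantees that $\duale{B_i}=\Homsh(B_i,\odi{\Bi_R G})$ is quasi-coherent with underlying module $\Hom_R(B_i,R)$, because internal hom commutes with the flat base change along the atlas when the first argument is finitely presented, and it ensures that $\varepsilon_i$ is a genuine section of $\duale{B_i}$ over $\Spec R$. Crucially, one does \emph{not} need $B_i$ to be dualizable: only the natural map $(M\otimes_R B_i)^{G}\arr\Hom_{\Mod^{G}R}(\duale{B_i},M)$ is needed, not an isomorphism, so the closed monoidal structure together with the counit computation $(\id_M\otimes\varepsilon)\circ\rho_M=\id_M$ suffices to conclude.
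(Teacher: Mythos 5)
Your proof is correct and follows essentially the same route as the paper's: pass to the filtered colimit $(M\otimes_R R[G])^{G}\simeq\varinjlim_i(M\otimes_R B_i)^{G}$ using the isomorphism of Remark \ref{rem: crucial isomorphism invariants RG and modules}, lift each $m\in M$ to an invariant $w\in(M\otimes_R B_i)^{G}$ with $(\id_M\otimes\varepsilon_i)(w)=m$, and convert $w$ via the $G$-equivariant map $M\otimes_R B_i\arr\Hom(\duale{B_i},M)$ into a comodule morphism $\duale{B_i}\arr M$ hitting $m$ at $\varepsilon_i$. Your extra care about why $\duale{B_i}$ is a well-defined comodule for merely finitely presented $B_i$ (flatness of $R[G]$ plus finite presentation) is a point the paper leaves implicit, but the argument is the same.
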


\begin{proof}
Set $B=R[G]$ and $\varepsilon_{i}\colon B_{i}\to R$ for the composition
$B_{i}\to B\arrdi{\varepsilon}R$ and let $M\in\Mod^{G}R$. Since
filtered direct limits commute with tensor products and taking invariants,
by \ref{rem: crucial isomorphism invariants RG and modules} we have
that the limit of the maps $(\varepsilon_{i}\otimes\id_{M})_{|(B_{i}\otimes M)^{G}}\colon(B_{j}\otimes M)^{G}\to M$
is an isomorphism. This means that for any $m\in M$ there exists
$i_{m}\in I$ and an element $\psi_{m}\in(B_{i_{m}}\otimes M)^{G}$
such that $(\varepsilon_{i}\otimes\id_{M})(\psi_{m})=m$. There is
a commutative diagram   \[   \begin{tikzpicture}[xscale=2.0,yscale=-1.2]     \node (A0_0) at (0, 0.5) {$b\otimes m$};     \node (A0_2) at (2, 0.5) {$(\phi \mapsto \phi(b)m)$};     \node (A1_0) at (0, 1) {$B_i \otimes M$};     \node (A1_2) at (2, 1) {$\Hom(B_i^\vee,M)$};     \node (A1_3) at (3, 1) {$\psi$};     \node (A2_1) at (1, 2) {$M$};     \node (A2_2) at (2, 2) {$\psi(\varepsilon_i)$};     \path (A1_3) edge [serif cm->]node [auto] {$\scriptstyle{}$} (A2_2);     \path (A0_0) edge [->]node [auto] {$\scriptstyle{}$} (A0_2);     \path (A1_0) edge [->]node [auto] {$\scriptstyle{\varepsilon_i\otimes \id}$} (A2_1);     \path (A1_0) edge [->]node [auto] {$\scriptstyle{}$} (A1_2);     \path (A1_2) edge [->]node [auto] {$\scriptstyle{}$} (A2_1);   \end{tikzpicture}   \] 
and the horizontal map is $G$-equivariant. Therefore we obtain a
$\delta_{m}\in\Hom^{G}(\duale{B_{i}},M)$ such that $\delta_{m}(\varepsilon_{i})=m$.
This implies that the map
\[
\bigoplus_{m\in M}\delta_{m}\colon\bigoplus_{m\in M}\duale B_{i_{m}}\to M
\]
is surjective and therefore that $M$ is generated by $\{\duale{B_{i}}\}_{i\in I}$.
\end{proof}
\begin{rem}
\label{rem: resolution property for BG cases} The class $\shG_{R}$
of flat, affine group schemes $G$ over $R$ such that $R[G]$ is
a direct limit of modules in $\Loc(\Bi_{R}G)$ is stable by arbitrary
products and projective limits. Moreover by construction contains
all groups which are flat, finite and finitely presented over $R$,
i.e. $R[G]\in\Loc(\Bi_{R}G)$, and thus all profinite groups. Since
any $G$-comodule is the union of the sub $G$-comodules which are
finitely generated $R$-modules (see \cite[2.13]{Jantzen2003}), we
see that $\shG_{R}$ contains all flat groups defined over a Dedekind
domain or a field, such as $\GL_{r}$, $\text{SL}_{r}$ and all diagonalizable
groups. Proposition \ref{thm:generating sheaves for BG} tells us
that if $G\in\shG_{R}$ then $\Bi_{R}G$ has the resolution property,
that is $\Loc(\Bi_{R}G)$ generates $\QCoh(\Bi_{R}G)$.
\end{rem}

Let $A$ be an $R$-algebra. We denote by $\Loc^{G}A$ the subcategory
of $\Mod^{G}A$ of $G$-comodules that are locally free of finite
rank (projective of finite type) as $A$-modules, so that $\Loc(\Bi_{R}G)\simeq\Loc^{G}R$.
We define $\QAdd^{G}A$ ($\QPMon^{G}A$, $\QMon^{G}A$) as the category
of \emph{covariant }$R$-linear (pseudo-monoidal, monoidal) functors
$\Loc^{G}R\to\Mod A$. We set $\QRings^{G}A$ for the category of
$M\in\Mod^{G}A$ with a $G$-equivariant map $M\otimes_{A}M\to M$
and $\QAlg^{G}A$ for the (not full) subcategory of $\QRings^{G}A$
of commutative $R$-algebras.
\begin{defn}
The group $G$ is called \emph{linearly reductive} if the functor
$(-)^{G}\colon\Mod^{G}R\to\Mod R$ is exact.
\end{defn}

\begin{rem}
\label{rem: LocG R semisimple if G linearly reductive} If $G$ is
linearly reductive then any short exact sequence in $\Loc^{G}R$ splits.
Indeed if $M\to N$ is surjective then $\Hom_{R}^{G}(N,M)\to\Hom_{R}^{G}(N,N)$
is surjective, yielding a $G$-equivariant section $N\to M$.
\end{rem}

\begin{thm}
\label{thm:general theorem for BG} If $\Bi_{R}G$ has the resolution
property then the functors
\[
\Mod^{G}A\to\QAdd^{G}A\comma\QRings^{G}A\to\QPMon^{G}A\comma\QAlg^{G}A\to\QMon^{G}A
\]
which maps $M$ to the functor $(-\otimes_{R}M)^{G}\colon\Loc^{G}R\to\Mod A$
are well defined, fully faithful and have essential image the subcategory
of functors which are left exact on short exact sequences in $\Loc^{G}R$.
In particular they are equivalences if $G$ is a linearly reductive
group.
\end{thm}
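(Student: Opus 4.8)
The plan is to reduce Theorem \ref{thm:general theorem for BG} to the already-proven Theorem \ref{thm:rewriting of general equivalences for monoidal functors} (which rewrites the $\Loc\stX$ case, i.e.\ Theorem \ref{thm:equivalence for Loc X}, in the monoidal and algebra variants) applied to $\stX=\Bi_R G$ together with the subcategory $\shC=\Loc(\Bi_R G)$. The crucial identification is the standard equivalence $\QCoh_A\Bi_R G\simeq\Mod^G A$ and its restriction $\Loc(\Bi_R G)\simeq\Loc^G R$ recalled at the start of the section. Under this dictionary the three source categories $\Mod^G A$, $\QRings^G A$, $\QAlg^G A$ correspond to $\QCoh_A\Bi_R G$, $\QRings_A\Bi_R G$, $\QAlg_A\Bi_R G$ respectively, and the three target categories $\QAdd^G A$, $\QPMon^G A$, $\QMon^G A$ correspond to $\L_R(\Loc(\Bi_R G),A)$, $\PML_R(\Loc(\Bi_R G),A)$, $\ML_R(\Loc(\Bi_R G),A)$ respectively.

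First I would pin down the comparison between the \emph{contravariant} functors used throughout Sections 2--3 and the \emph{covariant} functors $(-\otimes_R M)^G$ appearing in the statement. The Yoneda functor of Section 2 is contravariant, $\Omega^{\shF}_{-}=\Hom_{\stX}(-,\shF)$, whereas $\QAdd^G A$ etc.\ are defined as covariant functors $\Loc^G R\arr\Mod A$. These are matched by the duality on $\Loc^G R$: since every object of $\Loc^G R$ is dualizable, $V\mapsto \duale V$ is a contravariant involutive monoidal autoequivalence, and precomposition with it turns covariant functors into contravariant ones. So I would verify the natural isomorphism
\[
\Hom_{\Bi_R G}(V,\shF_M)\simeq (\duale V\otimes_R M)^G\quad\text{for }V\in\Loc^G R,\ M\in\Mod^G A,
\]
where $\shF_M$ is the quasi-coherent sheaf on $\Bi_R G$ corresponding to $M$. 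Indeed $\Hom_{\Bi_R G}(V,\shF_M)$ computes $G$-equivariant homomorphisms $V\arr M$, which is $\Hom^G_R(V,M)=(\duale V\otimes_R M)^G$. Thus the functor $M\mapsto(-\otimes_R M)^G$ is identified, up to the duality reindexing $V\leftrightarrow\duale V$, with $M\mapsto\Omega^{\shF_M}$, and this identification is compatible with the (pseudo-)monoidal structures since $\duale{(-)}$ is monoidal and the algebra structure on $M$ corresponds to the pseudo-monoidal structure on $\Omega^{\shF_M}$ as in Proposition \ref{prop:Omegastar on algebras}.

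Granting this dictionary, the three asserted statements become immediate consequences of earlier results. Well-definedness (that the functors land in $\QAdd^G A$, $\QPMon^G A$, $\QMon^G A$) follows from Proposition \ref{prop:Omegastar on algebras}; full faithfulness follows from Theorem \ref{thm:equivalence for Loc X} (linear case) and Theorem \ref{thm:fundamental for sheaf of algebras and monoidal functors} (monoidal and algebra cases), using that $\Bi_R G$ is quasi-compact with affine diagonal and that $\Loc(\Bi_R G)$ generates $\QCoh\Bi_R G$ precisely because we assumed $\Bi_R G$ has the resolution property. The description of the essential image as the functors left exact on short exact sequences in $\Loc^G R$ is exactly the content of $\Lex_R(\Loc\stX,A)$ as computed in Theorem \ref{thm:equivalence for Loc X}: since all objects of $\Loc^G R$ are finitely presented and kernels of surjections in $\Loc(\Bi_R G)$ lie again in $\Loc(\Bi_R G)$, Proposition \ref{ prop: Lex and left exact functors} identifies $\Lex_R(\Loc\stX,A)$ with the left-exact contravariant functors, which under the duality correspond to the covariant functors left exact on short exact sequences.

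Finally, for the last sentence I would use Remark \ref{rem: LocG R semisimple if G linearly reductive}: when $G$ is linearly reductive every short exact sequence in $\Loc^G R$ splits, so \emph{every} $R$-linear functor $\Loc^G R\arr\Mod A$ is automatically left exact on such sequences, whence the essential-image condition is vacuous and the three functors become equivalences onto the whole of $\QAdd^G A$, $\QPMon^G A$, $\QMon^G A$. The main obstacle, and the step deserving the most care, is the bookkeeping of the covariant/contravariant and dual/non-dual reindexing: I must check that the duality $\duale{(-)}$ transports the monoidal structure and the unit correctly (so that algebras go to monoidal functors and not merely pseudo-monoidal ones) and that it carries short exact sequences to short exact sequences, which holds because $\duale{(-)}$ is exact on $\Loc^G R$ as every object is locally free. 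Once this is set up cleanly, the theorem is a direct translation of the general machinery, and I would state it as such rather than re-deriving the exactness arguments.
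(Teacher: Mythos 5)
Your proposal is correct and follows essentially the same route as the paper: both reduce to Theorems \ref{thm:equivalence for Loc X} and \ref{thm:rewriting of general equivalences for monoidal functors} for $\stX=\Bi_R G$, use the duality $\duale{(-)}$ on $\Loc^G R$ to pass between covariant and contravariant functors via the identification $\Hom_{\Bi_R G}(\duale{\E},M)\simeq(\E\otimes_R M)^G$, and invoke Remark \ref{rem: LocG R semisimple if G linearly reductive} for the linearly reductive case. Your extra care with the covariant/contravariant bookkeeping and the compatibility of $\duale{(-)}$ with exactness and monoidal structures is exactly the verification the paper leaves implicit.
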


\begin{proof}
Set $\shC=\Loc^{G}R$. The functor $\duale{(-)}\colon\Loc^{G}R\to\Loc^{G}R$
is an equivalence and therefore we get equivalences $\QAdd^{G}A\simeq\L_{R}(\shC,A)$,
$\QPMon^{G}A\simeq\PML_{R}(\shC,A)$ and $\QMon^{G}A\simeq\ML_{R}(\shC,A)$.
Left exact functors are sent to left exact functors. Under those equivalences
$\Omega^{M}$ corresponds to $(-\otimes_{R}M)^{G}$ because $\Hom_{\Bi_{R}G}(\duale{\E},M)\simeq\Hl^{0}(\E\otimes_{\odi{\Bi_{R}G}}M)\simeq(\E\otimes_{R}M)^{G}$.
Thus the result follows from \ref{prop:excatness properties of Omega* and shF*},
\ref{thm:equivalence for Loc X}, \ref{thm:rewriting of general equivalences for monoidal functors}
and \ref{rem: LocG R semisimple if G linearly reductive}.
\end{proof}
\bibliographystyle{amsalpha}
\bibliography{biblio2}

\end{document}